\title{Entropy under flow equivalence and a classification of non-sofic $S$-gap shifts}
  \author{Peter Michael Reichstein Rasmussen}
  \date{\today}
\newcommand{\simequiv}{\mathrel {\vcenter{
  \offinterlineskip\halign{\hfil$##$\cr\noalign{\kern-1.5pt}\sim\cr\noalign{\kern-1.5pt}\sim\cr\noalign{\kern-1.5pt}\sim\cr\noalign{\kern-2.3pt}}}}}
\newcommand{\ex}[2] {
^{#1\mapsto #2}
}
\newcommand{\abs}[1]{\left\vert #1 \right\vert}
\newcommand{\orb}[1]{\mathcal O\left(#1\right)}
\newcommand{\R}{\mathbb{R}}
\newcommand{\N}{\mathbb{N}}
\newcommand{\Z}{\mathbb{Z}}
\newcommand{\A}{\mathcal A}
\newcommand{\F}{\mathcal F}
\newcommand{\X}{\mathsf X}
\newcommand{\V}{\mathcal V}
\newcommand{\E}{\mathcal E}
\newcommand{\G}{\mathcal G}
\newcommand{\Lab}{\mathcal L}
\newcommand{\FE}{\sim_{FE}}
\DeclareMathOperator{\BF}{BF}
\DeclareMathOperator{\sign}{sgn}
\newcounter{thmCounter}
\numberwithin{thmCounter}{chapter}
\theoremstyle{definition}
\newtheorem{definition}[thmCounter]{Definition}
\newtheorem{example}[thmCounter]{Example}
\newtheorem{remark}[thmCounter]{Remark}
\theoremstyle{plain}
\newtheorem{theorem}[thmCounter]{Theorem}
\newtheorem{proposition}[thmCounter]{Proposition}
\newtheorem{lemma}[thmCounter]{Lemma}
\newtheorem{corollary}[thmCounter]{Corollary}
\newtheorem{conjecture}[thmCounter]{Conjecture}
\begin{document}
%Initialize
%\maketitle

\begin{titlepage}
	\ThisLRCornerWallPaper{1}{baggrund}	
	%\ThisLRCornerWallPaper{1}{nat-farve.pdf}
	\vspace*{5.5cm}
	\noindent
	{\large\textsc{Peter Michael Reichstein Rasmussen}}\\[0.5cm]
	{\huge\textsc{Flow Equivalence of Shift Spaces}}\\[0.5cm]
	\vfill\noindent
	{\large\textsc{Bachelor Thesis in Mathematics}}\\[0.2cm]
	\noindent
	{\large\textsc{Department of Mathematical Sciences}}\\[0.2cm]
	\noindent
	{\large\textsc{University of Copenhagen}}\\[1cm]
	{\large\textsc{Advisor \\[0.2cm] {\Large Søren Eilers}}}\\[1cm]
	{\large\textsc{June 4, 2015}}
\end{titlepage}

\begin{comment}
\begin{titlepage}
	\ThisLRCornerWallPaper{1}{baggrund}	
	%\ThisLRCornerWallPaper{1}{nat-farve.pdf}
	\vspace*{5.5cm}
	\noindent
	{\huge\textsc{Flow Equivalence of Shift Spaces}}\\[0.5cm]
	{\Large\textsc{Bachelor Thesis in Mathematics}}\\[0.2cm]
	{\large\textsc{Peter Michael Reichstein Rasmussen}}\\[0.5cm]
	\vfill\noindent
	{\large\textsc{Advisor \\[0.2cm] {\Large Søren Eilers}}}\\[1cm]
	{\large\textsc{June 4, 2015}}
\end{titlepage}
\end{comment}
%This is where the text starts

\pagenumbering{roman}

\begin{abstract}
	We study two problems related to flow equivalence of shift spaces. The first problem, the classification of $S$-gap shifts up to flow equivalence, is partially solved with the establishment of a new invariant for the sofic $S$-gap shifts and a complete classification of the non-sofic $S$-gap shifts. The second problem is an examination of the entropy of shift spaces under flow equivalence. For a wide array of classes of shift spaces with non-zero entropy, it is shown that the entropies achievable while maintaining flow equivalence are dense in $\R^+$.
\end{abstract}
\vspace{1cm}

\renewcommand{\abstractname}{Resumé}

\begin{abstract}
Vi undersøger to problemer relaterede til strømningsækvivalens af skiftrum. Det første problem, klassifikation af $S$-interval skift op til strømningsækvivalens, bliver delvist løst med etableringen af en ny invariant for sofiske $S$-interval skift og en fuldstændig klassifikation af de ikke-sofiske $S$-interval skift. Det andet problem er en undersøgelse af entropi af skiftrum under strømningsækvivalens, og det bliver vist, at den opnåelige entropi under strømningsækvivalens er tæt i $\R^+$ for en bred vifte af skiftrum med positiv entropi. 
\end{abstract}

\newpage
\section*{Introduction}
The subject of this thesis is symbolic dynamics, an old sub-discipline of the field of dynamical systems. The main occupation of symbolic dynamicists is the analysis of shift spaces; dynamical systems $(X, \sigma)$, where $X$ is a compact subset of $\A^\Z$, for some finite set $\A$, and $\sigma\colon X\to X$ is the map subtracting one from all the indices of a point $(x_i)_{i\in \Z}$. We can see the application of the map $\sigma$ to a point of $X$ as a leap in time, just as we traditionally would in dynamical system theory, and a point itself can be understood as the oscillation of a system between a finite number of states over time. 

Flow equivalence, the specific focus of this thesis, is an equivalence relation, which allow us to obstruct the flow of time by inserting ``pauses'' at certain places in every point of a shift space. The consequences of flow equivalence are as of yet relatively unexplored, and the aim of this thesis is to close a few of the holes in our knowledge by investigating uncharted and open problems.
\\

\noindent
The thesis has been divided into four chapters as follows.

The first chapter introduces the basic theory of symbolic dynamics. We discuss shift spaces, the central object of study in symbolic dynamics; sliding block codes, the most commonly considered maps between shift spaces; shifts of finite type, in some sense the simplest class of shift spaces; and sofic shifts, a wider and more versatile class of shift spaces. As we go, the most rudimentary results needed in later chapters will be presented.

The second chapter defines flow equivalence and gives a combinatorial interpretation of the otherwise topologically defined relation. Most of the chapter is devoted to proving preliminary results for later chapters, but we also see our first classification result with respect to flow equivalence: the classification of the irreducible shifts of finite type.

The third chapter investigates the behaviour of entropy of shift spaces under flow equivalence. Not much was known beforehand in this area, though it seemed that entropy could potentially vary greatly with flow equivalence, which also turns out to be the case. We start by showing that all shift spaces of non-zero entropy are flow equivalent to shift spaces of arbitrarily small entropy. We then develop techniques for identifying shift spaces, which have flow equivalence classes with entropy dense in $\R^+$. The last part of the chapter is devoted to applying these techniques to different classes of shift spaces.

The fourth chapter ponders the open problem of the classification of the $S$-gap shifts up to flow equivalence. This problem has earlier been attacked by means of graph invariants under flow equivalence for sofic shifts, but in this chapter we develop a new technique, which by viewing flow equivalences as a map between shift spaces partly solves the problem. After reviewing previous results on flow equivalence of $S$-gap shifts, we turn to the development of our new technique, which we ultimately use to establish a stronger invariant for sofic $S$-gap shifts and completely classify the non-sofic $S$-gap shifts.

\vfill 
\renewcommand{\abstractname}{Acknowledgements}

\begin{abstract}
I would like to thank my advisor Søren Eilers for answering my many questions and for supplying me with problems to solve, theorems to prove. Further, Frederik Ravn Klausen deserves my gratitude for his comments on the manuscript.
\end{abstract}

\tableofcontents
\newpage
\pagenumbering{arabic}

\chapter{Shift spaces}
In this very first chapter we introduce the notion of a shift space, the central object of study in the field of symbolic dynamics, and account for the basic concepts and results needed in later chapters.

Some of the proofs of the chapter have been left out since we have to build up a rather extensive theory in a short presentation.

\begin{remark}
All results in this chapter may be found in the literature. 
Any result that does not have a citation is due to Lind and Marcus \cite{LM}, although the author has provided alternative proofs of Propositions \ref{pointMatchesLanguage}, \ref{languageUniqueness}, and \ref{finiteMStep}.
\end{remark}

\section{Introduction}
Let a finite set $\A$, called an \emph{alphabet}, be given and consider the set $\A^{\Z}$ of bi-infinite sequences over $\A$ together with a map $\sigma\colon \A^\Z \to \A^\Z$, which maps a point $x=(x_i)_{i\in \Z}\in \A^\Z$ to the point $(y_i)_{i\in \Z}=y=\sigma(x)$ with $y_i=x_{i+1}$ for all $i\in \Z$. The pair $(\A^\Z, \sigma)$ is called the \emph{full shift} over $\A$, and $\sigma$ is called the \emph{shift map}.

Shift spaces, which we introduce in this section, are certain subsets $X\subseteq \A^\Z$ satisfying $\sigma(X) = X$, and the name originates exactly from this invariance under the shift map. 

In topological terms, we will consider $\A$ in the discrete topology, the full shift over $\A$ will be $\A^\Z$ endowed with the product topology, and shift spaces will be the compact subsets of $\A^\Z$ that are invariant under the shift map. Note, however, that this treatment of the subject will rely mainly on a combinatorial interpretation of shift spaces and not require any prior knowledge of topology, although we will remark on certain results from a topological vantage point.

\subsection{Forbidden words and shift spaces}		
	Let $\A$ be an alphabet. A \emph{word} over $\A$ is a finite sequence over $\A$, written $w=w_1\dots w_n$ for $w_i\in \A$. We say that $w$ has length $\abs{w} = n$ and let the empty word of length $0$ be denoted $\epsilon$. If $w=w_1\dots w_n$ and $v=v_1\dots v_m$ are words over $\A$ then $wv=w_1\dots w_nv_1\dots v_m$ denotes their concatenation, and for $k\in \N$, $w^k$ is the concatenation of $k$ copies of $w$.

For a point $x=(x_i)_{i\in\Z}$ of the full shift $\A^\Z$, we let $x_{[i, j]}, i\leq j$, signify the word $w=x_i\dots x_j$ and say that $w$ \emph{occurs} in $x$. Similarly, for a word $w=w_1\dots w_n$ over $\A$ and $i, j\in \N$ with $1\leq i\leq j\leq n$,  we denote by $w_{[i,j]}$ the \emph{subword} $u=w_i\dots w_j$ of $w$ and say that $u$ is a factor of $w$.
\\

The canonical way of defining a shift space combinatorially is by the words that do not occur in any of its points. Let $\A$ be a finite alphabet, $\F$ a set of words over $\A$, and $\X_\F$ the set of points $x\in\A^\Z$ such that no word of $\F$ occurs in $x$. The set $\F$ is called a set of \emph{forbidden words} for $\X_\F$.
\begin{definition}
	Let $\A$ be an alphabet. A set $X\subseteq\A^\Z$ is a \emph{shift space} if there is a set of forbidden words $\F$ over $\A$ such that $X=\X_\F$. If an arbitrary shift space $X$ is given, we denote its alphabet by $\A(X)$, and the shift map restricted to $X$ by $\sigma_X$.
	\end{definition}

\begin{remark}
	From the definition it follows that shift spaces are indeed invariant under the shift map, since the words occurring in a point are invariant under the shift map. It is worth noting that not all shift invariant subsets of a full shift are shift spaces.
\end{remark}

\begin{example}
Let $\A $ be an alphabet and $\F = \emptyset$. Then $\X_\F$ is simply the full shift over $\A$. When the alphabet is of the form $\A = \{0, 1, \dots, n-1\}$ the shift space is known as the \emph{full $n$-shift} and is written $\X_{[n]}$.
\end{example}
\begin{comment}
\begin{example}
Let $\A=\{0, 1\}$ and $\F = \{11\}$. Then $X=\X_\F$ is the shift space consisting of every bi-infinite binary sequence with no two 1's next to each other. This shift space is called the \emph{golden mean shift}.
\end{example}
\end{comment}

\begin{example}\label{sGapShiftEx}
Let $S\subseteq \N$ be non-empty. The \emph{S-gap shift} $\X(S)$ has alphabet $\A=\{0, 1\}$ and consists of all bi-infinite binary sequences such that the number of 0's between any two consecutive 1's belong to $S$.

 More precisely $\X(S)=\X_\F$ where
\begin{equation*}
	\F = \begin{cases}
			 \{10^k1\mid k\not\in S \text{ and } k<\max{S}\}\cup \{0^{1+\max S}\}, &\text{if $S$ is finite}\\
 	         \{10^k1\mid k\not\in S \}, &\text{if $S$ is infinite.}
 		 \end{cases}
\end{equation*}
If $S$ is infinite, the point $0^\infty$ belongs to $\X(S)$ since forbidding any word of the form $0^k, k\in \N$ would forbid words of the form $10^s1, s\in S$ for $s\geq k$.
\end{example}

\begin{definition}
	A point $x$ is \emph{periodic} with \emph{period} $n\in \N$ if $\sigma^n(x)=x$, and $x$ is periodic if such an $n$ exists. The least $n\in \N$ satisfying $\sigma^n(x)=x$ is called the \emph{least period} of $x$.
\end{definition}
\noindent
If $x$ is periodic it must have the form $x = \dots www.www\dots$ for some word $w$, and we write this as $x=w^\infty$.

A significant property of the least period of a point, the proof of which is omitted for brevity, is the following.

\begin{proposition}\label{leastPeriod}
	The least period of a point $x$ divides every other period of $x$.
\end{proposition}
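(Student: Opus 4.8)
The plan is to exploit the additive structure of the set of periods together with the division algorithm, in the same spirit as the classical proof that every subgroup of $\Z$ is cyclic. Write $n$ for the least period of $x$ and let $m\in\N$ be an arbitrary period, so that $\sigma^n(x)=x$ and $\sigma^m(x)=x$. The first step is to record the closure properties of periods. Since $\sigma$ is a bijection on the full shift (its inverse adds one to every index), applying $\sigma^{-n}$ to $\sigma^n(x)=x$ gives $\sigma^{-n}(x)=x$; iterating then yields $\sigma^{kn}(x)=x$ and $\sigma^{-kn}(x)=x$ for every $k\in\N$.

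Next I would apply the division algorithm to write $m=qn+r$ with $q\in\N$ and $0\le r<n$, and compute
\begin{equation*}
\sigma^r(x)=\sigma^{\,m-qn}(x)=\sigma^{-qn}\!\left(\sigma^m(x)\right)=\sigma^{-qn}(x)=x,
\end{equation*}
using $\sigma^m(x)=x$ in the penultimate equality and $\sigma^{-qn}(x)=x$ in the last. Hence the remainder $r$ is itself a period of $x$.

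The final step is to invoke minimality: if $r>0$, then $r$ would be a period strictly smaller than the least period $n$, contradicting the definition of $n$. Therefore $r=0$, which is precisely the assertion $n\mid m$.

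The argument is essentially routine, and I do not expect a genuine obstacle. The only point demanding care is the bookkeeping with negative powers of $\sigma$, that is, justifying that one may ``subtract'' periods; this rests entirely on the invertibility of the shift map on $\A^\Z$, which is immediate from its explicit inverse. Once that is in place, the division-algorithm computation above closes the proof.
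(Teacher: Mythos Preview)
Your argument is correct: the division-algorithm trick, together with the invertibility of $\sigma$ on the full shift, is the standard way to establish this. The paper actually omits the proof of this proposition entirely (it is announced as a result ``the proof of which is omitted for brevity''), so there is no approach to compare against; your write-up would serve perfectly well as the missing argument.
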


\subsection{Languages}
Instead of focusing on the forbidden words of a shift, we will often want to discuss the allowed ones. We will say that a word $w$ \emph{occurs} in a shift space $X$, if there is a point $x\in X$ such that $w$ occurs in $x$.
\begin{definition}
Let $X$ be a shift space. We denote by $B_n(X)$ the set of all words of length $n$ that occur in $X$. The \emph{language} of $X$ is the set $B(X)=\bigcup_{i=0}^\infty B_i(X)$ of all words that occur in $X$.
\end{definition}
\noindent
The language of a shift space gives a convenient way to check whether or not a point belongs to the shift space.

\begin{proposition}\label{pointMatchesLanguage}
	Let $X$ be a shift space, $\A$ its alphabet, and $x\in \A^\Z$ a point. Then $x\in X$ if and only if every word occurring in $x$ belongs to $B(X)$.
\end{proposition}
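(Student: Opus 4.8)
The plan is to prove the two implications separately, with the forward direction being essentially definitional and the reverse direction relying on the forbidden-word description of $X$.

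For the forward implication, I would suppose $x\in X$ and take any word $w$ occurring in $x$. Since $x$ is a point of $X$, the word $w$ occurs in $X$ by definition, so $w\in B(X)$. This direction requires no further work beyond unwinding the definition of $B(X)$.

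For the reverse implication I would exploit that $X$ is a shift space, so I fix a set of forbidden words $\F$ with $X=\X_\F$. The key observation is that no forbidden word can lie in the language: if some $w\in\F$ occurred in a point $y\in X=\X_\F$, this would contradict the defining property that no word of $\F$ occurs in any point of $\X_\F$; hence $B(X)\cap\F=\emptyset$. Now assuming every word occurring in $x$ belongs to $B(X)$, it follows that no word occurring in $x$ lies in $\F$, which is exactly the condition for $x\in\X_\F=X$.

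I do not expect a genuine obstacle here; the argument is short once the right intermediate claim is isolated. The only points requiring care are to phrase things so that occurrences of words of every length are covered, and to state explicitly the disjointness $B(X)\cap\F=\emptyset$, which is the crux that converts the language condition into the forbidden-word condition. I would also remark that the set $\F$ is not unique, but since the argument goes through for any valid choice of $\F$, this causes no difficulty.
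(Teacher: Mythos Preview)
Your proposal is correct and follows essentially the same route as the paper: the forward direction is immediate from the definition of $B(X)$, and for the converse you fix a set $\F$ of forbidden words with $X=\X_\F$, note that $B(X)\cap\F=\emptyset$, and conclude that no forbidden word occurs in $x$, hence $x\in\X_\F=X$. The paper's proof is virtually identical, just slightly more terse.
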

\begin{proof}
	If $x\in X$ then every word occurring in $x$ is contained in $B(X)$ by definition. Conversely, suppose that every word occurring in $x$ is contained in $B(X)$, and let $\F$ be a set of forbidden words over $\A$ such that $X=\X_\F$. Since $B(X)\cap \F=\emptyset$ no word of $\F$ occurs in $x$ and we get $x\in \X_\F=X$.
\end{proof}
\noindent
Contrary to a set of forbidden words and its corresponding shift space, a shift space and its language uniquely identify each other.

\begin{proposition}\label{languageUniqueness}
	Two shift spaces are equal if and only if their languages are equal.
\end{proposition}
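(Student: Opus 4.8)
The plan is to prove both implications, with the forward direction being immediate and the reverse direction relying on the preceding Proposition \ref{pointMatchesLanguage}. For the forward direction, I would simply observe that if $X=Y$ as sets, then a word occurs in some point of $X$ exactly when it occurs in some point of $Y$, so by the definition of the language $B(X)=B(Y)$.

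The substance is in the reverse direction, and my strategy is to reduce equality of the two shift spaces to the two inclusions $X\subseteq Y$ and $Y\subseteq X$, which are symmetric; so I would prove $X\subseteq Y$ and invoke symmetry for the other. Assuming $B(X)=B(Y)$, I would take an arbitrary point $x\in X$ and aim to show $x\in Y$ by applying Proposition \ref{pointMatchesLanguage} to the shift space $Y$. Since $x\in X$, every word occurring in $x$ lies in $B(X)$, and by hypothesis $B(X)=B(Y)$, so every word occurring in $x$ lies in $B(Y)$. Proposition \ref{pointMatchesLanguage} then yields $x\in Y$, giving $X\subseteq Y$.

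The one point requiring care before Proposition \ref{pointMatchesLanguage} can be applied is that it presumes $x$ is a point of $\A(Y)^\Z$, i.e.\ that every letter of $x$ already belongs to the alphabet of $Y$. I would address this by noting that each individual letter $x_i$ occurring in $x$ is a word of length one occurring in $x$, hence lies in $B_1(X)\subseteq B(X)=B(Y)$, and words in $B_1(Y)$ are by definition letters of $\A(Y)$; thus $x\in\A(Y)^\Z$ and the hypothesis of Proposition \ref{pointMatchesLanguage} is met. This alphabet bookkeeping is the only genuine obstacle, and it is mild; once it is dispatched, the inclusion and its symmetric counterpart combine to give $X=Y$.
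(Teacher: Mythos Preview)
Your proof is correct and follows essentially the same approach as the paper's: the forward direction is immediate, and the reverse uses Proposition~\ref{pointMatchesLanguage} to show $X\subseteq Y$ from $B(X)=B(Y)$, then invokes symmetry. The only difference is that you explicitly verify $x\in\A(Y)^\Z$ before invoking Proposition~\ref{pointMatchesLanguage}, a technical point the paper silently glosses over; this is a harmless refinement rather than a different route.
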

\begin{proof}
	If two shift spaces are equal, then they clearly have the same language. Conversely, if two shift spaces $X$ and $Y$ have the same language and $x\in X$, then for every word $w$ occurring in $x$ we have $w\in B(X)=B(Y)$, so by Proposition \ref{pointMatchesLanguage}, $x\in Y$. Thus, $X\subseteq Y$ and by a symmetrical argument, $Y\subseteq X$.
\end{proof}
\noindent
There is a certain class of shift spaces that play an important role in the theory of symbolic dynamics. This is due to a property that allows us to ``glue'' together any two words of the shift space with a connecting word.
\begin{definition}
	A shift space $X$ is \emph{irreducible} if for every ordered pair $u, v\in B(X)$ there is a word $w$ such that $uwv\in B(X)$.
\end{definition}

\subsection{Sliding block codes}
If we consider two shift spaces $X$ and $Y$ in a topological context, it is reasonable to study continous maps $\varphi\colon X\to Y$ that commutes with the shift map, i.e. $\varphi(\sigma_X(x))=\sigma_Y(\varphi(x))$ for all $x\in X$. Since we do not rely on topology in this treatment, we will instead use a combinatorial interpretation of such a map, the sliding block code.

\begin{definition}
	Let $X$ be a shift space and $\A$ an alphabet. A \emph{block map} or \emph{$(m+n+1)$-block map} is a function $\Phi\colon B_{m+n+1}(X)\to \A$. If $w=a_1\dots a_r$ with $r\geq m+n+1$ we will write
	\begin{equation*}
		\Phi(w) = \Phi(a_1\dots a_{m+n+1})\Phi(a_2\dots a_{m+n+2})\dots \Phi(a_{r-m-n}\dots a_r).
	\end{equation*}
\end{definition}
\noindent A sliding block code is  the map we get from ``sliding'' a block map down a point in the following way.
\begin{definition}
	Let $X$ be shift spaces and $\Phi\colon B_{m+n+1}(X)\to \A$ a block map. The \emph{sliding block code} induced by $\Phi$ is the map $\Phi_\infty^{[m, n]} = \phi\colon X\to \A^\Z$ satisfying
	\begin{equation*}
		(\phi(x))_i = \Phi(x_{i-m}\dots x_{i+n})
	\end{equation*}
	for all $x\in X$ and $i\in \Z$. We call $m$ the \emph{memory} and $n$ the  \emph{anticipation} of $\phi$ and say that $\phi$ is an \emph{$(m+n+1)$-block code}. If $Y\subseteq \A^\Z$ is a shift space with $\phi(X)\subseteq Y$ we write $\phi\colon X\to Y$.
\end{definition}
\noindent
We see from the definition that a sliding block code does indeed commute with the shift map.

\begin{example}
	Let $X$ be the full 2-shift and let $\Phi\colon B_2(X)\to \{0, 1\}$ be given by $\Phi(w_1w_2) = w_2$. Then the map $\phi = \Phi^{[0, 1]}_\infty\colon X\to X$ is the shift map $\sigma_X$.
\end{example}
\noindent
An injective sliding block code $\phi\colon X\to Y$ is called an \emph{embedding} of $X$ into $Y$ and we say that $X$ \emph{embeds} into $Y$ if such a map exists. Similarly, a surjective sliding block code $\phi\colon X\to Y$ is called a \emph{factor code} from $X$ onto $Y$ and we say that $Y$ is a \emph{factor} of $X$ if there is a factor code from $X$ to $Y$. 

We say that a bijective sliding block code $\phi\colon X\to Y$ \emph{has an inverse}, if the inverse function $\phi^{-1}$ is a sliding block code, and in that case we call $\phi$ a \emph{conjugacy} from $X$ to $Y$. 
\begin{definition}
	Two shift spaces $X$ and $Y$ are \emph{conjugate}, written $X\cong Y$, if there is a conjugacy from $X$ to $Y$.
\end{definition}
\noindent
Conjugacy defines an equivalence relation on shift spaces, and two conjugate shift spaces share most every property. From a topological perspective this is intuitive as a conjugacy is a homeomorphism that commutes with the shift map.

We now bring two important propositions. They have technical combinatorial proofs, which we will skip for brevity, but their topological proofs are elegant and will be sketched here for sake of intuition. For the combinatorial proofs, see Theorem 1.5.13 and 1.5.14 of Lind and Marcus \cite{LM}.

\renewcommand*{\proofname}{Sketch of proof}
\begin{proposition}\label{imageUnderBlockCode}
Let $X$ be a shift space, $\A$ some alphabet, and $\phi\colon X\to \A^\Z$ a sliding block code. Then the image $\phi(X)$ is a shift space.
\end{proposition}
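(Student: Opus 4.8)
The plan is to use the topological description of shift spaces rather than to construct a set of forbidden words by hand. Recall that, as noted in the introduction, a subset of a full shift is a shift space precisely when it is compact and invariant under the shift map. It therefore suffices to check that $\phi(X)$ enjoys both of these properties, working inside the full shift $\A^\Z$ (with $\A$ the target alphabet, equipped with the product topology arising from the discrete topology on $\A$).

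Shift-invariance is immediate. As remarked just after the definition of a sliding block code, $\phi$ commutes with the shift, i.e. $\sigma\circ\phi = \phi\circ\sigma_X$. Since $X$ is itself a shift space we have $\sigma_X(X) = X$, and hence
\begin{equation*}
\sigma(\phi(X)) = \phi(\sigma_X(X)) = \phi(X),
\end{equation*}
so that $\phi(X)$ is invariant under $\sigma$.

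For compactness I would argue that $X$ is compact (being a shift space) and that $\phi$ is continuous, so that $\phi(X)$, as the continuous image of a compact set, is compact; since $\A^\Z$ is Hausdorff, this compact set is automatically closed. The one point requiring genuine attention is the continuity of $\phi$: here I would use that the underlying block map $\Phi$ depends only on the finitely many coordinates $x_{i-m}\dots x_{i+n}$, so that the $\phi$-preimage of a basic cylinder set in $\A^\Z$ (which fixes finitely many coordinates of the image) is cut out by fixing finitely many coordinates of the input, and is thus a finite union of cylinders in $X$, hence open.

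The main obstacle is therefore essentially bookkeeping rather than a deep difficulty: once continuity of the sliding block code is in hand, compactness and invariance are purely formal. Were one instead to follow the combinatorial route of Lind and Marcus, the real work would move to exhibiting an explicit forbidden-word set for $\phi(X)$ — which is delicate because each symbol of a point $\phi(x)$ encodes an entire window of $x$, so a word should be declared forbidden in $\phi(X)$ exactly when no admissible window in $X$ decodes to it — and it is precisely to sidestep this combinatorial overhead that the topological argument is preferable here.
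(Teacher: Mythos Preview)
Your proposal is correct and follows exactly the topological route the paper sketches: the paper's own ``Sketch of proof'' simply notes that shift spaces are the compact (shift-invariant) subsets of $\A^\Z$ and that the continuous image of a compact set is compact. Your write-up is in fact more complete than the paper's sketch, since you explicitly verify shift-invariance via $\sigma\circ\phi=\phi\circ\sigma_X$ and spell out why a sliding block code is continuous, both of which the paper leaves implicit.
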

\begin{proof}
	Shift spaces are the compact subsets of $\A^\Z$ and the image of a compact space under a continuous function is compact.
\end{proof}
\begin{proposition}\label{bijHasInverse}
Every bijective sliding block code has an inverse.
\end{proposition}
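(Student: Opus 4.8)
The plan is to proceed topologically, taking advantage of the fact that shift spaces are compact. Suppose $\phi\colon X\to Y$ is a bijective sliding block code; I must show that $\phi^{-1}$ is again a sliding block code. First I would note that $\phi$ is continuous in the product topology: the value $(\phi(x))_i$ depends only on the finitely many coordinates $x_{i-m},\dots,x_{i+n}$, so perturbing $x$ far from the window $[i-m,i+n]$ does not change $(\phi(x))_i$. In addition, $\phi$ commutes with the shift, $\phi\circ\sigma_X=\sigma_Y\circ\phi$, directly from the definition of a sliding block code.

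The central reduction is then purely topological. Since $X$ is compact and $Y\subseteq\A^\Z$ is Hausdorff, the continuous bijection $\phi$ is a homeomorphism: closed subsets of $X$ are compact, their images under $\phi$ are compact and hence closed in $Y$, so $\phi$ is a closed map, and a closed continuous bijection has a continuous inverse. Thus $\phi^{-1}\colon Y\to X$ is continuous. It also commutes with the shift, since composing $\phi\circ\sigma_X=\sigma_Y\circ\phi$ with $\phi^{-1}$ on both sides gives $\sigma_X\circ\phi^{-1}=\phi^{-1}\circ\sigma_Y$.

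It remains to show that a continuous map commuting with the shift is necessarily a sliding block code; applied to $\phi^{-1}$, this finishes the argument. This is the Curtis--Hedlund--Lyndon theorem, and it is where the real work lies, the step from ``continuous bijection'' to ``homeomorphism'' being essentially free. The idea is that continuity together with compactness force a uniform radius of dependence. For each symbol $a$ in the alphabet of $X$, the set $\{y\in Y : (\phi^{-1}(y))_0=a\}$ is open; each such set is a union of cylinder sets, and compactness of $Y$ yields a finite subcover by cylinders whose maximal radius I call $N$, so that $y_{[-N,N]}$ alone determines $(\phi^{-1}(y))_0$. This produces a block map on $B_{2N+1}(Y)$ inducing $\phi^{-1}$, whence $\phi^{-1}$ is a sliding block code. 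The main obstacle is precisely extracting this uniform window $N$ from compactness.
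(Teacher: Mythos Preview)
Your proposal is correct and follows the same topological route as the paper's sketch of proof---using compactness and the Hausdorff property to upgrade the continuous bijection to a homeomorphism. You go further by explicitly spelling out the Curtis--Hedlund--Lyndon step (that a continuous shift-commuting map is necessarily a sliding block code), which the paper's sketch leaves implicit in its identification of sliding block codes with continuous shift-commuting maps.
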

\begin{proof}
	Shift spaces are both compact and metric, and any bijective continuous function between such spaces is a homeomorphism. 
\end{proof}
\noindent

\renewcommand*{\proofname}{Proof}
\noindent
An example of the conservation of properties under conjugacy is the following.
\begin{lemma}
Let $\phi\colon X\to Y$	be a conjugacy and $x\in X$ be periodic with least period $p$. Then $\phi(x)$ also has least period $p$.
\end{lemma}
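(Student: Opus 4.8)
The plan is to show the two inequalities $q \leq p$ and $p \leq q$, where $q$ denotes the least period of $\phi(x)$, and conclude $p = q$. The engine driving both directions is the single observation that any sliding block code $\psi$ commutes with the shift map, which by a trivial induction upgrades to $\psi(\sigma^n(z)) = \sigma^n(\psi(z))$ for every $n \in \N$. I intend to apply this once to $\phi$ and once to its inverse $\phi^{-1}$, which is itself a sliding block code precisely because $\phi$ is a conjugacy.

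First I would establish that $p$ is a period of $\phi(x)$. Since $x$ has least period $p$ we have $\sigma_X^p(x) = x$, and applying $\phi$ together with the commuting relation gives
\begin{equation*}
    \sigma_Y^p(\phi(x)) = \phi(\sigma_X^p(x)) = \phi(x).
\end{equation*}
Hence $p$ is a period of $\phi(x)$, so by Proposition \ref{leastPeriod} its least period $q$ divides $p$; in particular $q \leq p$.

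For the reverse inequality I would exploit that $\phi^{-1}$ is a sliding block code and therefore also commutes with the shift. Starting from $\sigma_Y^q(\phi(x)) = \phi(x)$ and applying $\phi^{-1}$ yields
\begin{equation*}
    \sigma_X^q(x) = \phi^{-1}\bigl(\sigma_Y^q(\phi(x))\bigr) = \phi^{-1}(\phi(x)) = x,
\end{equation*}
so $q$ is a period of $x$. Since $p$ is the \emph{least} period of $x$, a second appeal to Proposition \ref{leastPeriod} gives $p \mid q$, hence $p \leq q$. Combining the two inequalities forces $p = q$, as desired.

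I do not anticipate a genuine obstacle here; the argument is symmetric and short. The one point worth flagging is \emph{why} the reverse direction works at all: it is essential that $\phi^{-1}$ be a sliding block code rather than merely a set-theoretic inverse, since only then does it commute with the shift. This is exactly the content of the conjugacy hypothesis, and it is the hypothesis one would see fail if $\phi$ were only, say, a factor code — a factor code can genuinely collapse the least period.
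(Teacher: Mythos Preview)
Your proof is correct and essentially identical to the paper's: both show $q\mid p$ by applying the commuting relation to $\phi$, then $p\mid q$ by applying it to the sliding block code $\phi^{-1}$, and conclude $p=q$. Your additional remark on why the conjugacy hypothesis (rather than a mere factor code) is needed is a nice touch not present in the paper.
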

\begin{proof}
	Let $x\in X$ have least period $p$ and $y=\phi(x)$ have least period $q$. Then 
	\begin{equation*}
		\sigma^p_Y(y) = \sigma^p_Y(\phi(x)) = \phi(\sigma^p_X(x)) = \phi(x)=y,
	\end{equation*}
	so $q\mid p$ by Proposition \ref{leastPeriod} as $p$ is a period of $y$.
	Similarly, $p\mid q$ as $\phi$ has an inverse, which is a sliding block code. It follows that $q=p$.
\end{proof}

\section{Shifts of finite type}
A central class of shift spaces is the shifts of finite type, which consists of all shift spaces that can be specified by a finite set of forbidden words. This section is dedicated to the study of such shift spaces.
\begin{definition}
	A \emph{shift of finite type} is a shift space $X$ such that there is a finite set of forbidden words $\F$ with $X=\X_\F$.
\end{definition}
\noindent
A very practical property of a shift of finite type is that for some $M\in \N_0$ we can verify that a point $x$ belongs to $X$ only by checking all words of length $M+1$ occurring in $x$.
\begin{definition}
	A shift space $X$ is \emph{$M$-step} for some $M\in \N$ if there is a set of forbidden words $\F$ all of length $ M+1$ such that $X=\X_\F$.
\end{definition}
\begin{proposition}\label{finiteMStep}
A shift space $X$ is of finite type if and only if it is $M$-step for some $M\in \N$.	
\end{proposition}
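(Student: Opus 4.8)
The plan is to prove the two implications separately, with the reverse implication being essentially immediate and the forward implication requiring a short construction. For the reverse direction, suppose $X$ is $M$-step, so that $X=\X_\F$ for a set $\F$ of forbidden words each of length $M+1$. Since the alphabet $\A$ is finite, there are only finitely many words of length $M+1$ over $\A$, and hence $\F$ is necessarily finite; thus $X$ is of finite type. No further work is needed here.

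For the forward direction, suppose $X=\X_\F$ for some finite set of forbidden words $\F$. After disposing of the degenerate cases (if $\F=\emptyset$ then $X$ is the full shift, which is $M$-step for any $M$ with the empty set of length-$(M+1)$ forbidden words), I would set $N=\max_{w\in\F}\abs{w}$ and $M=N-1$, so that the longest word of $\F$ has length $M+1$. The idea is to replace each forbidden word by all of its length-$(M+1)$ extensions, defining
\[
\F' = \{\, u : \abs{u}=M+1 \text{ and some } w\in\F \text{ is a factor of } u \,\}.
\]
Every word of $\F'$ has length exactly $M+1$, so it suffices to show $\X_{\F'}=\X_\F$, which then exhibits $X$ as $M$-step.

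The inclusion $\X_\F\subseteq\X_{\F'}$ is the easy half: if some $u\in\F'$ occurred in a point $x$, then the factor $w\in\F$ of $u$ would also occur in $x$, so any point avoiding every word of $\F$ automatically avoids every word of $\F'$. The reverse inclusion $\X_{\F'}\subseteq\X_\F$ carries the main content, and I would argue it by contraposition. If $x\notin\X_\F$, then some $w\in\F$ occurs in $x$, say $w=x_{[i,j]}$; because $x$ is bi-infinite and $\abs{w}\leq M+1$, the surrounding symbols of $x$ extend $w$ to a word $u=x_{[i,\,i+M]}$ of length exactly $M+1$ that occurs in $x$ and has $w$ as a factor. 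Then $u\in\F'$ occurs in $x$, so $x\notin\X_{\F'}$, which is what we want.

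The step I expect to demand the most care is precisely this extension argument: it relies on the points of a shift space being bi-infinite, so that any occurring word can always be lengthened to $M+1$ on either side within the same point — an argument that would fail for one-sided or finite sequences. I would also take care with the boundary bookkeeping in $u=x_{[i,\,i+M]}$ (checking $j\leq i+M$ so that $w$ really is a factor of $u$) and with the degenerate possibility that $\F$ contains the empty word, in which case $\X_\F=\emptyset$ and the construction forces $\F'$ to be the set of all words of length $M+1$, again yielding the empty shift.
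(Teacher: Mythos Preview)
Your proof is correct and follows essentially the same approach as the paper: for the forward direction, set $M+1=\max_{w\in\F}\abs w$ and replace each forbidden word by all length-$(M+1)$ words containing it as a factor. The paper simply asserts $X=\X_{\F'}$ without justification, whereas you spell out the two inclusions (including the bi-infinite extension argument), so your write-up is in fact more complete.
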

\begin{proof}
	If $X$ is $M$-step then it is of finite type, since there are only finitely many words of length $M+1$ over $\A(X)$.
	
	Conversely, if $X$ is of finite type there is a finite set of forbidden words $\F$ with $X=\X_\F$. If $\F=\emptyset$ then $X$ is vacuously $M$-step for all $M$. Else, let $M+1=\max_{w\in \F}\abs{w}$ and create a new set of forbidden words $\F'$ by replacing every $w\in \F$ with all words over $\A(X)$ of length $M+1$ that has $w$ as a factor. Then $\F'$ satisfies $\abs{w}=M+1$ for all $w\in \F'$ and $X=\X_{\F'}$.
\end{proof}
\noindent
The class of shifts of finite type is closed under conjugacy. We omit the proof for brevity, but it can be found as Theorem 2.1.10 of Lind and Marcus \cite{LM}.
\begin{theorem}
	If $X$ and $Y$ are conjugate shift spaces then $X$ is of finite type if and only if $Y$ is of finite type.
\end{theorem}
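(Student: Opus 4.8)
The plan is to recast the property ``shift of finite type'' into an internal gluing property that transfers cleanly across a conjugacy, rather than attempting to push forbidden words directly through the block codes. By Proposition \ref{finiteMStep} it is equivalent to show that the property of being $M$-step for some $M$ is a conjugacy invariant, and since the inverse of a conjugacy is again a conjugacy, it suffices to prove a single implication: if $X$ is $M$-step and $\phi\colon X\to Y$ is a conjugacy, then $Y$ is $M'$-step for some $M'$.

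The first auxiliary step is a gluing characterization: a shift space $Z$ is $M$-step if and only if, whenever $uv, vw\in B(Z)$ with $\abs{v}\geq M$, one also has $uvw\in B(Z)$. The forward direction follows because every factor of length $M+1$ of $uvw$ lies entirely within $uv$ or within $vw$ when $\abs{v}\geq M$, so it is already allowed; the backward direction is witnessed by taking $\F$ to be the set of length-$(M+1)$ words not in $B(Z)$ and checking, as in Proposition \ref{finiteMStep}, that $Z=\X_\F$.

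The heart of the argument is transferring this property. Write $\phi=\Phi_\infty^{[a,b]}$ and $\psi=\phi^{-1}=\Psi_\infty^{[c,d]}$. The crucial observation is that $(\psi(y))_i$ depends only on $y_{[i-c,\,i+d]}$, so the \emph{interior} decode of a word $s'$ occurring in $Y$ --- the symbols produced at positions more than $c$ from its left end and more than $d$ from its right end --- is independent of the surrounding context. Thus if $u's', s'w'\in B(Y)$ with $\abs{s'}$ large, decoding a witness of each through $\psi$ yields words $At, tB\in B(X)$ that share the common context-free interior block $t$ of $s'$. If $\abs{t}\geq M$, the $M$-gluing of $X$ gives $AtB\in B(X)$; re-encoding through $\phi$ any point of $X$ containing $AtB$ then recovers, using $\phi\circ\psi=\mathrm{id}_Y$ together with the interior dependence of $\Phi$, a point of $Y$ in which $u's'w'$ occurs. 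Choosing $M'$ larger than $M+2(a+b+c+d)$ guarantees simultaneously that $\abs{t}\geq M$ and that enough context survives decoding-then-encoding to reconstruct all of $u'$, $s'$, and $w'$. Hence $Y$ has the $M'$-gluing property and is $M'$-step by the characterization lemma.

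I expect the main obstacle to be the bookkeeping in this transfer step: namely, verifying rigorously that the two decoded words genuinely agree on the shared block $t$ (which rests on the context-independence of the interior decode) and that re-encoding $AtB$ reproduces $u's'w'$ in its entirety, together with pinning down the precise length bound on $M'$ needed to keep both the gluing overlap and the recoverable context large enough.
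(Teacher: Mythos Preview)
The paper does not actually prove this theorem: the proof is explicitly omitted for brevity and the reader is referred to Theorem~2.1.10 of Lind and Marcus~\cite{LM}. There is therefore nothing in the paper to compare your argument against directly.

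That said, your proposal follows precisely the standard route taken in Lind and Marcus. The gluing characterization you state is their Theorem~2.1.8, and the transfer of that gluing property through a conjugacy and its inverse block code is exactly the content of their Theorem~2.1.10. Your outline is correct: the interior of the $\psi$-decode of $s'$ is context-independent, so the two witness points genuinely share the block $t$; gluing in $X$ gives $AtB\in B(X)$; and re-encoding via $\phi$ recovers $u's'w'$ provided the windows are wide enough. The one place to be careful, which you already flag, is choosing the boundaries of $A$ and $B$ so that they carry enough of the $\psi$-image to determine all of $u'$ and $w'$ after applying $\Phi$; concretely, one must extend $A$ to the right (and $B$ to the left) by an additional $a+b$ symbols into the region covered by $t$, which is harmless once $\abs{t}\ge M+2(a+b)$. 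With that adjustment the bound $M'=M+c+d+2(a+b)$ suffices, and your sketch goes through.
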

\noindent

\subsection{Shifts of finite type represented by graphs}
We can conveniently represent any shift of finite type as a directed graph, which allows us to employ the powerful tools of graph theory and linear algebra in our examination of them. In this section we will introduce some basic graph theory and link it to shifts of finite type.

\begin{definition}
	A \emph{directed graph} $G$ is a pair of finite sets $(\mathcal V,\mathcal E)$ with two maps $s, t\colon \mathcal E\to \mathcal V$. We say that $\V$ and $\E$ are the vertices and edges of the graph respectively, and for any edge $e\in \E$ we say that $e$ starts at $s(e)$ and terminates at $t(e)$. 
	If a graph $G$ is given we will denote its vertices and edges by $\V(G)$ and $\E(G)$ respectively.
	
	A \emph{subgraph} $H\subseteq G$ is a graph with $\V(H)\subseteq \V(G)$ and $\E(H)\subseteq \E(G)$ such that all edges of $\E(H)$ start and terminate at vertices of $\V(H)$.
\end{definition}

\noindent
Note that directed graphs can have multiple edges between the same two vertices and edges that start and terminate at the same vertex. Figure \ref{directedGraphs} illustrates two examples of directed graphs.
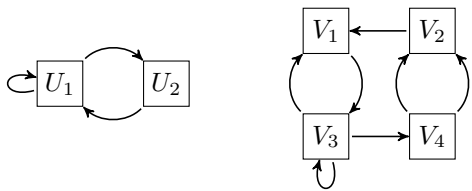
\begin{figure}
\begin{center}

\begin{tikzpicture}
  [bend angle=45,
   knude/.style = {circle, inner sep = 0pt},
   vertex/.style = {circle, draw, minimum size = 1 mm, inner sep =
      0pt,},
   textVertex/.style = {rectangle, draw, minimum size = 6 mm, inner sep =
      1pt},
   to/.style = {->, shorten <= 1 pt, >=stealth', semithick}, scale=0.7]
  
  % Definition af knuder
  \node[knude] (trans) at (5,0) {} ;

  \node[textVertex] (u) at (0,-1) {$U_1$};
  \node[textVertex] (v) at (2,-1) {$U_2$};

  \node[textVertex] (P1) at ($(u)+(0,1)+(trans)$) {$V_1$};
  \node[textVertex] (P2) at ($(v)+(0,1)+(trans)$) {$V_2$};
  \node[textVertex] (P3) at ($(0,-2)+(trans)$) {$V_3$};
  \node[textVertex] (P4) at ($(2, -2)+(trans)$) {$V_4$};

  % Edges in the Fischer cover
  \draw[to, bend left=45] (u) to node[auto] {} (v);
  \draw[to, bend left=45] (v) to node[auto] {} (u);
  \draw[to, loop left] (u) to node[auto] {} (u);
  % Edges in the Kriger cover
  \draw[to, bend left=45] (P1) to node[auto] {} (P3);
  \draw[to, bend left=45] (P3) to node[auto] {} (P1);
  \draw[to, bend left=45] (P4) to node[auto] {} (P2);  
  \draw[to, bend right=45] (P4) to node[auto] {} (P2);
  \draw[to] (P3) to node[auto] {} (P4);
  \draw[to] (P2) to node[auto, swap] {} (P1);
  \draw[to, loop below] (P3) to node[auto] {} (P3);
\end{tikzpicture}
\caption{Examples of directed graphs.}\label{directedGraphs}
\end{center}
\vspace{-0.5cm}
\end{figure}

\begin{definition}
	Let $G$ be a graph. A \emph{path} on $G$ is a sequence $\pi=e_1\dots e_n, e_i\in \E(G)$ such that $t(e_i)=s(e_{i+1})$ for all $1\leq i <n$. For two vertices $V_1, V_2\in \V(G)$ we say that $V_1$ is \emph{connected} to $V_2$ if there is a path $\pi=e_1\dots e_n$ on $G$ with $s(e_1)=V_1$ and $t(e_n)=V_2$.
\end{definition}
\noindent
Now, given a graph $(\V, \E)$ we can define a shift space with $\E$ as its language and where all points are bi-infinite walks on the graph.
\begin{definition}
	Let $G$ be a graph. The \emph{edge shift} of $G$ has alphabet $\E(G)$, is denoted by $\X_G$, and defined by
	\begin{equation*}
		\X_G = \{(e_i)_{i\in \Z}\mid \forall i\in \Z\colon t(e_i)=s(e_{i+1}) \}.
	\end{equation*}
\end{definition}
\noindent
A set of forbidden words for $\X_G$, obtained from the restrictions of the graph, is
\begin{equation*}
	\F = \{ e_1e_2 \mid e_1, e_2\in \E(G) \text{ and } t(e_1)\neq s(e_{2}) \},
\end{equation*}
so $\X_G$ is a shift space of finite type.

\begin{definition}
	Let $G$ be a graph. We say that a vertex $V$ of $G$ is \emph{stranded} if either there is no edge terminating at $V$ or no edge starting at $V$.
	
	 A graph is called \emph{essential} if none of its vertices are stranded. 
\end{definition}
\noindent
As no bi-infinite walk on a graph can include a stranded edge, we can always assume, in the context of edge shifts, that a graph is essential, since adding stranded edges to its graph will not alter its edge shift. When a graph is essential there is a bijective correspondence between paths on the graph and the words of its edge shift as all paths on the graph can be extended to bi-infinite walks on the graph.

As promised, we will see that all shifts of finite type can be represented, up to conjugacy, by an edge shift.
\begin{definition} \label{higherBlockDef} 
	Let $X$ be a shift space, $m\in \N$ be given, and the block map $\Phi\colon B_m(X)\to B_m(X)$ be the identity. Then the \emph{higher block shift} $X^{[m]}$ has alphabet $\A(X^{[m]}) = B_m(X)$ and is given by the image $X^{[m]}= \Phi_\infty^{[0, m-1]}(X)$.
\end{definition}
\noindent
Since the map in the above definition is injective, $X^{[m]}$ and $X$ are conjugate.

\begin{theorem}\label{finiteTypeIsGraph}
	A shift space $X$ is of finite type if and only if it is conjugate to an edge shift.
\end{theorem}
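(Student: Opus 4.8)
The plan is to prove the two implications separately; the content lies entirely in the forward direction, while the converse is a one-line assembly of facts already in hand.

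For the converse, suppose $X\cong \X_G$ for some graph $G$. The excerpt already exhibits a finite set of forbidden words, namely $\{e_1e_2\mid e_1,e_2\in\E(G),\ t(e_1)\neq s(e_2)\}$, showing that every edge shift $\X_G$ is of finite type, and it also records that the class of shifts of finite type is closed under conjugacy. Since $\X_G$ is of finite type and $X$ is conjugate to it, $X$ is of finite type, which settles this direction.

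For the forward direction I would first invoke Proposition \ref{finiteMStep} to obtain an $M\in\N$ for which $X$ is $M$-step; enlarging $M$ if necessary we may assume $M\geq 1$. The idea is to realise the higher block shift $X^{[M+1]}$, which by the remark following Definition \ref{higherBlockDef} is conjugate to $X$, as an edge shift. I would define a graph $G$ by setting $\V(G)=B_M(X)$ and $\E(G)=B_{M+1}(X)$, and for an edge $e=a_0a_1\cdots a_M$ declaring $s(e)=a_0\cdots a_{M-1}$ and $t(e)=a_1\cdots a_M$ to be its first and last $M$ symbols. Then the alphabet of $\X_G$ is $\E(G)=B_{M+1}(X)=\A(X^{[M+1]})$, and the goal reduces to proving the set equality $\X_G=X^{[M+1]}$, after which $X\cong X^{[M+1]}=\X_G$ finishes the theorem.

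I would establish $\X_G=X^{[M+1]}$ by double inclusion. The inclusion $X^{[M+1]}\subseteq\X_G$ is routine: if $y\in X^{[M+1]}$ arises from $x\in X$ via $y_i=x_{[i,i+M]}$, then $t(y_i)=x_{[i+1,i+M]}=s(y_{i+1})$, so $y$ is a legal bi-infinite walk and hence lies in $\X_G$. The reverse inclusion is the crux, and is exactly where the $M$-step hypothesis is indispensable. Given a walk $y=(e_i)\in\X_G$, the overlap conditions $t(e_i)=s(e_{i+1})$ let me reassemble a single point $x\in\A(X)^\Z$ whose consecutive $(M+1)$-blocks are precisely $x_{[i,i+M]}=e_i$. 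A priori this $x$ is only \emph{locally} legal: each block is an allowed word and neighbouring blocks are consistent. To upgrade this to $x\in X$ I use that $X=\X_\F$ with every word of $\F$ of length $M+1$; since each forbidden word fails to occur in $X$ it lies outside $B_{M+1}(X)=\E(G)$, whereas every length-$(M+1)$ factor of $x$ equals some $e_i\in\E(G)$. Hence no word of $\F$ occurs in $x$, so $x\in X$ and $y=\Phi_\infty^{[0,M]}(x)\in X^{[M+1]}$. This ``globalisation'' of local legality is the only step that genuinely needs finite type: for a general shift, consistency of all finite blocks of bounded length does not force membership, and it is precisely the boundedness of $\F$ that makes the argument go through. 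I expect this reverse inclusion to be the main obstacle, with the bookkeeping of the source/terminus overlaps being the place where care is required.
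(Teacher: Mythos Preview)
Your proposal is correct and follows essentially the same route as the paper: both directions are handled as you describe, and for the forward implication the paper constructs the identical graph $G$ with $\V(G)=B_M(X)$, $\E(G)=B_{M+1}(X)$, $s(w)=w_{[1,M]}$, $t(w)=w_{[2,M+1]}$, and then argues that $\X_G=X^{[M+1]}$.

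The only tactical difference is in how that last equality is verified. You do it by double inclusion, reassembling a point $x$ from a walk and using the $M$-step hypothesis to force $x\in X$. The paper instead writes down a common set of forbidden words
\[
\F'=\F\cup\{w_1w_2\mid w_1,w_2\in B_{M+1}(\A(X)^\Z),\ w_{1\,[2,M+1]}\neq w_{2\,[1,M]}\}
\]
and observes that $\F'$ defines both $X^{[M+1]}$ and $\X_G$, getting the equality in one line via Proposition~\ref{languageUniqueness}. Your argument is more explicit about where the $M$-step hypothesis enters; the paper's is terser. Both are standard and equivalent in content.
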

\begin{proof}
	We already noted that all edge shifts are of finite type, so we need only show that every shift of finite type is conjugate to some edge shift.
	
	Let $X$ be $M$-step with a collection $\F$ of forbidden words of length $M+1$, and define a graph $G$ with vertex set $\V=B_M(X)$, edge set $\E = B_{M+1}(X)$, and maps $s, t\colon B_{M+1}(X)\to B_{M}(X)$ satisfying $s(w)=w_{[1, M]}$ and $t(w)=w_{[2,M+1]}$. Then
	\begin{equation*}
		\X_G = \{ (w_i)_{i\in \Z}\in B_{M+1}(X)^\Z \mid \forall i\in \Z \colon w_{i\,[2,M+1]} = w_{i+1\,[1,M]} \}.
	\end{equation*}
	
	Let $X^{[M+1]}$ be the higher block shift of $X$. As all words of $\F$ have length $M+1$  a set of forbidden words for $X^{[M+1]}$ is given by 
	\begin{equation*}
		\F' = \F \cup \{ w_1w_2 \mid w_1, w_2\in B_{M+1}(\A(X)^{\Z}) \text{ and } w_{1\,{[2, M+1]}}\neq w_{2\,[1, M]} \}.
	\end{equation*}
	As it is clear that $\F'$ is also a set of forbidden words for $\X_G$, we have $X^{[M+1]}=\X_G$, and the result now follows since $X^{[M+1]}\cong X$.	
\begin{comment}
	We will show that $\X_G = X^{[M+1]}$, which completes the proof since $X^{[M+1]}\cong X$. Let $\Phi\colon B_{M+1}(X)\to B_{M+1}(X)$ be the identity and set $\phi = \Phi_\infty^{[0, M+1]}$.
	
	First, for all $x\in X^{[M+1]}$ we have $x=\phi(y)$ for some $y\in X$ yielding $x_{i\,[2,M+1]} = x_{i+1\,[1,M]}$ for all $i\in \Z$. Thus, $x\in \X_G$ and $X^{[M+1]}\subseteq \X_G$.
	
	Second, let $x\in \X_G$ and let $\Psi\colon B_{M+1}(X)\to \A(X)$ given by $\Psi(w)= w_{[1]}$, where $w_{[1]}$ denotes the first symbol of $w$, induce the 1-block code $\psi=\Psi_{\infty}^{[0,0]}\colon \X_G\to \A(X)^\Z$.
	 Suppose for contradiction that $y=\psi(x)\not\in X$. Then there is a word $w$ of length $M+1$ occurring in $y$ which is contained in $\F$. But then there is an $i\in \Z$ such that $x_i=w$, which contradicts $x\in B_{M+1}(X)^\Z$. Thus, $\psi$ is a map $\psi\colon \X_G\to X$, and it is clear that $\phi(\psi(x)) = x$, so $x\in \phi(X) = X^{[M+1]}$.
\end{comment}
\end{proof}
\noindent
So in a sense, all shifts of finite type are essentially edge shifts, and it is fairly easy, as the above proof illustrates, to construct an edge shift conjugate to a given shift of finite type. 

We have a concept of irreducibility for directed graphs, and this property turns out to be equivalent with irreducibility of the associated edge shift.
\begin{definition}
		 A graph $G$ is called \emph{irreducible} if for all ordered pairs $V_1, V_2\in \V(G)$ of vertices, $V_1$ is connected to $V_2$.	
%	An \emph{irreducible component} of $G$ is an irreducible subgraph $H\subseteq G$ that is not properly contained in any irreducible subgraph of $G$.
\end{definition}
\begin{proposition}\label{irreducibleMatrixVsGraph}
Let $G$ be an essential graph. The edge shift $\X_G$ is irreducible if and only if the graph $G$ is irreducible.	
\end{proposition}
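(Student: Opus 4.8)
The plan is to prove both implications by exploiting the bijective correspondence, noted earlier for essential graphs, between paths on $G$ and words in the language $B(\X_G)$. Under this correspondence a nonempty word $e_1\dots e_n \in B(\X_G)$ is exactly a path, starting at $s(e_1)$ and terminating at $t(e_n)$, and conversely every path on $G$ extends to a bi-infinite walk and hence occurs in $\X_G$. Both the definition of graph irreducibility (every ordered pair of vertices is connected by a path) and of shift irreducibility (every ordered pair $u,v$ of words can be glued by some $w$ into $uwv \in B(\X_G)$) are then readily translated into one another.

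For the forward direction, I would suppose $G$ is irreducible and take arbitrary $u, v \in B(\X_G)$. Writing $u = e_1\dots e_m$ and $v = f_1\dots f_n$, these are paths terminating at $t(e_m)$ and starting at $s(f_1)$ respectively. Irreducibility of $G$ then supplies a path $w$ from $t(e_m)$ to $s(f_1)$, and $uwv$ is again a path on $G$, so $uwv \in B(\X_G)$; if $t(e_m) = s(f_1)$ one simply takes $w = \epsilon$. Since $u, v$ were arbitrary, $\X_G$ is irreducible.

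For the reverse direction, I would suppose $\X_G$ is irreducible and fix an ordered pair $V_1, V_2 \in \V(G)$. Here I would invoke essentialness: since $V_1$ is not stranded it has an outgoing edge $e$ with $s(e) = V_1$, and since $V_2$ is not stranded it has an incoming edge $f$ with $t(f) = V_2$. Both $e$ and $f$ are length-one words of $B(\X_G)$, so irreducibility of $\X_G$ yields a word $w$ with $ewf \in B(\X_G)$. This word is a path from $s(e) = V_1$ to $t(f) = V_2$, so $V_1$ is connected to $V_2$, and $G$ is irreducible.

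Neither implication is a genuine obstacle; the only point requiring care is the twofold use of the essentialness hypothesis. It is needed once to guarantee that the concatenated paths $uwv$ and $ewf$ really occur in the edge shift (every finite path must extend to a bi-infinite walk), and once, in the reverse direction, to produce the single edges anchored at $V_1$ and $V_2$ that convert the vertex-connectivity question into a statement about words. Without essentialness a vertex could be stranded and contribute no word to feed into shift irreducibility, so the reverse direction would break down; flagging exactly where the hypothesis enters is the main thing I would be careful to make explicit.
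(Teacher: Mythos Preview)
Your proof is correct and follows essentially the same approach as the paper's. The only cosmetic difference is in the reverse direction: the paper picks an edge $u$ \emph{terminating} at $V_1$ and an edge $v$ \emph{starting} at $V_2$, so that the connecting word $w$ (rather than the full concatenation) furnishes the path from $V_1$ to $V_2$; your choice of an outgoing edge at $V_1$ and an incoming edge at $V_2$ is arguably cleaner since $ewf$ itself then runs from $V_1$ to $V_2$.
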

\begin{proof}%If removed, fix the sofic version.
	Assume that $G$ is irreducible and let $u=e_1\dots e_n, v=f_1\dots f_m\in B(\X_G)$. Then there must be a path $w=g_1\dots g_k$ from $t(e_n)$ to $s(f_1)$, so $uwv$ is a path on $G$ and $uwv\in B(\X_G)$.
	
	On the other hand, if $\X_G$ is irreducible and $V_1, V_2$ are vertices of $G$, then there are edges $u$ and $v$ that terminates and starts at $V_1$ and $V_2$ respectively. By irreducibility of $\X_G$ there is a $w\in B(\X_G)$ such that $uwv\in B(\X_G)$, so $uwv$ is a path on $G$ from $V_1$ to $V_2$.
\end{proof}

\subsection{Classification of shifts of finite type}
A major aspiration of symbolic dynamics is to decide, given two shift spaces, whether or not they are conjugate. In general this is very difficult, but in the case of shifts of finite type the question has somewhat been resolved by the aid of linear algebra.

\begin{definition}
	Let $G$ be a graph and number the vertices $\V(G)=\{ 1, \dots, n \}$. The \emph{adjacency matrix} of $G$ is the $n\times n$ matrix $A$ such that each entry $A_{ij}$ is the number of edges starting at vertex $i$ and terminating at vertex $j$.
	
	To every non-negative integer matrix $A$ we will assign a graph $G$ having adjacency matrix $A$. Further, if $A$ is the adjacency matrix for $G$ we will let $\X_A$ denote the edge shift $\X_G$.
\end{definition}
\begin{example}
The adjacency matrices of the graphs of Figure \ref{directedGraphs} are
\begin{equation*}
	\begin{pmatrix}
		1 & 1 \\
		1 & 0
	\end{pmatrix}
	\,\,\,\, \text{ and }\,\,\,\, 
	\begin{pmatrix}
		0 & 0&1&0 \\
		1 & 0 & 0&0\\
		1&0&1&1\\
		0&2&0&0
	\end{pmatrix}.
\end{equation*}
\end{example}
\noindent
Now, consider the following relations on matrices.
\begin{definition}
	Let $A, B$ be square non-negative integer matrices. An \emph{elementary equivalence from} $A$ \emph{to} $B$ is a pair $(R, S)$ of (not necessarily square) non-negative integer matrices satisfying 
	\begin{equation*}
		A=RS \text{ and } SR=B.
	\end{equation*}
If there exists an elementary equivalence from $A$ to $B$ we say that they are \emph{elementary equivalent} and write $A\simequiv B$.

A \emph{strong shift equivalence from} $A$ \emph{to} $B$ \emph{of lag} $\ell$ is a sequence of elementary equivalences 
\begin{equation*}
	A\simequiv A_1, A_1\simequiv A_2, \dots, A_{\ell-1}\simequiv B.
\end{equation*}
We say that $A$ and $B$ are \emph{strong shift equivalent}, writing $A\approx B$, if there exists a strong shift equivalence from $A$ to $B$ of lag $\ell$ for some $\ell$.
\end{definition}

\noindent

\noindent
Elementary equivalence is not an equivalence relation as it lacks transitivity, but strong shift equivalence is, and it turns out that we can determine conjugacy from strong shift equivalence.
\begin{theorem}[Williams \cite{FiniteClassification}]
	Let $A$ and $B$ be square non-negative integer matrices. The edge shifts $\X_A$ and $\X_B$ are conjugate if and only if $A$ and $B$ are strong shift equivalent.
\end{theorem}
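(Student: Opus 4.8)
The plan is to prove the two implications separately, treating strong shift equivalence $\Rightarrow$ conjugacy first, as it is the more direct of the two.

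For this direction, since conjugacy is transitive and a strong shift equivalence is by definition a finite chain of elementary equivalences, it suffices to show that $A\simequiv B$ implies $\X_A\cong\X_B$. I would therefore start from non-negative integer matrices $R$ (of size $n\times m$) and $S$ (of size $m\times n$) with $A=RS$ and $B=SR$, where $A$ is $n\times n$ and $B$ is $m\times m$. The idea is to encode $R$ and $S$ in a single bipartite graph $\mathcal G$ on the disjoint vertex sets $\{1,\dots,n\}$ and $\{1,\dots,m\}$, placing $R_{ij}$ edges from $i$ to $j$ and $S_{jk}$ edges from $j$ to $k$. Since $A_{ik}=\sum_j R_{ij}S_{jk}$ counts exactly the length-two $R$-then-$S$ paths in $\mathcal G$ from $i$ to $k$, and $B_{jl}=\sum_k S_{jk}R_{kl}$ counts the length-two $S$-then-$R$ paths from $j$ to $l$, I can fix bijections identifying the edges of the graph of $A$ with these $RS$-paths and the edges of the graph of $B$ with these $SR$-paths.

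With these identifications in hand, a point of $\X_A$ is a bi-infinite walk whose edges unfold into an alternating bi-infinite walk $\dots r_{-1}s_{-1}r_0 s_0 r_1 s_1\dots$ on $\mathcal G$; re-bracketing this walk as $\dots (s_{-1}r_0)(s_0 r_1)\dots$ reads off a point of $\X_B$. This re-bracketing depends on only finitely many coordinates, so it is a sliding block code $\phi\colon\X_A\to\X_B$, and the analogous re-bracketing in the other direction furnishes a two-sided inverse, so $\phi$ is bijective. Since $\phi$ is a bijective sliding block code, Proposition \ref{bijHasInverse} shows it is a conjugacy, completing this direction.

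The reverse implication, conjugacy $\Rightarrow$ strong shift equivalence, is the substantial half and where the real work lies. Here I would follow the state-splitting approach. The key local move is out-splitting (and its dual, in-splitting): partitioning the outgoing edges at each vertex of a graph $G$ and splitting the vertex accordingly yields a new graph $G'$ with $\X_{G'}\cong\X_G$, and the two adjacency matrices are visibly elementary equivalent, since the partition and incidence data furnish an explicit factorization of each matrix as a product realizing the other. Thus each splitting, and each amalgamation (the inverse of a splitting), contributes one link of a strong shift equivalence. The main obstacle is then the Decomposition Theorem: every conjugacy between edge shifts can be written as a finite composition of splitting codes and amalgamation codes. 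Granting this, an arbitrary conjugacy $\X_A\cong\X_B$ factors into elementary conjugacies, each inducing an elementary equivalence of the intermediate matrices, and chaining these together produces the required $A\approx B$. Proving the Decomposition Theorem is the crux: after recoding $\phi$ to a $1$-block code by passing to a conjugate higher block presentation, one must show that the combinatorial data of the code can be unravelled one split at a time, and controlling this unravelling, rather than the matrix bookkeeping, is the genuinely hard part of Williams' theorem.
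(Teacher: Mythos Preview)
The paper does not prove this theorem at all: it is stated with a citation to Williams and no proof is given. So there is nothing to compare your argument against in the paper itself.

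That said, your outline is the standard one (essentially the Lind--Marcus treatment). The bipartite graph on the disjoint vertex sets, with $R$-edges and $S$-edges, together with the re-bracketing $1$-block code, is exactly the right construction for the easy direction, and your identification of the Decomposition Theorem as the substantive content of the converse is correct. The one point worth being careful about is that your re-bracketing map and its ``analogous re-bracketing in the other direction'' are not literally mutual inverses: composing them gives the shift map rather than the identity, so one of the two must be pre- or post-composed with $\sigma^{\pm 1}$ to obtain an actual inverse. This is harmless for the conclusion (a shift is a conjugacy), but as written the claim that the second map is a ``two-sided inverse'' is not quite accurate.
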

\noindent
Deciding whether or not two matrices are strong shift equivalent is often easier than finding a direct conjugacy between their edge shifts, but it is still no simple task. In fact, no deterministic procedure for doing so has been found yet.

\section{Sofic shifts}
In this section we introduce a new class of shift spaces, the sofic shifts. The sofic shifts are defined by labeled graphs and turn out to constitute a much more versatile class of shift spaces than the shifts of finite type.

\begin{definition}
	Let $\A$ be an alphabet. A \emph{labeled graph} $\G$ is a pair $(G, \Lab)$ consisting of a graph $G$ and a \emph{labelling} $\Lab \colon \E(G)\to \A$ assigning to each edge of $G$ a symbol from $\A$. The \emph{sofic shift} presented by $\G$ is given by
	\begin{equation*}
		\X_\G = \{ \Lab_\infty^{[0, 0]} (x) \mid x\in \X_G \},
	\end{equation*}
	and we say that a shift space $X$ is sofic, if it has a \emph{presentation} as a labeled graph.
	A path $\pi = e_1\dots e_n$ on $G$ with $w = \Lab(\pi)$ is said to be a \emph{presentation} of $w$ in $\G$.
\end{definition}

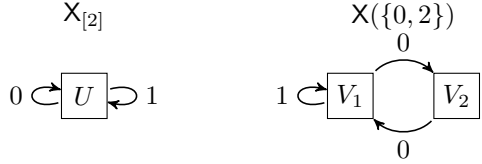
\begin{figure}
\begin{center}
\begin{tikzpicture}
  [bend angle=45,
   knude/.style = {circle, inner sep = 0pt},
   vertex/.style = {circle, draw, minimum size = 1 mm, inner sep =
      0pt,},
   textVertex/.style = {rectangle, draw, minimum size = 6 mm, inner sep =
      1pt},
   to/.style = {->, shorten <= 1 pt, >=stealth', semithick}, scale=0.7]
  
  % Definition af knuder
  \node[knude] (trans) at (5,0) {} ;

  \node[textVertex] (u) at (0,0) {$U$};

  \node[textVertex] (P1) at ($(u)+(trans)$) {$V_1$};
  \node[textVertex] (P2) at ($(2,0)+(trans)$) {$V_2$};

  % Text
  \node[knude] (full)  at (0, 1.5) {$\X_{[2]}$};
  \node[knude] (S) at ($(full)+(1,0)+(trans)$) {$\X(\{0, 2\})$};

  % Edges in the Fischer cover
  \draw[to, loop left=45] (u) to node[auto] {0} (u);
  \draw[to, loop right=45] (u) to node[auto] {1} (u);

  % Edges in the Kriger cover
  \draw[to, bend left=45] (P1) to node[auto] {0} (P2);
  \draw[to, bend left=45] (P2) to node[auto] {0} (P1);
  \draw[to, loop left] (P1) to node[auto] {1} (P1);
\end{tikzpicture}
\caption{Presentations  of the full 2-shift and the $S$-gap shift with $S=\{0, 2\}$ as labeled graphs.}\label{soficShifts}
\end{center}
\vspace{-0.5cm}
\end{figure}

\noindent
Two examples of presentations of sofic shifts are shown in Figure \ref{soficShifts}. We start out by making sure that sofic shifts are in fact shift spaces.

\begin{proposition}
Any sofic shift is a shift space.	
\end{proposition}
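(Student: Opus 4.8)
The plan is to show that the image of a shift space under a sliding block code is itself a shift space, and then observe that a sofic shift is exactly such an image. Concretely, a sofic shift $\X_\G$ is defined as $\Lab_\infty^{[0,0]}(\X_G)$, the image of the edge shift $\X_G$ under the $1$-block code induced by the labelling $\Lab$. Since $\X_G$ is a shift space (being of finite type) and $\Lab_\infty^{[0,0]}$ is a sliding block code, the result should follow directly from Proposition \ref{imageUnderBlockCode}, which states that the image of a shift space under a sliding block code is a shift space. So the cleanest route is simply to invoke that proposition.

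If one instead wants a self-contained combinatorial argument (in the spirit of a thesis building up the theory), I would proceed as follows. First I would set $X = \X_\G$ and let $\A$ be the label alphabet, so that $X \subseteq \A^\Z$. The goal is to exhibit a set of forbidden words $\F$ with $X = \X_\F$. The natural candidate is the set of all words over $\A$ that are \emph{not} presented by any path on $G$; that is, $\F = \{ w \in \bigcup_{n} \A^n \mid w \text{ has no presentation in } \G \}$. I would then show $X = \X_\F$ by double inclusion.

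For $X \subseteq \X_\F$, I would note that every word occurring in a point of $X$ is the label of some path on $G$, hence is not in $\F$, so no forbidden word occurs in any point of $X$. The harder inclusion is $\X_\F \subseteq X$: given a point $y \in \A^\Z$ every finite factor of which has a presentation in $\G$, I must produce a single bi-infinite walk on $G$ whose label is $y$. The main obstacle is precisely this gluing step — local presentability of each finite factor does not immediately yield a globally consistent bi-infinite path, since the presentations of overlapping factors need not agree. I expect to resolve this with a compactness or König's-lemma style argument: for each $n$, choose a presentation of the factor $y_{[-n,n]}$, and since $G$ is finite there are only finitely many length-$(2n+1)$ paths, so by a diagonal/pigeonhole argument one extracts a nested sequence of presentations that stabilize on every coordinate, assembling into a bi-infinite walk $x \in \X_G$ with $\Lab_\infty^{[0,0]}(x) = y$. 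This gives $y \in X$ and completes the proof.

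Given that Proposition \ref{imageUnderBlockCode} is already available, I would present the short invocation as the main proof and relegate the compactness argument to intuition, since the heavy lifting (image of a compact set is compact) has already been done topologically in the sketch of Proposition \ref{imageUnderBlockCode}.
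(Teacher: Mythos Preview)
Your proposal is correct and matches the paper's proof exactly: the paper simply observes that $\X_\G = \Lab_\infty^{[0,0]}(\X_G)$ is the image of the edge shift $\X_G$ under a $1$-block code and invokes Proposition~\ref{imageUnderBlockCode}. Your alternative combinatorial argument via a K\"onig's-lemma construction is sound but not used in the paper.
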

\begin{proof}
	Let $\G=(G, \Lab)$ be a presentation of a sofic shift with language $\A$. Then $\Lab$ induces the 1-block code $\Lab_\infty^{[0, 0]}\colon \X_G\to \A^\Z$, and its image $\X_\G=\Lab_\infty^{[0, 0]}(\X_G)$ is a shift space by Proposition \ref{imageUnderBlockCode}.
\end{proof}
\noindent
One of the motivations for (and definitions of) the class of sofic shifts is that it is the smallest class of shift spaces that includes shifts of finite type and is closed under factor codes.

\begin{proposition}\label{factorOfFinite}
A shift space is sofic if and only if it is a factor of a shift of finite type.	
\end{proposition}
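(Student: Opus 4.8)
The plan is to prove both implications by exploiting the correspondence between shifts of finite type and edge shifts established in Theorem \ref{finiteTypeIsGraph}, together with the observation that a $1$-block code out of an edge shift is nothing more than a labelling.

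For the forward implication, suppose $X$ is sofic with presentation $\G=(G,\Lab)$, so that $X=\X_\G$. The edge shift $\X_G$ is of finite type, as noted after its definition, and the labelling $\Lab$ is precisely a $1$-block map $B_1(\X_G)=\E(G)\to\A(X)$. The induced sliding block code $\Lab_\infty^{[0,0]}\colon\X_G\to\A(X)^\Z$ has image $\X_\G=X$ by definition, hence it is a factor code from the shift of finite type $\X_G$ onto $X$. So $X$ is a factor of a shift of finite type, and this direction is immediate.

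For the converse, suppose $X$ is a factor of a shift of finite type $Y$ via a factor code, and fix by Theorem \ref{finiteTypeIsGraph} a conjugacy $c\colon\X_G\to Y$ for some graph $G$. Composing, I obtain a factor code $\psi\colon\X_G\to X$, say $\psi=\Phi_\infty^{[m,n]}$ induced by a block map $\Phi\colon B_{m+n+1}(\X_G)\to\A(X)$, so it suffices to show that a factor of an edge shift is sofic. The core idea is to recode $\psi$ into a $1$-block code. Setting $N=m+n+1$ (the case $N=1$ being a direct labelling of $G$, so assume $N\geq2$), I pass to the higher block shift $\X_G^{[N]}$ of Definition \ref{higherBlockDef}. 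Since edge shifts are $1$-step, the construction of Proposition \ref{finiteMStep} lets me regard $\X_G$ as $(N-1)$-step, and then the construction in the proof of Theorem \ref{finiteTypeIsGraph} shows that $\X_G^{[N]}$ is \emph{equal} to an edge shift $\X_H$, where $H$ has vertex set $B_{N-1}(\X_G)$ and edge set $\E(H)=B_N(\X_G)$. I now define a labelling $\Lab\colon\E(H)=B_N(\X_G)\to\A(X)$ by $\Lab(w)=\Phi(w)$, making $H$ a labeled graph.

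It remains to verify that $\X_{(H,\Lab)}=X$, which is the main technical point and the step demanding care. Writing $\beta\colon\X_G\to\X_H$ for the conjugacy realising the higher block shift, so that $(\beta(x))_i=x_{[i,i+N-1]}$, a direct computation gives $\Lab_\infty^{[0,0]}(\beta(x))_i=\Phi(x_{[i,i+N-1]})$, whereas $\psi(x)_i=\Phi(x_{[i-m,i+n]})$; these agree after translating by $m$ coordinates, so $\Lab_\infty^{[0,0]}\circ\beta=\sigma_X^{m}\circ\psi$. Because $\beta$ is a bijection, $\psi$ maps onto $X$, and $\sigma_X$ maps $X$ onto itself, the image of $\Lab_\infty^{[0,0]}$ is exactly $X$. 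Hence $X=\X_{(H,\Lab)}$ is sofic. The only real obstacle is this bookkeeping of memory and anticipation, ensuring that the unavoidable shift offset introduced by recoding does not alter the image.
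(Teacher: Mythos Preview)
Your proof is correct and follows essentially the same approach as the paper: both directions are handled identically in spirit, reducing to a $1$-block code out of an edge shift via the higher block construction and then checking that the shift offset does not change the image. The only cosmetic difference is that the paper keeps the original $M$-step shift $Y$ and adjusts parameters so that $m+n+1=M+1$ before passing to $Y^{[M+1]}$, whereas you first replace $Y$ by an edge shift $\X_G$ via Theorem~\ref{finiteTypeIsGraph} and then take $\X_G^{[N]}$; the resulting labeled graph and verification $\Lab_\infty^{[0,0]}\circ\beta=\sigma_X^m\circ\psi$ are the same computation as the paper's $\phi(y)=\Lab_\infty^{[0,0]}(\psi(\sigma^{-m}(y)))$.
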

\begin{proof}
	Let $\X_\G$ be a sofic shift with the presentation $\G=(G, \Lab)$. Then $\X_\G = \Lab_\infty^{[0, 0]}(X_G)$ by definition, so it is a factor of $\X_G$, a shift of finite type.
	
	Conversely, suppose that $X$ is a shift space such that there is an $M$-step shift $Y$ and an $(m+n+1)$-block code $\phi\colon Y\to X$ induced by $\Phi:B_{m+n+1}\to \A(X)$, which is a factor map. 
	Since $Y$ is $M'$-step for every $M'\geq M$ and for every $m'\geq m$ there is a function identical to $\phi$ which is an $m'+n+1$-block code, we can assume that $m+n+1=M+1$. 
	
	From the proof of Theorem \ref{finiteTypeIsGraph} we know that $Y^{[M+1]}=Y^{[m+n+1]}$ is an edge shift. Let $G$ be the graph of $Y^{[M+1]}$ such that $Y^{[M+1]}=\X_G$, and $\psi=\Psi_\infty^{[0,M]}\colon Y\to Y^{[M+1]}$ be the conjugacy induced by the identity map $\Psi\colon B_{M+1}(Y)\to B_{M+1}(Y)$, as seen in Definition \ref{higherBlockDef}. Define $\Lab\colon B_{m+n+1}(Y)\to \A(X)$ by $\Lab(w)=\Phi(w)$. Then $\Lab$ induces a 1-block code $\Lab_\infty^{[0, 0]}\colon Y^{[M+1]}\to X$, and we will show that $\G=(G, \Lab)$ is a presentation of $X$, proving that $X$ is sofic.
	
	For all $y\in Y$ we have
	\begin{equation*}
	\phi(y)_{[i]} = \Phi(y_{[i-m, i+n]})\, \, \text{ and }\,\,\Lab_\infty^{[0,0]}(\psi(y))_{[i]} = \Lab(\psi(y)_{[i]})=\Phi(y_{[i, i+m+n]}),
	\end{equation*}
	 which implies $\phi(y) = \Lab_\infty^{[0, 0]}(\psi(\sigma^{-m}(y)))$ for all $y\in Y$. Since $\sigma$ and $\psi$ are both conjugacies this yields 
	\begin{equation*}
		X=\phi(Y)= \Lab_\infty^{[0, 0]}(\psi(\sigma^{-m}(Y)))=\Lab_\infty^{[0, 0]}(Y^{[m+n+1]}) =\Lab_\infty^{[0,0]}(\X_G),
	\end{equation*}  
	 so $\G$ is a presentation of $X$.
\end{proof}
\noindent
From the proposition it is now clear that the class of sofic shifts is indeed defined by the aforementioned properties.
\begin{corollary}
All shifts of finite type are sofic shifts, and the class of sofic shifts is closed under factor codes.
\end{corollary}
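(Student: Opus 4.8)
The plan is to derive both assertions directly from Proposition \ref{factorOfFinite}, which characterizes the sofic shifts as precisely the factors of shifts of finite type. The first claim is essentially immediate, and the second reduces to the observation that a factor of a factor is again a factor.

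For the first part, let $X$ be a shift of finite type. I would note that the identity map on $X$ is induced by the $1$-block map $\Phi\colon B_1(X)\to \A(X)$ with $\Phi(a)=a$, so it is a surjective sliding block code and exhibits $X$ as a factor of itself. Since $X$ is a shift of finite type, Proposition \ref{factorOfFinite} then yields that $X$ is sofic.

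For the second part, let $X$ be a sofic shift and $\phi\colon X\to Y$ a factor code. By Proposition \ref{factorOfFinite} there is a shift of finite type $Z$ and a factor code $\psi\colon Z\to X$, and I would consider the composition $\phi\circ\psi\colon Z\to Y$. The key step is to check that this composition is again a factor code: it is surjective as a composition of surjections, and it is a sliding block code because composing an $(m+n+1)$-block code with an $(m'+n'+1)$-block code yields a sliding block code, since the value of $(\phi\circ\psi)(z)_i$ depends only on a fixed finite window of coordinates of $z$ about $i$. Granting this, $Y$ is a factor of the shift of finite type $Z$, and a second application of Proposition \ref{factorOfFinite} shows that $Y$ is sofic.

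The only point requiring genuine verification — and hence the main obstacle — is that the composite of two sliding block codes is itself a sliding block code with appropriately combined memory and anticipation. This is a routine unravelling of the definitions: if $\phi$ has memory $m$ and anticipation $n$ and $\psi$ has memory $m'$ and anticipation $n'$, then $(\phi\circ\psi)(z)_i$ is determined by $z_{i-m-m'}\dots z_{i+n+n'}$, so $\phi\circ\psi$ is an $(m+m'+n+n'+1)$-block code. With that in hand, both halves of the corollary follow from Proposition \ref{factorOfFinite} with no further work.
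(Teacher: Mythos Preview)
Your proof is correct and follows essentially the same route as the paper: both parts are deduced from Proposition~\ref{factorOfFinite} by noting that a shift is a factor of itself and that a composition of factor codes is again a factor code. The only difference is that you spell out why the composite is a sliding block code, whereas the paper takes this for granted.
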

\begin{proof}
	First, every shift space is a factor of itself, so clearly all shifts of finite type are sofic. Second, if $\varphi\colon Y\to X$ is a factor code and $Y$ is sofic, then there is a factor code $\psi\colon Z\to Y$ for a shift of finite type $Z$. Now, $\varphi\circ \psi$ is a factor code mapping $Z$ onto $X$, so $X$ is sofic.
\end{proof}
\noindent 
A sofic shift can be presented on a standard form, the minimal right-resolving presentation. We will define it here and present a couple of results whose proofs we shall omit for brevity. See Figure \ref{figure:presentations} for examples.
\begin{definition}
	A labeled graph $(G, \Lab)$ is \emph{right-resolving} if for every vertex $V\in \V$ all edges starting at $V$ have different labels. A \emph{right-resolving presentation} of a sofic shift is a right-resolving labeled graph presenting it. A \emph{minimal right-resolving presentation} is a right-resolving presentation with a minimal number of vertices.
\end{definition}

\begin{proposition}
Every sofic shift has a minimal right-resolving presentation.	
\end{proposition}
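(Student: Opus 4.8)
The plan is to reduce the statement to two claims: first, that every sofic shift admits \emph{some} right-resolving presentation, and second, that among these there is one with a minimal number of vertices. The second claim is immediate from the well-ordering of the positive integers, so essentially all the work lies in the first.

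For the first claim I would use a determinization, or \emph{subset}, construction entirely analogous to passing from a nondeterministic to a deterministic finite automaton. Let $\G = (G, \Lab)$ be an arbitrary presentation of the sofic shift $X$ over the alphabet $\A$, which exists by definition. I construct a new labeled graph $\mathcal H = (H, \mathcal M)$ whose vertices are the nonempty subsets $I\subseteq \V(G)$. For each such $I$ and each symbol $a\in \A$ I set
\[
 J = \{\, t(e) \mid e\in \E(G),\ s(e)\in I,\ \Lab(e)=a \,\},
\]
and, whenever $J\neq \emptyset$, I place a single edge labeled $a$ from $I$ to $J$. By construction each vertex of $H$ emits at most one edge carrying any given label, so $\mathcal H$ is right-resolving, and $H$ is a finite graph since $\V(G)$ is finite. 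After discarding stranded vertices I may assume $\mathcal H$ is essential, which does not change $\X_{\mathcal H}$.

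The heart of the argument, and the step I expect to be the main obstacle, is verifying that $\X_{\mathcal H}=X$. By Proposition \ref{languageUniqueness} it suffices to show that the two shifts have the same language, so I would prove that a word $w=a_1\cdots a_n$ is presented by a path in $\mathcal H$ if and only if it is presented by a path in $\G$. The key observation is that $\mathcal H$ is deterministic: from a starting vertex $I_0$, reading $w$ forces one through the uniquely determined sets $I_k$ consisting of exactly those vertices of $G$ reachable from $I_0$ along an $a_1\cdots a_k$-labeled path. Such a path in $\mathcal H$ exists precisely when every $I_k$ is nonempty, which happens precisely when some vertex of $G$ admits a $w$-labeled path, i.e.\ when $w\in B(X)$. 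Care is needed because points of an edge shift are bi-infinite, so I must also check that every bi-infinite walk on $H$ spells a point of $X$ and conversely; this reduces to the same reachable-set bookkeeping together with essentialness, which guarantees that paths extend to bi-infinite walks.

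Finally, for minimality, once the first claim gives that the collection of right-resolving presentations of $X$ is nonempty, the set $\{\, \abs{\V(H)} : (H,\mathcal M) \text{ is a right-resolving presentation of } X \,\}$ is a nonempty subset of the positive integers and therefore has a least element. Any right-resolving presentation attaining this least number of vertices is by definition a minimal right-resolving presentation, completing the proof.
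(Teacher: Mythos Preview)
The paper does not prove this proposition; it is one of several results from Lind and Marcus \cite{LM} stated without proof, so there is nothing in the paper to compare your argument against. Your subset-construction approach is exactly the standard one found in \cite{LM} and is correct in outline.

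The one step that is thinner than it should be is the passage from finite words to bi-infinite points. You write that this ``reduces to the same reachable-set bookkeeping together with essentialness, which guarantees that paths extend to bi-infinite walks,'' but this is not quite automatic: the sets $I_k$ you describe depend on a choice of starting vertex $I_0$, and the natural choice $I_0=\V(G)$ need not have any incoming edge in $H$, so it need not lie on any bi-infinite walk, and after passing to the essential subgraph you can no longer be sure the relevant paths survive. What actually works is, for a given $x\in X$ and each $n\in\Z$, to let $I_n$ be the eventual value of the decreasing sequence of sets reached by reading $x_{[m,n-1]}$ from $\V(G)$ as $m\to -\infty$; each $I_n$ is nonempty since any $G$-walk presenting $x$ contributes a vertex to every approximant, and a short pigeonhole check shows that reading $x_n$ from $I_n$ lands exactly in $I_{n+1}$. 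With that compactness-flavoured step made explicit your argument is complete, and the well-ordering step for minimality is fine as written.
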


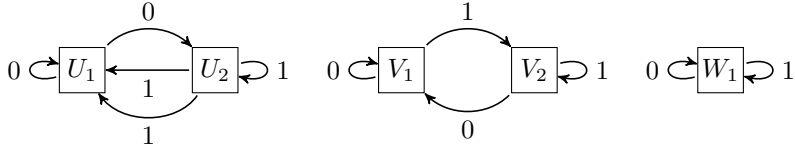
\begin{figure}
\begin{center}
\begin{tikzpicture}
  [bend angle=45,
   knude/.style = {circle, inner sep = 0pt},
   vertex/.style = {circle, draw, minimum size = 1 mm, inner sep =
      0pt,},
   textVertex/.style = {rectangle, draw, minimum size = 6 mm, inner sep =
      1pt},
   to/.style = {->, shorten <= 1 pt, >=stealth', semithick}, scale=0.7]
  
  % Definition af knuder
  \node[knude] (trans) at (6,0) {} ;
  
  \node[textVertex] (V1) at (0,0) {$U_1$};
  \node[textVertex] (V2) at (2.5,0) {$U_2$};

  \node[textVertex] (P1) at ($(u)+(trans)$) {$V_1$};
  \node[textVertex] (P2) at ($(2.5,0)+(trans)$) {$V_2$};

  \node[textVertex] (W1) at ($(u)+(trans)+(trans)$) {$W_1$};

  % Edges in the Fischer cover
  \draw[to, loop left=45] (V1) to node[auto] {0} (V1);
  \draw[to, loop right=45] (V2) to node[auto] {1} (V2);
  \draw[to, bend left=45] (V1) to node[auto] {0} (V2);
  \draw[to] (V2) to node[auto] {1} (V1);
  \draw[to, bend left=55] (V2) to node[auto] {1} (V1);

  % Edges in the Kriger cover
  \draw[to, bend left=45] (P1) to node[auto] {1} (P2);
  \draw[to, bend left=45] (P2) to node[auto] {0} (P1);
  \draw[to, loop left] (P1) to node[auto] {0} (P1);
  \draw[to, loop right] (P2) to node[auto] {1} (P2);
  
  \draw[to, loop left] (W1) to node[auto] {0} (W1);
  \draw[to, loop right] (W1) to node[auto] {1} (W1);
\end{tikzpicture}
\caption{Three presentations of the full 2-shift. From the left the presentations are: not right-resolving, right-resolving, and minimal right-resolving.}\label{figure:presentations}
\end{center}
\vspace{-0.5cm}
\end{figure}

\begin{definition}
	Let $\G=(G, \Lab)$ be a labeled graph and $V\in \V(G)$ be a vertex. Then the \emph{follower set} of $V$ is the set
	\begin{equation*}
		F(V) = \{ \Lab(e_1\dots e_n)\mid e_1\dots e_n \text{ is a path on $G$ and } s(e_1) = V \}.
	\end{equation*}
	We say that $\G$ is \emph{follower-separated} if for all different $V_1, V_2\in \V(G)$  we have $F(V_1)\neq F(V_2)$.
\end{definition}

\begin{comment}
\begin{lemma}
	Let $\G=(G, \Lab)$ be a labeled graph. For a vertex $V\in \V(G)$ let $s(V)$ denote the edges of $G$ starting at $V$, and let $aF(V), a\in \A$ denote the set $\{aw\mid w\in F(V)\}$. Then 
	\begin{enumerate}
		\item $B(\X_\G) = \bigcup_{V\in \V(G)}F(V)$.
		\item $F(V) = \bigcup_{e\in s(V)}\Lab(e)F(t(e))$.
	\end{enumerate}
\end{lemma}
\begin{proof}
	The first part follows since all words of $\X_\G$ are labeled paths on $G$ and these must start at some vertex. The second part follows since all words starting at $V$ must go through one of the edges of $s(V)$ and thereafter follow a labeled path belonging to $F(t(e))$ and all such words belong to $F(V)$.
\end{proof}
\end{comment}

\begin{proposition} \label{followerSeparated}
	If $\G$ is a minimal right-resolving presentation of a sofic shift, then it is follower-separated.
\end{proposition}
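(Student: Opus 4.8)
The plan is to argue by contradiction: assuming $\G = (G, \Lab)$ is a minimal right-resolving presentation that is \emph{not} follower-separated, I will construct a right-resolving presentation of the same sofic shift with strictly fewer vertices, contradicting minimality. The natural candidate is the quotient of $G$ by the equivalence relation $V \sim V'$ defined by $F(V) = F(V')$. If $\G$ fails to be follower-separated, some equivalence class contains at least two vertices, so the quotient genuinely has fewer vertices, which is exactly the contradiction we want.

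First I would record two structural facts that make the quotient well-defined, both consequences of right-resolvingness. For a vertex $U$ and a label $a$, the right-resolving property gives at most one edge labeled $a$ leaving $U$, and if that edge exists with target $T$ then $\{w : aw \in F(U)\} = F(T)$, since every path from $U$ whose label begins with $a$ must traverse this unique edge. From this I extract: (i) whether $U$ emits an $a$-edge depends only on whether $a \in F(U)$, hence only on the class of $U$; and (ii) if $V \sim V'$ and both emit $a$-edges, to targets $W$ and $W'$ respectively, then $F(W) = \{w : aw \in F(V)\} = \{w : aw \in F(V')\} = F(W')$, so $W \sim W'$. Fact (ii)---that the transition structure descends to the quotient---is the crux and the step I expect to demand the most care. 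It also explains why one cannot simply merge two equivalent vertices in isolation: their $a$-successors $W$ and $W'$ need only be equivalent, not equal, so right-resolvingness is restored only after collapsing \emph{every} class simultaneously.

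With these facts in hand I define $\G' = (G', \Lab')$: its vertices are the $\sim$-classes, and for each class $C$ and each label $a$ with $a \in F(V)$ for $V \in C$, I place a single edge labeled $a$ from $C$ to the class of the common target. By (i) and (ii) this is independent of the chosen representative, and by construction each class emits at most one edge per label, so $\G'$ is right-resolving. I would also remark that a minimal presentation has no stranded vertices---deleting a stranded vertex preserves the presented shift and the right-resolving property while lowering the vertex count---so both $G$ and $G'$ may be taken essential, and consequently $B(\X_\G) = \bigcup_{V} F(V)$, with the analogous identity for $\G'$.

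It then remains to show $\X_{\G'} = \X_\G$, for which by Proposition \ref{languageUniqueness} it suffices to prove the languages agree, and this I would obtain from the per-vertex identity $F_{\G'}(C) = F_\G(V)$ for every $V \in C$. The inclusion $F_\G(V) \subseteq F_{\G'}(C)$ is immediate, since any path in $G$ from $V$ projects to a path in $G'$ from $C$ with the same label. For the reverse inclusion I would lift a path in $G'$ starting at $C$ edge by edge to a path in $G$ starting at the representative $V$, using (i) to guarantee that each required edge exists and (ii) to keep the lifted vertex in the class dictated by the $G'$-path; this yields $F_{\G'}(C) \subseteq F_\G(V)$. Taking unions over all classes gives $B(\X_{\G'}) = \bigcup_C F_{\G'}(C) = \bigcup_V F_\G(V) = B(\X_\G)$, so the two presentations define the same shift. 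As $\G'$ has strictly fewer vertices, this contradicts the minimality of $\G$, and therefore $\G$ must be follower-separated.
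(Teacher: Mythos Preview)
The paper explicitly omits the proof of this proposition (``a couple of results whose proofs we shall omit for brevity''), so there is nothing in the published text to compare against. Your argument is correct and is the standard one (as in Lind and Marcus): quotient $G$ by the follower-set equivalence, use right-resolvingness to show the transition structure descends well-definedly to the quotient, verify $F_{\G'}([V]) = F_\G(V)$ by lifting paths, and conclude $\X_{\G'} = \X_\G$ via Proposition~\ref{languageUniqueness}, contradicting minimality.

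One small remark on your aside that ``one cannot simply merge two equivalent vertices in isolation'': this depends on the merging convention. If one merges $V_1$ and $V_2$ by deleting $V_2$ together with all edges \emph{starting} at $V_2$ and redirecting all edges \emph{terminating} at $V_2$ to $V_1$, the result is still right-resolving (the outgoing edges of the merged vertex are exactly those of $V_1$), and one can argue directly that it presents the same shift. This pairwise merge is in fact the approach in the paper's commented-out draft proof. Your global-quotient approach is cleaner and avoids having to track how follower sets in the reduced graph relate to those in the original, but the pairwise route is also viable; the aside is not needed for your argument and can be dropped.
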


\begin{comment}
	Assume for contradiction that $\G=(G, \Lab)$ is a minimal right-resolving presentation of a sofic shift, but there are two different vertices $V_1, V_2\in \V(G)$ with $F(V_1)=F(V_2)$. The idea is now to create a new right-resolving labeled graph representing $\X_\G$ with fewer vertices by merging the vertices $V_1$ and $V_2$.
	
	Let $\mathcal H=(H, \Lab')$ be the labeled graph $\G$ with the vertex $V_2$ and all edges starting at $V_2$ removed, all edges terminating at $V_2$ redirected to $V_1$, and the labelling retained on all remaining edges.  Then $\mathcal H$ is right-resolving. 

	Now, let $w\in 

	Now, we will show that $B(\X_\G)=B(\X_{\mathcal H})$. Assume for contradiction that there is a shortest word $w$ which  

	Now, all paths $w$ of $G$ starting at $V_1$ or $V_2$ has an equivalent path $v$ of $H$ starting at $V_1$ with $\Lab(w)=\Lab'(v)$, since $F_{\G}(V_1)=F_{\G}(V_2)$ and any 

	Now, since all words of $\X_\G$ are labeled paths on $G$ we must have $B(\X_\G) = \bigcup_{V\in \V(G)}F(V)$, and further we can see that if $s(V)$ denotes the edges starting at vertex $V$, then $F(V) = \bigcup_{e\in s(V)}\Lab(e)F(t(e))$, where $aF(V), a\in \A$ denotes the set $\{aw\mid w\in F(V)\}$.

	Now, let $w\in B(\X_\G)$ and $e_1\dots e_n$ be a path on $G$ such that $w=\Lab(e_1\dots e_n)$. If none of the edges $e_i$ start or terminate at $V_2$, then $e_1\dots e_n$ is also a path on $H$ with $\Lab'(e_1\dots e_n) = w$, so $w\in B(\X_{\mathcal H})$. If $e_1$ starts at $V_2$, then $w\in F(V_2)=F(V_1)$, so there is a path 
\end{comment}
\noindent
In some shift spaces there are words that act as ``hinges'' and allow us to combine other words very flexibly. This will be pivotal in Chapter 3.

\begin{definition}
	Let $X$ be a shift space and $w\in B(X)$. We say that $w$ is \emph{intrinsically synchronising} for $X$ if whenever $vw, wu\in B(X)$ for some words $v, u\in B(X)$, then $vwu \in B(X)$.
\end{definition}
\noindent
A nice property of sofic shifts is that every word can be extended to an intrinsically synchronising word.
\begin{definition}
	Let $X$ be a sofic shift with representation $\G=(G, \Lab)$. We say that a word $w\in B(X)$ \emph{focuses to $V\in \V(G)$} if every path in $\G$ presenting $w$ terminates at $V$. 
\end{definition}

\begin{lemma}\label{focusesThenIntrins}
	Let $\G=(G, \mathcal L)$ be a labeled graph and $w\in B(\X_\G)$. If $w$ focuses to a vertex $V\in \V(G)$ then $w$ is intrinsically synchronising for $\X_\G$.
\end{lemma}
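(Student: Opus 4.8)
The plan is to use the focusing vertex $V$ as a hinge at which the two given presentations of $w$ can be spliced together. Suppose $vw, wu \in B(\X_\G)$; the goal is to produce a single path on $G$ whose label is $vwu$, for then $vwu \in B(\X_\G)$ and $w$ is intrinsically synchronising by definition.

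First I would unpack the hypothesis $vw \in B(\X_\G)$: by definition of the sofic shift there is a path $\pi$ on $G$ with $\Lab(\pi) = vw$. Since $\Lab$ is a one-block labelling assigning exactly one edge per symbol, $\pi$ factors as $\pi = \alpha\beta$, where $\alpha$ presents $v$ and $\beta$ presents $w$. The crucial point is that $\beta$ is itself a path presenting $w$, so the hypothesis that $w$ focuses to $V$ forces $t(\beta) = V$.

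Symmetrically, from $wu \in B(\X_\G)$ I would choose a path $\rho = \gamma\delta$ with $\gamma$ presenting $w$ and $\delta$ presenting $u$. Again $\gamma$ presents $w$, so focusing gives $t(\gamma) = V$, and since $\rho$ is a genuine path we get $s(\delta) = t(\gamma) = V$. Thus both occurrences of $w$ land at the same vertex $V$, which is what makes the splice possible. Now $\alpha\beta$ is a path terminating at $t(\beta) = V = s(\delta)$, so $\alpha\beta\delta$ is a legitimate path on $G$, and its label is $\Lab(\alpha)\Lab(\beta)\Lab(\delta) = vwu$. Hence $vwu \in B(\X_\G)$, as required.

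I do not expect a serious obstacle here, since the whole argument is a single gluing step: the focusing condition does all the work by pinning down the endpoint of every presentation of $w$. The only point deserving care is the justification that a path presenting a concatenation such as $vw$ splits into a sub-path presenting $v$ followed by a sub-path presenting $w$; this is immediate because the labelling reads off one edge per symbol, so the factorisation of the label word induces a factorisation of the path at exactly the boundary between $v$ and $w$.
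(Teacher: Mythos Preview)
Your proof is correct and follows essentially the same approach as the paper's: both use the focusing vertex $V$ to splice the presentation of $vw$ (which must end at $V$) with the tail of a presentation of $wu$ (which starts a path for $u$ at $V$). You are simply more explicit about the path factorisation $\pi=\alpha\beta$ than the paper, which compresses this into a single sentence.
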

\begin{proof}
	Let words $u, v$ be given such that $vw, wu\in B(\X_\G)$. Any path representing $vw$ in $\G$ must terminate at $V$, and since there is a path representing $wu$ in $\G$ there is a path representing $u$ starting at $V$. Thus, there is a path representing $vwu$ in $\G$. 
\end{proof}

\begin{proposition}\label{extendToIntrin}
For a sofic shift $X$ any word $w\in B(X)$ can be extended to the right to an intrinsically synchronising word $wv\in B(X)$.
\end{proposition}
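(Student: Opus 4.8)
The plan is to reduce everything to the notion of \emph{focusing} and then invoke Lemma~\ref{focusesThenIntrins}: if I can extend $w$ to some $wv\in B(X)$ that focuses to a single vertex of a presentation, then $wv$ is automatically intrinsically synchronising. Since $X$ is sofic, it admits a minimal right-resolving presentation $\G=(G,\Lab)$, which by Proposition~\ref{followerSeparated} is follower-separated; I will work with this fixed $\G$ throughout, so that I have both the right-resolving property and distinct follower sets at distinct vertices available.

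For a word $u\in B(X)$ let $T(u)$ denote the set of vertices at which some path presenting $u$ terminates; since $w\in B(X)$ the set $T(w)$ is non-empty. The key structural observation is a description of $T(wu)$ in terms of $T(w)$. Because $\G$ is right-resolving, from any fixed vertex a given word labels at most one path, so every path presenting $wu$ is a path presenting $w$ (terminating at some $V\in T(w)$) followed by the unique $u$-labelled path out of $V$, when such a path exists. Hence $T(wu)$ is the image of the partial map sending $V$ to the terminus of that $u$-path, defined on $\{V\in T(w)\mid u\in F(V)\}$. In particular $|T(wu)|\le |T(w)|$ whenever $wu\in B(X)$.

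I then argue by induction on $|T(w)|$. If $|T(w)|=1$, the word $w$ already focuses to the unique vertex of $T(w)$ and Lemma~\ref{focusesThenIntrins} finishes the proof with $v=\epsilon$. If $|T(w)|\ge 2$, pick distinct $V_1,V_2\in T(w)$. Since $\G$ is follower-separated, $F(V_1)\ne F(V_2)$, so some word $u$ lies in exactly one of them; relabelling $V_1,V_2$ if necessary, assume $u\in F(V_1)\setminus F(V_2)$. As $u\in F(V_1)$ and $V_1\in T(w)$ there is a path presenting $wu$, so $wu\in B(X)$; and as $u\notin F(V_2)$ the vertex $V_2$ contributes nothing to $T(wu)$. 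By the description above, $T(wu)$ is therefore the image of a map defined on at most $|T(w)|-1$ vertices, so $|T(wu)|<|T(w)|$. Applying the induction hypothesis to $wu$ yields an extension $wuv'$ focusing to a single vertex, which is the desired intrinsically synchronising word.

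The only genuine content beyond bookkeeping is the strict-decrease step, and the feature that makes it work is follower-separatedness: it supplies a distinguishing word readable from one terminal vertex but not another, so appending it eliminates at least one vertex while keeping the extension legal. The monotonicity $|T(wu)|\le|T(w)|$ is the easy half and uses only right-resolvingness. I expect the main thing to state carefully is the precise identification of $T(wu)$ from $T(w)$, since that is exactly where the right-resolving property is needed; the remainder is a finite induction on the size of a set of vertices.
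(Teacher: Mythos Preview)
Your proof is correct and follows essentially the same approach as the paper: fix a minimal right-resolving (hence follower-separated) presentation, use follower-separatedness to append a distinguishing word that strictly decreases $|T(w)|$, and iterate until $|T(wv)|=1$, then invoke Lemma~\ref{focusesThenIntrins}. The paper phrases the iteration as ``continuing this procedure'' rather than a formal induction, and is terser about the role of right-resolvingness in bounding $|T(wu)|$, but the argument is the same.
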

\begin{proof}
	Let $\G=(G, \Lab)$ be a minimal right-resolving presentation of $X$. We will show that we can construct a word $v$ such that $wv$ focuses to a vertex in $G$. By Lemma \ref{focusesThenIntrins} this is sufficient. For every $u\in B(X)$ let $T(u)$ denote the set of vertices that has a path labeled $u$ terminating at them.
	
	If $\abs{T(w)}=1$ we are done, so assume $\abs{T(w)}>1$. Let $V_1, V_2\in T(w)$ be different. Since $\G$ is follower-separated by Proposition \ref{followerSeparated}, there is a word %Fjern "word" for en ekstra linje.
	$v_1\in F(V_1)$ with $v_1\not\in F(V_2)$. As $\G$ is right-resolving there is at most one path representing $v_1$ in $\G$ starting at each member of $T(w)$, and there is none starting at $V_2$, so $\abs{T(wv_1)}<\abs{T(w)}$. Continuing this procedure we can find words $v_1, \dots, v_n$ such that $\abs{T(wv_1\dots v_n)}=1$. Setting $v=v_1\dots v_n$ completes the proof.
\end{proof}

\chapter{Flow equivalence}
Since establishing conjugacy between shift spaces is very difficult in general, it can seem inviting to consider weaker equivalence relations. One such relation is that of flow equivalence, which has found uses in ergodic theory and the theory of C*-algebras.

After a formal introduction to flow equivalence in the first section, we present some results in the second section, which will not only  be crucial in later chapters, but also gives a sense as to how we can manipulate a shift space while preserving flow equivalence. The final section brings a presentation of a classification result by John Franks, which gives a simple, complete invariant for deciding flow equivalence between irreducible shifts of finite type.

\begin{remark}
All results in this chapter can be found in the literature, though several are here presented with alternative formulations or proofs. The results of Section 2.3 have been assembled from various sources.
\end{remark}

\section{Introduction to flow equivalence}
Flow equivalence is originally a topological relation, but here we will only state the topological definition for sake of completeness. To replace it we introduce a combinatorial interpretation of flow equivalence, which is more workable in the context of this thesis.

\subsection{Flow equivalence in terms of topology}
We recall that for some alphabet $\A$, a shift space $X$ over $\A$ is a compact, shiftinvariant subset of $\A^\Z$. 

\begin{definition}
	Let $X$ be a shift space and define an equivalence relation $\sim$ on $X\times \R$ generated by $(x, t+1)\sim (\sigma_X(x), t)$. Giving $X\times \R$ the product topology we let the \emph{suspension flow} of $X$ be given by the quotient space
	\begin{equation*}
		SX = X\times \R/\sim.
	\end{equation*}
	We denote by $[x, t]$ the equivalence class in $SX$ of $(x, t)\in X\times \R$.
\end{definition}
\noindent
A flow equivalence is a homeomorphism between the suspension flows of two shift spaces that preserves direction in $\R$.

\begin{definition}
	Let $X$ and $Y$ be shift spaces and $SX$ and $SY$ their suspension flows. A homeomorphism $\Phi\colon SX\to SY$ is a \emph{flow equivalence} if for each $[x, t]\in SX$ there is a monotonically increasing function $\phi_{[x,t]}\colon \R\to\R$ such that $\Phi([x, t]) = [y, t']$ implies $\Phi([x, t+r])= [y, t'+\phi_{[x, t]}(r)]$.
	
	If such a homeomorphism exists we say that $X$ and $Y$ are \emph{flow equivalent} and write $X\FE Y$.
\end{definition}

\subsection{Flow equivalence as symbol expansions}
To understand flow equivalence in combinatorial terms we need the concept of a symbol expansion. A symbol expansion of a shift space $X$ takes a symbol $a\in \A(X)$ and appends to every occurrence of $a$ in every point of $X$ a symbol $\diamondsuit\not\in \A(X)$ such that $a$ is replaced by $a\diamondsuit$ everywhere in $X$.

\begin{definition}
	Let $X$ be a shift space, $a\in \A(X)$, and $\diamondsuit\not\in \A(X)$. Let for $b\in \A(X)$,
	\begin{equation*}
		\tau(b) = \begin{cases} a\diamondsuit, & b=a\\
					  b, 		     & b\neq a, \end{cases}
	\end{equation*}
and define a function $\mathcal T$ on the points of $X$ by 
	$\mathcal T(x) = \dots \tau(x_{-1}).\tau(x_0)\tau(x_1)\dots$.	
	The shift space $X^{a\mapsto a\diamondsuit} = \mathcal T(X)\cup\sigma(\mathcal T (X))$ is said to be obtained by a \emph{symbol expansion} of $X$.
	If $B$ is a set of words over an alphabet containing $a$, we write 
	\begin{equation*}
		B^{a\mapsto a\diamondsuit} = \{\tau(w_1)\dots \tau(w_n) \mid w_1\dots w_n\in B\}.
	\end{equation*}
\end{definition}
\begin{remark}
Adding the set $\mathcal T(\sigma(X))$ in the definition of the symbol expansion of a shift space is necessary for $X^{a\mapsto a\diamondsuit}$ to be closed under the shift map. We will be rather liberal with the notation $X^{a\mapsto b}$, which will simply mean replacing every occurrence of $a$ in $X$ by $b$ and taking the closure under the shift map.
\end{remark}
\noindent
We now make sure that symbol expansion does in fact yield a shift space.
\begin{proposition}[Johansen \cite{RuneJohansen}]\label{symbolExpansionYieldsShiftSpace}
Let $X$ be a shift space, $a\in \A(X)$, and $\diamondsuit\not\in \A(X)$ be a symbol. Then $X^{a\mapsto a\diamondsuit}$	is a shift space.
\end{proposition}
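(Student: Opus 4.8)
The plan is to exhibit an explicit set of forbidden words $\F'$ over the enlarged alphabet $\A' = \A(X)\cup\{\diamondsuit\}$ and to prove that $\X_{\F'} = X^{a\mapsto a\diamondsuit}$; since $\X_{\F'}$ is a shift space by definition, this suffices. Fix a set $\F$ of forbidden words with $X=\X_\F$, which exists because $X$ is a shift space. The expansion is governed by exactly three kinds of constraints: every $a$ is immediately followed by $\diamondsuit$; every $\diamondsuit$ is immediately preceded by $a$; and, after contracting each pair $a\diamondsuit$ back to $a$, the resulting point lies in $X$. I would encode these by
\[
\F' = \{ac \mid c\in\A(X)\}\cup\{b\diamondsuit\mid b\in\A'\setminus\{a\}\}\cup\F^{a\mapsto a\diamondsuit}.
\]
The first family forces each $a$ to be followed by $\diamondsuit$, the second (which includes $\diamondsuit\diamondsuit$) forces each $\diamondsuit$ to be preceded by $a$, and the last, built with the expansion notation of the excerpt, is meant to rule out precisely the images of the forbidden words of $X$.

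The technical heart is the following block-decomposition observation. If $y\in(\A')^\Z$ avoids the first two families, then every $\diamondsuit$ sits immediately after an $a$ and every $a$ immediately before a $\diamondsuit$, so the $a$'s and $\diamondsuit$'s pair off and $y$ decomposes uniquely into consecutive blocks, each either a single letter $c\in\A(X)\setminus\{a\}$ or a pair $a\diamondsuit$. Contracting every $a\diamondsuit$ to $a$ then yields a well-defined point $x\in\A(X)^\Z$. Moreover, since each $\tau(w)$ begins with a non-$\diamondsuit$ symbol and every non-$\diamondsuit$ symbol of $y$ starts a block, any occurrence of a word $\tau(w)$ in $y$ is block-aligned and corresponds exactly to an occurrence of $w$ in $x$. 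Hence $y$ avoids $\F^{a\mapsto a\diamondsuit}$ if and only if $x$ avoids $\F$, that is, if and only if $x\in X$.

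With this in hand both inclusions are short. For $X^{a\mapsto a\diamondsuit}\subseteq\X_{\F'}$, a point of $\mathcal{T}(X)$ visibly meets all three constraints (the third because $x\in X$ avoids $\F$), and since these are shift-invariant conditions the points of $\sigma(\mathcal{T}(X))$ meet them as well. For the reverse inclusion, take $y\in\X_{\F'}$ and let $x\in X$ be its contraction. If $y_0\neq\diamondsuit$ then position $0$ starts a block and, indexing $x$ so that its zeroth block lands at position $0$, we get $y=\mathcal{T}(x)\in\mathcal{T}(X)$. If instead $y_0=\diamondsuit$ then $y_{-1}=a$, so $\sigma^{-1}(y)$ has a block boundary at position $0$ and the previous case gives $\sigma^{-1}(y)\in\mathcal{T}(X)$, whence $y\in\sigma(\mathcal{T}(X))$. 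I expect the main obstacle to be making the unique block decomposition and the block-alignment of $\tau(w)$-occurrences fully rigorous, since the bijection with points of $X$ and the phase case analysis distinguishing $\mathcal{T}(X)$ from $\sigma(\mathcal{T}(X))$ both rest on it. A more topological route would instead verify directly that $X^{a\mapsto a\diamondsuit}$ is shift-invariant (this is why $\sigma(\mathcal{T}(X))$ is adjoined) and closed in $(\A')^\Z$, but the forbidden-word construction stays within the combinatorial framework of the excerpt and seems cleaner.
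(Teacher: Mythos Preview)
Your proof is correct and follows the paper's approach of exhibiting an explicit set $\F'$ of forbidden words over $\A(X)\cup\{\diamondsuit\}$; you also supply the block-decomposition argument and the phase case analysis for the two inclusions, which the paper leaves entirely implicit. In fact your $\F'$ is more careful than the paper's, which omits the family $\{ac\mid c\in\A(X)\}$---without it, when $X$ is a full shift the point $a^\infty$ would lie in $\X_{\F'}$ but not in $X^{a\mapsto a\diamondsuit}$, so your extra family is genuinely needed.
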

\begin{proof}
	Let $\F$ be a set of forbidden words for $X$ and $\mathcal B =\A(X)\cup \{\diamondsuit\}$ be an alphabet. The set
	\begin{equation*}
		\F' = \F^{a\mapsto a\diamondsuit} \cup \{b\diamondsuit \mid b\in \A(X)\setminus\{a\}\}\cup \{\diamondsuit\diamondsuit\}
	\end{equation*}
	is a set of forbidden words for $X^{a\mapsto a\diamondsuit} \subseteq \mathcal B^\Z$.
\end{proof}
\noindent
A result by Parry and Sullivan makes the concept of flow equivalence available to our combinatorial interpretation of the theory of shift spaces through symbol expansion. They showed that flow equivalence is the coarsest equivalence relation which is closed under both conjugacy and symbol expansion. 
\begin{theorem}[Parry and Sullivan \cite{ParrySullivan}]\label{PaSu}
	Let $X, Y$ be shift spaces. Then $X\FE Y$ if and only if there exists a sequence of shift spaces $X_0=X, X_1, \dots, X_n=Y$ such that for each $0\leq i < n$ one of the following conditions hold.
	\begin{itemize}
		\item $X_{i}$ is obtained by a symbol expansion of $X_{i+1}$.
		\item $X_{i+1}$ is obtained by a symbol expansion of $X_i$.
		\item $X_i$ and $X_{i+1}$ are conjugate.
	\end{itemize}
\end{theorem}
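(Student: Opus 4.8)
The plan is to prove the two implications separately, keeping in mind that the left-hand side $X \FE Y$ is a statement about suspension flows while the right-hand side is purely combinatorial. The reverse implication (a combinatorial chain forces flow equivalence) is the more routine one, so I would dispatch it first. Since $\FE$ is an equivalence relation — it is symmetric and transitive because direction-preserving homeomorphisms of suspension flows compose and invert to direction-preserving homeomorphisms — it suffices to show that each individual move between $X_i$ and $X_{i+1}$ yields a flow equivalence. A conjugacy $\phi \colon X \to Y$ lifts to the map $SX \to SY$ sending $[x,t] \mapsto [\phi(x), t]$; this is well defined because $\phi \sigma_X = \sigma_Y \phi$ respects the identification $(x,t+1) \sim (\sigma_X(x), t)$, it is a homeomorphism with inverse the lift of $\phi^{-1}$, and it preserves the $\R$-direction with each $\phi_{[x,t]} = \mathrm{id}$.

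For a symbol expansion $X^{a \mapsto a\diamondsuit}$ the geometric content is that appending a $\diamondsuit$ after each occurrence of $a$ merely lengthens the flow line sitting over those coordinates by one unit of time. I would make this precise by constructing an explicit flow equivalence $SX \to S(X^{a\mapsto a\diamondsuit})$ that reparametrizes each orbit, stretching the unit interval lying over a coordinate labelled $a$ into an interval of length two while leaving the intervals over the other symbols unchanged. Because this reparametrization is monotone in $t$ and continuous in the base point, it satisfies the definition of a flow equivalence, and chaining such maps along the sequence $X_0, \dots, X_n$ gives $X \FE Y$.

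For the forward implication I would argue through cross-sections. The base $C_X = \{[x,0] : x \in X\}$ is a cross-section of the suspension flow meeting every orbit, with first-return map exactly $\sigma_X$ and constant return time $1$; the same holds for $C_Y \subseteq SY$. Given a flow equivalence $\Phi \colon SX \to SY$, the image $\Sigma = \Phi(C_X)$ is again a cross-section of the flow on $SY$ whose return map is topologically conjugate to $\sigma_X$, so the problem reduces to comparing the two cross-sections $\Sigma$ and $C_Y$ of one and the same flow. The return time of the flow back to $\Sigma$ is a continuous function on the compact, totally disconnected space $\Sigma \cong X$, hence it is locally constant and takes only finitely many values. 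I would exploit this finiteness to discretize: by performing symbol expansions on $Y$ I can subdivide the slabs of its suspension until $C_Y$ refines to a cross-section whose return times match integer multiples, after which $\Sigma$ and the refined base meet the orbits in a combinatorially matched pattern and determine a conjugacy between the associated subshifts.

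The main obstacle is precisely this discretization step. Turning a continuous flow equivalence with locally constant, finitely-valued return times into an \emph{explicit finite} list of conjugacies and symbol expansions requires checking two things carefully: that subdividing a slab of the suspension of $Y$ is genuinely realized by a symbol expansion of the underlying subshift rather than by some more general reparametrization, and that after finitely many such subdivisions the two cross-sections can be matched symbol by symbol. I expect to lean on zero-dimensionality throughout, since it is what forces every relevant continuous function to be locally constant and hence encodable by finitely many words, together with the Parry–Sullivan principle that the only freedom a flow equivalence has beyond conjugacy is the rescaling of return times — which is exactly what symbol expansion implements combinatorially.
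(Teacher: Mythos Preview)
The paper does not actually prove this theorem: it is stated with a citation to Parry and Sullivan and then used as a black box throughout the thesis, so there is no proof in the paper to compare your proposal against.

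That said, your sketch contains a genuine error in the forward direction. You write that the first-return time of the flow to the cross-section $\Sigma = \Phi(C_X)$ is a continuous function on the compact, totally disconnected space $\Sigma$, ``hence it is locally constant and takes only finitely many values.'' This inference is false: continuous real-valued functions on the Cantor set are not automatically locally constant (the inclusion of the middle-thirds Cantor set into $[0,1]$ is already a counterexample). The return-time function to $\Sigma$ is a priori only continuous and positive, so the discretization you need is not for free. The actual work in Parry--Sullivan is precisely to show that, up to a further flow equivalence, one can replace $\Sigma$ by a nearby cross-section whose return time \emph{is} locally constant and integer-valued; equivalently, that any positive continuous roof function on a zero-dimensional base is cohomologous, via a continuous transfer function, to a locally constant integer-valued one, and that cohomologous roof functions give topologically conjugate suspensions. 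Only after that reduction does your final paragraph (matching the refined cross-section with $C_Y$ via finitely many symbol expansions and a conjugacy) go through. Your outline of the reverse implication is fine.
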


\section{Basic results on flow equivalence}
In this section, we develop the basic tools needed in later chapters for dealing with flow equivalence. 

\noindent
First, symbol expansions of a shift space $X$ using symbols of $\A(X)$ are possible.

\begin{proposition}[Johansen \cite{RuneJohansen}]\label{extendWithExistingSymbol}
Let $X$ be a shift space and $a, b\in \A(X)$ with $a\neq b$. Then $X\FE X^{a\mapsto ab}$.	
\end{proposition}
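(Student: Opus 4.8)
The plan is to realise $X^{a\mapsto ab}$ as a conjugate copy of an honest symbol expansion of $X$ and then appeal to the Parry--Sullivan characterisation. Concretely, I would pick a fresh symbol $\diamondsuit\notin\A(X)$ and form the symbol expansion $X^{a\mapsto a\diamondsuit}$, which is a shift space by Proposition \ref{symbolExpansionYieldsShiftSpace}. Since $X^{a\mapsto a\diamondsuit}$ is obtained from $X$ by a symbol expansion, Theorem \ref{PaSu} gives $X\FE X^{a\mapsto a\diamondsuit}$ directly. It then suffices to produce a conjugacy $X^{a\mapsto a\diamondsuit}\cong X^{a\mapsto ab}$: conjugate shift spaces are flow equivalent (again by Theorem \ref{PaSu}), and transitivity of $\FE$ finishes the argument.

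For the conjugacy I would record two structural facts read straight off the definition $X^{a\mapsto a\diamondsuit}=\mathcal T(X)\cup\sigma(\mathcal T(X))$: in every point of $X^{a\mapsto a\diamondsuit}$ each occurrence of $a$ is immediately followed by a $\diamondsuit$, and each $\diamondsuit$ is immediately preceded by an $a$ (bi-infiniteness means every position has a predecessor, so shifting does not destroy this). I then define $\phi\colon X^{a\mapsto a\diamondsuit}\to\A(X)^{\Z}$ to be the $1$-block code induced by relabelling $\diamondsuit\mapsto b$ and fixing every other symbol; since $b\neq\diamondsuit$ this is a genuine relabelling, its image is a shift space by Proposition \ref{imageUnderBlockCode}, and comparing $\mathcal T$ with the substitution $a\mapsto ab$ shows the image is exactly $X^{a\mapsto ab}$. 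For the inverse I would use the $2$-block code $\psi$ (memory $1$, anticipation $0$) that sends any $b$ immediately preceded by an $a$ to $\diamondsuit$ and fixes all other symbols. Both $\phi$ and $\psi$ are sliding block codes, so once I check $\psi\circ\phi=\mathrm{id}$ and $\phi\circ\psi=\mathrm{id}$ the map $\phi$ is a conjugacy.

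The \textbf{main obstacle} is precisely the verification that $\psi$ is a two-sided inverse, i.e. that in a point of $X^{a\mapsto ab}$ a symbol $b$ sits immediately after an $a$ \emph{exactly} when it is one of the inserted symbols. The forward inclusion is clear, since the substitution $a\mapsto ab$ always plants a $b$ right after each $a$; the delicate direction is that no \emph{original} $b$ can wind up directly after an image-$a$, because the inserted $b$ always intervenes. This is where the hypothesis $a\neq b$ is essential: it guarantees that the marker $b$ is visibly distinct from $a$, so the local rule in $\psi$ unambiguously recovers the insertions. (Were $a=b$, the inserted symbol would coincide with $a$ and local decoding would fail, e.g. $a^{2}\mapsto a^{4}$ admits no local parsing.) Granting this, $\psi\circ\phi$ restores each relabelled $\diamondsuit$ and $\phi\circ\psi$ restores each $b$, giving the conjugacy and hence $X\FE X^{a\mapsto ab}$.
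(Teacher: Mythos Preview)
Your proposal is correct and follows exactly the paper's approach: introduce a fresh symbol $\diamondsuit$, invoke Theorem~\ref{PaSu} for $X\FE X^{a\mapsto a\diamondsuit}$, and then exhibit the conjugacy $X^{a\mapsto a\diamondsuit}\cong X^{a\mapsto ab}$ via the $1$-block relabelling $\diamondsuit\mapsto b$ with inverse the $2$-block code (memory $1$, anticipation $0$) sending $ab\mapsto\diamondsuit$. Your added discussion of why $\psi$ really is a two-sided inverse---that the inserted $b$ always intervenes between an $a$ and any original $b$---is a welcome elaboration of a point the paper leaves implicit.
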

\begin{proof}
	Let $\diamondsuit\not\in \A(X)$. Then $X\FE X^{a\mapsto a\diamondsuit}$ by Theorem \ref{PaSu}.
	
	Now, consider $\Phi\colon B_1(X\ex{a}{a\diamondsuit})\to \A(X)$ given by 
	\begin{equation*}
		\Phi(w_1) = \begin{cases}
			w_1, & w_1 \in\A(X)\\
			b, & w_1=\diamondsuit,
		\end{cases}
	\end{equation*}
	which induces a sliding block code $\phi=\Phi_\infty^{[0,0]}\colon X\ex{a}{a\diamondsuit} \to X\ex{a}{ab}$. As the sliding block code $\psi = \Psi_\infty^{[1, 0]}$ induced by the 2-block code $\Psi \colon B_2(X\ex{a}{ab})\to \A(X)\cup \{\diamondsuit\}$ satisfying
	\begin{equation*}
		\Psi(w_1w_2) = \begin{cases}
			\diamondsuit, & w_1w_2=ab \\
			w_2, & w_1w_2\neq ab
		\end{cases}
	\end{equation*}
	is an inverse of $\phi$, it is a conjugacy, and $X\FE X\ex{a}{a\diamondsuit}\FE X\ex{a}{ab}$.
\end{proof}
\noindent
Second, we can remove symbols from a shift space by a reversal of the process of symbol expansion, a \emph{symbol contraction}.

\begin{lemma}[Johansen \cite{RuneJohansen}]\label{collapseTwoAlwaysConsecutiveLetters}
Let $X$ be a shift space and $a, b\in \A(X)$ with $a\neq b$. If for every point $x\in X$ it is true that $x_{i+1}=b$ if and only if $x_i=a$, then $X\FE X\ex{b}{\epsilon}$.	
\end{lemma}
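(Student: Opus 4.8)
The plan is to recognize $X$ itself as a symbol expansion of the contracted shift $Y = X\ex{b}{\epsilon}$, and then invoke the fact that symbol expansion is a flow-equivalence move. The hypothesis says precisely that in every point of $X$ we have $x_i = a$ if and only if $x_{i+1} = b$; equivalently, every $a$ is immediately followed by a $b$ and every $b$ is immediately preceded by an $a$, so the letters $a$ and $b$ only ever occur bound together in the block $ab$. Thus $b$ plays exactly the role of the fresh symbol appended to $a$ in an expansion, and deleting the $b$'s ought to be undone by re-expanding each $a$ to $ab$. This is the reverse of the situation treated in Proposition \ref{extendWithExistingSymbol}, and I would argue along the same lines, starting from a symbol expansion.

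Concretely, I would first set $Y = X\ex{b}{\epsilon}$ and note that $b \notin \A(Y)$, since every $b$ has been deleted and no new $b$'s are created; hence $b$ is a legitimate fresh symbol for a symbol expansion of $Y$. By Proposition \ref{symbolExpansionYieldsShiftSpace}, $Y\ex{a}{ab}$ is a shift space, and by Theorem \ref{PaSu} (applied to the sequence $Y, Y\ex{a}{ab}$, whose second term is obtained from the first by a symbol expansion) we get $Y \FE Y\ex{a}{ab}$. The whole lemma therefore reduces to the single claim that $Y\ex{a}{ab} = X$. By Proposition \ref{languageUniqueness} it is enough to prove the language equality $B(Y\ex{a}{ab}) = B(X)$.

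To establish this I would characterize both languages. A word lies in $B(X)$ exactly when, for every internal index $i$, its $i$-th letter is $a$ iff its $(i+1)$-th letter is $b$, and when the word obtained from it by deleting all $b$'s lies in $B(Y)$; the same description falls out of the expansion construction for $B(Y\ex{a}{ab})$, since replacing each $a$ of a $Y$-point by $ab$ produces points in which, internally, $a$ is followed by $b$ and $b$ is preceded by $a$. I expect the main obstacle to be the careful bookkeeping at the two ends of a finite factor: a factor of $X$ may begin with a $b$ whose governing $a$ has been cut off to the left, or end with an $a$ whose $b$ has been cut off to the right, and one must check that exactly these boundary words are produced on the expansion side. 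This is precisely the purpose of the term $\sigma(\mathcal{T}(Y))$ in the definition of symbol expansion together with the passage to all factors; once these boundary cases are matched up, the two languages coincide. This yields $Y\ex{a}{ab} = X$, and hence $X \FE X\ex{b}{\epsilon}$.
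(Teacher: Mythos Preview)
Your approach is essentially the same as the paper's: both set $Y = X\ex{b}{\epsilon}$, argue that $X = Y\ex{a}{ab}$, and conclude $X \FE Y$ since symbol expansion preserves flow equivalence. The paper additionally writes down explicit forbidden words for $Y$ to confirm it is a shift space before forming the expansion, a step you leave implicit; conversely, you correctly note that $b \notin \A(Y)$ and invoke Theorem~\ref{PaSu} directly, whereas the paper cites Proposition~\ref{extendWithExistingSymbol} (whose hypothesis $b \in \A(Y)$ is not literally met here).
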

\begin{proof}
	First of all, let $Y=X\ex{b}{\epsilon}$ and a set of forbidden words for $X$ be given by
	\begin{equation*}
		\F = \F' \cup \{ cb\mid c\in \A(X)\setminus \{a\} \}, 
	\end{equation*}
	where every occurrence of $b$ in $\F'$ is preceded by an $a$ and every occurrence of $a$ is followed by a $b$.
	Then $Y\subseteq (\A(X)\setminus \{b\})^\Z$ is a shift space with forbidden words $\F^* = {\F'} \ex{b}{\epsilon}$. Note that none of the points of $X$ ``vanish'' under the removal of $b$, since every $b$ is preceded by an $a$.
	
	 Now, $X=Y\ex{a}{ab}$ so by Proposition \ref{extendWithExistingSymbol}, $X\FE Y$.
\end{proof}

\begin{lemma}[Johansen \cite{RuneJohansen}]\label{standardSymbolContraction}
	Let $X$ be a shift space and $a, b\in \A(X)$ be different. If for every point $x\in X$ we have $x_{i+1}=b$ if $x_i=a$, then $X\FE X\ex{ab}{a}$.
\end{lemma}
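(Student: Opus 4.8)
The plan is to realise the contraction $X\ex{ab}{a}$ as the exact inverse of the symbol expansion studied in Proposition \ref{extendWithExistingSymbol}, and then read that proposition backwards. Write $Y = X\ex{ab}{a}$. Because the hypothesis forces every $a$ occurring in a point of $X$ to be immediately succeeded by a $b$, no two $a$'s are adjacent and the occurrences of the word $ab$ in a point of $X$ are in bijection with the occurrences of the letter $a$. Passing to $Y$ therefore deletes precisely the $b$ sitting after each $a$, while every other $b$ (one not preceded by an $a$) is left untouched. Consequently $a, b \in \A(Y)$ in the typical case, and I claim that re-expanding recovers $X$, i.e. $Y\ex{a}{ab} = X$.

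The heart of the argument is this identity $Y\ex{a}{ab} = X$. First I would establish it pointwise: a point of $X$ contains no factor $aa$, so deleting the $b$ after each $a$ and then reinserting a $b$ after each $a$ is the identity on that point; conversely every point of $Y$ arises this way, so expanding each $a$ of $Y$ back to $ab$ returns exactly the points of $X$. The only thing to watch is the shift-closure built into both the contraction and the expansion operators: I would check that the closure introduces no extra points, using that a configuration obtained as a shifted limit of legitimate contractions or expansions must still have every $a$ followed by a $b$, hence already lies in the intended set. Points of $X$ containing no $a$ are fixed by both operations and cause no trouble.

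With $Y\ex{a}{ab} = X$ in hand, Proposition \ref{extendWithExistingSymbol} applied to $Y$ with the distinct letters $a, b \in \A(Y)$ gives $Y \FE Y\ex{a}{ab} = X$, which is the assertion. The one degenerate case is when \emph{every} $b$ of $X$ is preceded by an $a$: then $b$ disappears entirely under the contraction, so $b \notin \A(Y)$ and Proposition \ref{extendWithExistingSymbol} does not apply verbatim. In that situation the hypothesis upgrades to the biconditional of Lemma \ref{collapseTwoAlwaysConsecutiveLetters} (now $x_{i+1}=b$ exactly when $x_i=a$), and $X\ex{ab}{a}$ coincides with $X\ex{b}{\epsilon}$, so Lemma \ref{collapseTwoAlwaysConsecutiveLetters} yields $X \FE X\ex{b}{\epsilon} = Y$ directly; alternatively one simply treats $b$ as a symbol fresh to $Y$ and invokes Theorem \ref{PaSu}. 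I expect the main obstacle to be purely bookkeeping: carefully proving the equality $Y\ex{a}{ab}=X$ including its shift-closure clauses, rather than any conceptual difficulty.
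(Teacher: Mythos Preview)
Your argument is correct, but the route differs from the paper's. You invert the operation directly: set $Y=X\ex{ab}{a}$, verify the identity $Y\ex{a}{ab}=X$, and then read Proposition~\ref{extendWithExistingSymbol} backwards, with a separate treatment of the degenerate case $b\notin\A(Y)$ via Lemma~\ref{collapseTwoAlwaysConsecutiveLetters}. The paper instead avoids the case split by introducing a fresh symbol $\diamondsuit\notin\A(X)$: the $2$-block map sending $ab\mapsto\diamondsuit$ and otherwise $w_1w_2\mapsto w_2$ gives a conjugacy $\psi\colon X\to\psi(X)$, and in the image one has the full biconditional ($y_{i+1}=\diamondsuit$ iff $y_i=a$) automatically, because $\diamondsuit$ is fresh. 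Lemma~\ref{collapseTwoAlwaysConsecutiveLetters} then applies directly to $\psi(X)$, giving $X\cong\psi(X)\FE\psi(X)\ex{\diamondsuit}{\epsilon}=X\ex{ab}{a}$ in one stroke.

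The trade-off: your approach is conceptually transparent (contraction is literally the inverse of expansion) but costs you the case analysis and the bookkeeping for $Y\ex{a}{ab}=X$, including shift-closure. The paper's fresh-symbol trick sidesteps both issues at the price of one auxiliary conjugacy, and is the idiom the paper reuses elsewhere (e.g.\ in Proposition~\ref{replOverlap}).
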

\begin{proof}
	Let $\diamondsuit\not\in \A(X)$ and define a 2-block map
	\begin{equation*}
		\Psi(w_1w_2) = \begin{cases}
			\diamondsuit, & w_1w_2=ab\\
			w_2, & w_1w_2\neq ab.
		\end{cases}
	\end{equation*}
	As we saw in the proof of Proposition \ref{extendWithExistingSymbol} the sliding block code $\psi = \Psi_\infty^{[1,0]}$ is a conjugacy. The image $Y=\psi(X)$ is a shift space where for all $y\in Y$, $y_i=a$ if and only if $y_{i+1}=\diamondsuit$, so by Lemma \ref{collapseTwoAlwaysConsecutiveLetters}, 
$X\ex{ab}{a} = Y\ex{\diamondsuit}{\epsilon} \FE Y\cong X$.		
\end{proof}

\noindent
Third, we can contract words into single symbols. 

For words $u, w$ we say that $u$ is a \emph{prefix} of $w$ if we can write $w=uv$ for some word $v$. The term \emph{suffix} is defined analogously.

\begin{definition}
	Let $X$ be a shift space and $w\in B_n(X)$. The space $X$ is said to admit \emph{non-trivial $w$-overlaps} if there is a  $v\in B_m(X), n< m<2n$ with prefix and suffix $w$.
\end{definition}

\begin{definition}
	A word $w$ over some alphabet $\mathcal A$ is said to be \emph{non-overlapping}, if the shift space $\mathcal A^\Z$ does not allow non-trivial $w$-overlaps, i.e., if no proper prefix of $w$ is also a suffix of $w$.
\end{definition}

\begin{remark}
	If $w$ is non-overlapping, it follows that no shift space $X$ with $w\in B(X)$ allows non-trivial $w$-overlaps.
\end{remark}

\begin{proposition}[Johansen \cite{RuneJohansen}] \label{replOverlap} For any shift space $X$, any $w\in B(X)$ for which $X$ does not admit non-trivial $w$-overlaps, and $\diamondsuit\not\in\mathcal A(X)$ we have $X\sim_{FE} X^{w\mapsto \diamondsuit }$.
\end{proposition}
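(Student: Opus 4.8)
The plan is to induct on the length $n=\abs{w}$, using symbol contraction (Lemma \ref{standardSymbolContraction}) as the engine and the non-overlapping hypothesis precisely to guarantee that the lemma applies at each step. For the base case $n=1$ the word $w$ is a single symbol $a$, and $X^{a\mapsto\diamondsuit}$ is obtained from $X$ by the bijective relabelling $a\mapsto\diamondsuit$, a $1$-block conjugacy; hence $X\cong X^{w\mapsto\diamondsuit}$ and a fortiori $X\FE X^{w\mapsto\diamondsuit}$.

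For the inductive step, suppose $n\geq 2$ and write $a=w_1$. First I would mark the occurrences of $a$ that begin a copy of $w$. Choosing a fresh symbol $\hat a\notin\mathcal A(X)\cup\{\diamondsuit\}$, let $\phi\colon X\to X_1$ be the block code with memory $0$ and anticipation $n-1$ that sends $x_i\mapsto\hat a$ exactly when $x_{[i,i+n-1]}=w$ and fixes every other symbol. Its inverse is the $1$-block relabelling $\hat a\mapsto a$, so by Propositions \ref{imageUnderBlockCode} and \ref{bijHasInverse} the map $\phi$ is a conjugacy and $X\cong X_1$.

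The one place the hypothesis enters is the claim that in $X_1$ every $\hat a$ is immediately followed by a single fixed symbol $b$. Indeed, an $\hat a$ at position $i$ records that $x_{[i,i+n-1]}=w$; since $X$ admits no non-trivial $w$-overlaps, no copy of $w$ can begin at position $i+1$, so position $i+1$ always carries the unmarked symbol $w_2=:b$, the same for every occurrence of $\hat a$. Lemma \ref{standardSymbolContraction} then yields $X_1\FE X':=X_1^{\hat a b\mapsto\hat a}$, which deletes the symbol following each $\hat a$ and so turns each marked copy of $w$ into the word $w':=\hat a w_3\cdots w_n$ of length $n-1$. Because $\hat a$ occurs in $X'$ only as the leading letter of such a block, $w'$ is non-overlapping in $X'$, and contracting it to $\diamondsuit$ reproduces $X^{w\mapsto\diamondsuit}$ verbatim (the surrounding symbols are untouched throughout). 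Invoking the inductive hypothesis on the pair $(X',w')$ and chaining $X\cong X_1\FE X'\FE X'^{\,w'\mapsto\diamondsuit}=X^{w\mapsto\diamondsuit}$ closes the induction.

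I expect the main obstacle to be the bookkeeping supporting the last paragraph: one must check carefully that the marking conjugacy and the contraction alter only the interiors of the $w$-occurrences, leaving every other symbol and every (necessarily non-empty, by non-overlapping) gap between occurrences intact, so that $w'$ genuinely stays non-overlapping in $X'$ and $X'^{\,w'\mapsto\diamondsuit}$ is \emph{literally} $X^{w\mapsto\diamondsuit}$. Some care is also needed for degenerate configurations, such as a periodic point tiled by copies of $w$, where one should confirm that the symbol expansions and contractions behave as intended after taking the shift-closure built into the notation $X^{w\mapsto\diamondsuit}$.
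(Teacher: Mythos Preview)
Your proof is correct and follows essentially the same approach as the paper: mark the first letter of each occurrence of $w$ with a fresh symbol via a conjugacy, then repeatedly apply Lemma \ref{standardSymbolContraction} to strip off the remaining letters $w_2,\dots,w_n$. The only difference is cosmetic---the paper uses $\diamondsuit$ itself as the marker and writes the stripping as an explicit chain $X\cong X^{w\mapsto\diamondsuit w_2\dots w_n}\FE X^{w\mapsto\diamondsuit w_3\dots w_n}\FE\cdots\FE X^{w\mapsto\diamondsuit}$, whereas you introduce an auxiliary symbol $\hat a$ and wrap the same iteration in an induction on $\abs{w}$.
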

\begin{proof}
	Let $w=w_1\dots w_n$ and define an $n$-block map
	\begin{equation*}
		\Phi(e_1\dots e_n) = \begin{cases}
			\diamondsuit, & e_1\dots e_n = w\\
			e_1, & e_1\dots e_n\neq w.
		\end{cases}
	\end{equation*}
	Then $\phi = \Phi_\infty^{[0, n-1]} \colon X\to X^{w\mapsto \diamondsuit w_2\dots w_n}$ is a conjugacy the inverse of which simply replaces $\diamondsuit$ with $w_1$ everywhere. Note that this is only well defined as $X$ does not allow non-trivial $w$-overlaps. Now, $w_2$ always follows $\diamondsuit$ in $X\ex{w}{\diamondsuit w_2\dots w_n}$, so by Lemma \ref{standardSymbolContraction}, $X\ex{w}{\diamondsuit w_2\dots w_n}\FE X\ex{w}{\diamondsuit w_3\dots w_n}$. Repeating this procedure, removing a $w_i$ at a time, yields
	\begin{equation*}
		X\cong X\ex{w}{\diamondsuit w_2\dots w_n}\FE X\ex{w}{\diamondsuit w_3\dots w_n} \FE \dots \FE X\ex{w}{\diamondsuit}.
	\end{equation*}
\end{proof}
\noindent
Fourth, shift spaces being of finite type, irreducible or sofic is preserved under flow equivalence, though we omit the proof for brevity.
\begin{proposition}[Johansen \cite{RuneJohansen}]
If the shift space $X$ is of finite type, irreducible, or sofic, and $Y\FE X$, then $Y$ is of finite type, irreducible, or sofic, respectively.
\end{proposition}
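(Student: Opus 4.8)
The plan is to reduce to the generators of flow equivalence. By Theorem~\ref{PaSu}, $Y \FE X$ means there is a finite chain $X = X_0, X_1, \dots, X_n = Y$ in which consecutive terms are either conjugate or related by a single symbol expansion. Each of the three properties is a property of an individual shift space, so by transitivity along the chain it suffices to prove that \emph{each} property passes between $Z$ and $Z^{a\mapsto a\diamondsuit}$ for any shift space $Z$, any $a\in\A(Z)$, and any $\diamondsuit\notin\A(Z)$, and that it is preserved by a single conjugacy. I would therefore treat the three properties separately, and within each dispose of the conjugacy case and then the expansion case in both directions.

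Conjugacy. For finite type this is exactly the theorem stated earlier, so nothing is needed. For sofic shifts a conjugacy is in particular a factor code, so the closure of the sofic class under factor codes (the corollary following Proposition~\ref{factorOfFinite}) gives the result at once. For irreducibility I would argue directly: given a conjugacy $\phi\colon Z\to Z'$ with $Z$ irreducible and words $u,v\in B(Z')$, I choose points of $Z'$ witnessing $u$ and $v$, pull them back through $\phi^{-1}$ to points of $Z$, read off words $\bar u,\bar v\in B(Z)$, glue them with a connecting word by irreducibility of $Z$, and push the resulting point forward through $\phi$. The only care needed is bookkeeping of the finitely many coordinates lost to the memory and anticipation of $\phi$ and $\phi^{-1}$ at the seams, which can be absorbed into the connecting word.

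Symbol expansion. Here I would exploit the explicit descriptions already available. For finite type, if $Z=\X_\F$ with $\F$ finite, then Proposition~\ref{symbolExpansionYieldsShiftSpace} exhibits the forbidden set $\F^{a\mapsto a\diamondsuit}\cup\{b\diamondsuit\mid b\in\A(Z)\setminus\{a\}\}\cup\{\diamondsuit\diamondsuit\}$ for $Z^{a\mapsto a\diamondsuit}$, which is again finite; conversely, if $Z^{a\mapsto a\diamondsuit}$ is $M$-step, then each length-$(M+1)$ window of the expanded point $\mathcal T(z)$ is the expansion of a chunk of $z$ of length at most $M+1$, so membership in $Z$ is governed by windows of bounded length and $Z$ is of finite type as well. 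For sofic shifts I would work with a presentation $(G,\Lab)$ of $Z$ and subdivide every edge labelled $a$ into a length-two path labelled $a$ then $\diamondsuit$, producing a presentation of $Z^{a\mapsto a\diamondsuit}$; reversing this contraction recovers a presentation of $Z$. For irreducibility, a connecting word in $Z$ expands to a connecting word in $Z^{a\mapsto a\diamondsuit}$ and contracts back, since $a\mapsto a\diamondsuit$ is a bijection on points compatible with concatenation.

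The main obstacle I anticipate lies in the reverse (contraction) directions and in the irreducibility-under-conjugacy step, because symbol contraction is not a sliding block code: it changes the rate at which symbols are read, so none of the earlier conjugacy machinery transfers verbatim. The cleanest way around this is the window-length argument for finite type and the edge-subdivision/contraction picture for sofic shifts, both of which convert the rate change into a purely local, bounded modification of words or of the presenting graph. Once these local modifications are controlled, all three invariances follow by the chain argument.
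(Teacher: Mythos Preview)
The paper does not give a proof of this proposition at all; it simply states the result and cites Johansen, omitting the argument ``for brevity.'' So there is nothing in the paper to compare against directly.

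Your approach is the natural one and is correct: reduce via Theorem~\ref{PaSu} to a single conjugacy and a single symbol expansion, then check each of the three properties passes in both directions. The conjugacy cases are handled by results already in the paper (the finite-type theorem, closure of sofic shifts under factor codes, and a direct glue-and-push argument for irreducibility). For symbol expansion, the explicit forbidden set from Proposition~\ref{symbolExpansionYieldsShiftSpace} and the edge-subdivision picture for presentations are exactly the right tools. You have also correctly identified where the only real care is needed, namely the contraction direction, and your window-length bound and graph-contraction arguments are sound. This is essentially how the proof in the cited reference proceeds.
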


\section{Classification of shifts of finite type}
Arguably, the most important classification result on flow equivalence is due to John Franks \cite{Franks} who completely classified the irreducible shifts of finite type. His paper from 1984 is based on works by Bowen-Franks and Parry-Sullivan and presents a complete and easily computable invariant of flow equivalence on the matrices of edge shifts. We will introduce the invariant and show its necessity. The proof of the sufficiency of the invariant, which is the content of Franks' paper from 1984, is mostly based on matrix manipulation and will not be detailed here.

The proofs of this section are inspired by material from Lind and Marcus \cite{LM}, Parry and sullivan \cite{ParrySullivan}, and Franks \cite{Franks}.
\subsection{The signed Bowen-Franks group}
The invariant introduced by Franks is the signed Bowen-Franks group, which is an augmentation of the Bowen-Franks group.

\begin{definition}
	Let $A$ be an $n\times n$ integer matrix with non-negative entries. The \emph{Bowen-Franks group} of $A$ is given by the quotient
	\begin{equation*}
		\BF(A) = \Z^n/(I-A)\Z^n.
	\end{equation*}
\end{definition}
\noindent
The Bowen-Franks group can be computed quite easily with elementary linear algebra. For the square integer matrix $A$, we consider the \emph{elementary operations over $\Z$} on $A$ to be the following:
\begin{enumerate}
	\item Exchanging two rows or columns of $A$.
	\item Multiplying a row or  column of $A$ by $-1$.
	\item Adding an integer multiple of a row or column of $A$ to another row or column, respectively, of $A$.
\end{enumerate}
Every such elementary operation over $\Z$ has an \emph{elementary matrix} representing it. An elementary matrix is a square integer matrix, which performs an elementary operation on a matrix if multiplied from either the left or the right. Since every elementary operation has an inverse elementary operation, all elementary matrices are invertible over $\Z$.

We recall that the Smith normal form of $A$ is the unique diagonal matrix, reachable from $A$ by elementary operations over $\Z$, of the form
\begin{equation}\label{smith}
	D = \begin{pmatrix}
		d_1&\cdots&0\\
		 \vdots&\ddots &\vdots\\
		0&\cdots&d_n
	\end{pmatrix},
\end{equation}
	where $d_i\geq 0$ and $d_i \mid d_{i+1}$ for all $i$ with the convention that every integer divides zero and no positive integer divides zero. It turns out that we can derive the Bowen-Franks group of a matrix from its  Smith normal form.

\begin{lemma}\label{isomorphism}
	Let $B$ be an $n\times n$ integer matrix and $E$ an elementary matrix. Then
	\begin{equation*}
		\Z^n/B\Z^n \simeq \Z^n/(BE\Z^n) \simeq \Z^n/(EB\Z^n).
	\end{equation*}
\end{lemma}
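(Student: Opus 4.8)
The plan is to exploit the single most important property of elementary matrices recorded just above the statement: every elementary matrix $E$ is invertible over $\Z$, meaning that $E^{-1}$ again has integer entries. Consequently the map $x\mapsto Ex$ is a group automorphism of $\Z^n$, and in particular $E\Z^n=\Z^n$. I would treat the two claimed isomorphisms separately, as they are of slightly different character.

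For the first isomorphism I would observe that it is in fact an equality of subgroups. Since $E$ is a bijection of $\Z^n$ onto itself, as $x$ ranges over $\Z^n$ the vector $Ex$ ranges over all of $\Z^n$; hence $BE\Z^n=B(E\Z^n)=B\Z^n$. The two quotients $\Z^n/B\Z^n$ and $\Z^n/BE\Z^n$ are therefore literally the same group, and the isomorphism is immediate.

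For the second isomorphism I would use $E$ as an automorphism directly. The natural candidate is the map $\bar E\colon \Z^n/B\Z^n\to \Z^n/EB\Z^n$ defined by $\bar E(x+B\Z^n)=Ex+EB\Z^n$. I would check that it is well defined (if $x-x'\in B\Z^n$ then $E(x-x')\in EB\Z^n$), that it is a homomorphism (clear from linearity of $E$), that it is surjective (because $E$ is surjective on $\Z^n$), and that it is injective (if $Ex\in EB\Z^n=E(B\Z^n)$, then applying $E^{-1}$ gives $x\in B\Z^n$, where invertibility of $E$ over $\Z$ is exactly what legitimizes this step). These four checks together yield the desired isomorphism.

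There is no genuine obstacle here; the only point requiring care is to keep straight the distinction between $EB\Z^n=E(B\Z^n)$, the image of the subgroup $B\Z^n$ under the automorphism $E$, and $BE\Z^n=B(E\Z^n)=B\Z^n$, which collapses because $E$ merely permutes $\Z^n$. Both facts hinge only on invertibility of $E$ over $\Z$. The lemma will then serve to compute the Bowen–Franks group from the Smith normal form: since elementary operations over $\Z$ correspond to left- or right-multiplication by elementary matrices, this lemma guarantees that no such operation alters the isomorphism type of $\Z^n/A\Z^n$, so $\BF(A)$ can be read off the diagonal form \eqref{smith}.
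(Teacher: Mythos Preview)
Your proof is correct and follows essentially the same approach as the paper: for the first isomorphism you observe $E\Z^n=\Z^n$ so that $BE\Z^n=B\Z^n$ and the quotients are literally equal, and for the second you use the automorphism $v\mapsto Ev$ of $\Z^n$ to induce an isomorphism on the quotients. The paper's version is slightly more terse, writing the second step as $\Z^n/(EB\Z^n)=\varphi(\Z^n)/\varphi(B\Z^n)\simeq \Z^n/B\Z^n$ rather than verifying the four properties of the induced map explicitly, but the content is the same.
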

\begin{proof}
	Since $E$ is invertible over $\Z$, $E\Z^n = \Z^n$, so $\Z^n/(B\Z^n) = \Z^n/(BE\Z^n)$. Further, we have an isomorphism $\varphi\colon \Z^n \to \Z^n$ given by $\varphi(v) = Ev$, which yields
	\begin{equation*}
		\Z^n/(EB\Z^n) = \varphi(\Z^n)/\varphi(B\Z^n) \simeq \Z^n/B\Z^n.
	\end{equation*}
\end{proof}

\begin{proposition}
If $I-A$ has Smith normal form $D$ written as \eqref{smith}, then 
\begin{equation*}
	\BF(A) \simeq \Z_{d_1}\oplus \dots \oplus \Z_{d_n},
\end{equation*}
where $\Z_0=\Z$ and $\Z_n = \Z/n\Z, n\neq 0$.
\end{proposition}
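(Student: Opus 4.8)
The plan is to reduce the Bowen-Franks group computation to the Smith normal form of $I-A$ by exploiting the isomorphism established in Lemma \ref{isomorphism}. The key observation is that the Smith normal form $D$ is obtained from $I-A$ by a sequence of elementary operations over $\Z$, each of which corresponds to multiplication by an elementary matrix on the left or right. Since there exist invertible integer matrices $P$ and $Q$ (each a product of elementary matrices) such that $D = P(I-A)Q$, the lemma lets me conclude that the quotients $\Z^n/(I-A)\Z^n$ and $\Z^n/D\Z^n$ are isomorphic.

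First I would write the passage from $I-A$ to $D$ as a finite sequence of elementary operations, so that $D = E_k \cdots E_1 (I-A) F_1 \cdots F_\ell$ for elementary matrices $E_i, F_j$. Then I would apply Lemma \ref{isomorphism} repeatedly, peeling off one elementary matrix at a time from either side, to obtain the chain of isomorphisms
\begin{equation*}
	\BF(A) = \Z^n/(I-A)\Z^n \simeq \Z^n/D\Z^n.
\end{equation*}
This is the heart of the argument and follows immediately from induction on the number of elementary operations, using that the lemma handles a single elementary matrix on either side.

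It then remains to compute $\Z^n/D\Z^n$ directly, which is straightforward since $D$ is diagonal with entries $d_1, \dots, d_n$. Because $D\Z^n = d_1\Z \oplus \dots \oplus d_n\Z$ as a subgroup of $\Z^n$, the quotient splits as a direct sum coordinatewise, giving
\begin{equation*}
	\Z^n/D\Z^n \simeq (\Z/d_1\Z) \oplus \dots \oplus (\Z/d_n\Z) = \Z_{d_1}\oplus \dots \oplus \Z_{d_n},
\end{equation*}
where the convention $\Z_0 = \Z$ correctly accounts for any zero diagonal entries (since $0\cdot\Z = \{0\}$ yields the quotient $\Z$).

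The main obstacle, such as it is, is mostly bookkeeping rather than conceptual: I must be careful that Lemma \ref{isomorphism} as stated applies to a general integer matrix $B$ multiplied by a \emph{single} elementary matrix, so the induction requires that at each intermediate stage the matrix remains an integer matrix. This is automatic because elementary matrices have integer entries and the product of integer matrices is integer. A minor subtlety worth flagging is ensuring the isomorphism type of $\Z^n/D\Z^n$ is genuinely independent of the order in which the diagonal entries appear, but this is guaranteed by the uniqueness of the Smith normal form together with the divisibility condition $d_i \mid d_{i+1}$, so no additional work is needed.
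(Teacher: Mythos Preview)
Your proposal is correct and follows essentially the same approach as the paper: use Lemma \ref{isomorphism} repeatedly to pass from $\Z^n/(I-A)\Z^n$ to $\Z^n/D\Z^n$, and then read off the direct sum decomposition from the diagonal form of $D$. The paper is terser about the induction step but the argument is identical.
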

\begin{proof}
	Since $D$ is reachable from $I-A$ by elementary operations, Lemma \ref{isomorphism} yields
	\begin{equation*}
		\Z^n/(I-A)\Z^n\simeq \Z^n/D\Z^n.
	\end{equation*}
	The proposition now follows, as $D\Z^n = \{ (a_1 d_1, \dots, a_n d_n)\in \Z^n \mid a_i\in \Z \}$.
\end{proof}

\noindent
Sadly, the Bowen-Franks group does not constitute a complete invariant for flow equivalence, so we introduce an additional component.

\begin{definition}
	The \emph{signed Bowen-Franks group} is given by the pair
	\begin{equation*}
		\BF_+(A) = (\sign \det(I-A), \BF(A)).
	\end{equation*}
	We write $\BF_+(A)\simeq \BF_+(B)$ if $\sign \det(I-A)=\sign \det(I-B)$ and $\BF(A)\simeq \BF(B)$.
\end{definition}
\noindent
Actually, the complete invariant is given by the determinant $\det(I-A)$ and the Bowen-Franks group $\BF(A)$, but $\abs{\det(I-A)}$ can be extracted from $\BF(A)$ since $\abs{\det(I-A)}=\abs{\det(D)}$, where $D$ is the Smith normal form of $I-A$. Thus, only the sign of the determinant is necessary for the complete invariant. 

\subsection{Franks' result}
We say that the matrix $A$ is irreducible if the shift space $\X_A$ is irreducible. The result by Franks now reads.
\begin{theorem}[Franks \cite{Franks}]
	Suppose that $A$ and $B$ are non-negative irreducible integer matrices such that neither $\X_A$ nor $\X_B$ is a single orbit. Then $\X_A\FE \X_B$ if and only if $\BF_+(A) \simeq \BF_+(B)$.
\end{theorem}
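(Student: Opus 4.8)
The plan is to prove only the necessity of the invariant, namely that $\X_A\FE\X_B$ forces $\BF_+(A)\simeq\BF_+(B)$; the converse is the substantial content of Franks' paper, which I would cite rather than reprove, remarking that it proceeds by reducing any two matrices sharing the invariant to one another through an explicit sequence of integer matrix moves and that the hypothesis excluding single orbits is used there. By Theorem \ref{PaSu} the relation $\X_A\FE\X_B$ means that $\X_A$ and $\X_B$ are joined by a finite chain of shift spaces in which consecutive terms are either conjugate or related by a symbol expansion. Each link is itself a flow equivalence, so by transitivity every space in the chain is flow equivalent to the shift of finite type $\X_A$; since being of finite type is preserved under flow equivalence, every space in the chain is a shift of finite type and hence, by Theorem \ref{finiteTypeIsGraph}, conjugate to an edge shift. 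It therefore suffices to prove that $\BF_+$ is unchanged under (a) conjugacy and (b) a single symbol expansion, where (a) in particular makes $\BF_+$ a well-defined invariant of each space in the chain, independent of the chosen edge presentation.

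For (a) I would use Williams' theorem to translate $\X_A\cong\X_B$ into strong shift equivalence $A\approx B$, and since this is a chain of elementary equivalences it is enough to take $A=RS$ and $B=SR$ for non-negative integer matrices $R,S$ of sizes $k\times\ell$ and $\ell\times k$. Sylvester's determinant identity gives $\det(I_k-RS)=\det(I_\ell-SR)$, so the signs agree. For the groups, set
\begin{equation*}
 N=\begin{pmatrix} I_k & R\\ S & I_\ell\end{pmatrix}.
\end{equation*}
Left and right multiplication of $N$ by the unimodular integer matrices $\begin{pmatrix} I_k & -R\\ 0 & I_\ell\end{pmatrix}$ and $\begin{pmatrix} I_k & 0\\ -S & I_\ell\end{pmatrix}$ (products of elementary matrices over $\Z$) reduces it to $\operatorname{diag}(I_k-RS,\,I_\ell)$ on one hand and to $\operatorname{diag}(I_k,\,I_\ell-SR)$ on the other. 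Since left or right multiplication by a unimodular integer matrix leaves $\Z^n/M\Z^n$ unchanged up to isomorphism — exactly the argument of Lemma \ref{isomorphism}, whose proof only uses invertibility over $\Z$ — both reduced forms have cokernel isomorphic to that of $N$, whence
\begin{equation*}
 \BF(A)=\Z^k/(I_k-RS)\Z^k \simeq \Z^{k+\ell}/N\Z^{k+\ell} \simeq \Z^\ell/(I_\ell-SR)\Z^\ell=\BF(B).
\end{equation*}

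For (b) I would realise the symbol expansion on an edge shift: if the expansion is applied to an edge $e\colon i\to j$ of a graph $G$ with $\X_A\cong\X_G$, then $\X_G^{e\mapsto e\diamondsuit}$ is the edge shift $\X_{G'}$ of the graph $G'$ obtained by subdividing $e$ with a new vertex. Writing the new vertex last, the adjacency matrix $A'$ of $G'$ satisfies
\begin{equation*}
 I_{n+1}-A'=\begin{pmatrix} (I_n-A)+e_ie_j^{\top} & -e_i\\ -e_j^{\top} & 1\end{pmatrix},
\end{equation*}
with $e_i,e_j$ standard basis vectors. Adding the last row to the $i$-th row clears both the last column and the rank-one term, giving $\begin{pmatrix} I_n-A & 0\\ -e_j^{\top} & 1\end{pmatrix}$, and a single column operation clears $-e_j^{\top}$, leaving $\operatorname{diag}(I_n-A,\,1)$. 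Hence $\det(I_{n+1}-A')=\det(I_n-A)$ and the cokernel is $\BF(A)\oplus 0\simeq\BF(A)$, so $\BF_+(A')=\BF_+(A)$.

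The main obstacle is the passage between the intrinsic notion of symbol expansion and the graph-theoretic subdivision used in (b): in the Parry--Sullivan chain a symbol expansion is performed on a symbol of whatever alphabet the current abstract shift of finite type carries, whereas the computation above presumes the expanded symbol is an edge of a chosen edge presentation. Making this reduction precise — exhibiting, for a symbol expansion of an arbitrary shift of finite type, edge presentations of the shift and of its expansion that differ by a single subdivision (or, when the expanded symbol is a state of a higher-step presentation, by the corresponding vertex-splitting move, for which the identical determinant-and-cokernel computation applies) — is the delicate bookkeeping. By contrast the determinant and cokernel computations are routine once the matrix moves are correctly identified; the genuine mathematical content lies in the two cokernel isomorphisms, each an instance of the principle isolated in Lemma \ref{isomorphism} that unimodular changes of basis do not affect $\Z^n/M\Z^n$.
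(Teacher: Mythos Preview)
Your proposal is correct and follows the same overall strategy as the paper: prove only the necessity (citing Franks for sufficiency), reduce via Parry--Sullivan and Williams to the two generating moves, and check that both $\det(I-A)$ and $\BF(A)$ are preserved under elementary equivalence and under symbol expansion. Your computation for symbol expansion is identical in substance to the paper's (the paper writes out $\bar A$ explicitly rather than using $e_ie_j^{\top}$ notation), and both use Sylvester for the determinant under elementary equivalence.

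The one genuine tactical difference is in the $\BF$-invariance under elementary equivalence $A=RS$, $B=SR$: the paper builds explicit homomorphisms $\hat S\colon\BF(A)\to\BF(B)$ and $\hat R\colon\BF(B)\to\BF(A)$ by $v\mapsto Sv$ and $v\mapsto Rv$ and checks that $\hat R\hat S$ and $\hat S\hat R$ are the identity modulo the relevant subgroups, whereas you pass through the auxiliary block matrix $N$ and reduce it by unimodular operations to both $\operatorname{diag}(I_k-RS,I_\ell)$ and $\operatorname{diag}(I_k,I_\ell-SR)$. Both arguments are short and standard; yours has the advantage of reusing the single principle of Lemma~\ref{isomorphism} throughout, while the paper's is perhaps more direct. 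As for your ``main obstacle'' --- the passage from an abstract symbol expansion to an edge subdivision --- the paper handles this cleanly in Lemma~\ref{matrixReprOfExpansion}, which simply observes that every symbol expansion of an edge shift is of the form $\X_A\mapsto\X_{\bar A}$ and that $\simequiv$ together with $\sim_{\text{SE}}$ generate flow equivalence on the matrix level; this dissolves the bookkeeping you were worried about.
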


\begin{example}\label{fullShiftsAreDifferent}
	Let $r>1$ be an integer. The full $r$-shift $\X_{[r]}$ has matrix representation $A_r = \left(r\right)$, we have $\det(I-A_r) = 1-r$, and $(r-1)$ is the Smith normal form of $I-A_r$. So the signed Bowen-Franks group of $A_r$ is
	\begin{equation*}
		\BF_+(A_r) \simeq (-, \Z_{r-1}).
	\end{equation*}
	
	It follows that for two different integers $r,s>1$, $\X_{[r]}\not\FE \X_{[s]}$. Thus, two full shifts are flow equivalent if and only if they are conjugate.
\end{example}
\noindent
As the above example illustrates, the signed Bowen-Franks group is a very convenient invariant as it is easily computed given two matrices. We will now show the necessity of the invariant.
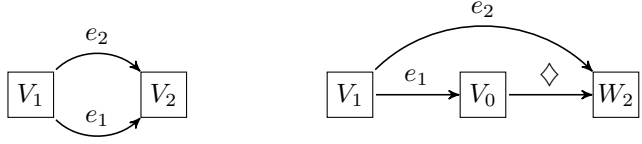
\begin{figure}
\begin{center}
\begin{tikzpicture}
  [bend angle=45,
   knude/.style = {circle, inner sep = 0pt},
   vertex/.style = {circle, draw, minimum size = 1 mm, inner sep =
      0pt,},
   textVertex/.style = {rectangle, draw, minimum size = 6 mm, inner sep =
      1pt},
   to/.style = {->, shorten <= 1 pt, >=stealth', semithick}, scale=0.7]
  
  % Definition af knuder
  \node[knude] (trans) at (6,0) {} ;
  
  \node[textVertex] (V1) at (0,0) {$V_1$};
  \node[textVertex] (V2) at (2.5,0) {$V_2$};

  \node[textVertex] (P1) at ($(V1)+(trans)$) {$V_1$};
  \node[textVertex] (P2) at ($(2.5,0)+(V1)+(trans)$) {$V_0$};
  \node[textVertex] (W1) at ($(5,0)+(V1)+(trans)$) {$W_2$};

  % Edges in the Fischer cover
  \draw[to, bend right=45] (V1) to node[auto] {$e_1$} (V2);
  \draw[to, bend left=45] (V1) to node[auto] {$e_2$} (V2);

  % Edges in the Kriger cover
  \draw[to] (P1) to node[auto] {$e_1$} (P2);
  \draw[to] (P2) to node[auto] {$\diamondsuit$} (W1);
  \draw[to, bend left = 45] (P1) to node[auto] {$e_2$} (W1);

\end{tikzpicture}
\caption{Illustration of a symbol expansion of a shift of finite type.}\label{figure:expansion}
\end{center}
\vspace{-0.5cm}
\end{figure}
\begin{lemma}\label{matrixReprOfExpansion}
	For a non-negative integer matrix $A=(a_{ij})$ with $a_{kl}>0$ we define
	\begin{equation*}
		\bar{A} = \begin{pmatrix}
		  			  	0&0 &\cdots&1&\cdots & 0\\
		  			  	0& a_{11} & \cdots & a_{1l}&\cdots & a_{1n}\\
		  			  	\vdots & \vdots & & \vdots && \vdots \\
		  			  	1 & a_{k1} & \cdots & a_{kl}-1 &\cdots & a_{kn}\\
		  			  	\vdots & \vdots & & \vdots && \vdots \\ 
		  			  	0& a_{n1} & \cdots & a_{nl}&\cdots & a_{nn}\\
					  \end{pmatrix}.
	\end{equation*}
	Write $X\sim_{\text{SE}} Y$ if $X=\bar Y$ or $Y=\bar X$. Flow equivalence is generated by the relations $\sim_{\text{SE}}$ and $\simequiv$.
\end{lemma}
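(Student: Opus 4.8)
The plan is to show that the equivalence relation generated by $\simequiv$ and $\sim_{\text{SE}}$ coincides, on pairs of non-negative integer matrices, with the relation $\X_A \FE \X_B$; this requires two inclusions.

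For the inclusion ``generated $\Rightarrow$ flow equivalent'', first suppose $A \simequiv B$. This is precisely a strong shift equivalence of lag $1$, so $\X_A \cong \X_B$ by the theorem of Williams, and in particular $\X_A \FE \X_B$. Next suppose $B = \bar A$ for a distinguished entry $a_{kl} > 0$. Choosing an edge $e$ from vertex $k$ to vertex $l$ in the graph of $A$, I would check directly from the displayed form of $\bar A$ that $\bar A$ is the adjacency matrix of the graph obtained by subdividing $e$ with a single new vertex, as in Figure \ref{figure:expansion}; hence $\X_{\bar A} = \X_A\ex{e}{e\diamondsuit}$ is a symbol expansion of $\X_A$, and $\X_A \FE \X_{\bar A}$ by Theorem \ref{PaSu}. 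As $\FE$ is an equivalence relation, it contains the relation generated by $\simequiv$ and $\sim_{\text{SE}}$.

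For the reverse inclusion, assume $\X_A \FE \X_B$. Theorem \ref{PaSu} provides a chain $X_0 = \X_A, X_1, \dots, X_n = \X_B$ whose consecutive terms are either conjugate or related by a symbol expansion. Every $X_i$ is of finite type, since finite type is preserved under flow equivalence, and hence conjugate to an edge shift $\X_{A_i}$ by Theorem \ref{finiteTypeIsGraph}, where we take $A_0 = A$ and $A_n = B$. For a conjugacy step $X_i \cong X_{i+1}$ we get $\X_{A_i} \cong \X_{A_{i+1}}$, so $A_i \approx A_{i+1}$ by Williams' theorem, and a strong shift equivalence is by definition a finite chain of elementary equivalences $\simequiv$. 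Thus conjugacy steps are accounted for, and it remains to treat the symbol-expansion steps.

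The hard part is precisely the symbol-expansion step $X_{i+1} = X_i\ex{a}{a\diamondsuit}$: the letter $a$ lives in the abstract alphabet of $X_i$ and, after passing to an edge presentation, generally corresponds to several edges, while a single $\sim_{\text{SE}}$ move subdivides only one. To handle this I would fix the concrete edge presentation $X_i \cong X_i^{[M+1]}$ with edge alphabet $B_{M+1}(X_i)$, in which each edge carries a well-defined leading letter, and then apply one $\sim_{\text{SE}}$ move to each of the finitely many edges whose leading letter is $a$. Since inserting a fresh symbol after each traversal of such an edge is exactly what appending $\diamondsuit$ after each occurrence of $a$ does, the resulting edge shift is conjugate to $X_{i+1}$, the distinct subdividing symbols being merged into a single $\diamondsuit$ by a $1$-block recoding; this last conjugacy is again absorbed into a chain of $\simequiv$ via Williams' theorem. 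This exhibits $A_i$ and $A_{i+1}$ as joined by $\sim_{\text{SE}}$ and $\simequiv$ moves, and gluing the steps along the chain (any two edge presentations of one $X_i$ being strong shift equivalent, hence connected by elementary equivalences) connects $A$ to $B$, proving the inclusion.
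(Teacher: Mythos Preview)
Your proof is correct and follows the same overall strategy as the paper's: both directions rest on Parry--Sullivan (Theorem~\ref{PaSu}) together with Williams' theorem, identifying conjugacy with chains of elementary equivalences and symbol expansion with the $A\mapsto\bar A$ move. The paper's own proof is extremely terse---it simply observes that $\X_{\bar A}$ is a symbol expansion of $\X_A$, asserts that ``every symbol expansion can be represented in that way,'' and then invokes Parry--Sullivan and the fact that strong shift equivalence is generated by elementary equivalences.

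What you add, and what the paper suppresses, is precisely the point you flag as ``the hard part'': in the Parry--Sullivan chain the intermediate $X_i$ need not be edge shifts, so a symbol expansion $X_i\mapsto X_i^{a\mapsto a\diamondsuit}$ does not literally correspond to a single $\sim_{\text{SE}}$ move on a matrix. The paper's sentence ``every symbol expansion can be represented in that way'' is only literally true when the shift being expanded is already an edge shift (one symbol equals one edge). Your device---passing to the higher-block presentation $X_i^{[M+1]}$, subdividing every edge whose leading letter is $a$ by separate $\sim_{\text{SE}}$ moves, and then absorbing the resulting conjugacy with $X_{i+1}$ into a chain of $\simequiv$ via Williams---is exactly the argument needed to make that sentence rigorous. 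So your proof is not a different route but rather a fleshed-out version of the paper's sketch; the extra care buys you an honest reduction of an arbitrary symbol expansion to matrix moves, which the paper leaves to the reader.
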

\begin{proof}
	As Figure \ref{figure:expansion} illustrates, the edge shift $\X_{\bar A}$ is a symbol expansion of $\X_A$ and every symbol expansion can be represented in that way. Further, strong shift equivalence is generated by elementary equivalences, so the result follows as flow equivalence is generated by strong shift equivalence and symbol expansion.
\end{proof}
\begin{proposition}
	If $A, B$ are non-negative integer matrices with $\X_A \FE \X_B$, then $\BF(A)\simeq \BF(B)$.
\end{proposition}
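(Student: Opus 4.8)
The plan is to lean on Lemma~\ref{matrixReprOfExpansion}, which says that flow equivalence of edge shifts is generated by elementary equivalence $\simequiv$ and the symbol-expansion relation $A \mapsto \bar A$. Thus if $\X_A \FE \X_B$ there is a finite chain of matrices from $A$ to $B$ in which consecutive matrices are related by one of these two moves, and it suffices to prove that the isomorphism class of $\BF$ is preserved by each move. In both cases I would use the same mechanism: reduce the relevant integer matrix by elementary operations over $\Z$ to a block-diagonal matrix of the shape $\mathrm{diag}(I_k, I-A)$, then apply Lemma~\ref{isomorphism} (iterated across the operations) together with the fact that $\Z^k/I_k\Z^k = 0$, so that the cokernel of the reduced matrix is precisely $\BF(A)$.

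For the symbol-expansion move I would index the new coordinate first and write
\begin{equation*}
  I - \bar A = \begin{pmatrix} 1 & -e_l^{\top} \\ -e_k & I - A + E_{kl} \end{pmatrix},
\end{equation*}
where $e_k, e_l$ are standard basis (column) vectors and $E_{kl}$ is the corresponding matrix unit. The unit in the top-left corner does all the work: adding the top row to block-row $k$ clears the $-1$ in the first column and simultaneously cancels the stray $E_{kl}$ term in that row, and then adding the first column to column $l$ clears the first row. These are all type-$3$ elementary operations over $\Z$, and they leave $\mathrm{diag}(1, I-A)$, whence $\BF(\bar A) \simeq \BF(A)$ by the reduction described above.

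For the elementary-equivalence move, suppose $A = RS$ and $B = SR$ with $R$ an $n\times m$ and $S$ an $m\times n$ non-negative integer matrix, so $A$ and $B$ need not have the same size. Here I would pad with identity blocks and verify directly that
\begin{equation*}
  \begin{pmatrix} I_n - A & 0 \\ 0 & I_m \end{pmatrix} \quad\text{and}\quad \begin{pmatrix} I_n & 0 \\ 0 & I_m - B \end{pmatrix}
\end{equation*}
are carried to one another by the block matrices $\left(\begin{smallmatrix} I & \pm R \\ 0 & I \end{smallmatrix}\right)$ and $\left(\begin{smallmatrix} I & 0 \\ \pm S & I \end{smallmatrix}\right)$: multiplying on the appropriate sides first collapses the left-hand matrix to $\left(\begin{smallmatrix} I & R \\ S & I \end{smallmatrix}\right)$ and then to the right-hand one, using $I - RS = I - A$ and $I - SR = I - B$. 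Each such block matrix is a product of type-$3$ elementary matrices over $\Z$, so Lemma~\ref{isomorphism} shows the two padded cokernels are isomorphic; since the identity blocks contribute trivial cokernels, this reads $\BF(A) \simeq \BF(B)$.

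The main obstacle is this elementary-equivalence case. Because $A$ and $B$ may have different dimensions, one cannot directly compare $\Z^n/(I-A)\Z^n$ with $\Z^m/(I-B)\Z^m$, and the essential point is to pad both matrices with identity blocks and produce the explicit element of $GL_{n+m}(\Z)$ realizing $(I-RS)\oplus I_m$ and $I_n\oplus(I-SR)$ as equivalent over $\Z$. Once both invariance statements are in hand, finitely iterating them along the chain supplied by Lemma~\ref{matrixReprOfExpansion} yields $\BF(A) \simeq \BF(B)$.
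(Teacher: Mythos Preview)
Your proof is correct. The overall strategy and the symbol-expansion case match the paper's argument essentially line for line: reduce to the two generating moves via Lemma~\ref{matrixReprOfExpansion}, and for $\bar A$ clear the unit corner by row/column operations to reach $\mathrm{diag}(1,I-A)$.

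The elementary-equivalence case, however, is handled differently. The paper does not pad with identity blocks; instead it writes down the explicit group homomorphisms $\hat S\colon \BF(A)\to\BF(B)$ and $\hat R\colon \BF(B)\to\BF(A)$ induced by multiplication by $S$ and $R$, checks they are well defined using $S(I-A)=(I-B)S$, and observes that $\hat R\circ\hat S$ is multiplication by $A$, which acts as the identity on $\BF(A)$ since $Av-v\in(I-A)\Z^n$. Your route---showing that $(I-A)\oplus I_m$ and $I_n\oplus(I-B)$ are $GL_{n+m}(\Z)$-equivalent via the block-triangular matrices in $R$ and $S$---is the Smith-normal-form version of the same phenomenon. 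It has the virtue of reusing exactly the same mechanism (Lemma~\ref{isomorphism}) that you used for the $\bar A$ case, making the whole proof more uniform; the paper's version, by contrast, hands you the isomorphism explicitly without any padding or change of ambient group. Both are standard and equally rigorous.
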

\begin{proof}
	We show separately that the Bowen-Franks group is invariant up to isomorphism for elementary equivalences and for symbol expansions. As seen in Lemma \ref{matrixReprOfExpansion} these relations generate flow equivalence so the result then follows.
	
	Let $A$ and $B$ be two elementary equivalent integer matrices with $A=RS$ and $B=SR$ for non-negative integer matrices $R, S$, and let $m$ and $n$ be the size of $A$ and $B$, respectively. Then for any $v\in \Z^m$ we have
	\begin{equation*}
		S(I_m-A)v = Sv-SRSv = Sv-BSv = (I_n-B)Sv,
	\end{equation*}
	so $S(I_m-A)\Z^m \subseteq (I_n-B)\Z^n$. Hence, the map $\hat S\colon \BF(A)\to \BF(B)$ given by 
	\begin{equation*}
		\hat S(v+ (I_m-A)\Z^m) ) = Sv + (I_n-B)\Z^n
	\end{equation*}
	for $v\in \Z^m$ is a well-defined group homomorphism. Similarly, we have a group homomorphism $\hat R\colon \BF(B)\to \BF(A)$ given by 
	$\hat R(v+(I_n-B)\Z^n) = Rv+(I_m-A)\Z^m$
	 for $v\in \Z^n$. The homomorphism $\hat A = \hat R \circ \hat S $ given by 
	 $\hat A(v+(I_m-A)\Z^m) = Av + (I_m-A)\Z^m$
	  is now the identity, since for every $v\in \Z^m$,
	\begin{align*} 
		\hat A (v+(I_m-A)\Z^m)-(v+(I_m-A)\Z^m) &= (A-I_m)v + (I_m-A)\Z^m \\&= (I_m-A)\Z^m.
	\end{align*}
	Similarly, $\hat B = \hat S \circ \hat R$ is the identity on $\BF(B)$, so $\hat R$ and $\hat S$ are isomorphisms and we get $\BF(A)\simeq \BF(B)$.
	
	For symbol expansion, we see that we can alter $I-\bar A$ by one row and one column operations to
	\begin{align*}
	I-\bar{A} &=\begin{pmatrix}
		  			  	1&0 &\cdots&-1&\cdots & 0\\
		  			  	0& 1-a_{11} & \cdots & -a_{1l}&\cdots & -a_{1n}\\
		  			  	\vdots &  \vdots& & \vdots && \vdots \\
		  			  	-1 & -a_{k1} & \cdots & -a_{kl}+1 &\cdots & -a_{kn}\\
		  			  	\vdots & \vdots & & \vdots && \vdots \\ 
		  			  	0& -a_{n1} & \cdots & -a_{nl}&\cdots & 1-a_{nn}\\
					  \end{pmatrix}\\
		&\to \begin{pmatrix}
			1 & 0&\cdots &0\\
			0 & 1-a_{11}&\cdots & -a_{1n}\\
			\vdots &\vdots &\ddots&\vdots\\
			0&-a_{n1}&\cdots & 1-a_{nn}
		\end{pmatrix}.
	\end{align*}
	Note that if $k=l$, then $I-\bar A$ is written differently, although the result remains. So if $D$ is the Smith normal form of $I-A$ written as in \eqref{smith}, then the Smith normal form of $I-\bar A$ is 
	\begin{equation*}
		\begin{pmatrix}
			1 &0 &\cdots &0\\
			 0& d_1&\cdots &0\\
			 \vdots&\vdots &\ddots&\vdots\\
			0&0&\cdots& d_n
		\end{pmatrix}.
	\end{equation*}
	This yields
	\begin{equation*}
		\BF(\bar A) \simeq \Z_1 \oplus \Z_{d_1}\oplus \dots \oplus \Z_{d_n} \simeq \Z_{d_1}\oplus \dots \oplus \Z_{d_n} \simeq \BF(A).
	\end{equation*}
\end{proof}

\begin{lemma}[Sylvester's theorem] Let $A$ be an $m\times n$ and $B$ an $n\times m$ matrix. Then
	$\det(I_m + AB) = \det(I_n+BA)$.
\begin{proof}
	Define the block matrix
\begin{equation*}
	M = \begin{pmatrix}
		    I_m & -A\\
		    B   & I_n
		\end{pmatrix}
\end{equation*}
and see that we can decompose $M$ as 
\begin{equation*}
	M = \begin{pmatrix}
		    I_m & 0\\
		    B   & I_n
		\end{pmatrix}
		\begin{pmatrix}
		    I_m & -A\\
		    0   & I_n+BA
		\end{pmatrix}\,\, \text{ and }\,\,
		M = \begin{pmatrix}
		    I_m+AB & -A\\
		    0   & I_n
		\end{pmatrix}
		\begin{pmatrix}
		    I_m & 0\\
		    B   & I_n
		\end{pmatrix}.
\end{equation*}
Taking determinants we get $\det(I_n+BA) = \det(M) = \det(I_m+AB)$.
\end{proof}
\end{lemma}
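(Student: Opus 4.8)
The plan is to exhibit a single $(m+n)\times(m+n)$ block matrix whose determinant can be evaluated in two different ways, each producing one side of the claimed identity. First I would introduce
\[
M = \begin{pmatrix} I_m & -A \\ B & I_n \end{pmatrix},
\]
the natural object encoding both products $AB$ and $BA$ through its off-diagonal blocks.

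Next I would perform block Gaussian elimination from two directions. Eliminating the lower-left block $B$ by means of the top block row factors $M$ as a lower block-triangular matrix with unit diagonal times an upper block-triangular matrix whose diagonal blocks are $I_m$ and $I_n + BA$; since the first factor has determinant $1$ and the determinant of a block-triangular matrix is the product of the determinants of its diagonal blocks, this gives $\det M = \det(I_n + BA)$. Symmetrically, eliminating the upper-right block $-A$ by means of the bottom block row factors $M$ as an upper block-triangular matrix with diagonal blocks $I_m + AB$ and $I_n$ times a lower block-triangular matrix of determinant $1$, giving $\det M = \det(I_m + AB)$. Equating the two expressions for $\det M$ yields the identity. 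The verifications that the stated products actually equal $M$ are routine $2\times 2$ block multiplications.

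The only real insight — the genuinely nontrivial step — is recognizing that the same matrix $M$ admits both factorizations: the Schur complement of the $(1,1)$ block is $I_n + BA$, while the Schur complement of the $(2,2)$ block is $I_m + AB$, and each, multiplied by a unit-determinant triangular factor, reconstructs $\det M$. Everything else is mechanical. An alternative route would be to compare the nonzero eigenvalues of $AB$ and $BA$, which coincide with multiplicity, so that $\det(I + AB)$ and $\det(I + BA)$ share the same factors $1 + \lambda$; but that argument requires passing to an algebraically closed field and treating the zero eigenvalues separately, which is less clean than the block-matrix computation, the latter working verbatim over $\Z$ and hence fitting directly into the present setting.
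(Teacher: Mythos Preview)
Your proposal is correct and is essentially identical to the paper's proof: the paper introduces the same block matrix $M = \begin{pmatrix} I_m & -A \\ B & I_n \end{pmatrix}$ and writes down exactly the two block-triangular factorizations you describe, then equates the resulting determinants. The only difference is cosmetic---the paper displays the two factorizations explicitly rather than describing them in terms of Schur complements.
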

\begin{proposition}
	If $A, B$ are non-negative integer matrices with $\X_A \FE \X_B$ then $\det(I-A)=\det(I-B)$.
\end{proposition}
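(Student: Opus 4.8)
The plan is to mirror the structure of the preceding proposition on $\BF$: since Lemma \ref{matrixReprOfExpansion} tells us that flow equivalence is generated by elementary equivalence $\simequiv$ and the symbol-expansion relation $\sim_{\text{SE}}$, it suffices to verify that the quantity $\det(I-A)$ is left unchanged — exactly, not merely up to sign — by each of these two moves. The conclusion then follows by inducting along a chain of such moves connecting $A$ to $B$.

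For an elementary equivalence, suppose $A=RS$ and $B=SR$ with $R$ an $m\times n$ and $S$ an $n\times m$ non-negative integer matrix, so that $A$ is $m\times m$ and $B$ is $n\times n$. Here I would invoke Sylvester's theorem with its ``$A$'' taken to be $-R$ and its ``$B$'' taken to be $S$, obtaining
\begin{equation*}
\det(I_m - RS) = \det(I_m + (-R)S) = \det(I_n + S(-R)) = \det(I_n - SR),
\end{equation*}
that is, $\det(I-A)=\det(I-B)$.

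For symbol expansion I would reuse the explicit row-and-column reduction of $I-\bar A$ already carried out in the proof of the previous proposition, which brings $I-\bar A$ to the block form having a single $1$ in the top-left corner and $I-A$ in the complementary block. The point I must stress is that the operations used there are of the third type — adding an integer multiple of one row (resp.\ column) to another — whose elementary matrices are unipotent and hence have determinant $1$; consequently these operations preserve the determinant exactly, rather than only its absolute value or its value up to sign. Expanding along the first row and column then gives $\det(I-\bar A) = 1\cdot \det(I-A)=\det(I-A)$.

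The main obstacle is one of bookkeeping rather than of ideas: in the earlier $\BF$ argument we were free to use all three kinds of elementary operations over $\Z$, including row/column swaps and multiplication by $-1$, since only the Smith normal form up to isomorphism mattered there. Here I must instead ensure that the reduction of $I-\bar A$, and the block manipulations underlying Sylvester's theorem, never introduce a sign change, so that the equality of determinants holds on the nose. Confirming that this reduction genuinely requires only determinant-preserving operations, and carefully tracking the dimensions in the application of Sylvester's theorem, is where the attention must go.
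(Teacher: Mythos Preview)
Your proposal is correct and follows essentially the same approach as the paper: invoke Lemma \ref{matrixReprOfExpansion}, handle elementary equivalence via Sylvester's theorem applied to $(-R,S)$, and handle symbol expansion by a determinant-preserving row/column reduction of $I-\bar A$ followed by cofactor expansion. The only cosmetic difference is that the paper redoes the reduction from scratch using a single row addition (adding row $0$ to row $k$) and then expands directly, rather than citing the two-step reduction from the $\BF$ proof; your observation that both steps there are type-3 operations, hence determinant-preserving, is exactly the point that makes either route work.
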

\begin{proof}
	We show invariance separately for elementary equivalences and for symbol expansions. As seen in Lemma \ref{matrixReprOfExpansion} these relations generate flow equivalence and the result follows.
	
	Let $C$ and $D$ be elementary equivalent non-negative integer matrices with $C=RS$ and $D=RS$ for non-negative integer matrices $S, R$. Then it follows by Sylvester's theorem that
	\begin{equation*}
		\det(I-C) = \det(I+(-R)S) = \det(I+S(-R)) = \det(I-D).
	\end{equation*}
	
	For symbol expansion, let $A$ be an integer matrix. Then by adding rows, we get
	\begin{align*}
		\det (I-\bar A) &= \det\begin{pmatrix}
		  			  	1&0 &\cdots&-1&\cdots & 0\\
		  			  	0& 1-a_{11} & \cdots & -a_{1l}&\cdots & -a_{1n}\\
		  			  	\vdots &  \vdots& & \vdots && \vdots \\
		  			  	-1 & -a_{k1} & \cdots & -a_{kl}+1 &\cdots & -a_{kn}\\
		  			  	\vdots & \vdots & & \vdots && \vdots \\ 
		  			  	0& -a_{n1} & \cdots & -a_{nl}&\cdots & 1-a_{nn}\\
					  \end{pmatrix}\\
		                &=  \det\begin{pmatrix}
		  			  	1&0 &\cdots&-1&\cdots & 0\\
		  			  	0& 1-a_{11} & \cdots & -a_{1l}&\cdots & -a_{1n}\\
		  			  	\vdots &  \vdots& & \vdots && \vdots \\
		  			  	0 & -a_{k1} & \cdots & -a_{kl} &\cdots & -a_{kn}\\
		  			  	\vdots & \vdots & & \vdots && \vdots \\ 
		  			  	0& -a_{n1} & \cdots & -a_{nl}&\cdots & 1-a_{nn}\\
					  \end{pmatrix}\\
		&= \det\begin{pmatrix}
			1-a_{11}&\cdots & -a_{1n}\\
			\vdots  &\ddots&\vdots &\\
			-a_{n1}&\cdots & 1-a_{nn}
		\end{pmatrix}\\
		&= \det(I-A)
	\end{align*} 
	where the last equality follows by expansion of the $l$th column. Note, again, that the above representations of matrices are not accurate when $k=l$, but the calculation is still the same.
\end{proof}

\chapter{Entropy of flow classes}
This chapter will study the entropy of shift spaces under flow equivalence. As in physics or information theory, entropy is an indicator of the number of allowed states or words of a shift space, the more states the greater the entropy. After developing the necessary theory, we show some new results that shed light on how wildly entropy can change under flow equivalence.

\begin{remark}
Any result in this chapter that does not have a citation or is stated as known, is new and produced by the author. Further, as no proof of Theorem \ref{zeroEntropyInvariant} could be found in the literature, the one presented here has been supplied by the author.
\end{remark}

\section{Introduction to entropy}
Entropy describes the information density or complexity of a shift space by the asymptotic number of words of a given length. 
\begin{definition}
	Let $X$ be a shift space. Then the \emph{entropy} of $X$ is given by
	\begin{equation*}
		h(X) = \lim_{n\to\infty}\frac{1}{n}\log \abs{B_n(X)},
	\end{equation*}
	where $\log$ is the base 2 logarithm.
\end{definition}
\noindent
The limit exists (see for instance Lind and Marcus \cite[Prop. 4.1.8]{LM}), so the entropy is always well-defined. Entropy can be said to describe the information density of a shift space in the sense that if $h(X)=t>0$ for some shift space $X$, then there are roughly $2^{tn}$ words of length $n$ in $X$. A very intuitive example of entropy is that of the full shift.
\begin{example}\label{fullREntropy}
Let $X=X_{[r]}$ be the full $r$-shift. Then $\abs{B_n(X)}=r^n$, so 
\begin{equation*}
	h(X) = \lim_{n\to\infty}\frac{1}{n}\log\abs{B_n(X)} = \lim_{n\to\infty}\frac{n}{n}\log r = \log r.
\end{equation*}
\end{example}

\subsection{Entropy under sliding block codes}
Entropy turns out to be invariant under conjugacy, and in general sliding block codes between shift spaces behave nicely with respect to entropy, which will be useful in later sections.
\begin{proposition}[Lind and Marcus \cite{LM}]\label{factorEntropy}
Let $Y$ be a factor of the shift space $X$. Then $h(Y)\leq h(X)$.
\end{proposition}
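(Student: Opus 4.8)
The plan is to use the fact that a factor code is a surjective sliding block code, and that sliding a block map over a point can only collapse information, never create it. Write $\phi = \Phi_\infty^{[m,n]}\colon X\to Y$ for the factor code, induced by a block map $\Phi\colon B_{m+n+1}(X)\to \A(Y)$. The central observation I would isolate first is local: whenever $y=\phi(x)$, each coordinate $y_j=\Phi(x_{[j-m,\,j+n]})$, so a block $y_{[i,\,i+k-1]}$ of length $k$ is completely determined by the block $x_{[i-m,\,i+k-1+n]}$ of length $k+m+n$. In other words, sliding $\Phi$ descends to a well-defined map $B_{k+m+n}(X)\to B_k(Y)$, sending a length-$(k+m+n)$ input word to the length-$k$ output word it produces.

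Next I would promote this to a counting bound using surjectivity. Since $\phi$ is a factor code it is onto $Y$, so any $w\in B_k(Y)$ occurs in some $y\in Y$, and choosing $x\in X$ with $\phi(x)=y$ exhibits $w$ as the image of the corresponding window of $x$, which lies in $B_{k+m+n}(X)$. Hence the map $B_{k+m+n}(X)\to B_k(Y)$ is surjective, giving for every $k$ the cardinality estimate
\begin{equation*}
	\abs{B_k(Y)} \leq \abs{B_{k+m+n}(X)}.
\end{equation*}

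Finally I would pass to the limit defining entropy. Taking $\tfrac1k\log$ of both sides and letting $k\to\infty$, the constant shift $m+n$ in the length is harmless, because
\begin{equation*}
	\frac{1}{k}\log\abs{B_{k+m+n}(X)} = \frac{k+m+n}{k}\cdot\frac{1}{k+m+n}\log\abs{B_{k+m+n}(X)} \xrightarrow{k\to\infty} 1\cdot h(X),
\end{equation*}
using that $\tfrac{k+m+n}{k}\to 1$ and that the limit defining $h(X)$ exists (and so agrees along the subsequence of lengths $k+m+n$). Combining with the bound from the previous step gives $h(Y)\leq h(X)$.

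There is no genuine analytic obstacle here; the only thing to be careful about is the bookkeeping with memory $m$ and anticipation $n$, namely getting the window lengths exactly right and confirming the surjection lands in $B_k(Y)$ rather than merely in $\A(Y)^k$. Once the local determination statement is pinned down, the rest is the standard observation that a bounded shift in block length does not affect the exponential growth rate, so the inequality survives the limit.
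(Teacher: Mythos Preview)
Your proof is correct and follows essentially the same approach as the paper: establish the surjection $B_{k+m+n}(X)\to B_k(Y)$ from the factor code, obtain the cardinality bound $\abs{B_k(Y)}\leq\abs{B_{k+m+n}(X)}$, and absorb the fixed length shift into the limit via the factor $\tfrac{k+m+n}{k}\to 1$. The only difference is cosmetic (your more explicit bookkeeping of memory and anticipation versus the paper's slightly terser phrasing).
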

\begin{proof}
	Since $Y$ is a factor of $X$ there is a surjective sliding block code $\phi:X\to Y$ induced by some $(M+N+1)$-block map $\Phi:B_{M+N+1}(X)\to \A(Y)$. Thus, for every $w\in B_n(y)$ there must exist $u\in B_{M+N+n}(X)$ such that $\Phi(u) = w$. 
	So $\abs{B_n(Y)}\leq \abs{B_{M+N+n}(X)}$ and hence
	\begin{align*}
		h(Y) &= \lim_{n\to\infty}\frac{1}{n}\log\abs{B_n(Y)}\\
		     & \leq \lim_{n\to\infty}\frac{1}{n}\log\abs{B_{M+N+n(X)}}\\
		     & = \lim_{n\to\infty}\frac{M+N+n}{n}\left(\frac{1}{M+N+n}\log\abs{B_{M+N+n(X)}}\right)\\
		     & = h(X).
	\end{align*}
\end{proof}
\noindent
The invariance of entropy under conjugacy now follows almost trivially.
\begin{proposition}[Lind and Marcus \cite{LM}]\label{entroConjInv}
Let $X$ and $Y$ be conjugate shift spaces. Then $h(X)=h(Y)$.	
\end{proposition}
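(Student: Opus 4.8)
The plan is to exploit the symmetry built into the notion of conjugacy together with the monotonicity of entropy under factor codes established in Proposition \ref{factorEntropy}. The key observation is that a conjugacy is simultaneously a factor code in both directions, so that each of $X$ and $Y$ is a factor of the other, forcing their entropies to sandwich one another.

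Concretely, let $\phi\colon X\to Y$ be a conjugacy. First I would note that $\phi$ is a bijective, hence surjective, sliding block code, so $Y$ is by definition a factor of $X$. Proposition \ref{factorEntropy} then yields the inequality $h(Y)\leq h(X)$. For the reverse inequality I would invoke the defining property of a conjugacy: its inverse $\phi^{-1}\colon Y\to X$ is itself a sliding block code (this is exactly what it means for $\phi$ to \emph{have an inverse}, and it is guaranteed in any case by Proposition \ref{bijHasInverse}). Since $\phi^{-1}$ is also bijective and therefore surjective, $X$ is a factor of $Y$, and a second application of Proposition \ref{factorEntropy} gives $h(X)\leq h(Y)$.

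Combining the two inequalities forces $h(X)=h(Y)$, which is the claim. There is essentially no obstacle here: the entire content lives in the earlier proposition, and the only thing one must be careful about is recording that conjugacy supplies a genuine sliding block code in \emph{both} directions, so that the monotonicity result applies twice. I would keep the write-up to a couple of sentences, since the statement follows almost immediately once the two factor codes are in hand.
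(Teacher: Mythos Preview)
Your argument is correct and matches the paper's proof essentially line for line: both exploit that a conjugacy makes each space a factor of the other and then apply Proposition~\ref{factorEntropy} twice. The only difference is that you spell out the two inequalities separately and mention Proposition~\ref{bijHasInverse} explicitly, whereas the paper compresses this into a single sentence.
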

\begin{proof}
	Let $\phi\colon X\to Y$ be a conjugacy. Since $\phi$ is bijective, $X$ is a factor of $Y$ and $Y$ is a factor of $X$, so $h(X)=h(Y)$ by Proposition \ref{factorEntropy}.
\end{proof}
\noindent
A last result that will be crucial later is rather intuitive given the above propositions.

\begin{proposition}[Lind and Marcus \cite{LM}] \label{entrOfFull}
Let the shift space $X$ embed into $Y$. Then $h(X)\leq h(Y)$.	
\end{proposition}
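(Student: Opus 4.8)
The plan is to reduce the statement to two facts already in hand: the conjugacy-invariance of entropy (Proposition \ref{entroConjInv}) and the monotonicity of entropy under inclusion of shift spaces. An embedding is by definition an injective sliding block code $\phi\colon X\to Y$, so my first move would be to pass to its image. By Proposition \ref{imageUnderBlockCode} the image $\phi(X)$ is itself a shift space, and when we regard $\phi$ as a map $\phi\colon X\to\phi(X)$ it becomes a bijective sliding block code. Proposition \ref{bijHasInverse} then supplies a sliding-block inverse, so this restricted map is a conjugacy and $X\cong\phi(X)$. Invoking Proposition \ref{entroConjInv} immediately gives $h(X)=h(\phi(X))$, which disposes of the embedding and leaves only a comparison between $\phi(X)$ and $Y$.

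The second step is exactly that comparison, using the inclusion $\phi(X)\subseteq Y$. Any word occurring in a point of $\phi(X)$ occurs in that same point viewed inside $Y$, so $B_n(\phi(X))\subseteq B_n(Y)$ and hence $\abs{B_n(\phi(X))}\le\abs{B_n(Y)}$ for every $n$. Feeding this into the definition of entropy yields
\[
h(\phi(X))=\lim_{n\to\infty}\frac1n\log\abs{B_n(\phi(X))}\le\lim_{n\to\infty}\frac1n\log\abs{B_n(Y)}=h(Y),
\]
where both limits exist by the remark following the definition of entropy. Chaining the two steps gives $h(X)=h(\phi(X))\le h(Y)$, as claimed.

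I do not expect a genuine obstacle here; the result is essentially a bookkeeping composition of earlier propositions, and the only point requiring the slightest care is the transition from the set inclusion $\phi(X)\subseteq Y$ to the language inclusion $B_n(\phi(X))\subseteq B_n(Y)$, which is immediate from the definition of occurrence of a word. One could alternatively phrase the second step as an instance of the same counting argument used in Proposition \ref{factorEntropy}, but the direct language comparison above is the shortest route.
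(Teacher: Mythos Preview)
Your proof is correct and follows essentially the same approach as the paper: pass to the image $\phi(X)$ via Propositions \ref{imageUnderBlockCode} and \ref{bijHasInverse}, apply Proposition \ref{entroConjInv} to get $h(X)=h(\phi(X))$, and then use the language inclusion $B_n(\phi(X))\subseteq B_n(Y)$ to finish. The paper's argument is line-for-line the same.
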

\begin{proof}
	There is an embedding $\phi\colon X\to Y$ and its image $\phi(X)\subseteq Y$ is a shift space by Proposition \ref{imageUnderBlockCode}. Since $\phi$ is injective it is a conjugacy from $X$ to $\phi(X)$ by Proposition \ref{bijHasInverse}, so $h(X)= h(\phi(X))$ by Proposition \ref{entroConjInv}. Now, for all $n\in \N$ we have $\abs{B_n(\phi(X))}\leq \abs{B_n(Y)}$  as $\phi(X)\subseteq Y$, and thus, $h(X)=h(\phi(X))\leq h(Y)$.
\end{proof}

\subsection{Entropy of sofic shifts}
Since entropy measures exponential growth, it seems natural for a shift space $X$ that subshifts of $X$ with an entropy less than $X$ can be disregarded in calculating the entropy of $X$ as they only contribute subexponentially. This is the basis of the next proposition whose proof is omitted since the necessary theory is not developed in this presentation. 
\begin{proposition}[Lind and Marcus \cite{LM}]\label{irreducibleEntropy}
Let $G$ be a graph and let $T$ be the set of irreducible subgraphs of $G$. Then 
\begin{equation*}
	h(\X_G) = \max_{H\in T}h(\X_H).
\end{equation*}
\end{proposition}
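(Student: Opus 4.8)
The plan is to prove the two inequalities separately. The direction $h(\X_G) \geq \max_{H\in T} h(\X_H)$ is the easy one: for every irreducible subgraph $H\subseteq G$, the edge shift $\X_H$ embeds into $\X_G$, since any bi-infinite walk on $H$ is in particular a bi-infinite walk on $G$, and this inclusion is a $1$-block code. By Proposition \ref{entrOfFull}, $h(\X_H)\leq h(\X_G)$, and taking the maximum over $H\in T$ gives the bound. The reverse inequality $h(\X_G)\leq \max_{H\in T} h(\X_H)$ is where the real content lies, and I would approach it by counting paths.

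The key idea is that every sufficiently long path on $G$ must spend ``most'' of its length inside a single strongly connected (irreducible) component. First I would recall that the vertices of $G$ decompose into strongly connected components, which can be partially ordered by reachability; since this partial order has no cycles, any path on $G$ visits a bounded number of distinct components, where the bound is at most $|\V(G)|$. Thus a path $\pi$ of length $n$ can be cut at the (boundedly many) transitions between components into segments $\pi = \pi_1 \pi_2 \cdots \pi_k$ with $k\leq |\V(G)|$, each $\pi_j$ lying entirely within one component's subgraph. If I write $h^* = \max_{H\in T} h(\X_H)$, then each component contributes at most roughly $2^{h^* \abs{\pi_j}}$ choices for a segment of its length, so the number of paths of length $n$ is bounded by a polynomial-in-$n$ number of ways to choose the cut points and the sequence of components, times a product of the form $2^{h^*(\abs{\pi_1}+\cdots+\abs{\pi_k})} = 2^{h^* n}$. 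Taking $\tfrac1n\log$ and letting $n\to\infty$ kills the polynomial factor and leaves $h(\X_G)\leq h^*$.

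To make the counting clean, I would phrase the component bound using adjacency matrices: ordering the components topologically puts the adjacency matrix $A$ of $G$ in block upper-triangular form, whose diagonal blocks are the adjacency matrices of the irreducible components. The entries of $A^n$ count paths of length $n$, and the growth rate of $\abs{B_n(\X_G)} = \sum_{ij}(A^n)_{ij}$ is governed by the spectral radius of $A$, which for a block upper-triangular matrix equals the maximum of the spectral radii of the diagonal blocks. Since the entropy of an irreducible edge shift equals the logarithm of the Perron eigenvalue (the spectral radius of its adjacency matrix), this immediately gives $h(\X_G) = \max_H h(\X_H)$.

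The main obstacle will be handling the subexponential contributions rigorously: the off-diagonal blocks do contribute paths that cross between components, and one must verify that the number of such crossing paths grows at rate no greater than $h^*$. This is exactly the point the author flags as requiring theory ``not developed in this presentation'' — the cleanest justification goes through the spectral-radius argument above, which presupposes the Perron--Frobenius characterization $h(\X_A)=\log\lambda_A$ for irreducible $A$. Absent that machinery, one can instead give the direct combinatorial estimate of the previous paragraph, where the care lies in bounding the number of segmentations (a polynomial factor $\binom{n}{k}\cdot |\V(G)|^k$ with $k$ fixed) and in noting that maximizing $\sum 2^{h^*\abs{\pi_j}}$ subject to $\sum\abs{\pi_j}=n$ still yields the exponential rate $h^*$ rather than something larger.
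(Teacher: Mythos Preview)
The paper does not actually prove this proposition: its entire proof reads ``See Lind and Marcus \cite[Theorem 4.3.1 and 4.4.4]{LM},'' and the surrounding text explicitly says the proof is omitted because the necessary theory (Perron--Frobenius) is not developed in the thesis. So there is no in-paper argument to compare against; your proposal supplies what the paper deliberately skips.

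On its own merits your sketch is sound and in fact mirrors the Lind--Marcus treatment. The spectral route you describe---put $A$ in block upper-triangular form by a topological ordering of the strongly connected components, then use that the spectral radius of a block-triangular matrix is the maximum of the spectral radii of the diagonal blocks, together with $h(\X_A)=\log\lambda_A$---is exactly Theorems~4.3.1 and~4.4.4 of \cite{LM}. Your combinatorial alternative is also correct in spirit; the only point to tighten is that when you cut a path at component transitions, the transition edges themselves need not lie in any irreducible subgraph, and vertices not lying on any cycle form trivial components contributing no internal edges. Both issues are harmless (they add only a bounded number of edges, absorbed into the polynomial factor), but the clean statement is that each segment \emph{within} a nontrivial component is a path in that irreducible subgraph, while the at most $|\V(G)|$ inter-component edges and visits to trivial components contribute $O(1)$ to the length.
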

\begin{proof}
	See Lind and Marcus \cite[Theorem 4.3.1 and 4.4.4]{LM}.
\end{proof}
\noindent
In order to say something about the entropy of sofic shifts later, we need the following two propositions, which connect the presentation of a sofic shift as a labeled graph with its irreducibility and entropy.
\begin{proposition}
Let $X$ be sofic with representation $\G=(G, \Lab)$. If $G$ is irreducible then $X$ is irreducible.	
\end{proposition}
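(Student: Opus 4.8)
The plan is to lift the irreducibility of the graph $G$ to the language $B(X)$ by working with presentations. The key observation is that since $X=\X_\G$, the language $B(X)$ consists precisely of the labels $\Lab(\pi)$ of paths $\pi$ on $G$: every word occurring in a point of $X$ is read off some finite walk on $G$, and conversely the label of any path occurs in some point of $X$ (because $G$, being irreducible, has no stranded vertices and is therefore essential, so every finite path extends to a bi-infinite walk). This dictionary between paths and words is what makes the graph-theoretic hypothesis usable.

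First I would fix an arbitrary ordered pair $u,v\in B(X)$. If either $u$ or $v$ is the empty word the required connecting word may be taken empty as well and there is nothing to prove, so assume both are nonempty and choose presentations: a path $\pi$ on $G$ with $\Lab(\pi)=u$ and a path $\rho$ on $G$ with $\Lab(\rho)=v$. Write $V_1$ for the terminal vertex of $\pi$ and $V_2$ for the initial vertex of $\rho$.

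Next I would invoke the irreducibility of $G$ to obtain a path $\tau$ on $G$ from $V_1$ to $V_2$. Then the terminal vertex of $\pi$ equals the initial vertex of $\tau$, and the terminal vertex of $\tau$ equals the initial vertex of $\rho$, so the concatenation $\pi\tau\rho$ is again a path on $G$. Setting $w=\Lab(\tau)$ gives $\Lab(\pi\tau\rho)=uwv$, and since this is the label of a path it lies in $B(X)$. As $u,v$ were arbitrary, $X$ is irreducible.

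The step carrying the real content is the translation between the combinatorial data on the graph and the language $B(X)$, namely that each word of $B(X)$ admits a presentation and that labels of paths are exactly the words of $B(X)$; once this is in hand the argument is the one already used for Proposition \ref{irreducibleMatrixVsGraph}, simply pushed forward through the labelling $\Lab$. Indeed, one could alternatively deduce the result by first noting that $\X_G$ is irreducible by Proposition \ref{irreducibleMatrixVsGraph} and then applying the label map to the connecting word. The only points needing care are the empty-word case and the tacit essentiality of $G$, both of which are harmless here.
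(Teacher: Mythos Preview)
Your proof is correct and follows exactly the approach the paper intends: the paper's own proof reads simply ``Similar to the first part of the proof of Proposition \ref{irreducibleMatrixVsGraph}'', and what you have written is precisely that argument pushed through the labelling $\Lab$. Your additional remarks on the empty-word case and on essentiality of $G$ are correct refinements that the paper leaves implicit.
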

\begin{proof}
	Similar to the first part of the proof of Proposition \ref{irreducibleMatrixVsGraph}.
\end{proof}
\begin{proposition}[Lind and Marcus \cite{LM}]\label{soficEntropyByGraph}
Let $\mathcal G = (G, \mathcal L)$ be a right-resolving labeled graph. Then $h(\X_{\mathcal G}) = h(\X_G)$.	
\end{proposition}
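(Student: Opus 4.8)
The plan is to show that the right-resolving property gives a bijection between blocks of $\X_G$ and blocks of $\X_{\mathcal G}$ for a fixed starting vertex, so that the two edge-counts differ only by a bounded factor that vanishes in the entropy limit. Concretely, the labelling $\Lab$ induces a factor code $\Lab_\infty^{[0,0]}\colon \X_G\to \X_{\mathcal G}$, so $\X_{\mathcal G}$ is a factor of $\X_G$ and Proposition \ref{factorEntropy} immediately yields $h(\X_{\mathcal G})\leq h(\X_G)$. The entire content of the proposition is therefore the reverse inequality $h(\X_G)\leq h(\X_{\mathcal G})$, and this is exactly where right-resolvingness must be used: in general a factor can strictly lower entropy, but the right-resolving hypothesis prevents the labelling from collapsing too many paths onto a single word.

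The key step I would carry out is a counting argument. First I would fix an enumeration of the vertices $\V(G)=\{V_1,\dots,V_r\}$ and, for each vertex $V$, let $p_n(V)$ be the number of paths of length $n$ in $G$ starting at $V$, and let $\ell_n(V)$ be the number of distinct labels $\Lab(\pi)$ of such paths. The right-resolving property says that at each vertex the edges have distinct labels, so a path starting at $V$ is \emph{uniquely determined by its starting vertex and its label word}: given $V$ and a word $w$, there is at most one path $\pi$ with $s(\pi)=V$ and $\Lab(\pi)=w$. Hence $p_n(V)=\ell_n(V)$ for every $V$ and every $n$. Summing over starting vertices, the total number of paths of length $n$ in $G$ satisfies $\abs{B_n(\X_G)}=\sum_V p_n(V)=\sum_V \ell_n(V)\leq r\cdot\abs{B_n(\X_{\mathcal G})}$, since each $\ell_n(V)$ counts a set of label words all lying in $B_n(\X_{\mathcal G})$.

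From this bound the reverse entropy inequality follows routinely: taking $\tfrac1n\log$ of $\abs{B_n(\X_G)}\leq r\cdot\abs{B_n(\X_{\mathcal G})}$ gives
\begin{equation*}
	\frac1n\log\abs{B_n(\X_G)}\leq \frac{\log r}{n}+\frac1n\log\abs{B_n(\X_{\mathcal G})},
\end{equation*}
and letting $n\to\infty$ the term $\tfrac{\log r}{n}\to 0$, so $h(\X_G)\leq h(\X_{\mathcal G})$. Combined with the factor-code inequality this forces equality.

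The main obstacle is making the per-vertex bijection precise and correctly accounting for the relationship between paths in $G$ and words in $B(\X_{\mathcal G})$. One must be careful that $B_n(\X_{\mathcal G})$ consists of label words of length-$n$ paths regardless of starting vertex, whereas the injectivity $p_n(V)=\ell_n(V)$ is only available after fixing the starting vertex $V$; the factor $r$ precisely absorbs the ambiguity of which vertex a given word emanates from. A minor point to verify is that $G$ may be assumed essential (so $B_n(\X_G)$ genuinely corresponds to paths), which is harmless since adding or removing stranded vertices changes neither edge shift nor its labelled version. Everything else is a standard squeeze in the entropy limit.
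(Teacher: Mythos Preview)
Your proof is correct and follows essentially the same approach as the paper's: both arguments use that right-resolvingness forces each labelled word $w\in B_n(\X_{\mathcal G})$ to be presented by at most $r$ (the number of vertices) paths in $G$, giving the squeeze $\abs{B_n(\X_{\mathcal G})}\leq\abs{B_n(\X_G)}\leq r\cdot\abs{B_n(\X_{\mathcal G})}$, whence equal entropies. The only cosmetic difference is that you invoke Proposition~\ref{factorEntropy} for the easy inequality while the paper records it directly via the ``at least one path per word'' count.
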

\begin{proof}
Let $k$ be the number of vertices of $G$ and $w\in B_n(\X_\G)$. There is at least one and at most $k$ paths $\pi$ on $G$ with $\Lab(\pi)=w$, because $G$ being right-resolving implies that each vertex can support at most one such path. This means that $B(\X_G)\geq B(\X_\G)\geq \frac{1}{k}B(\X_G)$, yielding 
	\begin{equation*}
		h(\X_G) = \lim_{n\to\infty} \frac{1}{n} \log B_n(\X_G) \geq h(\X_\G) \geq \lim_{n\to\infty} \frac{1}{n} \log \left(\tfrac1kB_n(X_G)\right) = h(\X_G).
	\end{equation*}
\end{proof}

\subsection{Terminology for remaining chapter}
The remainder of this chapter will be devoted to the study of the entropy of the flow equivalence classes of shift spaces.

\begin{definition}
	Let $X$ be a shift space. We denote by $[X]$ the class of shift spaces that are flow equivalent to $X$, and by $h([X])$ the set $\{h(Y)\mid Y\in [X]\}$.
\end{definition}
\noindent
In particular we shall be interested in whether or not $h([X])$ is bounded from above or below in $\R^+$.

\begin{definition}
	We say that a shift space $X$ is \emph{flow equivalent to shifts of arbitrarily large entropy} if $h([X])$ is unbounded, and similarly we say that $X$ is \emph{flow equivalent to shifts of arbitrarily small entropy} if $\inf (h([X])) = 0$.
\end{definition}

\section{Small entropies}
This section will show that all shift spaces are flow equivalent to shifts of arbitrarily small entropy. First of all, we prove the already known result  that having entropy zero is an invariant under flow equivalence.
\begin{theorem}\label{zeroEntropyInvariant}
	Let $X$ and $Y$ be shift spaces with $X\FE Y$. Then $h(X)=0$ if and only if $h(Y)=0$.
\end{theorem}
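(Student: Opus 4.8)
The plan is to reduce the statement to the two moves that generate flow equivalence and dispose of each. By Theorem \ref{PaSu}, if $X\FE Y$ there is a finite chain $X=X_0,X_1,\dots,X_n=Y$ in which consecutive terms are either conjugate or related by a symbol expansion (in one direction or the other). Since ``having zero entropy'' is a property of a single shift space, it suffices to show that each elementary step preserves it \emph{in both directions}; chaining the resulting biconditionals then gives the theorem. A conjugacy step is immediate from Proposition \ref{entroConjInv}, which yields $h(X_i)=h(X_{i+1})$. So the entire content sits in a single symbol expansion, and I would prove: for any shift space $X$, any $a\in\A(X)$, and $\diamondsuit\notin\A(X)$, one has $h(X)=0$ if and only if $h(X\ex{a}{a\diamondsuit})=0$.

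Write $Y=X\ex{a}{a\diamondsuit}$. The idea is to compare $\abs{B_n(Y)}$ with the $\abs{B_m(X)}$ for $m$ near $n$ via the obvious ``delete every $\diamondsuit$'' and ``insert a $\diamondsuit$ after every $a$'' maps. First let $\pi\colon B_n(Y)\to B(X)$ delete all occurrences of $\diamondsuit$. Since $\diamondsuit\diamondsuit$ is forbidden in $Y$, at most half the letters of a word of $Y$ are $\diamondsuit$, so $\abs{\pi(u)}=m$ with $\floor{n/2}\le m\le n$; and $\pi(u)\in B(X)$ because deleting the $\diamondsuit$'s from a point of $Y$ returns a point of $X$. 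The forbidden words $b\diamondsuit$ (for $b\neq a$) force every interior $a$ of $u$ to be immediately followed by its $\diamondsuit$, so the $\diamondsuit$-positions of $u$ are determined by $\pi(u)$ except for the treatment of the two ends; hence each word of $X$ has at most a constant number (say $4$) of $\pi$-preimages of a fixed length $n$. This yields
\begin{equation*}
	\abs{B_n(Y)}\le 4\sum_{m=\floor{n/2}}^{n}\abs{B_m(X)}.
\end{equation*}
For the reverse comparison, inserting a $\diamondsuit$ after each $a$ gives an injection $\iota\colon B_m(X)\to B(Y)$ whose images have length between $m$ and $2m$, so
\begin{equation*}
	\abs{B_m(X)}\le\sum_{n=m}^{2m}\abs{B_n(Y)}.
\end{equation*}

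Finally I would feed these two inequalities through the definition of entropy. If $h(X)=0$, then for every $\varepsilon>0$ we have $\abs{B_m(X)}\le 2^{\varepsilon m}$ for all large $m$, so the first inequality gives $\abs{B_n(Y)}\le 4(n+1)2^{\varepsilon n}$ and thus $\frac{1}{n}\log\abs{B_n(Y)}\le\varepsilon+o(1)$, whence $h(Y)\le\varepsilon$; as $\varepsilon$ was arbitrary and entropy is nonnegative, $h(Y)=0$. Symmetrically, if $h(Y)=0$ the second inequality gives $\abs{B_m(X)}\le(m+1)2^{2\varepsilon m}$ and hence $h(X)=0$. Both directions being established, the biconditional for a symbol expansion holds, which completes the reduction.

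I expect the only delicate point to be the bookkeeping in the delete-$\diamondsuit$ map: verifying that interior $\diamondsuit$'s are genuinely forced (so they carry no information beyond $\pi(u)$) and that only the two boundaries contribute, giving uniformly bounded fibers for $\pi$. Everything else is a routine estimate, and crucially the crude constant factor and the factor-of-two distortion in lengths are harmless precisely because subexponential growth is stable under such polynomial reparametrizations.
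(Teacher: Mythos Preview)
Your argument is correct and follows the same route as the paper: reduce via Theorem~\ref{PaSu} and Proposition~\ref{entroConjInv} to a single symbol expansion, then control word counts by the delete-$\diamondsuit$ and insert-$\diamondsuit$ maps with bounded fibers and at most a factor-two length distortion. One small slip: the forbidden words $b\diamondsuit$ (for $b\neq a$) tell you that every $\diamondsuit$ is preceded by an $a$, not that every $a$ is followed by a $\diamondsuit$; the latter fact, which is what you actually need for the fiber bound on $\pi$, comes straight from the construction of $Y=X\ex{a}{a\diamondsuit}$. The paper packages the same counting a bit more tightly: instead of summing over a range of lengths, it extends $\pi(u)$ to a word of $B_n(X)$ and compares $\abs{B_n(Y)}$ with $\abs{B_n(X)}$ and $\abs{B_{2n}(Y)}$ with $\abs{B_n(X)}$ directly, obtaining the quantitative sandwich $h(X)\ge h(Y)\ge \tfrac12 h(X)$, from which the zero-entropy biconditional is immediate without an $\varepsilon$ argument.
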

\begin{proof}
	Let $X$ be a shift space. Since entropy is invariant under conjugacy and flow equivalence is generated by conjugacy and symbol expansion, we only need to show that for some shift space $X$ and some shift space $Y=X^{a\mapsto a\diamondsuit}$ obtained by a symbol expansion of $X$, we have $h(X)=0$ if and only if $h(Y)=0$. 
	
	First, for $u_1, u_2\in B_n(Y)$, it holds that $u_1\ex{\diamondsuit}{\epsilon}$ is a prefix of $u_2\ex{\diamondsuit}{\epsilon}$ if and only if $u_2$ can be achieved by adding and removing $\diamondsuit$ at the ends of $u_1$. This can be done in maximally of two different ways (e.g. if $u_1=\diamondsuit u_1'$, where $u_1'$ does not end in a $\diamondsuit$, then $u_2=u_1$ or $u_2=u_1'\diamondsuit$ are the two possibilities), so for every $w\in B_n(X)$ there is at most two words $u \in B_n(Y)$ such that $u\ex{\diamondsuit}{\epsilon}$ is a prefix of $w$, and for every $u \in B_n(Y)$, $u\ex{\diamondsuit}{\epsilon}$ is a prefix of some $w\in B_n(X)$. Thus, $2\abs{B_n(X)}\geq \abs{B_n(Y)}$, which yields
	\begin{equation*}
		h(X) = \lim_{n\to\infty}\frac{1}{n}\log 2\abs{B_n(X)} \geq \lim_{n\to\infty}\frac{1}{n}\log \abs{B_n(Y)} = h(Y).
	\end{equation*}
	
	Second, for two different word $w_1, w_2\in B_n(X)$, none of the words $w_1\ex{a}{a\diamondsuit}$ and $w_2\ex{a}{a\diamondsuit}$ can be a prefix of the other, and for every $w\in B_n(X)$ there is at least one word $u\in B_{2n}(Y)$, which has $w\ex{a}{a\diamondsuit}$ as a prefix. So $B_{2n}(Y)\geq B_n(X)$, and we can make the estimate

	\begin{align*}
		h(Y) = \lim_{n\to \infty} \frac{1}{2n}\log \vert B_{2n}(Y)\vert 
		      \geq  \lim_{n\to \infty} \frac{1}{2n}\log \vert B_{n}(X)\vert
		     = \frac{1}{2}h(X).
	\end{align*}
	Thus, $h(X)\geq h(Y)\geq \frac{1}{2}h(X)$ and the result follows.
\end{proof}
\noindent
Moving on to shift spaces with non-zero entropy, we need a procedure that given a shift space can produce shift spaces flow equivalent to it of arbitrarily small entropy.
\begin{theorem}\label{fractionalEntropy}
	Let $X$ be a shift space and $n\in \N$. Then there exists $Y\FE X$ with $h(Y)=\frac{1}{n}h(X)$.
\end{theorem}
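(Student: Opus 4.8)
The plan is to realize a ``time dilation by the factor $n$'' at the level of shift spaces: I will construct a shift space $Y\FE X$ in which every symbol of $X$ has been stretched into a block of length $n$. Intuitively, a window of length $nm$ in $Y$ then carries exactly the information of a window of length $m$ in $X$, so words grow $n$ times as slowly and $h(Y)=\tfrac1n h(X)$. The flow equivalence will come for free from a chain of symbol expansions via Theorem~\ref{PaSu}, and the real work will be the word count.

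\emph{Construction and flow equivalence.} Enumerate $\A(X)=\{a_1,\dots,a_k\}$ and introduce fresh symbols $d_{j,i}\notin\A(X)$ for $1\le j\le k$ and $1\le i\le n-1$. Starting from $X$, I apply symbol expansions one at a time: for each $j$ I first form the expansion ${}^{a_j\mapsto a_j d_{j,1}}$ and then successively expand the trailing pause symbol by ${}^{d_{j,i}\mapsto d_{j,i}d_{j,i+1}}$ for $i=1,\dots,n-2$. Since each $d_{j,i}$ is new at the moment it is introduced, these steps are legitimate symbol expansions, each yielding a shift space by Proposition~\ref{symbolExpansionYieldsShiftSpace}. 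After treating all $j$, every original symbol $a_j$ has been replaced throughout by the length-$n$ block $B_j=a_j d_{j,1}\cdots d_{j,n-1}$; call the resulting shift $Y$. A chain of symbol expansions is a flow equivalence by Theorem~\ref{PaSu}, so $Y\FE X$.

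\emph{Entropy count.} The points of $Y$ are exactly the shifts of the block codings $\cdots B_{x_{-1}}B_{x_0}B_{x_1}\cdots$ of points $x\in X$, and a position of such a point begins a block precisely when its symbol lies in $\A(X)$, since the pause symbols $d_{j,i}$ are new; thus block boundaries are recoverable from any word. For the lower bound, the map $x_1\cdots x_m\mapsto B_{x_1}\cdots B_{x_m}$ sends $B_m(X)$ injectively into $B_{nm}(Y)$ (one recovers the $x_i$ by reading the block-start symbols), so $\abs{B_{nm}(Y)}\ge\abs{B_m(X)}$. For the upper bound, any length-$nm$ window of a point of $Y$ meets at most $m+1$ consecutive blocks and is completely determined by its offset $p\in\{0,\dots,n-1\}$ within the first block together with the length-$(m+1)$ word of $X$ indexing those blocks; hence $\abs{B_{nm}(Y)}\le n\,\abs{B_{m+1}(X)}$.

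\emph{Conclusion and main obstacle.} As the limit defining entropy exists, I may evaluate it along $N=nm$, and the two bounds give
\begin{equation*}
 \frac1n\cdot\frac1m\log\abs{B_m(X)}\ \le\ \frac{1}{nm}\log\abs{B_{nm}(Y)}\ \le\ \frac{1}{nm}\log n+\frac{m+1}{nm}\cdot\frac{1}{m+1}\log\abs{B_{m+1}(X)}.
\end{equation*}
Letting $m\to\infty$, both outer expressions tend to $\tfrac1n h(X)$, so $h(Y)=\tfrac1n h(X)$, as required. The delicate step is the upper bound $\abs{B_{nm}(Y)}\le n\,\abs{B_{m+1}(X)}$: I must argue that block boundaries are detectable and that each length-$nm$ window is pinned down by a bounded amount of $X$-data (the $m+1$ underlying symbols) plus one of the finitely many phases $p$. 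Everything else, in particular the flow equivalence, is routine once the chain of symbol expansions is set up and Theorem~\ref{PaSu} is invoked.
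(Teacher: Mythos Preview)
Your proof is correct and follows essentially the same approach as the paper: dilate time by a factor of $n$ through a chain of symbol expansions and then count words along the subsequence $N=nm$. The only cosmetic difference is that the paper uses a single shared pause symbol $\diamondsuit$ (so $a_j\mapsto a_j\diamondsuit^{\,n-1}$ for every $j$), which yields the exact count $\lvert B_{ns}(Y)\rvert=n\lvert B_s(X)\rvert$, whereas your distinct pause symbols $d_{j,i}$ lead to the sandwich $\lvert B_m(X)\rvert\le\lvert B_{nm}(Y)\rvert\le n\lvert B_{m+1}(X)\rvert$; either route gives the same limit.
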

\begin{proof}
	The case $n=1$ is trivially true, so assume that $n>1$. Let $\mathcal A=\{e_1, e_2, \dots, e_m\}$ be the alphabet of $X$, $\diamondsuit \not\in\mathcal A$, and $w=\diamondsuit^{n-1}$. Further, set $X_0 = X$ and consider the series of symbol expansions 
	\begin{equation*}
		X_i=X_{i-1}^{e_{i}\mapsto e_{i}w}, 1\leq i\leq m.
	\end{equation*}
	 Now,  $X\FE X_m$ by repeated use of Proposition \ref{extendWithExistingSymbol}, and for every $s\in \N$ the words of $X_m$ of length $ns$ can be described by
	\begin{equation*}
		B_{ns}(X_m) = \{\diamondsuit^k f_1 w f_2w\dots w f_s\diamondsuit^{n-1-k}\mid 0\leq k\leq n-1\text{ and }f_1f_2\dots f_s\in B_s(X)\}.
	\end{equation*}
 So, noting that $\abs{B_{ns}(X_m)} = n\abs{B_s(X)}$, we find that
	\begin{align*}
		\frac 1n h(X) &= \frac1n \lim_{s\to \infty} \frac 1s \log \vert B_s(X) \vert
		       = \lim_{s\to \infty} \frac 1{ns} \log \left(\tfrac1n \vert B_{ns}(X_m)\vert\right)
		       = h(X_m).
	\end{align*}
\end{proof}
\noindent
The main result of the section now follows easily.
\begin{corollary}
	Any shift space $X$ is flow equivalent to shifts of arbitrarily small entropy.
\end{corollary}
\begin{proof}
Follows directly from Theorem \ref{fractionalEntropy}.
\end{proof}

\section{Large entropies}
Let $\R^+$ denote the strictly positive real numbers. In this final section of the chapter we show that shift spaces with non-zero entropy from a wide range of classes are flow equivalent to shifts of arbitrarily large entropy and that this is in fact equivalent to the entropy under flow equivalence being dense in $\R^+$. The author is not at the moment aware of any shift space that is a candidate for having a bounded non-zero entropy.

\subsection{General results}
Before we consider specific classes of shift spaces, we need some preliminary results. 

\begin{theorem}\label{dense}
	Let $X$ be a shift space, which is flow equivalent to shifts of arbitrarily large entropy. Then $h([X])$ is dense in $\R^+$.
\end{theorem}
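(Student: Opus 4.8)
The plan is to combine the hypothesis with Theorem \ref{fractionalEntropy}. Since flow equivalence is transitive, that theorem shows $h([X])$ is closed under division by positive integers: for any $Z\in[X]$ and any $n\in\N$ it produces $Y\FE Z\FE X$ with $h(Y)=h(Z)/n$, so the entire reciprocal grid $\{h(Z)/n : n\in\N\}$ lies in $h([X])$. The hypothesis that $X$ is flow equivalent to shifts of arbitrarily large entropy means $h([X])$ is unbounded, so I may take $Z\in[X]$ with $h(Z)$ as large as I please. The strategy is to show that, by pushing $h(Z)$ up, the reciprocal grid it generates becomes fine near any prescribed target.

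To prove density, I would fix an arbitrary $t\in\R^+$ and $\epsilon>0$ and seek a member of $[X]$ with entropy within $\epsilon$ of $t$. First I would use unboundedness to choose $Z\in[X]$ with $H:=h(Z)>t+t^2/\epsilon$. Then I would locate $t$ in the grid $\{H/n\}$ by setting $n=\ceil{H/t}$; since $H>t$ forces $n\geq 2$, and $n-1<H/t\leq n$, the grid point $H/n$ sits at or just below $t$ while $H/(n-1)$ sits above it.

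The decisive estimate is then $0\leq t-H/n < H/(n-1)-H/n = H/(n(n-1))$, and substituting $n\geq H/t$ and $n-1\geq (H-t)/t$ bounds this by $t^2/(H-t)<\epsilon$ by the choice of $H$. Feeding this $n$ into Theorem \ref{fractionalEntropy} yields $Y\in[X]$ with $h(Y)=H/n$ satisfying $\abs{h(Y)-t}<\epsilon$, which is exactly density at $t$.

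The conceptual heart, and the only point requiring care, is the observation that the mesh of the reciprocal grid $\{H/n\}$ near a fixed point $t$ shrinks like $t^2/H$ as $H\to\infty$, so unboundedness of the available entropies refines the grid uniformly on any bounded region. I do not anticipate a deep obstacle: the main work is the bookkeeping that selects $H$ large enough to drive the local gap below $\epsilon$, after which transitivity of flow equivalence and the integer-division closure from Theorem \ref{fractionalEntropy} finish the argument immediately.
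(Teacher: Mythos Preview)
Your proposal is correct and follows essentially the same route as the paper: pick a target $t$ and tolerance $\epsilon$, use unboundedness to select $H\in h([X])$ large enough that $t^2/(H-t)<\epsilon$, locate the integer $n$ with $t$ between $H/n$ and $H/(n\pm 1)$, and bound the error by the gap $H/(n(n-1))\leq t^2/(H-t)$ before invoking Theorem~\ref{fractionalEntropy}. The only cosmetic difference is that the paper chooses $n=\lfloor H/t\rfloor$ so that $H/n$ lies just above $t$, whereas you take $n=\lceil H/t\rceil$ so that $H/n$ lies just below; the gap estimate and the conclusion are identical.
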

\begin{proof}
	Let $i\in \R^+$ be arbitrary and $\epsilon>0$ be given. From the assumptions we can find a $Y\FE X$ such that $T=h(Y)$ satisfies $0<\frac{i^2}{T-i}<\epsilon$. 
	
	Let $n\in \N$ be given such that $\frac T{n+1} \leq i\leq \frac{T}{n}$
	or equivalently $\frac Ti-1\leq n\leq \frac Ti$. Then Theorem \ref{fractionalEntropy} allows us to find $Z\FE Y\FE X$ with $h(Z)=\frac{1}{n}h(Y)$ and
	\begin{equation*}
	    \left\vert h(Z)-i\right\vert =\left\vert\frac Tn-i\right\vert \leq \frac{T}{n}-\frac{T}{n+1} = \frac{T}{n(n+1)}\leq \frac{T}{\left(\frac Ti-1\right)\frac Ti}=\frac{i^2}{T-i}<\epsilon.
	\end{equation*}
\end{proof}
\noindent 
One idea, for proving a shift space $X$ to be flow equivalent to shifts of arbitrarily large entropy, is to find a copy of the full 2-shift embedded in a shift $Y\FE X$ and then use it to construct spaces of increasingly large entropy.

Once the shift $Y$ is found the procedure is pretty straight-forward. 
\begin{proposition} \label{2shiftEmb}
	Let $X$ be a shift space. If there is an injective 1-block code $\phi:\X_{[2]}\to X$, then $h([X])$ is dense in $\R^+$.
\end{proposition}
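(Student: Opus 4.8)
The plan is to combine the hypothesis with Theorem~\ref{dense}: by that theorem it suffices to show that $X$ is flow equivalent to shifts of arbitrarily large entropy. So given the injective $1$-block code $\phi\colon \X_{[2]}\to X$, I would produce, for each target entropy level, a shift space $Y\FE X$ whose entropy exceeds that level. The idea is to use the embedded copy of the full $2$-shift inside $X$ as a ``seed'' of positive entropy and then inflate it via symbol expansions that multiply the number of available symbols without changing the flow equivalence class.

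First I would record what the injective $1$-block code gives us. Since $\phi$ is a $1$-block code it is induced by a map $\Phi\colon B_1(\X_{[2]})=\{0,1\}\to \A(X)$, and injectivity on the level of points forces $a:=\Phi(0)$ and $b:=\Phi(1)$ to be distinct symbols of $\A(X)$ with the property that every bi-infinite $\{a,b\}$-sequence belongs to $X$; in other words $\{a,b\}^\Z\subseteq X$ and $B_n(X)\supseteq \{a,b\}^n$ for all $n$. Thus $X$ already contains a full $2$-shift on the two letters $a,b$ as a subsystem.

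Next I would build the high-entropy flow-equivalent shifts. Pick a fresh symbol $\diamondsuit\notin\A(X)$ and consider the symbol expansion $X^{a\mapsto a\diamondsuit}$; by Proposition~\ref{extendWithExistingSymbol} (applied with the existing symbol, or by Theorem~\ref{PaSu} for a genuinely new symbol) each such expansion stays in the flow equivalence class of $X$. The key observation is that a symbol expansion replacing the single letter $a$ by the word $a\diamondsuit$ is reversible by a symbol contraction (Lemma~\ref{standardSymbolContraction}), so these operations preserve $\FE$ but, because contracting shortens words, they \emph{raise} entropy on the embedded full shift. Concretely, I would iterate: starting from the copy $\{a,b\}^\Z\subseteq X$, contract one of the two letters against a forced neighbour so that $k$ symbols of the free shift get encoded using only words over a smaller effective length, which by the entropy scaling of Theorem~\ref{fractionalEntropy} (run in reverse) sends $h$ up by a factor. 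Since each step keeps us inside $[X]$ and the embedded $2$-shift guarantees the growth persists, the entropies obtained are unbounded, giving flow equivalence to shifts of arbitrarily large entropy.

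The main obstacle, and the step I would be most careful with, is making the inflation argument rigorous while staying inside the flow equivalence class: I must ensure that each symbol expansion or contraction I apply is legitimately of the form covered by Proposition~\ref{extendWithExistingSymbol}, Lemma~\ref{standardSymbolContraction}, or Lemma~\ref{collapseTwoAlwaysConsecutiveLetters} (i.e.\ the relevant adjacency condition ``$x_{i+1}=b$ whenever $x_i=a$'' genuinely holds in the constructed shift), and that the entropy computation is carried out on the embedded full $2$-shift rather than on all of $X$, using Proposition~\ref{entrOfFull} to pass the lower bound on the subsystem's entropy up to the whole space. The cleanest route is probably to show directly that for each $n$ there is a shift $Y\FE X$ admitting an embedded full shift on $2^n$-many words, whence $h(Y)\geq \log(2^{n})/\ell_n$ for a controllable length $\ell_n$, driving $h(Y)\to\infty$; verifying that these $Y$ are reached from $X$ by a valid chain of conjugacies and symbol expansions is where the real work lies.
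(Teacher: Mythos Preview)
Your overall architecture is correct: reduce via Theorem~\ref{dense} to producing shifts in $[X]$ of arbitrarily large entropy, and do this by embedding ever larger full shifts using Proposition~\ref{entrOfFull}. You also correctly extract from the injective $1$-block code the two distinct letters $a,b\in\A(X)$ with $\{a,b\}^\Z\subseteq X$.

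However, the actual construction of the high-entropy shifts is missing, and the mechanism you sketch does not work. You propose to ``contract one of the two letters against a forced neighbour'' via Lemma~\ref{standardSymbolContraction} or Lemma~\ref{collapseTwoAlwaysConsecutiveLetters}, but inside $\{a,b\}^\Z$ no symbol has a forced neighbour, so those lemmas are not applicable at that stage; and symbol expansions as in Theorem~\ref{fractionalEntropy} \emph{lower} entropy rather than raise it, so ``running it in reverse'' is not a move available to you on $X$. Your final paragraph names the right target (find $Y\FE X$ containing a full shift on many symbols) but does not say how to reach it.

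The paper's device is precisely what fills this gap: rather than contracting single symbols, one contracts \emph{words} using Proposition~\ref{replOverlap}. Take the non-overlapping words $w_1=ba,\ w_2=ba^2,\ \dots,\ w_M=ba^M$ (non-overlapping because each begins with $b$ and the tail is all $a$'s) and successively replace them by fresh symbols $\diamondsuit_1,\dots,\diamondsuit_M$, starting from the longest. Since every concatenation of the $w_i$ lies in $\{a,b\}^\Z\subseteq X$, after all the replacements the resulting $X_1\FE X$ contains $\{\diamondsuit_1,\dots,\diamondsuit_M\}^\Z$, giving $h(X_1)\geq\log M$. Choosing $M=2^N$ yields entropy at least $N$. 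This is the concrete step your proposal is lacking.
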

\begin{proof}
By Theorem \ref{dense} we only need to show that $X$ is flow equivalent to shifts of arbitrarily large entropy.

	Let $N\in \N$ be given, set $M=2^N$, and assume that $\phi$ maps the symbols $0, 1$ of $\X_{[2]}$ to the symbols $a, b$ of $X$. Consider the words $w_1=ba, w_2=baa, \dots, w_M = b(a)^M$ and the corresponding different symbols $\diamondsuit_1, \dots, \diamondsuit_M\not\in \mathcal{A} (X)$. Since $w_i$ is non-overlapping for every $i$, we can use Proposition \ref{replOverlap} to form a sequence of shift spaces
\begin{equation*}
	X_M=X^{w_M\mapsto \diamondsuit_M}, X_{M-1}=X_M^{w_{M-1}\mapsto \diamondsuit_{M-1}}, \dots, X_1=X_2^{w_1\mapsto \diamondsuit_1}
\end{equation*}
such that $X\FE X_M \FE \dots \FE X_1$. Since all points that are concatenations of $w_i$'s belong to $X$, there is an embedding $\psi=\Psi_\infty: \X_{[M]}\to X_1$ generated by the 1-block map $\Psi:i\mapsto \diamondsuit_{i+1}$, and so $h(X_1)\geq h(\X_{[M]}) = \log M=N$ by Proposition \ref{entrOfFull} and Example \ref{fullREntropy}.
\end{proof}
\noindent
Given a shift space $X$ we wish to employ Proposition \ref{2shiftEmb} by  constructing a shift space $Y\FE X$ such that $Y$ contains a copy of the full 2-shift.

\begin{lemma}\label{stillSynchronising}
	Let $X$ be a shift space, and let the word $w\in B(X)$ satisfy that it is intrinsically synchronising for $X$ and that $X$ does not allow non-trivial $w$-overlaps. Then $\diamondsuit$ is intrinsically synchronising for the shift space $X\ex{w}{\diamondsuit}$.
\end{lemma}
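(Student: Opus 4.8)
The plan is to show directly from the definition of intrinsic synchronisation that $\diamondsuit$ is intrinsically synchronising for $Y = X\ex{w}{\diamondsuit}$. Recall that the symbol expansion replaces every occurrence of $w$ in a point of $X$ by the single symbol $\diamondsuit$, and that this operation is well-defined precisely because $X$ does not admit non-trivial $w$-overlaps, so the occurrences of $w$ in any point are disjoint and the decontraction $\ex{\diamondsuit}{w}$ recovers $X$ unambiguously. Throughout I would pass freely between a word $u'\in B(Y)$ and the word $u = u'\ex{\diamondsuit}{w}\in B(X)$ obtained by expanding each $\diamondsuit$ back to $w$, using the bijective correspondence between points (and hence factors) of $Y$ and points of $X$.

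First I would set up the hypotheses concretely: suppose $v'\diamondsuit, \diamondsuit u' \in B(Y)$ for some words $v', u' \in B(Y)$, and I must produce $v'\diamondsuit u' \in B(Y)$. Expanding $\diamondsuit$ back to $w$ everywhere, the word $v'\diamondsuit$ corresponds to $vw$ and $\diamondsuit u'$ to $wu$ for the expanded words $v = v'\ex{\diamondsuit}{w}$ and $u = u'\ex{\diamondsuit}{w}$; both $vw$ and $wu$ lie in $B(X)$ since the expansion takes words of $Y$ to words of $X$. Now the hypothesis that $w$ is intrinsically synchronising for $X$ applies and yields $vwu \in B(X)$. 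The word $vwu$ contains the displayed occurrence of $w$ in its middle, so contracting that occurrence (together with all other occurrences of $w$) gives a word of $Y$, and I would argue this word is exactly $v'\diamondsuit u'$, completing the proof.

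The main obstacle, and the step I would be most careful about, is the last one: verifying that contracting $vwu$ reproduces $v'\diamondsuit u'$ and, in particular, that the central copy of $w$ is genuinely contracted as a single block and is not ``absorbed'' into an overlap with neighbouring occurrences of $w$ spilling over from $v$ or $u$. This is exactly where the no-non-trivial-$w$-overlaps hypothesis is essential: because no proper suffix of one occurrence of $w$ can coincide with a proper prefix of another, the occurrences of $w$ that got contracted to form $v'$ and $u'$ cannot overlap the central $w$, so the central $w$ contracts cleanly to the expected $\diamondsuit$ and the contractions on either side reproduce $v'$ and $u'$ respectively. I would state this overlap-disjointness as the crux and argue it by observing that an overlap between the central $w$ and an adjacent contracted block would exhibit a non-trivial $w$-overlap in $X$, contradicting the assumption. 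Once this is pinned down, the equality $v'\diamondsuit u' = (vwu)\ex{w}{\diamondsuit} \in B(Y)$ follows and the lemma is proved.
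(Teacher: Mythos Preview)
Your proposal is correct and follows essentially the same route as the paper: expand $v'\diamondsuit$ and $\diamondsuit u'$ back to $vw, wu\in B(X)$, apply intrinsic synchronisation of $w$ to get $vwu\in B(X)$, and contract to obtain $v'\diamondsuit u'\in B(Y)$. The paper's proof is terser and simply asserts the final equality $(vwu)\ex{w}{\diamondsuit}=v'\diamondsuit u'$, whereas you explicitly justify why no spurious occurrence of $w$ can straddle the central block---a point the paper leaves implicit but which is indeed where the no-overlap hypothesis is used.
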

\begin{proof}
	First of all, $Y=X\ex{w}{\diamondsuit}$ is a shift space by Lemma \ref{replOverlap}. 
	
	Now, consider words $u, v$ that $u\diamondsuit, \diamondsuit v\in B(Y)$. By construction of the space $Y$ we have $u\ex{\diamondsuit}{w} w,  wv\ex{\diamondsuit}{w}\in B(X)$ and since $w$ is intrinsically synchronising for $X$ it follows that $u\ex{\diamondsuit}{w} wv\ex{\diamondsuit}{w}\in B(X)$. Thus, 
	\begin{equation*}
		\left(u\ex{\diamondsuit}{w} \diamondsuit v\ex{\diamondsuit}{w}\right)\ex{w}{\diamondsuit} = u\diamondsuit v \in B(Y),
	\end{equation*} 
	and the result follows.
\end{proof}

\begin{theorem} \label{mainEntr}
	Let $X$ be a shift space and $Y\subseteq X$ be an irreducible subshift of $X$ with $h(Y)>0$.
 Assume further that there exists a word $w\in B(Y)$ such that $w$ is intrinsically synchronising for $Y$ and $X$ does not admit non-trivial $w$-overlaps. Then $h([X])$ is dense in $\R^+$. 
\end{theorem}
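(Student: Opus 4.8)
The goal is to reduce Theorem \ref{mainEntr} to the already-established Proposition \ref{2shiftEmb}, which says that producing an injective $1$-block code $\X_{[2]}\to Z$ for some $Z\FE X$ is enough to conclude density of $h([X])$ in $\R^+$. So the plan is to manufacture, from the data $(X, Y, w)$, a shift space $Z$ flow equivalent to $X$ into which the full $2$-shift embeds as a $1$-block code.

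First I would contract the synchronising word $w$ to a single new symbol $\diamondsuit$. Since $X$ does not admit non-trivial $w$-overlaps, Proposition \ref{replOverlap} gives $X\FE X\ex{w}{\diamondsuit}=:X'$, and by Lemma \ref{stillSynchronising} the new symbol $\diamondsuit$ is intrinsically synchronising for $X'$. The point of this step is that $\diamondsuit$ now behaves as a reusable ``hinge'': because $w$ was intrinsically synchronising for $Y$ and $h(Y)>0$, the irreducible subshift $Y$ contributes infinitely many words, and the synchronising property lets me freely concatenate blocks that begin and end appropriately relative to $\diamondsuit$.

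The heart of the argument is to extract two independent ``loops'' through the synchronising symbol so that the full $2$-shift embeds. Using irreducibility of $Y$ together with $h(Y)>0$ (which guarantees $Y$ is not a single periodic orbit, so $B(Y)$ grows and there exist at least two genuinely distinct return words to the hinge), I would find two words $u_0,u_1$ such that $\diamondsuit u_0 \diamondsuit$ and $\diamondsuit u_1\diamondsuit$ both lie in $B(X')$, with $u_0\neq u_1$. Intrinsic synchronisation of $\diamondsuit$ then guarantees that \emph{any} bi-infinite concatenation $\cdots \diamondsuit u_{i_{-1}}\diamondsuit u_{i_0}\diamondsuit u_{i_1}\diamondsuit\cdots$ with $i_k\in\{0,1\}$ lies in $X'$; this realises a copy of the full $2$-shift inside $X'$, though not yet via a $1$-block code since $u_0,u_1$ may have different lengths. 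To fix the block length I would normalise: after possibly padding (replacing each $u_i\diamondsuit$ by a non-overlapping word of a common length and applying Proposition \ref{replOverlap} once more to contract each such word to its own single symbol $\diamondsuit_0,\diamondsuit_1$), I obtain a shift space $Z\FE X'\FE X$ containing every bi-infinite sequence over $\{\diamondsuit_0,\diamondsuit_1\}$. The map sending the symbols $0,1$ of $\X_{[2]}$ to $\diamondsuit_0,\diamondsuit_1$ is then an injective $1$-block code $\X_{[2]}\to Z$, and Proposition \ref{2shiftEmb} finishes the proof.

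The main obstacle I expect is the passage from ``two distinct return words'' to a genuine $1$-block embedding: one must ensure the two chosen words really do differ and that their concatenations stay inside the space (handled by intrinsic synchronisation), and then reconcile their possibly unequal lengths with the $1$-block requirement (handled by the non-overlapping contraction of Proposition \ref{replOverlap}). The subtler input is extracting two distinct return words at all, which is exactly where $h(Y)>0$ is used: positive entropy rules out $Y$ being a single orbit and forces enough distinct words through the hinge to give two inequivalent choices. Verifying that these contractions do not collapse distinct sequences — i.e.\ that injectivity of the resulting $1$-block code survives — is the place where the non-overlapping hypothesis on $w$ must be tracked carefully.
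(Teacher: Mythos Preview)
Your proposal is correct and follows essentially the same route as the paper: reduce $w$ to a single synchronising letter $\diamondsuit$ via Proposition \ref{replOverlap} and Lemma \ref{stillSynchronising}, use irreducibility and $h(Y)>0$ to find two distinct $\diamondsuit$-free return words $u,v$ with $\diamondsuit u\diamondsuit,\diamondsuit v\diamondsuit\in B(Y')$, then contract $\diamondsuit u$ and $\diamondsuit v$ to single symbols so that $\X_{[2]}$ embeds by a $1$-block code and Proposition \ref{2shiftEmb} applies. Two small corrections in execution: Lemma \ref{stillSynchronising} only gives that $\diamondsuit$ is intrinsically synchronising for $Y\ex{w}{\diamondsuit}$, not for $X\ex{w}{\diamondsuit}$ (which is all you actually use), and the paper avoids your padding step entirely by choosing $u,v$ with $\diamondsuit\notin u,v$ (so $\diamondsuit u,\diamondsuit v$ are automatically non-overlapping) and contracting the longer one first.
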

\begin{proof}
	Let $\mathcal{A}$ be the alphabet of $Y$. By Lemma \ref{replOverlap} and Lemma \ref{stillSynchronising} we can assume that $w$ is just a single letter $\diamondsuit\in \mathcal A$. Consider the set 
	\begin{equation*}
		A=\{ u\in B(Y) \mid \diamondsuit\not\in u \text{ and } \diamondsuit u\diamondsuit\in B(Y) \}
	\end{equation*}
 of words  that are ``sandwiched'' between two $\diamondsuit$'s. We will show that $\vert A\vert >1$. 
	
	First of all $A$ is non-empty. For by irreducibility, there exists $w\in B(Y)$ such that $\diamondsuit w\diamondsuit\in B(Y)$ and making $w$ of minimal length ensures that $\diamondsuit\not\in w$.
	
	Now, assume for contradiction that $\vert A\vert = 1$ and let $w$ be the single element of $A$. Then there is no word $u$ of $Y$ with $\vert u \vert >\vert w\vert$ such that $\diamondsuit$ is not a factor of $u$ , since else irreducibility of $Y$ would ensure words $s, t\in B(Y)$ with $\diamondsuit s u t \diamondsuit\in B(Y)$ and then some subword of $sut$ with length greater than $\vert w\vert$ would also belong to $A$. Thus, all points of $Y$ must be of the form $(\diamondsuit w)^\infty$, but this contradicts the assumption that $h(Y)>0$.
	
	Having proved that $\vert A\vert>1$ we can let $u, v\in A$ be different. Since $\diamondsuit$ is intrinsically synchronising for $Y$ every point of the form
	\begin{equation}\label{ab-descr}
		x = \dots \diamondsuit s_{-1}\diamondsuit s_{0}\diamondsuit s_1\dots, \text{ where } s_i\in \{u, v\}
	\end{equation}
	 belongs to $Y$. Assume without loss of generality that $\abs{u}\geq \abs{v}$. As $X$ allows neither non-trivial $\diamondsuit u$-overlaps nor non-trivial $\diamondsuit v$-overlaps because $u, v$ do not contain the letter $\diamondsuit$, we can use Lemma \ref{replOverlap} to construct shift spaces $X_1=X^{\diamondsuit u \mapsto a}$ and $X_2 = X_1^{\diamondsuit v \mapsto b}$ for different symbols $a, b\not\in\A$, such that
	\begin{equation*}
		X\FE X_1\FE X_2.
	\end{equation*}
	Using \eqref{ab-descr} we see that $X_2$ contains every point of $\{a, b\}^\Z$, so there is a 1-block code $\phi: \X_{[2]}\to X_2$, which is an embedding, and the result now follows by Proposition \ref{2shiftEmb}.
\end{proof}

\subsection{Classes of shift spaces with arbitrarily large entropy}
We shall now consider actual classes of shift spaces and apply the previous results to them. In all cases we show that the entropy of the flow equivalence class of the considered shift spaces is dense in $\R^+$.

\subsubsection{The S-gap shift and the $\beta$-shift}
First of all we consider the entropy of the $S$-gap shift as defined in Example \ref{sGapShiftEx}, which gives a nice example of the application of Theorem \ref{mainEntr}.
\begin{proposition}
	Let $S\subseteq \N$ with $\vert S\vert>1$. Then $h([\X(S)])$ is dense in $\R^+$.
\end{proposition}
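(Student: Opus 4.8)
The plan is to apply Theorem \ref{mainEntr} directly, taking both the ambient shift and the irreducible subshift to be $\X(S)$ itself and using the single letter $1$ as the synchronising word. To do so I must verify three things: that $\X(S)$ is irreducible, that $h(\X(S))>0$, and that $1$ is intrinsically synchronising for $\X(S)$ while $\X(S)$ admits no non-trivial $1$-overlaps. The last condition is immediate, since a word of length $1$ can have no non-trivial self-overlap (there is no integer $m$ with $1<m<2$).

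For irreducibility, given $u,v\in B(\X(S))$ I would connect them through a fresh $1$: writing $u$ with $p$ trailing zeros after its last $1$ and $v$ with $q$ leading zeros before its first $1$, the fact that $u,v\in B(\X(S))$ forces $p,q\le \max S$ when $S$ is finite (and imposes no bound when $S$ is infinite), so I can choose $s,s'\in S$ with $s\ge p$ and $s'\ge q$ and insert $w=0^{\,s-p}1\,0^{\,s'-q}$, making the two newly created gaps equal to $s,s'\in S$; hence $uwv\in B(\X(S))$, with the few edge cases ($u$ or $v$ containing no $1$) handled the same way. For positive entropy, since $\abs{S}>1$ I would fix two distinct $s_1,s_2\in S$ and observe that the $2^k$ words $1\,0^{a_1}1\cdots 0^{a_k}1$ with each $a_i\in\{s_1,s_2\}$ are pairwise distinct and all lie in $B(\X(S))$; as each has length at most $k(1+\max(s_1,s_2))+1$, the counts $\abs{B_n(\X(S))}$ grow at least like $2^{cn}$ for a constant $c>0$, giving $h(\X(S))>0$.

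The substantive point is that $1$ is intrinsically synchronising. Suppose $v1,1u\in B(\X(S))$; I would show $v1u\in B(\X(S))$ via Proposition \ref{pointMatchesLanguage}, i.e. by checking that no forbidden word of $\X(S)$ occurs in $v1u$. Every forbidden word is either $0^{1+\max S}$ (when $S$ is finite) or of the form $10^k1$ with $k\notin S$, and in each case its interior between the flanking symbols consists only of $0$'s. Consequently no forbidden word can contain the central $1$ of $v1u$ in its interior, so any hypothetical occurrence lies entirely within the factor $v1$ or entirely within the factor $1u$ — but both of these already belong to $B(\X(S))$. Hence $v1u$ is free of forbidden words and belongs to $\X(S)$, establishing the claim.

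With these three verifications in hand, Theorem \ref{mainEntr}, applied with $X=Y=\X(S)$ and $w=1$, immediately yields that $h([\X(S)])$ is dense in $\R^+$. I expect the only mild subtlety to be the finite-$S$ bookkeeping — ensuring the long-zero-run prohibition $0^{1+\max S}$ is respected both when inserting connecting blocks for irreducibility and when concatenating across the synchronising $1$ — but in both places the presence of an explicit $1$ prevents any new over-long run of $0$'s from forming.
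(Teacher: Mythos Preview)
Your proposal is correct and follows exactly the same approach as the paper: invoke Theorem~\ref{mainEntr} with $X=Y=\X(S)$ and $w=1$, noting that $\X(S)$ is irreducible, has positive entropy, and that the letter $1$ is non-overlapping and intrinsically synchronising. The paper states these hypotheses without justification, whereas you spell out the verifications; the only minor imprecision is your appeal to Proposition~\ref{pointMatchesLanguage} (which concerns points rather than words), but extending $v1u$ to a bi-infinite point using left- and right-infinite extensions of $v1$ and $1u$ immediately repairs this.
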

\begin{proof}
	The result follows from Theorem \ref{mainEntr}, since $h(\X(S))>0$, $\X(S)$ is irreducible, and the symbol $1$ is both non-overlapping and  intrinsically synchronising for $\X(S)$.
\end{proof}
\noindent
Note that if $S$ only consists of a single element, then $h(\X(S)) = 0$. 

As we have not introduced the class of shift spaces called $\beta$-shifts, and doing so would be infeasible for this single appearance, we omit the proof of the following result. It is, however, almost exactly as for the $S$-gap shift.
\begin{proposition}
	For any $\beta>1$ let $\X_\beta$ be the associated $\beta$-shift. Then the set $h([\X_\beta])$ is dense in $\R^+$. 
\end{proposition}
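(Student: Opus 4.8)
The plan is to verify the hypotheses of Theorem \ref{mainEntr} for $X=\X_\beta$ and then invoke it, just as was done for the $S$-gap shift. Recall the two standard facts about $\beta$-shifts (see \cite{LM}): for every $\beta>1$ the shift $\X_\beta$ is irreducible and has entropy $h(\X_\beta)=\log\beta>0$. What remains, and what I would spend the argument on, is to exhibit an irreducible subshift $Y\subseteq\X_\beta$ with $h(Y)>0$ together with a word $w\in B(Y)$ that is intrinsically synchronising for $Y$ and for which $\X_\beta$ admits no non-trivial $w$-overlaps; Theorem \ref{mainEntr} then delivers density of $h([\X_\beta])$ in $\R^+$.

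The delicate ingredient is the word. For the $S$-gap shift the synchronising symbol was $1$, and one is tempted to use the lowest digit $0$ here, but this does not transfer verbatim: when $1<\beta<2$ the single symbol $0$ need not be intrinsically synchronising for all of $\X_\beta$, because a left context can match a long prefix of the expansion $d=d(\beta)$ of $1$ that ends in a $0$, and such a high context blocks the gluing $u0,\,0v\mapsto u0v$. I would therefore not take $Y=\X_\beta$ but instead pass to a positive-entropy irreducible sub-shift of finite type. When $\lceil \beta\rceil-1\geq 2$ the full shift on the digits $\{0,\dots,\lceil \beta\rceil-2\}$ is such a subshift of $\X_\beta$ (any sequence using only these digits stays strictly below $d$), on which \emph{every} word is intrinsically synchronising; in particular the non-overlapping symbol $0$ works and Theorem \ref{mainEntr} applies at once. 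For the remaining range $1<\beta<2$ I would read off the run-length structure of $d$: forbidding the runs of $1$'s (respectively the too-short $0$-gaps) dictated by the first descent of $d$ cuts out an explicit irreducible SFT $Y$ sitting inside $\X_\beta$ with $h(Y)>0$. More uniformly, one may invoke that every $\beta$-shift is a coded system, hence an increasing union of irreducible SFTs whose entropies increase to $\log\beta$, and simply select one with positive entropy.

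Once such an $M$-step irreducible SFT $Y\subseteq\X_\beta$ is fixed, every word of $B(Y)$ of length at least $M$ is intrinsically synchronising for $Y$, so the only real obstacle is to pick such a $w$ that is simultaneously \emph{non-overlapping}, since it is this property that licenses Proposition \ref{replOverlap} inside the proof of Theorem \ref{mainEntr}. Because $Y$ is irreducible with positive entropy, its graph carries two distinct first-return loops at a common vertex, yielding two freely concatenable words; from these one builds unbordered words of every sufficiently large length, and I would take $w$ to be one of length at least $M$. By the remark following the definition of non-overlapping words, $\X_\beta$ then admits no non-trivial $w$-overlaps, all hypotheses of Theorem \ref{mainEntr} hold, and density of $h([\X_\beta])$ in $\R^+$ follows. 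I expect the genuinely delicate point to be exactly this construction of a word that is at once long enough to synchronise $Y$ and unbordered in $\X_\beta$, the convenient single-symbol synchroniser of the $S$-gap case being unavailable for small $\beta$.
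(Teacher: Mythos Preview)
Your approach is correct and follows the paper's intended route, namely a direct application of Theorem~\ref{mainEntr}. The paper itself omits the proof, remarking only that it is ``almost exactly as for the $S$-gap shift,'' which amounts to invoking irreducibility, $h(\X_\beta)=\log\beta>0$, and the existence of a non-overlapping intrinsically synchronising word. You go further than the paper by correctly flagging that the single-symbol synchroniser available for $S$-gap shifts has no direct analogue when $1<\beta<2$, and your workaround---passing to an irreducible SFT subshift $Y\subseteq\X_\beta$ of positive entropy (using that $\beta$-shifts are coded) and exploiting that every word of length at least $M$ in an $M$-step shift is intrinsically synchronising for it---is sound.

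The one place you are somewhat informal is the construction of an unbordered word of length $\geq M$ from two first-return loop words; this is true but not entirely routine, since concatenations of two distinct loop words need not be unbordered over the underlying alphabet. If you want this step airtight within the paper's framework, you can instead imitate the proof of Theorem~\ref{soficEntropy}, which handles exactly this issue for an arbitrary irreducible positive-entropy sofic subshift: first pass via flow equivalence (Proposition~\ref{replOverlap}) to a shift containing a fixed point $c^\infty$, then build the non-overlapping synchronising word as $c^{km}w$ with $\abs{c^{km}}>\abs{w}$ and $w$ not ending in $c$. Since your $Y$ is an SFT and hence sofic, that argument applies verbatim.
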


\subsubsection{Sofic shifts}
For a more general result, we consider the class of sofic shifts.

\begin{theorem}\label{soficEntropy}
	Let $X$ be a sofic shift with $h(X)>0$. Then $h([X])$ is dense in $\R^+$. 
\end{theorem}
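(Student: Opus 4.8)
The plan is to verify the hypotheses of Theorem \ref{mainEntr} for $X$: I must exhibit an irreducible subshift $Y\subseteq X$ with $h(Y)>0$ together with a word $w\in B(Y)$ that is intrinsically synchronising for $Y$ while $X$ admits no non-trivial $w$-overlaps. Once this is in place the theorem delivers the density of $h([X])$ in $\R^+$ directly, so the entire task is to construct $Y$ and $w$.

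First I would produce $Y$ from the graph structure. Fix a minimal right-resolving presentation $\G=(G,\Lab)$ of $X$. By Proposition \ref{soficEntropyByGraph} we have $h(X)=h(\X_G)$, and by Proposition \ref{irreducibleEntropy} there is an irreducible subgraph $H\subseteq G$ with $h(\X_H)=h(\X_G)=h(X)>0$. Restricting the labelling, $(H,\Lab|_H)$ is a right-resolving labeled graph each of whose labeled walks is a labeled walk of $\G$; hence $Y:=\X_{(H,\Lab|_H)}$ is a subshift of $X$. Since $H$ is irreducible, $Y$ is irreducible, and since the presentation is right-resolving, $h(Y)=h(\X_H)>0$ by Proposition \ref{soficEntropyByGraph} applied to $(H,\Lab|_H)$. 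This supplies the required $Y$.

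The crux is the word $w$. The cleanest sufficient condition for ``$X$ admits no non-trivial $w$-overlaps'' is that $w$ be \emph{non-overlapping}, since by the remark following that definition a non-overlapping word has no non-trivial overlaps in any shift space containing it. So it suffices to find a non-overlapping word that is intrinsically synchronising for $Y$. I would start from any intrinsically synchronising word $u$ for $Y$, which exists by Proposition \ref{extendToIntrin}, and use the easy observation that intrinsic synchronisation is preserved under extension inside $B(Y)$: if $u$ is intrinsically synchronising and $uv,vu\in B(Y)$, then $uv$ and $vu$ are again intrinsically synchronising. Positive entropy prevents $Y$ from being a single orbit, so (exactly as in the $\abs{A}>1$ argument in the proof of Theorem \ref{mainEntr}) there are two distinct words $\alpha\neq\beta$ of equal length with $u\alpha u,u\beta u\in B(Y)$, and every word $u c_1 u\cdots u c_k u$ with $c_i\in\{\alpha,\beta\}$ lies in $B(Y)$ and is intrinsically synchronising. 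From these blocks I would assemble a synchronising word whose suffix pattern occurs in it only once, so that no proper prefix can coincide with a suffix; such a word is non-overlapping.

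I expect this last step to be the main obstacle: verifying that the assembled word is genuinely unbordered at the symbol level, and not merely at the level of the blocks $\alpha,\beta$, requires care, because a self-overlap of $w$ need not respect the block boundaries unless the separating copies of $u$ are forced into alignment. An alternative I would keep in reserve is a counting argument: by Perron--Frobenius growth for the irreducible graph $H$, the intrinsically synchronising words of length $N$ bracketed by $u$ number on the order of $\lambda^N$ with $\lambda=2^{h(Y)}>1$, whereas the bordered words of length $N$ are dominated by $\sum_{j<N}\abs{B_j(Y)}$; choosing $u$ with no short border and taking $N$ large then isolates a non-overlapping synchronising word. With $w$ secured, Theorem \ref{mainEntr} applies and $h([X])$ is dense in $\R^+$.
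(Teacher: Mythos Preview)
Your reduction to Theorem \ref{mainEntr} and the construction of the irreducible sofic subshift $Y\subseteq X$ with $h(Y)>0$ match the paper exactly. Where you part ways is in producing the non-overlapping intrinsically synchronising word $w$, and this is where a genuine gap remains. Your block-assembly route starts from a synchronising $u$ that is not itself known to be unbordered, so nothing forces the $u$-separators in $u c_1 u\cdots u c_k u$ to align under a self-overlap; a border can cut through the middle of a block, and the ``equal length'' claim for $\alpha,\beta$ does not follow from the $|A|>1$ argument in the proof of Theorem \ref{mainEntr}, which only yields two distinct (possibly different-length) words. Your counting fallback bounds the bordered words of length $N$ by $\sum_{j<N}|B_j(Y)|$, but for an irreducible sofic $Y$ with growth rate $\lambda=2^{h(Y)}$ this sum is of order $\lambda^N/(\lambda-1)$, which for $\lambda$ close to $1$ swamps the $\Theta(\lambda^N)$ count of $u$-bracketed synchronising words; the inequality you want does not follow from these crude estimates, and ``choosing $u$ with no short border'' presupposes exactly the kind of unbordered synchronising word you are trying to build.

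The paper sidesteps both difficulties with one idea you do not use: it first passes to a flow-equivalent pair $(X',Y')$ in which $Y'$ contains a \emph{fixed point} $c^\infty$. This is achieved by repeatedly collapsing non-overlapping two-letter words $ab$ (with $a\neq b$) occurring in $Y$ to fresh symbols via Proposition \ref{replOverlap}, turning some periodic point of $Y$ into a fixed point of $Y'$; since each such replacement is applied to $X$, one still has $Y'\subseteq X'$ with $X'\sim_{FE}X$. In $Y'$ one then takes a synchronising extension $c^k s$ with $s$ not ending in $c$ (possible because $h(Y')>0$ and right-extension preserves intrinsic synchronisation, as you observed), and pads on the left to $c^{km}s$ with $km>|s|$. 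The unique maximal run of $c$'s now sits at the very front, so any proper border is either all $c$'s (impossible, since the word ends in a non-$c$) or long enough to contain that maximal $c$-run (impossible, it occurs only once). This gives a concrete unbordered synchronising word with no counting or block-alignment issues, after which Theorem \ref{mainEntr} applies to $X'$ and hence to $X$.
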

\begin{proof}
	Let $\mathcal G = (G, \mathcal L)$ be a minimal right-resolving representation of $X$. Since $h(\X_G)=h(\X_{\mathcal G})>0$ by Proposition \ref{soficEntropyByGraph} there is an irreducible subgraph $G'\subseteq G$ with $h(\X_{ G'})>0$ by Proposition \ref{irreducibleEntropy}.
	 Now, let $Y$ be the sofic shift presented by the labeled graph $\G'=(G', \Lab\restriction_{G'})$. Then $Y\subseteq X$ is sofic, irreducible, and $h(Y)>0$. So now for us to apply Theorem \ref{mainEntr} and complete the proof, we only need to find an intrinsically synchronising word $u$ for $Y$, such that $X$ does not allow non-trivial $u$-overlaps.  
	
		First, by Proposition \ref{replOverlap} we can replace any word $ab, a\neq b$ of $Y$ with a new symbol $\diamondsuit\not\in \A(X)$. The sofic shift $Y$ must contain a periodic point $v^\infty$, and by repeatedly replacing two adjacent and different letters of $v$ with new letters, we can achieve a shift $Z\FE Y$ containing a point $c^\infty$ for some symbol $c$. The shift space $Z$ is irreducible, sofic, and $h(Z)>0$, and we may let $\mathcal H$ be a presentation of $Z$. Then there is a vertex $I$ of $\mathcal H$, which is contained in a cycle where all edges are labeled $c$. We shall consider the word $v=c^k, k\in \N$ as though it is a single tour around the cycle beginning and ending at the node $I$. 
		
		Second, extend the word $v$ to the right from the vertex $I$ by way of the proof of Proposition \ref{extendToIntrin} to an intrinsically synchronising word for $Z$, $vw$, such that $w$ does not end with the letter $c$. This is possible since $Y$ is irreducible and $h(Y)>0$. Further, extend the word to the left to the intrinsically synchronising word $v^mw, m\in \N$ such that $\vert v^m\vert >\vert w \vert$. Then $v^mw$ is non-overlapping. To see this, let $c^n$ be the longest prefix of $w$ only containing $c$'s. Now, the only occurrence of $c^{km+n}$ in $v^mw$ is as the prefix. So if a proper prefix of $v^mw$ contains $c^{km+n}$ it cannot be a suffix. On the other hand, if a prefix of $v^mw$ does not contain $c^{km+n}$, then it consists entirely of $c$'s and cannot be a suffix, since $w$ ends in a letter different from $c$.
		
		As we have constructed a non-overlapping word $v^mw$ which is intrinsically synchronising for $Y$, we are done.
\end{proof}
\noindent
Since shifts of finite type are all sofic we immediately get the following corollary.

\begin{corollary}
	For a shift of finite type $X$ with $h(X)>0$, the set $h([X])$ is dense in $\R^+$.
\end{corollary}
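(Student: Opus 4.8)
The plan is to observe that this corollary is an immediate specialization of Theorem \ref{soficEntropy}, so essentially no new work is required. The key fact I would invoke is that every shift of finite type is sofic, which was established earlier in the excerpt (the corollary following Proposition \ref{factorOfFinite}, where it is noted that ``every shift space is a factor of itself, so clearly all shifts of finite type are sofic'').

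Concretely, given a shift of finite type $X$ with $h(X)>0$, I would first note that $X$ is in particular a sofic shift. Since it also has positive entropy by hypothesis, the conditions of Theorem \ref{soficEntropy} are met verbatim. Applying that theorem directly then yields that $h([X])$ is dense in $\R^+$, which is exactly the desired conclusion.

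There is no genuine obstacle here: the substantive content — constructing an irreducible subgraph of positive entropy from a minimal right-resolving presentation, manufacturing a non-overlapping intrinsically synchronising word, and feeding the result into Theorem \ref{mainEntr} and Proposition \ref{2shiftEmb} — was already carried out in the proof of Theorem \ref{soficEntropy}. The only thing worth stating explicitly is the inclusion of shifts of finite type within the sofic shifts, after which the result is a one-line deduction.

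\begin{proof}
	Every shift of finite type is sofic, so if $X$ is a shift of finite type with $h(X)>0$, then $X$ is a sofic shift with $h(X)>0$. The claim now follows immediately from Theorem \ref{soficEntropy}.
\end{proof}
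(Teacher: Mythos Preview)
Your proof is correct and matches the paper's approach exactly: the paper also simply notes that shifts of finite type are sofic and deduces the corollary immediately from Theorem \ref{soficEntropy}.
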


\subsubsection{Non-periodic shift spaces}
When it comes to shift spaces without periodic points the methods we have used so far are insufficient. If $X$ is without periodic points there can be no embedding of the full 2-shift in a shift $Y\FE X$ since periodicity of points is maintained under flow equivalence and the full 2-shift contains periodic points. Instead we employ a different method resembling the one we shall see in Chapter 4. 

\begin{theorem}
	Let $X$ be a shift space without periodic points and $h(X)>0$. Then the set $h([X])$ is dense in $\R^+$.
\end{theorem}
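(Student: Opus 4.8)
The overall plan is to verify the hypothesis of Theorem \ref{dense}: I will show that a non-periodic shift space $X$ with $h(X)>0$ is flow equivalent to shifts of arbitrarily large entropy, after which density of $h([X])$ in $\R^+$ is immediate. Note first that the full-shift techniques of Proposition \ref{2shiftEmb} and Theorem \ref{mainEntr} are unavailable here: as already observed, periodicity of points is preserved under flow equivalence, and every full shift $\X_{[M]}$ contains periodic points, so no $Y\FE X$ can contain an embedded copy of $\X_{[M]}$. Consequently the high-entropy shifts we produce must themselves be non-periodic, which rules out embedding a full shift and forces us to raise entropy by \emph{compression}, that is, by re-blocking $X$ through symbol contractions, an operation that preserves non-periodicity.

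The mechanism I would use is an inducing (first-return) construction, the combinatorial shadow of the change of entropy under time-reparametrisation of a suspension flow. Fix a non-overlapping word $u\in B(X)$ that recurs in the points of $X$ with uniformly bounded gaps; its occurrences, being pairwise disjoint, cut each such point uniquely into finitely many \emph{return blocks}, each of the form $us$ in which $u$ occurs only as the prefix. By Proposition \ref{replOverlap} I would first contract $u$ to a single fresh symbol $\diamondsuit$, so that each block becomes $\diamondsuit s$ with $\diamondsuit\notin s$; such a word is automatically non-overlapping, its only $\diamondsuit$ being the leading one, so applying Proposition \ref{replOverlap} once more to each of the finitely many block values $\diamondsuit s$ in turn yields a shift space $Y\FE X$ whose alphabet consists of one symbol per return block. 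A length-$m$ word of $Y$ then corresponds to $m$ consecutive return blocks whose total length in $X$ is about $m\bar L$, where $\bar L$ is the average return length, so a direct count gives $\abs{B_m(Y)}\approx \abs{B_{m\bar L}(X)}$ and hence $h(Y)=\bar L\cdot h(X)$. Choosing $u$ rarer, with longer return times, makes $\bar L$ and therefore $h(Y)$ as large as desired, completing the reduction.

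Two points require care, and here non-periodicity and positive entropy enter exactly as the hypotheses $h(Y)>0$ and $\abs{A}>1$ did in the proof of Theorem \ref{mainEntr}. Positive entropy guarantees that the number of return blocks grows exponentially in the return length, so that $\bar L$ can be driven to infinity and $h(Y)=\bar L\,h(X)$ is genuinely large, while non-periodicity guarantees at least two distinct return blocks, so the construction never collapses onto a single periodic orbit. The main obstacle, however, will be the \textbf{production of a suitable marker}: a non-overlapping word recurring with bounded gaps in the relevant points of an \emph{arbitrary} non-periodic shift with $h(X)>0$, which may be minimal and hence possess no synchronising words whatsoever. I would address this by locating $u$ inside a recurrent part of $X$, invoking uniform recurrence when $X$ is minimal, and otherwise truncating the return-word construction at a large bound $B$, contracting only the finitely many return blocks of length at most $B$ and leaving longer stretches intact, so that $Y$ still has finite alphabet, still satisfies $Y\FE X$ by Proposition \ref{replOverlap}, and still inherits entropy of order $B\cdot h(X)$ from its compressed, high-complexity blocks. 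Verifying that enough varied, long, boundedly recurrent blocks are present to realise this bound is the crux of the argument, and the point at which $h(X)>0$ together with the absence of periodic points must be used decisively.
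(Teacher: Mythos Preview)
Your overall strategy --- verify the hypothesis of Theorem \ref{dense} by constructing $Y\FE X$ with strictly larger entropy, then iterate --- matches the paper's, and your observation that Proposition \ref{2shiftEmb} is unavailable is exactly the paper's starting point. But the mechanism you propose, a first-return (induced) construction over a marker word $u$, has a genuine gap that you yourself flag and do not close: the existence of a non-overlapping $u$ recurring with uniformly bounded gaps in the points of an \emph{arbitrary} non-periodic shift of positive entropy. There is no reason such a marker exists --- a non-periodic shift need not be minimal, need not contain a minimal subsystem of positive entropy, and in the non-minimal case a given word may fail to recur at all in some points. Your proposed truncation (contracting only return blocks of length $\le B$) does not repair this: once some points have arbitrarily long $u$-free stretches, those stretches survive the contraction uncompressed, and your heuristic $h(Y)\approx\bar L\cdot h(X)$ (already only an Abramov-type identity requiring its own justification) breaks down, since $\bar L$ is no longer well-defined and the uncompressed portions may carry all the entropy.

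The paper avoids the marker problem entirely by exploiting a different consequence of non-periodicity: not bounded recurrence of some word, but bounded \emph{runs of every letter}. Since $e_i^\infty\notin X$ for each $e_i\in\A(X)$, there is $k_i$ with $e_i^{k_i}$ forbidden; the paper then contracts every non-overlapping word of the form $s e_i^l$ (with $s\ne e_i$, $1\le l\le k_i$) to a fresh symbol. After these finitely many contractions no two consecutive symbols of the old alphabet can survive adjacently, so a word of length $3n$ in $X$ compresses to a word of length at most $2n$ in the resulting shift $X_r$. A counting argument (controlling how many $X$-words can share a given compressed image) gives $\abs{B_{2n}(X_r)}\ge \abs{B_{3n}(X)}/R$ for a constant $R$, hence $h(X_r)\ge\tfrac32 h(X)$, and iteration plus Theorem \ref{dense} finishes. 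This sidesteps your obstacle altogether: no single recurring word is needed, only the automatic bound on letter-runs. If you want to salvage the inducing idea, the missing ingredient is precisely a marker whose existence is guaranteed in every non-periodic shift --- and the run-length observation is exactly such a substitute.
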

\begin{proof}
Our approach will be to construct a shift space $Y\FE X$ with $h(Y)\geq \frac32 h(X)$. Since $Y$ is flow equivalent to $X$ it will not contain any periodic points, so the procedure can be repeated, and $X$ will be flow equivalent to shifts of arbitrarily large entropy. Applying Theorem \ref{dense} then completes the proof.

	 Let $\mathcal A = \{e_1, \dots, e_m\}$ be the alphabet of $X$, and $\mathcal F$ be a set of forbidden blocks for $X$. Since $X$ contains no periodic points it cannot contain any point of the form $e_i^\infty, 1\leq i\leq m$, so we may assume that there is a word $e_i^{k_i}\in \mathcal F, k_i\in \N$ for every $1\leq i\leq m$. 
	 
	 Create the collection of words
	 \begin{equation*}
	 	A = \left\{ se_i^l \mid s\in \mathcal A\setminus \{e_i\}, 1\leq l\leq k_i \right\},
	 \end{equation*}
	 and sort them by length such that $A=\{ w_1,\dots, w_r \}$ with $\vert w_i\vert \geq \vert w_j\vert$ for $i\geq j$.
	 
	 As $w_i$ is non-overlapping for all $i$, Proposition \ref{replOverlap} allows us, for different symbols $\diamondsuit_1, \diamondsuit_2, \dots, \diamondsuit_r\not\in \A$, to make the following word contractions.
	 \begin{equation*}
	 	X_1 = X^{w_1\mapsto \diamondsuit_1}, X_2 = X_1^{w_2\mapsto \diamondsuit_2}, \dots, X_r=X_{r-1}^{w_r\mapsto \diamondsuit_r},
	 \end{equation*}
	 such that
	 \begin{equation*}
	 	X\FE X_1\FE X_2\FE \dots\FE X_r.
	 \end{equation*}
	 Note that the order of the contractions is important as it facilitates that no word with two adjacent letters from the old alphabet $\A$ occurs in $X_r$.

	 Define functions 
	 \begin{equation*}
	 	T_i: B(X_{i-1})\to B(X_{i}), 1\leq i\leq r
	 \end{equation*}
	  such that $T_i(w) = w'$, which takes $w$; removes from the left and right end, respectively, the biggest possible blocks of the form $f^k, hg^l$ with $f, g, h\in \mathcal A, h\neq g$ and $k, l\in \N$; and thereafter replaces every occurrence of $w_i$ with the symbol $\diamondsuit_i$. The procedure is illustrated here, where each $s_i$ is a word and $f$ is not a prefix of $s_1$.
	 \begin{eqnarray*}
	 	w:&\overbrace{ff\dots ff}^k s_1\,w_i\,s_2\,w_i\,s_3\,h\overbrace{gg\dots gg}^l& \\
	 	&\downarrow  &\\
	 	w':& s_1\, \diamondsuit_i\, s_2\, \diamondsuit_i\, s_3&
	 \end{eqnarray*}
	 Note that this procedure never removes or changes any of the symbols $\diamondsuit_i$. 
	 
	 Construct the function $T:B(X)\to B(X_r)$ defined by 
	 \begin{equation*}
	 	T=T_r\circ T_{r-1}\circ \cdots\circ T_1,
	 \end{equation*}
	 and set $S:B(X_r) \to B(X)$ to be the function that takes $w'$ and expands every occurrence of $\diamondsuit_i$ to $w_i$. Letting $N=\max\{k_i\}$ we can see that $\abs{S(T(w))}\geq \vert w\vert-(2N+1)r$ as a total maximum of $2N+1$ symbols, all in $\A$, can vanish from the ends of $w$ under $T_i$.
	  
	 	 Consider the sets
	 	 \begin{align*}
	 	 	C_n &= \{T(w) \mid w\in B_n(X)\},\\
	 	 	D_n &= \{u\in C_n \mid \text{No other member of $C_n$ is a proper prefix of }u\},\\
	 	 	E_n(v) &= \{u\in C_n\mid v\text{ is a prefix of }u\}, v\in D_n.
	 	 \end{align*}
		Suppose that  $u\in D_n$ and let $T_n^{-1}$ be the pre-image of $T$ restricted to $B_n(X)$. Then every element of $T_n^{-1}(E_n(u))$ has the word $S(u)$ as a factor and length $n$, so we get the inclusion
		 \begin{equation*}
	T_n^{-1}(E_n(u)) \subseteq \{ w S(u) v\mid w\in B_k(\A^\Z), v\in B_l(\A^\Z), k+l+\abs{S(u)} = n \}.
		\end{equation*}
		 Using $\abs{S(u)}\geq n-(2N+1)r$ this yields
		 \begin{align*}
		 	\vert T_n^{-1}(E_n(u))\vert
		 		&\leq \sum_{k=0}^{n-\abs{S(u)}} \abs{B_k(\A^\Z)}\cdot \abs{B_{n-k-\abs{S(u)}}(\A^\Z)}\\
		 		&\leq ((2N+1)r+1)\vert \mathcal A\vert^{(2N+1)r}.
		 \end{align*}
This estimate, together with  
\begin{equation*}
	\bigcup_{u\in D_n} T_n^{-1}\left(E_n(u)\right)=T_n^{-1}(C_n)=B_n(X),
\end{equation*}
 yields
\begin{equation*}
	\vert D_n\vert\geq \frac{\abs{B_n(X)}}{R},
\end{equation*}
where $R=((2N+1)r+1)\vert \mathcal A\vert^{(2N+1)r}$ is a constant.
Any word of $D_{3n}$ is of length $\leq 2n$ since no two consecutive symbols in a word of $B(X_r)$ can be members of $\mathcal A$ and expanding any other symbol yields a word of length at least two. Further, as there is no $u\in D_{3n}$, which is a prefix of another $v\in D_{3n}$, and $D_{3n}\subseteq B(X_r)$, we have
\begin{equation*}
	\abs{B_{2n}(X_r)} \geq \abs{D_{3n}} \geq \frac{\abs{B_{3n}(X)}}{R}.
\end{equation*}
Since $R$ is simply a constant, we get
\begin{align*}
	\frac32 h(X) =   \frac32\lim_{n\to \infty} \frac1{3n}\log \vert B_{3n}(X)\vert
	     \leq\lim_{n\to\infty} \frac1{2n} \log  (R\cdot \vert B_{2n}(X_r)\vert) 
	     = h(X_r).
\end{align*}
\end{proof}

\chapter{Classification of the S-gap shift}
In this chapter we will attempt a classification of the $S$-gap shift with respect to flow equivalence. The classification is not fully achieved, but we show a new, stronger invariant for sofic $S$-gap shifts and classify the non-sofic $S$-gap shifts completely. Our approach will be to regard flow equivalence as a transformation of the orbits of a shift space and examine properties of these transformations.

\begin{remark}
Any result in this chapter that does not have a citation or is stated as known, is new and produced by the author.
Further, Proposition \ref{sGapFiniteClass} has an alternative proof supplied by the author.
\end{remark}

\section{Initial results on the S-gap shift}
The $S$-gap shifts were introduced in Example \ref{sGapShiftEx}, and this initial section will serve to present some already existing results on their classification. We show how to distinguish finite type, sofic, and non-sofic $S$-gap shifts, and then move on to classification with respect to flow equivalence. If $S\subseteq \N_0$ and $k\in \Z$ then $k+S$ will denote the set $\{k+s\mid s\in S\}$.

\subsection{Shift types}
First, we need to establish when an $S$-gap shift is sofic or of finite type.
\begin{proposition}[Dastjerdi and Jangjoo \cite{S-gap}]\label{sGapFiniteClass}
Let $S\subseteq \N_0$. Then $\X(S)$ is of finite type if and only if $S$ is finite or there exists $R\subseteq \N_0$ and $t\in \N_0$ with $S=R\cup (t+\N_0)$. 	
\end{proposition}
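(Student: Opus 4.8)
The plan is to first unwind the hypothesis. The condition that there exist $R\subseteq\N_0$ and $t\in\N_0$ with $S=R\cup(t+\N_0)$ holds precisely when $\N_0\setminus S\subseteq\{0,\dots,t-1\}$, i.e.\ exactly when the complement $\N_0\setminus S$ is finite; so the right-hand side of the biconditional says simply that $S$ or $\N_0\setminus S$ is finite. With this reformulation the easy direction is immediate: if $S$ is finite, the explicit finite forbidden set exhibited in Example~\ref{sGapShiftEx} shows $\X(S)$ is of finite type; if $S$ is infinite but $\N_0\setminus S$ is finite, then the set $\F=\{10^k1\mid k\notin S\}$ from Example~\ref{sGapShiftEx} is finite, so again $\X(S)$ is of finite type.

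For the converse I would argue by contraposition on the shape of $S$. Assume $\X(S)$ is of finite type; if $S$ is finite we are done, so suppose $S$ is infinite and aim to show $\N_0\setminus S$ is finite. By Proposition~\ref{finiteMStep} the shift is $M$-step for some $M\in\N$. The key tool is the standard gluing property of $M$-step shifts: if $uw,wv\in B(\X(S))$ with $\abs{w}\ge M$, then $uwv\in B(\X(S))$. I would justify this directly by overlapping tails---choose points realizing $uw$ and $wv$, keep the left tail of the first up through $uw$ and the right tail of the second from $wv$ onward, and glue them along the common block $w$; since $\abs w\ge M$, no window of length $M+1$ can straddle from the $u$-side to the $v$-side, so every $(M+1)$-window of the glued bi-infinite sequence already occurs in one of the two points and is therefore allowed, whence the sequence lies in $\X(S)$ and $uwv\in B(\X(S))$.

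With the gluing property in hand the contradiction is short. Suppose some $k\ge M$ satisfies $k\notin S$. Because $S$ is infinite there is $s\in S$ with $s\ge k$, and $10^s1\in B(\X(S))$; its prefix $10^k$ and its suffix $0^M1$ therefore also lie in $B(\X(S))$. Taking $u=10^{k-M}$, $w=0^M$ and $v=1$ gives $uw=10^k\in B(\X(S))$ and $wv=0^M1\in B(\X(S))$, so gluing along $w$ yields $uwv=10^k1\in B(\X(S))$---contradicting $k\notin S$. Hence every $k\ge M$ lies in $S$, i.e.\ $\N_0\setminus S\subseteq\{0,\dots,M-1\}$ is finite, and so $S=(S\cap\{0,\dots,M-1\})\cup(M+\N_0)$ has the required form with $t=M$.

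The routine easy direction and the final contradiction are essentially bookkeeping; the one step demanding care is the gluing property, both in pinning down the correct overlap length ($\abs w\ge M$ is exactly what prevents an $(M+1)$-window from seeing both sides) and in producing an honest bi-infinite point rather than merely a word with no forbidden factor. Verifying that the short words $10^k$ and $0^M1$ genuinely occur in $\X(S)$---which is where infiniteness of $S$ is used---is the other place to be slightly careful.
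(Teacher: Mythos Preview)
Your proof is correct and follows essentially the same route as the paper's: both invoke Proposition~\ref{finiteMStep} to get an $M$-step presentation, use the existence of some $s\in S$ with $s>M$ to see that all the relevant length-$(M{+}1)$ subwords of $10^k1$ already occur in $\X(S)$, and conclude that every $k\ge M$ lies in $S$. The only difference is presentational---you isolate and justify the overlap/gluing property of $M$-step shifts explicitly, whereas the paper compresses that step into the single clause ``because none of these blocks can be forbidden in $\X(S)$.''
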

\begin{proof}
	If $S$ is finite Example \ref{sGapShiftEx} describes a finite set of forbidden words $\F$ with $\X_\F=\X(S)$. 
	If $S$ is infinite and $\X(S)$ is of finite type, then it is $M$-step for some $M$ by Proposition \ref{finiteMStep}. Since there is an $s\in S$ with $s>M$, then 
	\begin{equation*}
		\{10^k1\mid k>M\}\subseteq B(\X(S))
	\end{equation*}
	because none of these blocks can be forbidden in $\X(S)$. So $S$ must contain all $k>M$ and is thus of the form $S=R\cup (M+1+\N_0)$ for some $R\subseteq \N_0$.
	
	If $S$ is of the form $S=R\cup (t+\N_0)$, then a finite set of forbidden words for $\X(S)$ is given by 
	\begin{equation*}
	\F = \{10^r1\mid r\in \N_0\setminus S\}.
	\end{equation*}
\end{proof}

\begin{proposition}[Dastjerdi and Jangjoo \cite{S-gap}]\label{soficSGapShifts}
Let $S\subseteq	 \N_0$. Then $\X(S)$ is a sofic shift if and only if there are finite sets $R, T\subseteq \N_0$ and $N\in \N$ with $S = R\cup (T+N\N_0)$.
\end{proposition}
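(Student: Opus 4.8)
The plan is to reduce the statement to the eventual periodicity of the indicator sequence of $S$. For $j\in\N_0$ write $S_j=(S-j)\cap\N_0=\{s-j\mid s\in S,\ s\geq j\}$, so that $S_0=S$ and $S_{j+1}=(S_j-1)\cap\N_0$. A pigeonhole argument shows that the family $\{S_j\}_{j\in\N_0}$ is finite if and only if there are $j<j'$ with $S_j=S_{j'}$ (the forward recursion then forces $S_{j+i}=S_{j'+i}$ for all $i$), which happens exactly when $n\in S\iff n+(j'-j)\in S$ for all $n\geq j$; writing $N=j'-j$, $R=S\cap[0,j)$ and $T=S\cap[j,j+N)$, this is equivalent to $S=R\cup(T+N\N_0)$ with $R,T$ finite. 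Thus it suffices to prove that $\X(S)$ is sofic if and only if $\{S_j\}_{j\in\N_0}$ is finite. The case of finite $S$ is immediate: then $S=S\cup(\emptyset+N\N_0)$ already has the required form and $\X(S)$ is of finite type, hence sofic, so I would assume $S$ infinite (whence $0^\infty\in\X(S)$ and every word $10^j$ lies in $B(\X(S))$).

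For the direction assuming $\{S_j\}$ finite, I would exhibit an explicit finite labelled graph presenting $\X(S)$. Since $S$ is then eventually periodic with period $N$ from some threshold $M$, I take vertices $q_0,\dots,q_{M+N-1}$, reading $q_j$ as ``the last $1$ was followed by exactly $j$ zeros'' with the identification $q_{j}=q_{j+N}$ for $j\geq M$. From $q_j$ I put a $0$-labelled edge to $q_{j+1}$ (indices read modulo the period once $j\ge M$) whenever some $s\in S$ satisfies $s>j$, and a $1$-labelled edge to $q_0$ whenever $j\in S$. Deleting stranded vertices (which only occurs in the finite-$S$ case, so not here) leaves an essential graph, and one checks directly from the description in Example \ref{sGapShiftEx} that the labelled bi-infinite walks are exactly the points of $\X(S)$; hence $\X(S)$ is sofic by the definition of a sofic shift.

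For the converse I would argue through follower sets. If $\X(S)$ is sofic it has a presentation $\G=(G,\Lab)$, which I may assume essential, and for $w\in B(\X(S))$ let $T(w)\subseteq\V(G)$ be the set of vertices at which a path labelling $w$ terminates, as in the proof of Proposition \ref{extendToIntrin}. Since every finite path in an essential graph extends both ways, $wu\in B(\X(S))$ holds exactly when $u\in F(V)$ for some $V\in T(w)$, so the follower set $\{u\mid wu\in B(\X(S))\}=\bigcup_{V\in T(w)}F(V)$ is determined by the subset $T(w)$ of the finite vertex set; hence $\X(S)$ has only finitely many distinct follower sets. It then remains to compute these for the words $10^j$: a continuation $0^a1\cdots$ may follow $10^j$ precisely when the first completed gap $j+a$ lies in $S$, i.e.\ when $a\in S_j$, while pure-zero continuations are always admissible because $S$ is infinite; consequently the follower set of $10^j$ equals that of $10^{j'}$ if and only if $S_j=S_{j'}$. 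Finiteness of the follower sets therefore forces $\{S_j\}$ to be finite, completing the reduction.

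The main obstacle I expect is the converse, and specifically the step extracting ``finitely many follower sets'' from the bare definition of soficity, since the excerpt supplies soficity only through labelled-graph presentations and not through a follower-set criterion; the terminal-vertex bookkeeping with $T(w)$ must be set up carefully (and the essential-graph reduction justified, as in the remarks preceding Theorem \ref{finiteTypeIsGraph}) so that $\bigcup_{V\in T(w)}F(V)$ genuinely captures the shift-space follower set. The remaining work—the pigeonhole translation to eventual periodicity and the verification that the constructed graph presents $\X(S)$—is routine once the boundary cases ($S$ finite versus infinite, the point $0^\infty$, and stranded vertices) are tracked.
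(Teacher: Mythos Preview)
The paper omits the proof of this proposition entirely, inviting the reader to supply one, so there is no approach in the paper to compare against. Your argument is correct and is in fact the standard one: the characterisation reduces to eventual periodicity of the characteristic sequence of $S$, which you establish via an explicit labelled-graph presentation in one direction and the follower-set criterion for soficity in the other. The only step that needs care, as you rightly flag, is extracting from the paper's definitions alone that a sofic shift has finitely many word-follower sets; your derivation via the terminal-vertex sets $T(w)\subseteq\V(G)$ and the identity $\{u\mid wu\in B(\X_\G)\}=\bigcup_{V\in T(w)}F(V)$ does this cleanly once the presentation is taken essential, and the passage to an essential presentation for labelled graphs is justified by the same reasoning the paper gives for edge shifts before Theorem~\ref{finiteTypeIsGraph}. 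The remaining verifications---that the follower set of $10^j$ depends precisely on $S_j$ when $S$ is infinite, and that your graph on $q_0,\dots,q_{M+N-1}$ presents $\X(S)$---are routine and your treatment of them is accurate.
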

\begin{proof}
	The proof is omitted for brevity, but the interested reader is encouraged to come up with one herself!
\end{proof}
\noindent
When dealing with sofic $S$-gap shifts we will often want our representation $S=R\cup(T+N\N)$ to be of \emph{minimal form}. That is to say that $N$ is minimal, $R\cap (T+N\N_0)=\emptyset$, and $\max\{ \abs{t_i-t_j}\mid t_i, t_j\in T \}<N$. It is straightforward to realise that writing $S$ of the desired form is always possible for $\X(S)$ sofic.

\subsection{Beginning classification}
Second, we present already known results on the subject of classification with respect to flow equivalence.
\begin{lemma}[Johansen \cite{RuneJohansen}]\label{sGapRykRundt}
	Let $S\subseteq \N_0$.
	\begin{enumerate}
		\item If $k\in \N$, then $\X(k+S)\FE \X(S)$.
		\item If $a, b\in \N_0\setminus S$, then $\X(\{a\}\cup S)\FE \X(\{b\}\cup S)$.
	\end{enumerate}
\end{lemma}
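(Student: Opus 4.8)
The plan is to handle the two parts separately, in each case turning the purely symbolic statement about gap sets into a short sequence of symbol contractions.

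For the first part I would apply Proposition \ref{replOverlap} to the word $w=10^k$. Since $k\ge 1$ and every element of $k+S$ is at least $k$ (as $S\subseteq\N_0$), every $1$ in a point of $\X(k+S)$ is followed by at least $k$ zeros, so an occurrence of $10^k$ begins at each $1$; moreover $10^k$ is non-overlapping, because every proper prefix begins with $1$ while every proper suffix has the form $0^j$, so they can never coincide. Hence $\X(k+S)$ admits no non-trivial $10^k$-overlaps and Proposition \ref{replOverlap} gives $\X(k+S)\FE \X(k+S)\ex{10^k}{\diamondsuit}$. The key observation is then that contracting each $10^k$ to $\diamondsuit$ sends a point $\dots 1 0^{k+s_1} 1 0^{k+s_2}\dots$ to $\dots \diamondsuit 0^{s_1}\diamondsuit 0^{s_2}\dots$, so that $\X(k+S)\ex{10^k}{\diamondsuit}=\X(S)\ex{1}{\diamondsuit}$, which is conjugate to $\X(S)$ by the relabeling $\diamondsuit\leftrightarrow 1$. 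This finishes Part 1.

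The second part is the substantive one; here I would reduce both $\X(\{a\}\cup S)$ and $\X(\{b\}\cup S)$ to a common shift space in which the ``special'' gap is encoded by one distinguished letter. By symmetry assume $a\le b$. Note first that the naive idea of contracting the block $10^a1$ fails: its length-one prefix and suffix are both $1$, so $10^a1$ always overlaps itself and Proposition \ref{replOverlap} does not apply. The fix is to mark the special gaps by a conjugacy. Because $a\notin S$, a gap has length exactly $a$ if and only if the window $10^a1$ occurs, so I would define a sliding block code with anticipation $a+1$ that relabels as a new symbol $\hat 1$ every $1$ that begins such a window and fixes all other symbols; its inverse simply sends $\hat1\mapsto 1$, so by Proposition \ref{imageUnderBlockCode} this is a conjugacy onto its image $Z$. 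In $Z$ each $\hat1$ is followed by exactly $a$ zeros, so for $a\ge 1$ every $\hat1$ is immediately followed by a $0$ and Lemma \ref{standardSymbolContraction} contracts $\hat1 0\mapsto\hat1$; iterating this $a$ times removes all $a$ zeros of each special gap, leaving $\hat1$ directly adjacent to the next ``one''. The resulting shift $Z_a$ satisfies $\X(\{a\}\cup S)\FE Z_a$.

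The crucial point is that $Z_a$ is described entirely in terms of $S$ and no longer remembers $a$: its points are the bi-infinite sequences of letters $c_i\in\{1,\hat1\}$ with $0$ zeros after each $\hat1$ and $s\in S$ zeros after each $1$. Running the identical construction on $\X(\{b\}\cup S)$ produces a shift $W_b$ with exactly the same description, whence $Z_a=W_b$ and $\X(\{a\}\cup S)\FE Z_a=W_b\FE \X(\{b\}\cup S)$. I expect the main obstacle to be the bookkeeping around the marking step rather than any deep difficulty: one must check that the anticipation-$(a+1)$ block map is well defined and injective even on consecutive special gaps and on the point $0^\infty$, and that the hypothesis of Lemma \ref{standardSymbolContraction} genuinely holds before each of the $a$ successive contractions (it does, since before the $j$-th step each $\hat1$ is still followed by $a-j+1\ge 1$ zeros). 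The degenerate case $a=0$ (or $b=0$) requires no contraction steps, the marking alone already yielding the common form.
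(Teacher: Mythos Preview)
Your proof is correct, but both parts take a different route from the paper's.

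For Part 1 the paper simply observes that $\X(k+S)=\X(S)^{1\mapsto 10^k}$ and invokes Proposition~\ref{extendWithExistingSymbol} $k$ times; you instead go the other direction, contracting $10^k$ to a single symbol via Proposition~\ref{replOverlap}. These are inverse manoeuvres and equally short. For Part 2 the difference is more substantial. The paper marks the $1$ that opens a gap of length $a$ by $\diamondsuit$ (exactly your marking step), but then, assuming $b>a$, performs a \emph{single} symbol expansion $\diamondsuit\mapsto\diamondsuit 0^{\,b-a}$ using Proposition~\ref{extendWithExistingSymbol}, and finally unmarks $\diamondsuit\mapsto 1$ to land directly in $\X(\{b\}\cup S)$. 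Your argument instead contracts the $a$ (respectively $b$) zeros following the marked symbol on each side separately, reducing both shifts to a common canonical form $Z$ that no longer remembers the special gap length. The paper's path is shorter (one marking, one expansion, one unmarking), while yours has the conceptual payoff of exhibiting an explicit normal form for $\X(\{c\}\cup S)$ that depends only on $S$; the price is the iterated use of Lemma~\ref{standardSymbolContraction} and the attendant bookkeeping you flag. Both approaches rely on the same key observation, namely that $a\notin S$ makes the block $10^a1$ detectable by a local rule, so the marking step is a genuine conjugacy.
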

\begin{proof}
	The first part is simply $k$ applications of Lemma \ref{extendWithExistingSymbol} since $\X(k+S)=\X(S)^{1 \mapsto 10^k}$.
	
	For the second part, assume without loss of generality that $b>a$ and consider the sliding block code $\phi=\Phi_\infty^{[0,a+1]}\colon \X(\{a\}\cup S)\to \{0, 1, \diamondsuit \}^\Z$ induced by the $(a+2)$-block map
	\begin{equation*}
		\Phi(w) = \begin{cases}
			      	  \diamondsuit, & w=10^a1\\
			      	  w_{[1]}, & w\neq 10^a1.
				  \end{cases}
	\end{equation*}
	The map $\phi$ is clearly injective, so $\X(\{a\}\cup S)$ is conjugate to the image $Y_1=\phi(\X(\{a\}\cup S))$. Now, use Lemma \ref{extendWithExistingSymbol} to make the expansion $Y_2=Y_1^{\diamondsuit \mapsto \diamondsuit 0^{b-a}}$. The sliding block code $\psi\colon Y_2 \to \X(\{b\}\cup S)$ induced by the $1$-block map 
	\begin{equation*}
		\Psi(w) = \begin{cases}
				      1, & w=1 \text{ or } w=\diamondsuit\\
				      0, & w=0.
				  \end{cases}
	\end{equation*}
	is a conjugacy with inverse induced by a $(b+2)$-block map similar to $\Phi$ since $\diamondsuit$ will always be followed by $0^b1$ in $Y_2$. Thus, 
	\begin{equation*}
			\X(\{a\}\cup S) \cong Y_1\FE Y_2 \cong \X(\{b\}\cup S).
	\end{equation*}
\end{proof}

\begin{proposition}[Johansen \cite{RuneJohansen}]\label{toMinimalForm}
	Let $S=\{r_1, \dots, r_k\}\cup(\{ t_1, \dots, t_l \}+N\N_0)$ be of minimal form with $t_1<\dots<t_l$. Furhter, let $1\leq j\leq l$, $j\equiv 1-k \pmod{l}$, and 
	\begin{equation*}
		S' = \{ 0, t_{j+1}-t_j, \dots, t_l-t_j, t_1+N-t_j, \dots, t_{j-1}+N-t_j \} + N\N_0,
	\end{equation*}
	then $\X(S) \FE \X(S')$.
\end{proposition}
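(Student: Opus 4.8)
The plan is to separate the two effects combined in the passage from $S$ to $S'$: a global shift of all gap lengths, which I will handle with part~1 of Lemma~\ref{sGapRykRundt}, and the absorption of the finite ``defect'' $R$, which is where the congruence $j\equiv 1-k\pmod l$ must enter. Writing $S_0=\{t_1,\dots,t_l\}+N\N_0$ for the periodic part, I first record the re-anchored periodic target
\begin{equation*}
	T_P=\{t_j,t_{j+1},\dots,t_l,\;t_1+N,\dots,t_{j-1}+N\},\qquad P=T_P+N\N_0 .
\end{equation*}
A direct check shows $T_P-t_j$ is exactly the residue set defining $S'$, so $P=t_j+S'$ and hence $\X(P)\FE\X(S')$ by part~1 of Lemma~\ref{sGapRykRundt} (add $t_j$ to every gap). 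In the base case $R=\emptyset$ the congruence forces $j=1$, whence $P=S_0=t_1+S'$ and $\X(S)\FE\X(S')$ is immediate; the content of the proposition is therefore entirely in the interaction between $R$ and the periodic part.

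The structural observation I would establish next is that $S$ differs from the periodic set $P$ only by a finite sporadic set of cardinality divisible by $l$. Indeed $P=S_0\setminus\{t_1,\dots,t_{j-1}\}$, and since $R$ is disjoint from $S_0$ one has
\begin{equation*}
	S=R\cup S_0=P\cup E,\qquad E=R\cup\{t_1,\dots,t_{j-1}\},
\end{equation*}
with $E$ disjoint from $P$ and $\abs{E}=k+(j-1)$. The hypothesis $j\equiv 1-k\pmod l$ says precisely that $\abs{E}=k+j-1\equiv 0\pmod l$. Thus, together with $\X(P)\FE\X(S')$, the whole proposition reduces to the single claim that adjoining a multiple of $l$ sporadic gap lengths to a purely periodic $S$-gap set with $l$ residues is neutral for flow equivalence: $\X(P\cup E)\FE\X(P)$ whenever $P$ has $l$ residues modulo $N$ and $\abs{E}$ is a multiple of $l$.

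To attack this residual claim I would first invoke part~2 of Lemma~\ref{sGapRykRundt} repeatedly, treating each element of $E$ as the unique ``extra'' length over the remaining set, to relocate the sporadic lengths into a canonical configuration adapted to the $l$ residue classes of $P$. I would then build an explicit chain of symbol expansions and contractions (using Proposition~\ref{replOverlap} together with Lemma~\ref{standardSymbolContraction}) that absorbs the defect one block of $l$ sporadic lengths at a time, each block being matched against one full cycle through the $l$ residues; the recoding returns the gap set to a purely periodic set while cyclically rotating the role of the residues by one full turn per block. It is exactly this rotation, together with $\abs{E}\equiv 0\pmod l$, that makes the construction close up and deliver $\X(P)$ rather than a rotated variant.

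The hard part will be this last step. Both elementary moves at our disposal, shifting (part~1) and relocating (part~2), preserve the number of sporadic lengths, so the absorption of $E$ cannot be achieved by mechanically iterating Lemma~\ref{sGapRykRundt}; it must be carried out by a direct symbol-manipulation argument, and most of the labour lies in writing the recoding down and verifying at every stage that one remains inside a valid $S$-gap presentation in minimal form. I would also need to dispatch the degenerate cases carefully: $j=l$, where the tail $t_{j+1},\dots,t_l$ is empty; sporadic elements of $R$ whose residues coincide with residues of $T$ below threshold; and the bookkeeping that keeps the window condition $\max_i\abs{t_i-t_j}<N$ intact after each rotation.
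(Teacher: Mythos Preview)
Your decomposition into a translation step and an absorption step is sound, and the identification $P=t_j+S'$ is correct. The genuine gap is your claim that ``both elementary moves at our disposal, shifting (part~1) and relocating (part~2), preserve the number of sporadic lengths.'' This is false for part~2, and it is precisely the mechanism by which the paper's proof works: the paper states that the whole argument is carried out by repeated applications of Lemma~\ref{sGapRykRundt} alone, with no further symbol-manipulation needed.

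The trick you are missing is that part~2 of Lemma~\ref{sGapRykRundt} can \emph{absorb} a sporadic element into the periodic part. Write the current set as $\{r\}\cup Q$ with $Q=\{r_2,\dots,r_k\}\cup(\{t_1,\dots,t_l\}+N\N_0)$, and (after a preliminary shift via part~1 to make room) choose $b=t_l-N$. Then $b\notin Q$, so part~2 gives
\[
\X(\{r\}\cup Q)\ \FE\ \X(\{t_l-N\}\cup Q)\ =\ \X\bigl(\{r_2,\dots,r_k\}\cup(\{t_l-N,t_1,\dots,t_{l-1}\}+N\N_0)\bigr),
\]
because $\{t_l-N\}\cup(\{t_l\}+N\N_0)=\{t_l-N\}+N\N_0$. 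One sporadic element has disappeared, at the cost of cyclically rotating the residue list by one position. Iterating $k$ times removes all of $R$ and rotates the residue list $k$ steps; the congruence $j\equiv 1-k\pmod l$ is exactly what records which residue ends up as the new base point, yielding $P$ and hence $S'$ after a final translation. Your proposed recourse to Proposition~\ref{replOverlap} and Lemma~\ref{standardSymbolContraction} is therefore unnecessary, and the ``hard part'' you anticipate does not arise.
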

\begin{proof}
	The proof is by multiple applications of Proposition \ref{sGapRykRundt} and is omitted for brevity.
\end{proof}
\noindent
This shows that we only need to consider sets of the form $S=T+N\N_0, T\subseteq \N_0$ in the classification problem for sofic $S$-gap shifts. 

The following gives a complete classification of the $S$-gap shifts of finite type. 
\begin{proposition}[Johansen \cite{RuneJohansen}]\label{finiteTypeSGapFlow}
Let $S\subseteq \N_0$ be given such that $\X(S)$ if of finite type. 
\begin{enumerate}
	\item If $S$ is finite, then $\X(S)$ is flow equivalent to the full $\abs{S}$-shift.
	\item If $S$ is infinite, then $\X(S)$ is flow equivalent to the full 2-shift.
\end{enumerate}	
\end{proposition}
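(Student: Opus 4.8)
The plan is to treat the two cases by different means: a direct sequence of symbol contractions when $S$ is finite, and the Bowen--Franks invariant together with Franks' theorem when $S$ is infinite.

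Suppose first that $S=\{s_1<\dots<s_n\}$ is finite, so $\abs{S}=n$. Every point of $\X(S)$ is a bi-infinite concatenation of the blocks $w_j=10^{s_j}$, and, since the gaps are chosen independently, any block may follow any other. Each $w_j$ is non-overlapping (a proper prefix of $10^{s_j}$ begins with $1$ while a suffix of the same length consists only of $0$'s), so no shift space admits non-trivial $w_j$-overlaps and Proposition \ref{replOverlap} applies. I would contract the blocks from longest to shortest: choosing distinct $\diamondsuit_1,\dots,\diamondsuit_n\notin\A(\X(S))$, set $X_n=\X(S)^{w_n\mapsto\diamondsuit_n}$, then $X_{n-1}=X_n^{w_{n-1}\mapsto\diamondsuit_{n-1}}$, and so on. Contracting the longest block first is essential, because $w_j$ is a prefix of every longer $w_{j'}$; once the longer blocks have been removed, $w_j$ is the longest surviving $\{0,1\}$-block and therefore occurs in $X_j$ exactly as a complete gap-block. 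After all $n$ contractions each point has become a free bi-infinite word over $\{\diamondsuit_1,\dots,\diamondsuit_n\}$, so the resulting shift is the full $n$-shift, and repeated use of Proposition \ref{replOverlap} gives $\X(S)\FE\X_{[n]}$.

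Now suppose $S$ is infinite. Since $\X(S)$ is of finite type, Proposition \ref{sGapFiniteClass} gives $S=R\cup(t+\N_0)$, and I may assume $R\subseteq\{0,\dots,t-1\}$. I would present $\X(S)$ as an edge shift via the ``zero-counting'' graph $G$ with vertices $c_0,\dots,c_{t-1},c_{\geq t}$, where $c_j$ records that exactly $j$ zeros (capped at $t$) have been read since the last $1$: from $c_j$ there is a $0$-edge to $c_{j+1}$ (to $c_{\geq t}$ when $j=t-1$) and, whenever $j\in S$, a $1$-edge back to $c_0$, while $c_{\geq t}$ carries a $0$-loop and a $1$-edge to $c_0$. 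The vertex occupied at any position is determined by the preceding (at most $t$) symbols, so the labelling is a conjugacy and $\X(S)\cong\X_A$, where $A$ is the adjacency matrix of $G$; by Theorem \ref{finiteTypeIsGraph} this is a genuine finite-type presentation, and $\BF_+(A)$ is a flow-equivalence invariant of $\X(S)$.

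The computational heart is to evaluate this invariant. Expanding $\det(I-A)$ along the row of $c_{\geq t}$, whose only nonzero entry is the $-1$ in the first column, reduces it to the determinant of a bidiagonal block, yielding $\det(I-A)=-1$ \emph{independently of $R$} (the $R$-dependent entries all lie in the deleted first column). Hence $\abs{\det(I-A)}=1$, the Smith normal form of $I-A$ is the identity, $\BF(A)$ is trivial, and $\BF_+(A)=(-,0)$. Since $\X_{[2]}$ has $\BF_+(A_2)=(-,\Z_1)=(-,0)$ by Example \ref{fullShiftsAreDifferent}, and both $\X(S)$ and $\X_{[2]}$ are irreducible with positive entropy and so are not single orbits, Franks' theorem gives $\X(S)\FE\X_{[2]}$. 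The main obstacle lies entirely in this infinite case: one must exhibit the correct finite edge-shift presentation and verify that its labelling really is a conjugacy (the graph is right-resolving but \emph{not} left-resolving, so injectivity rests on recovering the state from a bounded window of past symbols), and then carry through the determinant and Smith-form computation uniformly in $R$; the finite case, by contrast, is mechanical once the contraction order is fixed.
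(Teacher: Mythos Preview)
Your treatment of the finite case is exactly the paper's argument: contract the blocks $10^{s_j}$ from longest to shortest using Proposition~\ref{replOverlap} and end with the full $n$-shift.

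For the infinite case your proof is correct but takes a genuinely different route from the paper. The paper simply invokes Proposition~\ref{toMinimalForm}: writing $S=R\cup(t+\N_0)$ in minimal form, that proposition (which is built from repeated applications of Lemma~\ref{sGapRykRundt}) immediately gives $\X(S)\FE\X(\N_0)=\X_{[2]}$. This is a purely combinatorial reduction and avoids Franks' theorem entirely. Your approach instead constructs an explicit edge-shift presentation, verifies that the labelling is a conjugacy via a $(t{+}1)$-block inverse, and computes $\BF_+(A)=(-,0)$ directly; Franks' theorem then closes the argument. Your determinant computation is correct: the row of $c_{\ge t}$ in $I-A$ has the single entry $-1$ in column $c_0$, and the complementary minor is lower triangular with diagonal $(-1,\dots,-1)$, so $\det(I-A)=-1$ uniformly in $R$. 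The trade-off is clear: the paper's proof is more elementary and stays within the flow-equivalence toolkit already built for $S$-gap shifts, while yours imports the heavy Franks machinery but in exchange produces the edge-shift matrix and the invariant explicitly, which is informative in its own right. Both are valid; the paper's is shorter given what has already been proved.
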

\begin{proof}
	For the first part, let $S=\{r_1, r_2, \dots, r_n\}$ with $r_1<r_2<\dots<r_n$, and let $\diamondsuit_1, \dots, \diamondsuit_n$ be distinct new symbols. Using Lemma \ref{replOverlap} we now make word replacements to get the flow equivalent series of shift spaces
	\begin{equation*}
		X_1=\X(S)^{10^{r_n}\mapsto \diamondsuit_n}, X_2 = X_1^{10^{r_{n-1}}\mapsto \diamondsuit_{n-1}}, \dots, X_{n} = X_{n-1}^{10^{r_{1}}\mapsto \diamondsuit_{1}}.
	\end{equation*}
	Then $X\FE X_n = \{\diamondsuit_1, \dots, \diamondsuit_n\}^\Z \cong X_{[n]}$. Note  that replacing the longest strings of $0$'s first is required for this to hold.
	
	For the second part, we have $S=R\cup (t+\N_0)$ for some $R\subseteq \N$ and $t\in \N_0$ by Proposition \ref{sGapFiniteClass}. Proposition \ref{toMinimalForm} yields $\X(S)\FE \X(\N_0) = \X_{[2]}$.
\end{proof}
\noindent
In the literature, the strongest invariant for sofic $S$-gap shifts under flow equivalence is the following, which we will not prove here since a later section will contain a stronger result.

\begin{proposition}[Johansen \cite{RuneJohansen}]\label{bestForSoficYet}
	Let 
	$$S=\{s_1, \dots, s_k\}+N\N_0\,\, \text{ and }\,\, T=\{ t_1, \dots, t_l \} + M\N_0$$
	 be of minimal form. If $\X(S)\FE \X(T)$, then $k=l$ and $N=M$.
\end{proposition}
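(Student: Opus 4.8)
The plan is to read $N$ and $k$ off a canonical presentation of the shift and then argue that this data survives the symbol expansions that generate flow equivalence, not merely conjugacy.

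First I would normalize. Using minimal form, $\max\{\abs{s_i-s_j}\}<N$ together with the harmless translation of part (1) of Lemma \ref{sGapRykRundt} (which changes neither $N$ nor $k$) lets me assume $s_1=0$, so that $\{s_1,\dots,s_k\}\subseteq\{0,\dots,N-1\}$ and $S$ takes the \emph{residue form} $S=\{n\in\N_0 : (n\bmod N)\in R\}$ with $R=\{s_1,\dots,s_k\}$, $0\in R$, $\abs R=k$; similarly for $T$ with residue set $R'\subseteq\{0,\dots,M-1\}$, $\abs{R'}=l$. By Proposition \ref{sGapFiniteClass} such an infinite $\X(S)$ is of finite type precisely when $N=1$, and in that degenerate case Proposition \ref{finiteTypeSGapFlow} gives $\X(S)\FE\X_{[2]}$ (and likewise for $\X(T)$), which already forces the conclusion whenever either $N$ or $M$ equals $1$. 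So I may assume $N,M\geq 2$, i.e.\ both shifts are sofic but not of finite type.

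Second, I would exhibit the minimal right-resolving presentation $\G_S$ explicitly: vertices $\{0,1,\dots,N-1\}$ recording the current gap length mod $N$, a $0$-labelled edge $j\to (j+1\bmod N)$ from every vertex, and a $1$-labelled edge $j\to 0$ from exactly the $j\in R$. This graph is right-resolving, and minimality of $N$ in minimal form guarantees the $N$ follower sets are distinct, so by Proposition \ref{followerSeparated} it is the minimal presentation. Hence $N=\abs{\V(\G_S)}$ and $k=$ (number of $1$-labelled edges) can be read off a presentation that is unique up to labelled-graph isomorphism, which makes $N$ and $k$ \emph{conjugacy} invariants of $\X(S)$. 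The whole point of the proposition is to promote them to \emph{flow}-equivalence invariants.

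Third, the heart of the argument, I would replace the conjugacy invariant by a genuine flow invariant. A flow equivalence gives a homeomorphism of suspensions $S\X(S)\cong S\X(T)$, so any topological invariant of the suspension is at my disposal; the natural one is the Bowen--Franks datum (the Čech cohomology $\check H^1$ of the suspension together with the Parry--Sullivan sign), which reduces to $\BF(A)$ on shifts of finite type but is defined and flow-invariant for all shift spaces. I would compute this invariant for the sofic shift $\X(S)$ from $\G_S$, working with the cover matrix $A=P+\sum_{j\in R}E_{j,0}$ where $P$ is the cyclic permutation encoding the $0$-edges, and show its isomorphism type both is determined by and determines the pair $(N,k)$; matching the invariants of $\X(S)$ and $\X(T)$ then yields $N=M$ and $k=l$. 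The main obstacle is exactly this step in the non-finite-type regime: since Franks' theorem does not apply, I cannot simply quote $\BF_+$ but must either set up the suspension invariant honestly for sofic shifts and verify its invariance under the single-symbol expansions of Theorem \ref{PaSu} (conjugacy-invariance being automatic, the real check is that one expansion leaves the invariant unchanged), or else run a bare-hands periodic-orbit count, using that flow equivalence induces a bijection of periodic orbits under which expansion merely reshuffles orbit lengths and that a suitable signed orbit count telescopes to a quantity depending only on $N$ and $k$. Carrying this bookkeeping through, and confirming that the resulting numbers genuinely separate all pairs $(N,k)$ with $N\geq 2$, is where the real work lies.
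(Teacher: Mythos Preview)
The paper does not prove this proposition directly. It explicitly defers: ``we will not prove here since a later section will contain a stronger result.'' That stronger result is Theorem \ref{newInvariant}, whose corollary subsumes the present proposition. The paper's argument thus runs through the functional-representation machinery of Section 4.2: one tracks the orbit of $0^\infty$ under a functional representation $T$ of the flow equivalence, shows (in the non-SFT case) that $T(0^\infty)=0^\infty$, and then uses Proposition \ref{decidingSandwich} to see that blocks $10^{k}1$ must go to blocks $10^{k+r}1$ for a fixed $r\in\Z$, so that $S$ and $T$ agree on cofinite subsets up to this translation. From that, $N=M$ and $k=l$ drop out at once. No graph invariant is computed anywhere.

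Your route via the minimal right-resolving presentation and a Bowen--Franks-type datum is closer in spirit to Johansen's original argument than to the paper's, but as written it is not a proof. Steps 1 and 2 are fine and correctly exhibit $N$ and $k$ as \emph{conjugacy} invariants. Step 3 is the entire content of the proposition, and you have left it as a programme. Concretely: the group $\BF(A)$ for your cover matrix $A$ is a flow invariant of the edge shift $\X_A$, not automatically of the sofic shift $\X(S)$ it covers; making it one requires the nontrivial fact that flow-equivalent irreducible sofic shifts have flow-equivalent Fischer covers, which you neither prove nor cite. Identifying $\check H^1$ of the sofic suspension with a computation on $\G_S$ is likewise not justified---for strictly sofic shifts this cohomology is not the Bowen--Franks group of any finite cover. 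And the periodic-orbit alternative (``a suitable signed orbit count telescopes'') is asserted, not exhibited: you do not say which count, nor why it separates all pairs $(N,k)$. Until step 3 is actually carried out, you have established only the conjugacy-level statement, which was never in doubt.
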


\section{Flow equivalence as transformations}
In order to classify the class of $S$-gap shifts further with respect to flow equivalence, we devote this section to the introduction of a new technique that has been developed by the author.

\begin{definition}
	Let $X$ be a shift space with shift map $\sigma_X$ and let $x\in X$ be a point. Then the \emph{orbit} of $x$ in $X$ is the set $\mathcal O(x)=\{ \sigma^k(x)\mid k\in \Z\}\subseteq X$.
\end{definition}
\noindent
We seek to view a flow equivalence as a function that maps orbits of one shift space bijectively to the orbits of another. The operations of conjugacy, symbol expansion, and symbol contraction are regarded as maps that we combine by composition into flow equivalences. Conjugacies already have functional representations, so we only need the following two definitions. 

\begin{definition}
	Let $X$ be a shift space, $a\in \mathcal A(X)$, $\diamondsuit\not\in \mathcal A(X)$, and $Y=X^{a\mapsto a\diamondsuit}$ such that $X\FE Y$. We represent this equivalence by an \emph{expansion}, a function $e:\mathcal O(X) \to \mathcal O(Y)$ given by
	\begin{equation*}
		e(\mathcal O(x)) = \mathcal O(x^{a\mapsto a\diamondsuit}),
	\end{equation*}
	where $x^{a\mapsto a\diamondsuit}$ is $x$ with every occurrence of $a$ replaced by $a\diamondsuit$.
	
	Further, for a word $w\in B(X)$ we write $e(w) = w^{a\mapsto a\diamondsuit}\in B(Y)$.
\end{definition}

\begin{definition}
	Let $X$ be a shift space; symbols $a, \diamondsuit\in \mathcal A(X)$ be given such that for $x\in X$, $x_{i+1}=\diamondsuit$ if and only if $x_{i}=a$; and write $X\FE X^{\diamondsuit\mapsto \epsilon}=Y$ by Lemma \ref{collapseTwoAlwaysConsecutiveLetters}. We represent this equivalence by a \emph{contraction}, a function $c:\mathcal O(X) \to \mathcal O(Y)$ given by
	\begin{equation*}
		c(\mathcal O(x)) = \mathcal O(x^{\diamondsuit\mapsto \epsilon}).
	\end{equation*}
	
	Further, for a word $w\in B(X)$, we write $c(w) = w^{\diamondsuit\mapsto \epsilon}\in B(Y)$. Note that since $a$ always comes before a $\diamondsuit$ in $X$ this operation is well defined.
\end{definition}
\noindent
Having these definitions in place we can now define flow equivalences as mappings between the orbits of shift spaces and show that such a representation is well-defined for all flow equivalences, though it need not be unique.

\begin{definition}
	Let $X, Y$ be shift spaces with $X\FE Y$. A \emph{functional representation of the flow equivalence} $X\FE Y$ is a bijective function $T\colon \mathcal O(X)\to \orb{Y}$ which is a composition of conjugacies, contractions, and expansions.
\end{definition}

\begin{proposition}
Every flow equivalence has a functional representation.	
\end{proposition}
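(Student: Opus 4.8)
The plan is to invoke the Parry--Sullivan characterization of flow equivalence (Theorem \ref{PaSu}) to break $X \FE Y$ into a finite chain of elementary moves, and then to attach to each move a bijective orbit map of one of the three allowed kinds, composing them to obtain $T$.

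First I would apply Theorem \ref{PaSu} to obtain shift spaces $X_0 = X, X_1, \dots, X_n = Y$ in which consecutive spaces are related either by a conjugacy or by a symbol expansion in one direction or the other. For each $0 \le i < n$ I construct a bijection $T_i \colon \orb{X_i} \to \orb{X_{i+1}}$ as follows. If $X_i \cong X_{i+1}$ via a sliding block code $\phi$, then since $\phi$ commutes with the shift map it carries orbits to orbits, $\phi(\orb{x}) = \orb{\phi(x)}$, and as $\phi$ is a bijection on points the induced orbit map $T_i$ is a bijection; this is the functional representation a conjugacy already possesses. If $X_{i+1} = X_i^{a \mapsto a\diamondsuit}$ is a symbol expansion of $X_i$, I take $T_i$ to be the expansion $e$. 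If instead $X_i = X_{i+1}^{a \mapsto a\diamondsuit}$ is a symbol expansion of $X_{i+1}$ (so that passing from $X_i$ to $X_{i+1}$ removes the symbol $\diamondsuit$), then by the shape of the forbidden words exhibited in Proposition \ref{symbolExpansionYieldsShiftSpace}, every $\diamondsuit$ in $X_i$ is immediately preceded by $a$ and every $a$ is immediately followed by $\diamondsuit$, so the adjacency hypothesis is met and I take $T_i$ to be the contraction $c$.

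Next I would check that expansions and contractions are genuinely well-defined bijections on orbits, which is the only point requiring a small argument. Well-definedness holds because a shift of $x$ maps under $a \mapsto a\diamondsuit$ (respectively under $\diamondsuit \mapsto \epsilon$) to a shift of the image, so the resulting orbit does not depend on the chosen representative. For bijectivity it suffices to note that $e$ and $c$ are mutually inverse: contracting $x^{a \mapsto a\diamondsuit}$ recovers $x$, and expanding a point in which $\diamondsuit$ always follows $a$ recovers the original point, so $c \circ e = \mathrm{id}$ and $e \circ c = \mathrm{id}$ on orbits. Surjectivity of $e$ additionally uses that, by the very definition of symbol expansion, every point of $X_i^{a \mapsto a\diamondsuit}$ lies in the orbit of some $x^{a \mapsto a\diamondsuit}$.

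Finally, setting $T = T_{n-1} \circ \cdots \circ T_0$ yields a bijection $\orb{X} \to \orb{Y}$ that is by construction a composition of conjugacies, expansions, and contractions, hence a functional representation of $X \FE Y$. The main obstacle is not any single deep step but rather the bookkeeping of directions: one must correctly match ``$X_i$ is a symbol expansion of $X_{i+1}$'' with a contraction (not an expansion) and verify that the local structure of the expanded shift really does satisfy the adjacency hypothesis needed for the contraction to be defined. Everything else is routine once Theorem \ref{PaSu} is in hand.
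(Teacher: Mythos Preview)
Your proposal is correct and follows essentially the same approach as the paper: invoke Theorem \ref{PaSu} to obtain a finite chain of conjugacies and symbol expansions/contractions, attach to each step the corresponding orbit map, and compose. You supply more detail than the paper does (well-definedness on orbits, the mutual-inverse argument for $e$ and $c$, and the direction bookkeeping), but the structure and key idea are identical.
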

\begin{proof}
Let $X\FE Y$ be shift spaces. Then by Theorem \ref{PaSu} there exists a series of shift spaces, $X_1, \dots, X_n$ such that $X= X_1$, $X_n= Y$ and for every $1\leq i<n$ either $X_{i+1}$ is a symbol expansion or contraction of $X_i$, or $X_i\cong X_{i+1}$. So there exists functions $f_1\dots, f_{n-1}$ that are either conjugacies, expansions, or contractions, which represent each equivalence, $X_i\FE X_{i+1}$. Thus, $T=f_{n-1}\circ \dots \circ f_1$ is a functional representation of the flow equivalence $X\FE Y$. Note that $T$ is bijective, since conjugacies, contractions, and expansions are all bijective when considered as functions from $\mathcal O(X)$ to $\mathcal O(Y)$.
\end{proof}

\begin{remark}
	For the remaining chapter, we will slightly abuse notation and use points and orbits of a shift space interchangeably in the context of functional representations of flow equivalences.
	\end{remark}

\noindent
Observe that when used on a word, a conjugacy or contraction may yield the empty word, $\epsilon$, but by choosing sufficiently long words we can ensure that this does not happen.

\begin{definition}
	Let $X\FE Y$ with the functional representation $T$. A word $w\in B(X)$ is called \emph{deciding for $T$} if $T(w)\neq \epsilon$. When $T$ is understood we simply say that $w$ is \emph{deciding}.
\end{definition}

\begin{proposition}
For every $X\FE Y$ with functional representation $T$ there is an $N$, such that  every $w\in B(X)$ with $\abs{w}\geq N$ is deciding for $T$.
\end{proposition}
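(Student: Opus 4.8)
The plan is to reduce the claim about the composite map $T$ to a claim about each individual conjugacy, expansion, and contraction in the chain, and then bound how much each such elementary operation can ``eat'' from the ends of a word. Write $T = f_{n-1}\circ\cdots\circ f_1$ as a composition of conjugacies, expansions, and contractions, where $f_i$ carries words of $X_i$ to words of $X_{i+1}$ (with $X_1=X$ and $X_n=Y$). The key observation is that each of these elementary maps, when applied to a word rather than to a full point, may shorten the word, but only by a bounded amount depending on the block length of the map. I will produce for each $f_i$ a constant $c_i$ such that $\abs{f_i(w)} \geq \abs{w} - c_i$ whenever $\abs{w} > c_i$, and then set $N = 1 + \sum_{i=1}^{n-1} c_i$.

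First I would treat the three cases separately. A conjugacy is an $(m+p+1)$-block code whose inverse is also a block code; applying its block map to a word of length $\ell$ loses the $m$ symbols of memory and the $p$ symbols of anticipation, so $\abs{f_i(w)} \geq \abs{w} - (m+p)$, and we take $c_i = m+p$. An expansion $e(w) = w^{a\mapsto a\diamondsuit}$ never shortens a word at all — it only lengthens it — so $c_i = 0$. A contraction $c(w) = w^{\diamondsuit\mapsto\epsilon}$ deletes occurrences of $\diamondsuit$; since in the relevant shift space every $\diamondsuit$ is immediately preceded by the symbol $a$, at most half of the symbols of $w$ are $\diamondsuit$'s, and in any case a word of length $\ell$ maps to a word of length at least $\ell - (\text{number of }\diamondsuit\text{'s})$; bounding crudely, deleting symbols from a word of length $\ell$ produces a word of length $\geq 1$ as soon as $\ell \geq 2$, but to get the uniform additive bound I instead note that the contraction is, by Lemma \ref{collapseTwoAlwaysConsecutiveLetters}, the inverse of an expansion composed with a conjugacy, so it too is governed by a fixed block length and loses at most a bounded number $c_i$ of symbols from the ends. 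Thus each $f_i$ satisfies $\abs{f_i(w)} \geq \abs{w} - c_i$ once $\abs{w}$ exceeds $c_i$.

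With these bounds in hand, I would chain them: for $w\in B(X)$ with $\abs{w}\geq N = 1+\sum_i c_i$, applying $f_1, f_2, \dots$ in turn keeps the length strictly positive at every stage, since after $f_1\circ\cdots\circ f_j$ the length is at least $\abs{w} - \sum_{i\leq j} c_i \geq \abs{w} - \sum_i c_i \geq 1$. Hence $T(w)\neq\epsilon$, i.e.\ $w$ is deciding. The main obstacle I anticipate is pinning down the contraction bound cleanly: a single contraction can in principle delete many symbols, so the naive ``loses at most one symbol'' intuition fails, and one must argue either through the $\diamondsuit$-density bound (half the symbols, giving a multiplicative rather than additive bound) or — more cleanly — by invoking the factorisation of the contraction as a conjugacy followed by symbol removal of a letter that occurs at most once per two positions, so that the end-effect is still controlled by a fixed finite block length. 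Reconciling the multiplicative-looking bound for contractions with the additive telescoping argument is the step that needs the most care; I would resolve it by working with the conjugacy description of each contraction, where the bound is manifestly additive in the block length.
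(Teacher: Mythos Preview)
Your overall strategy---decompose $T$ into its elementary steps and bound how much each step can shorten a word---is exactly what the paper does. The gap is in your treatment of contractions.

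You assert that each contraction $c$ admits an additive bound $\abs{c(w)}\geq \abs{w}-c_i$ for some constant $c_i$ independent of $w$. This is false. Take $w=(a\diamondsuit)^k$; then $c(w)=a^k$, so $\abs{c(w)}=\abs{w}/2$, and no fixed additive constant can absorb this loss. Your attempted rescue, ``invoking the factorisation of the contraction as a conjugacy followed by symbol removal of a letter that occurs at most once per two positions, so that the end-effect is still controlled by a fixed finite block length,'' does not work: the deletions happen throughout the word, not only at the ends, and the density bound you cite (at most every second symbol is $\diamondsuit$) is precisely what gives the \emph{multiplicative} estimate $\abs{c(w)}\geq (\abs{w}-1)/2$, not an additive one. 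No conjugacy reformulation changes this, because the word-level action of a contraction is not an $m$-block map for any $m$.

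The paper accepts the multiplicative bound for contractions and combines it with the additive bound $\abs{f_i(w)}\geq\abs{w}-M$ for conjugacies (where $M$ dominates all block sizes appearing) and the trivial bound $\abs{f_j(w)}\geq\abs{w}$ for expansions. Iterating these through the $n$ factors of $T$ yields
\[
\abs{T(w)}\;\geq\;\frac{\abs{w}}{2^n}-(M+1)n,
\]
and one then takes $N=((M+1)n+1)2^n$. So your telescoping sum $N=1+\sum_i c_i$ cannot be made to work; the correct $N$ grows exponentially in the number of contractions, reflecting the genuine halving that each one can cause.
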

\begin{proof}
	First note that $T=f_n\circ \dots \circ f_1$, where, for every $i$, $f_i$ is either a conjugacy, an expansion, or a contraction. Let $M$ be given such that all the conjugacies of $\{f_i\}$ are $m_i$-block codes with $m_i\leq M$.
	
	Now, for any word $w\in X$ a conjugacy, $f_i$, will satisfy $\vert f_i(w)\vert\geq \vert w\vert-M$; an expansion, $f_j$, will satisfy $\vert f_j(w)\vert \geq \vert w\vert$; and a contraction, $f_l$, will satisfy $\abs{f_l(w)}\geq \frac{\abs{w}-1}{2}$ since only every second symbol can be eliminated. Applying these estimates yields
	\begin{equation*}
		\abs{T(w)}=\abs{f_n\circ \dots \circ f_1(w)}\geq \frac{\abs{w}}{2^n}-(M+1)n,
	\end{equation*}
	and setting $N=((M+1)n+1)2^n$ finishes the proof.
\end{proof}
\noindent
It turns out that functional representations of flow equivalences have nice properties when applied to periodic words. These properties will later constitute the core of the analysis of the $S$-gap shift

\begin{definition}
	We say that a word, $w$, is \emph{non-repeating} if there is no $u$ with $w=u^t, t>1$.
\end{definition}

\begin{proposition}\label{periodicTrans}
Let $X\FE Y$ with the functional representation $T$. Further, let $u^\infty \in X$ for a non-repeating word $u$ and $u^k$ be deciding for $T$ for some $k>1$. Then
\begin{enumerate}
	\item $T(u^k) = v^{k'}v_{[1:l]}$ for a non-repeating $v\in B(Y)$ and some $k'\in \N_0, l\in \N$.
	\item $T(u^{k+1}) = v^{k'+1}v_{[1:l]}$, i.e., adding another repetition to the argument adds another repetition to the result.
\end{enumerate} 
\end{proposition}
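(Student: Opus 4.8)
The plan is to induct on the number $n$ of elementary operations in a decomposition $T = f_n \circ \dots \circ f_1$, where each $f_i$ is a conjugacy, an expansion, or a contraction (such a decomposition exists by Theorem \ref{PaSu} and the definition of a functional representation). Writing $W_i^{(k)} = (f_i \circ \dots \circ f_1)(u^k)$, I will carry along a joint invariant: (a) for fixed $i$ the words $\{W_i^{(k)}\}_k$ form a prefix chain whose forward union is a one-sided periodic word $(v^{(i)})^\infty_+ = v^{(i)} v^{(i)} v^{(i)} \cdots$ for a \emph{non-repeating} word $v^{(i)}$ read from position $0$; and (b) the per-repetition length increment $|W_i^{(k+1)}| - |W_i^{(k)}|$ equals $p_i := |v^{(i)}|$. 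Taking $i = n$ and $v = v^{(n)}$ then yields both claims at once: since $T(u^k) = W_n^{(k)}$ is a prefix of $v^\infty_+$ it has the form $v^{k'} v_{[1:l]}$ with $k' \in \N_0$ and $1 \le l \le |v|$, which is part 1; and since passing from $u^k$ to $u^{k+1}$ lengthens the image by exactly $p_n = |v|$ symbols that continue the periodic pattern, the partial block $v_{[1:l]}$ is left unchanged and exactly one further copy of $v$ is inserted, so $T(u^{k+1}) = v^{k'+1} v_{[1:l]}$, which is part 2.

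The first ingredient is that each of the three operations, viewed as a map on words, preserves the prefix relation: if $A$ is a prefix of $B$ then $f_i(A)$ is a prefix of $f_i(B)$. For a conjugacy $f_i = \Phi_\infty^{[m,n]}$ this holds because the output symbol in position $j$ is computed from the input window $[j,j+m+n]$, so the output on $A$ uses only windows lying inside $A$; for an expansion $e(w) = w\ex{a}{a\diamondsuit}$ and a contraction $c(w) = w\ex{\diamondsuit}{\epsilon}$ it is immediate, since the substitution is carried out letter by letter from the left. As $u^k$ is a prefix of $u^{k+1}$, composition shows that each $\{W_i^{(k)}\}_k$ is a prefix chain, and in particular $T(u^k)$ is a prefix of $T(u^{k+1})$. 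The common forward limit $W_i = \bigcup_k W_i^{(k)}$ satisfies $W_i = f_i(W_{i-1})$ as one-sided sequences, and since $f_i$ sends a one-sided periodic sequence to a periodic one, $W_i$ is periodic. I take $v^{(i)}$ to be the primitive period word read from position $0$, which automatically places a period boundary at the start of every $W_i^{(k)}$; this gives invariant (a), apart from the non-repeating assertion.

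The delicate point, and the main obstacle, is exactly that $v^{(i)}$ is non-repeating, i.e. that each operation sends a point of primitive period to a point of primitive period; without this the increment in (b) would be a multiple of $|v|$ rather than $|v|$ itself. For conjugacies this is the already-noted fact that a conjugacy preserves least periods. For $e$ and $c$ I argue by invertibility: on the relevant periodic points $e$ has inverse ``collapse $a\diamondsuit \mapsto a$'' and $c$ has inverse ``reinsert $\diamondsuit$ after each $a$''. Suppose $e(u^\infty)$ had a smaller least period $z^\infty$, with $z$ primitive of length $(p_{i-1}+c)/d$ and $d>1$, where $c$ is the number of expanded letters per period. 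Choosing the rotation of $z$ not beginning with $\diamondsuit$, no pair $a\diamondsuit$ straddles a $z$-boundary, so collapsing produces a period of $u^\infty$ of length $p_{i-1}/d < p_{i-1}$, contradicting primitivity of $u$; hence $d = 1$. The contraction case is symmetric. This secures that $v^{(i)}$ is non-repeating and that $p_i$ is genuinely the least period.

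Finally I verify invariant (b) stage by stage, using (a) at the previous stage. Passing from $W_{i-1}^{(k)}$ to $W_{i-1}^{(k+1)}$ appends exactly $p_{i-1}$ symbols forming one full period of $v^{(i-1)}$. A conjugacy alters total length by a fixed constant, so the increment is preserved and equals $p_{i-1} = p_i$. An expansion adds one symbol per expanded letter, and a full period contains a fixed number $c$ of them, so the increment becomes $p_{i-1}+c = p_i$; a contraction deletes a fixed number $c'$ of $\diamondsuit$'s per period, giving $p_{i-1}-c' = p_i$. In every case the increment equals $|v^{(i)}|$, closing the induction. It remains only to note that, since longer words are deciding once $u^k$ is (by the earlier length bound for deciding words), $u^{k+1}$ is automatically deciding and every intermediate image is long enough for the window computations used above; hence the argument applies at the $k>1$ of the hypothesis and at all larger values.
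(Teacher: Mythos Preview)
Your proof is correct and follows essentially the same approach as the paper's: both proceed by verifying the claim separately for each elementary operation (conjugacy, expansion, contraction) and then composing, with the key step being that each such operation sends a non-repeating period word to a non-repeating one. The paper's presentation is terser---it works directly with words of the form $u^k u_{[1:l]}$ at each stage rather than setting up your prefix-chain invariant framework---but the substance is identical.
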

\begin{proof}
	We will proceed by showing that for each of the operations conjugation, contraction, and expansion the lemma holds for blocks of the form $u^ku_{[1:l]}$. As $T$ is a composition of such operations, the proposition follows.
	
		\textbf{Conjugacies: } If the conjugacy $\phi\colon X\to Y$ is induced by an $M$-block map $\Phi$, $u\in B_n(X)$, and $k\in \N_0, l\in \N$, then $\Phi(u^ku_{[1:l]}) = v^{k'}v_{[1:l']}$ for some $v\in B_n(Y)$ and $k'\in \N_0, l'\in \N$. Since conjugacies maintain least periods, it also follows that $v$ must be non-repeating. Observing that $u^{k+1}u_{[1:l]}$ is $n$ symbols longer than $u^{k}u_{[1:l]}$ it follows that $\Phi(u^{k+1}u_{[1:l]})$ must be $n$ symbols longer than $v^{k'}v_{[1:l']}$ and hence, by periodicity, equal to $v^{k'+1}v_{[1:l']}$.
		
		 \textbf{Contractions: } Let $c\colon X\to Y$ be the contraction $Y=X^{\diamondsuit\mapsto \epsilon}$ and suppose that $a$ is the symbol that always precedes $\diamondsuit$ in $X$. Then $c(u^ku_{[1:l]}) = (u^{\diamondsuit\mapsto \epsilon})^k (u_{[1:l]})^{\diamondsuit\mapsto \epsilon}$ and clearly $c(u^{k+1}u_{[1:l]}) = (u^{\diamondsuit\mapsto \epsilon})^{k+1} (u_{[1:l]})^{\diamondsuit\mapsto \epsilon}$. Now, we only need to prove that $u^{\diamondsuit\mapsto \epsilon}$ is non-repeating. 
		 
		 Assume for contradiction that $u$ is non-repeating and $u^{\diamondsuit\mapsto \epsilon}=v^k$ for some word $v$ and $k>1$. We see that $u = (v^k)^{a\mapsto a\diamondsuit} = (v^{a\mapsto a\diamondsuit})^k$ yields a contradiction, so we must have $u \neq (u^{\diamondsuit\mapsto \epsilon})^{a\mapsto a\diamondsuit}$. This means that in $u$ there is an $a$ which does not precede a $\diamondsuit$ or a $\diamondsuit$ that does not have a preceding $a$. This is only possible, if the first and last symbol of $u$ are $\diamondsuit$ and $a$ respectively, because $u^k\in B(X)$ and $k>1$. So $u\diamondsuit$ must be the word $\diamondsuit (v^{a\mapsto a\diamondsuit})^k$, but then $u$ is also of the form $w^k, k>1$, and we have a contradiction.
		 				
		 \textbf{Expansions: }Let $e\colon X\to Y$ be the expansion $Y=X^{a\mapsto a\diamondsuit}$. Then $c(u^ku_{[1:l]}) = (u^{a\mapsto a\diamondsuit})^k (u_{[1:l]})^{a\mapsto a\diamondsuit}$ and clearly $c(u^{k+1}u_{[1:l]}) = (u^{a\mapsto a\diamondsuit})^{k+1} (u_{[1:l]})^{a\mapsto a\diamondsuit}$. We see that if $u$ is non-repeating, then so is $u^{a\mapsto a\diamondsuit}$, and the proof is complete.
\end{proof}

\noindent
Further, we need to incorporate this knowledge of the behaviour of periodic words under functional representations of flow equivalences in the context of specific points of a shift space. In the following the words that we use may be infinite in the sense that if $X$ is a shift space and $x$ is a point of $X$, we may write $x=vu$, where $v$ and $u$ are words extending infinitely to the left and to the right. We say that such words are \emph{left-infinite} and \emph{right-infinite}, respectively.

\begin{proposition}\label{rulesOfTrans}
	Let $X\FE Y$ with functional representation $T$, $v\in B(X)$ be deciding, and $T(v) = v'$.
	\begin{enumerate}
		\item If $vw\in B(X)$ for some word $w$, then $T(vw) = v'w'$ for some word $w'$.
		\item If $svt\in B(X)$ for words $s, t$, then $T(svt) = s'v't'$, $T(sv) = s'v'$, and $T(vt) = v't'$ for some words $s', t'$.
	\end{enumerate}
\end{proposition}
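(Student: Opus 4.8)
The plan is to reduce the claim to the level of the atomic operations out of which $T$ is built---conjugacies, expansions, and contractions---and to prove for each of them a single self-contained statement that already packages all of part (2); part (1) is then recovered as the special case $s=\epsilon$ (with $t$ renamed $w$). So the whole proposition reduces to a per-operation lemma, followed by a composition argument.

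The per-operation lemma I would isolate is this: if $f$ is one of the three atomic operations and $svt$ lies in the language of its domain with $f(v)\neq\epsilon$, then $f(svt)=s'v't'$, $f(sv)=s'v'$, and $f(vt)=v't'$ for some words $s',t'$, where $v'=f(v)$. For expansions and contractions this is immediate: such an $f$ acts letter-by-letter (via $w\ex{a}{a\diamondsuit}$, respectively $w\ex{\diamondsuit}{\epsilon}$) and is therefore a concatenation homomorphism, $f(svt)=f(s)f(v)f(t)$, so one simply sets $s'=f(s)$ and $t'=f(t)$. The actual content is the conjugacy case. Here I would write $svt=a_1\dots a_r$ with $v=a_{p+1}\dots a_q$, let $\Phi$ be the inducing $(m+n+1)$-block map, and track output positions: the windows lying entirely inside $v$ are exactly those producing $\Phi(v)=v'$, and the hypothesis $f(v)\neq\epsilon$ guarantees $\abs{v}>m+n$, so this range of output positions is nonempty and occupies a fixed slot inside $\Phi(svt)$. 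Because $\Phi(sv)$ and $\Phi(vt)$ reuse precisely the same windows over their overlapping segments, $v'$ appears as a factor in the same relative position in all three images, which forces the three identities at once once $s'$ and $t'$ are declared to be the portions of $\Phi(svt)$ before and after $v'$.

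With the lemma in hand I would induct along $T=f_n\circ\dots\circ f_1$, carrying all three identities through each stage. Writing $v_i'=f_i\circ\dots\circ f_1(v)$ and assuming inductively that $f_i\circ\dots\circ f_1(svt)=s_i'v_i't_i'$ with matching one-sided decompositions, I apply the lemma to $f_{i+1}$ with the distinguished factorization $s_i'\,v_i'\,t_i'$; the identities $f_{i+1}(s_i'v_i')$ and $f_{i+1}(v_i't_i')$ then propagate the prefix and suffix decompositions simultaneously, since $s_i'v_i'=f_i\circ\dots\circ f_1(sv)$ and $v_i't_i'=f_i\circ\dots\circ f_1(vt)$ by the induction hypothesis. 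The one side condition to check at each stage is that $v$ stays deciding, i.e. $v_i'\neq\epsilon$ for all $i$; this follows from $v$ being deciding for $T$, because each atomic operation sends $\epsilon$ to $\epsilon$, so $T(v)\neq\epsilon$ forces every partial image of $v$ to be nonempty.

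The main obstacle is the bookkeeping in the conjugacy case: keeping the memory/anticipation offsets straight so that a \emph{single} pair $s',t'$ serves all three identities, rather than merely establishing the isolated prefix relation of part (1). Phrasing the per-operation lemma so that it records both the left and the right decomposition at the same time is precisely what makes the composition step propagate cleanly; once that is done, the inductive gluing and the verification of the deciding condition are routine.
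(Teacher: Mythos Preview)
Your proposal is correct and follows the same route as the paper: reduce to the atomic operations (conjugacies, expansions, contractions), verify the claim for each, and then compose. The paper's proof is in fact a single sentence asserting exactly this reduction without any of the supporting detail, so what you have written is effectively the fleshed-out version of the paper's argument---in particular, your careful window-tracking for the conjugacy case and your check that the deciding condition propagates through intermediate stages (via $f(\epsilon)=\epsilon$) are exactly the points the paper leaves implicit.
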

\begin{proof}
	All the postulates are true under conjugations, contractions, and extensions, so they must be true for a composition of these.
\end{proof}

\begin{proposition}\label{decidingSandwich}
	Let $X\FE Y$ with the functional representation $T$, $u^\infty \in X$ for a non-repeating word $u$, $u^k\in B(X)$ be deciding for a $k>1$, and $w, v\in B(X)$ such that $wu^kv\in B(X)$ and $wu^{k+1}v\in B(X)$. 
	
	If $T(u^k) = u'^{k'}u'_{[1:l]}$ for a non-repeating word $u'\in B(X)$ and some $k'>0$, then
	\begin{enumerate}
		\item  $T(wu^kv) = w'u'^{k'}v'$ for some $w', v'\in B(Y)$.
		\item  $T(wu^{k+1}v) = w'u'^{k'+1}v'$.
	\end{enumerate}
\end{proposition}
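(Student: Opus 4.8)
The plan is to combine the boundary information from Proposition \ref{rulesOfTrans} with the periodic information from Proposition \ref{periodicTrans}, and to reduce the one genuinely new point to the locality of the three operations generating $T$. Part (1) is immediate: since $u^k$ is deciding with $T(u^k)=u'^{k'}u'_{[1:l]}$ and $wu^kv\in B(X)$, the second part of Proposition \ref{rulesOfTrans}, applied with deciding core $u^k$ and sandwiching words $w,v$, produces $a,b\in B(Y)$ with
\begin{equation*}
T(wu^kv)=a\,u'^{k'}u'_{[1:l]}\,b,\qquad T(wu^k)=a\,u'^{k'}u'_{[1:l]},\qquad T(u^kv)=u'^{k'}u'_{[1:l]}\,b.
\end{equation*}
Putting $w'=a$ and $v'=u'_{[1:l]}\,b$ yields $T(wu^kv)=w'\,u'^{k'}\,v'$, which is part (1).

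For part (2) I would show that the \emph{same} $w'$ and $v'$ work. First, $u^{k+1}$ is deciding with $T(u^{k+1})=u'^{k'+1}u'_{[1:l]}$ by Proposition \ref{periodicTrans}. Decomposing $wu^{k+1}v=(wu)\,u^k\,v$ and applying Proposition \ref{rulesOfTrans} again with core $u^k$, the right boundary transform of $v$ depends only on $v$ and the core and so is again $b$; this gives
\begin{equation*}
T(wu^{k+1}v)=a''\,u'^{k'}u'_{[1:l]}\,b
\end{equation*}
for some $a''\in B(Y)$. On the other hand, viewing $T(wu^{k+1})$ with core $u^{k+1}$ and left sandwich $w$ gives $T(wu^{k+1})=\hat a\,u'^{k'+1}u'_{[1:l]}$, while viewing it with core $u^k$ and left sandwich $wu$ gives $T(wu^{k+1})=a''\,u'^{k'}u'_{[1:l]}$; cancelling the common periodic suffix forces $a''=\hat a\,u'$. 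Thus everything comes down to the stabilisation identity $\hat a=a$, i.e.\ that the left boundary transform of $w$ is the same whether the adjacent run is $u^k$ or $u^{k+1}$. Granting it, $a''=a\,u'$ and
\begin{equation*}
T(wu^{k+1}v)=a\,u'\,u'^{k'}u'_{[1:l]}\,b=w'\,u'^{k'+1}\,v',
\end{equation*}
which is part (2).

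The identity $\hat a=a$ is the crux and the step I expect to be the main obstacle. I would establish it from the locality of the three generating operations: a conjugacy is a sliding block code, so each output symbol depends only on a bounded input window, while an expansion and a contraction alter symbols and positions only within a bounded neighbourhood. Hence $T$ displaces the correspondence between input and output coordinates by a bounded amount, so the prefix of $T(wu^k)$ up to and including its first complete copy of $u'$ is determined by a bounded prefix of the input word. Since $k'>0$ guarantees that such a complete copy exists, and since $wu^k$ and $wu^{k+1}$ share the identical prefix $wu^k$, the relevant bounded input prefix is the same in both cases, giving $\hat a=a$. As in the proofs of Propositions \ref{periodicTrans} and \ref{rulesOfTrans}, it is enough to check this window statement for a single conjugacy, expansion, and contraction and then note that it passes to compositions; the one subtlety to watch is that an intermediate operation may temporarily reduce the number of clean copies, so the invariant to carry along the composition is that the two images differ precisely by one inserted copy of the current non-repeating word, flanked by a common prefix and suffix.
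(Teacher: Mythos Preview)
Your proposal is correct and follows the same route as the paper: invoke Proposition~\ref{rulesOfTrans} to sandwich $T(u^k)$ between boundary words for part (1), and then use Proposition~\ref{periodicTrans} to upgrade $u^k$ to $u^{k+1}$ for part (2). The paper's proof of (2) is a single sentence that simply asserts the conclusion follows from $T(u^{k+1})=u'^{k'+1}u'_{[1:l]}$, implicitly relying on the same ``check each generating operation'' reasoning used in the proofs of Propositions~\ref{periodicTrans} and~\ref{rulesOfTrans}; you have made this step explicit by isolating the stabilisation identity $\hat a=a$ and justifying it via locality (and, most cleanly, via the invariant in your final sentence that the two images differ by exactly one inserted copy of the current periodic word), which is more careful than the paper but not a different argument.
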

\begin{proof}
	By Proposition \ref{rulesOfTrans} (1) we have $T(wu^k) = w'u'^{k'}u'_{[1:l]}$ and $T(u^kv) = u'^{k'}v'$ for some words $w', v'$, where we choose to absorb $u'_{[1:l]}$ in $v'$. By Proposition \ref{rulesOfTrans} (2) we now have $T(wu^kv) = w'u'^{k'}v'$. The second part follows by observing that by Proposition \ref{periodicTrans}, $T(u^{k+1}) = u'^{k'+1}u'_{[1:l]}$.
\end{proof}

\begin{proposition}\label{periodicPoints}
	Let $X\FE Y$ with the functional representation $T$ and $u^\infty\in X$. Then $T(u^\infty) = v^\infty$ for some $v \in B(Y)$. 
\end{proposition}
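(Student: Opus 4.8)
The plan is to exploit the fact that $T$ is by definition a finite composition $T = f_n \circ \dots \circ f_1$ of conjugacies, expansions, and contractions, and to verify that each of these three elementary operations sends a periodic point to a periodic point. Since a bi-infinite point is periodic precisely when it can be written in the form $v^\infty$ for some word $v$, a straightforward induction on the length of the composition then yields $T(u^\infty) = v^\infty$ for some $v \in B(Y)$, as required. Thus the whole argument reduces to three short, essentially computational cases, and there is no need to identify $v$ explicitly.

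First I would dispose of conjugacies: if $\phi\colon X\to Y$ is a conjugacy and $x = u^\infty$, then since sliding block codes commute with the shift we have $\sigma_Y^{\abs{u}}(\phi(x)) = \phi(\sigma_X^{\abs{u}}(x)) = \phi(x)$, so $\phi(x)$ is periodic. For an expansion $e$ representing $Y = X\ex{a}{a\diamondsuit}$ I would simply observe that $e(\orb{u^\infty}) = \orb{(u^\infty)\ex{a}{a\diamondsuit}} = \orb{(u\ex{a}{a\diamondsuit})^\infty}$, since replacing every $a$ by $a\diamondsuit$ commutes with the infinite concatenation; this is again of the form $v^\infty$. The contraction case $c$ representing $Y = X\ex{\diamondsuit}{\epsilon}$ is handled analogously, giving $c(\orb{u^\infty}) = \orb{(u\ex{\diamondsuit}{\epsilon})^\infty}$.

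The one point that needs care — and the step I would flag as the main obstacle — is the contraction case, where I must ensure that $u\ex{\diamondsuit}{\epsilon}$ is non-empty, so that $(u\ex{\diamondsuit}{\epsilon})^\infty$ is a genuine periodic point. This holds because in a contraction every $\diamondsuit$ is immediately preceded by the symbol $a \neq \diamondsuit$; as $u^\infty$ is a bi-infinite point of $X$, the word $u$ cannot consist solely of $\diamondsuit$'s, and hence at least the symbol $a$ survives the deletion, keeping the contracted word non-empty. With this observation the induction goes through. Alternatively, and with more bookkeeping, one could route the argument through Proposition \ref{periodicTrans}: after reducing to a non-repeating $u$ and choosing $k$ large enough that $u^k$ is deciding, that proposition already gives $T(u^k) = v^{k'}v_{[1:l]}$ with $v$ non-repeating and $k'$ increasing by one with each additional copy of $u$, so arbitrarily long $v$-periodic stretches appear in $T(u^\infty)$; this is the route that meshes best with the later $S$-gap analysis, but for the present statement the operation-by-operation check is the shortest path.
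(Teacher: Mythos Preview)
Your proof is correct and follows exactly the same approach as the paper, which also reduces to checking that each of conjugacy, expansion, and contraction preserves periodicity and then concludes by composition; the paper merely states this in a single sentence without spelling out the three cases. Your extra care in the contraction case (ensuring $u\ex{\diamondsuit}{\epsilon}\neq\epsilon$) is a welcome detail that the paper leaves implicit.
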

\begin{proof}
	The proposition holds true for conjugations, contractions, and extensions, so it must hold for arbitrary  compositions of them.
\end{proof}

\section{Classification and a new invariant}
Existing results on the classification of $S$-gap shifts up to flow equivalence are based on graph invariants under flow equivalence and therefore focus almost solely on sofic shifts. This means that the results are no better than said graph invariants and that non-sofic shifts have not been classified.

This section will show, how stronger classification results can be achieved when we instead focus on the functional representations of the flow equivalences between $S$-gap shifts.

\begin{theorem}\label{newInvariant}
	Let $S, R$ be infinite subsets of $\N_0$ such that $\mathsf{X} (S)\FE \mathsf{X} (R)$. 
	Then there exists cofinite subsets $S'\subseteq S$ and $R'\subseteq R$ such that $S'+r=R'$ for some $r\in \Z$.
\end{theorem}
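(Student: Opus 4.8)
The plan is to track the ``simple'' periodic points $(10^s)^\infty$, $s\in S$, through a functional representation $T\colon\orb{\X(S)}\to\orb{\X(R)}$ of the flow equivalence, and to show that $T$ acts on their gaps, for large $s$, as a single translation $s\mapsto s+r$. Fix such a $T$ and note that $T^{-1}$ is a functional representation of the reverse flow equivalence. Since $S$ is infinite we have $0^\infty\in\X(S)$, and for each $s\in S$ the word $u_s=10^s$ is non-repeating with $u_s^\infty\in\X(S)$. Taking $s$ and the number of repetitions large enough that $u_s^k$ is deciding, Proposition \ref{periodicPoints} gives $T(u_s^\infty)=v_s^\infty$ for a non-repeating $v_s\in B(\X(R))$, while Proposition \ref{periodicTrans} gives $T(u_s^k)=v_s^{k'}(v_s)_{[1:l]}$ with $k'=k+c_s$ for a constant $c_s$; that is, \emph{the number of repetitions is preserved with slope one}. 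Writing $v_s$ up to cyclic rotation as $10^{a_1}\cdots 10^{a_{m_s}}$ with each $a_i\in R$, the first goal is to prove $m_s=1$ for all large $s$. Throughout, write $n_1(w)$ for the number of occurrences of $1$ in a word $w$.

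The engine of the argument is conservation of $n_1$ per period, and, granting this, the slope-one identity pins everything down. Applying Proposition \ref{periodicTrans} to $u=0$ yields $T(0^k)=z^{k+c_0}(z)_{[1:l]}$; since $0^k$ contains no $1$'s, conservation forces $n_1(z)=0$, hence $z=0$ and $T(0^k)=0^{k+c_0}$. Thus a homogeneous run of zeros is sent to a run of zeros lengthened only by the fixed constant $c_0$ --- this is the precise sense in which $T$ can translate gaps but never dilate them. Likewise, comparing the $k$ ones of $u_s^k$ with the $(k+c_s)m_s+O(1)$ ones of its image and matching the coefficient of $k$ forces $m_s=1$; moreover $m_s\geq 1$ automatically, since $T$ is a bijection of orbits and $0^\infty$ already accounts for the all-zero orbit, so $v_s^\infty\neq 0^\infty$. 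Hence $v_s=10^{g(s)}$ with $g(s)\in R$ for all large $s$.

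It then remains to see that $g(s)=s+r$ for one constant $r\in\Z$. By Propositions \ref{decidingSandwich} and \ref{rulesOfTrans} the image of a long gap $0^{s}$ flanked by the synchronising symbol $1$ is determined locally: splicing $(10^{s})^\infty$ and $(10^{s'})^\infty$ along a single junction, the two tails map to $(10^{g(s)})^\infty$ and $(10^{g(s')})^\infty$, and the interior of each long zero-run behaves exactly as a window of the map $0^k\mapsto 0^{k+c_0}$ established above. Consequently the zeros of a gap contribute with slope one and the lone $1$-marker contributes a fixed boundary amount $\beta$, giving $g(s)=s+c_0+\beta=:s+r$ with $r$ independent of $s$. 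Finally, set $S'=\{s\in S : s\geq N\}$ for $N$ large enough that all of the above applies; then $S'$ is cofinite in $S$ and $g(S')=S'+r\subseteq R$. Running the identical argument for $T^{-1}$ yields the inverse translation $-r$ on large elements of $R$, so every sufficiently large $r'\in R$ satisfies $r'-r\in S$, i.e.\ $r'\in S'+r$; hence $R':=S'+r$ is cofinite in $R$ and $S'+r=R'$, as desired.

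The main obstacle is the conservation of $n_1$ per period. It is immediate for expansions and contractions, which never create or destroy the marker $1$ and only introduce or delete auxiliary symbols placed after a fixed letter; the delicate case is the conjugacies appearing in the chain, where in intermediate alphabets there need not even be a distinguished symbol ``$1$''. To handle these one must exploit that $1$ is the non-overlapping intrinsically synchronising marker of both $\X(S)$ and $\X(R)$ --- equivalently, that $T$ preserves the homology class of a closed orbit and hence its number of crossings of the $1$-cross-section --- so that the per-period marker count is transported unchanged from one endpoint to the other. Establishing this marker-count invariance is the crux on which the whole classification rests, and I would isolate it as a separate lemma before assembling the three steps above.
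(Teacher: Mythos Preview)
Your proof hinges entirely on the ``conservation of $n_1$ per period'', which you correctly flag as the crux --- but that claim is \emph{false} as stated. Take $S=R=\N_0$, so that $\X(S)=\X(R)$ is the full $2$-shift, and let $T$ be the bit-flip conjugacy $0\leftrightarrow 1$. This is a perfectly good functional representation of $\X(S)\FE\X(R)$, yet $T(0^\infty)=1^\infty$ (so your $z$ equals $1$, not $0$) and $T((10^s)^\infty)=(01^s)^\infty$, whose least period carries $s$ ones, not one. The cross-section heuristic fails here because in the full $2$-shift both symbols define equally valid cross-sections, and nothing in the definition of a functional representation forces the distinguished marker $1$ to be respected end-to-end. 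No amount of ``homology of closed orbits'' will rescue this: the lemma you propose to isolate is simply not true.

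The paper's argument sidesteps the issue by never asserting marker conservation. It computes $T(0^\infty)=u^\infty$ for a non-repeating $u$ and splits into two cases. If $u\neq 0$ then $u$ contains a $1$, so $s'u^{k'+r}t'\in\X(R)$ for every $r\geq 0$; feeding these points back through $T^{-1}$ via Proposition~\ref{decidingSandwich} forces $k+r\in S$ for every $r\geq 0$, whence $S$ (and by symmetry $R$) contains an entire tail of $\N_0$ and the conclusion is immediate. If $u=0$, one fixes a single point $s10^k1t\in\X(S)$ and applies Proposition~\ref{decidingSandwich} in both directions to obtain $k+r\in S\Leftrightarrow k'+r\in R$ for all $r\geq 0$ --- exactly the translation you want, but obtained without ever tracking the orbits $(10^s)^\infty$ or counting markers. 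Your outline becomes sound once you replace the conservation lemma by this case split: the $u\neq 0$ branch is precisely the regime (flow equivalence to $\X_{[2]}$) in which your method breaks, and in the $u=0$ branch your argument for $g(s)=s+r$ already essentially coincides with the paper's.
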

\begin{proof}
Let $X=\mathsf X(S)$ and $Y=\mathsf X(R)$ and $T$ be a functional representation of $X\FE Y$. Let further, by Proposition \ref{periodicPoints}, $T(0^\infty) = u^\infty$ for some non-repeating $u\in B(Y)$, and choose $k\in S, k>1$ such that $0^k$ is deciding with respect to $T$. 

Suppose that $x\in X$ is of the form $x=s10^k1t$ for infinite words $s, t$, the existence of such a point being clear. Then, by Proposition \ref{decidingSandwich} (1), we have
\begin{equation*}
	T(s10^k1t) = s'u^{k'}t'
\end{equation*}
for infinite words $s', t'$ and $k'\in \N_0$. Let $T^{-1}$ be the functional representation of $Y\FE X$ and the inverse of $T$. By Proposition \ref{decidingSandwich} (2) we can choose $k$ large enough that $u^{k'}$ is deciding with respect to $T^{-1}$ and such that $k'>1$. 

Now, we have two cases. Either $u=0$ or $u\neq 0$.
\\

If $u\neq 0$ and is non-repeating it must have 1 as a factor. So for every $r\in \N_0$ we have $s'u^{k'+r}t'\in Y$, as $u^2$ is allowed in $Y$, and by Proposition \ref{decidingSandwich}(2),
\begin{equation*}
	T^{-1}(s'u^{k'+r}t') = s10^{k+r}1t\in X.
\end{equation*}
Thus, $k+\N_0\subseteq S$ and therefore $X\FE \X_{[2]}$ by Propositions \ref{sGapFiniteClass} and \ref{finiteTypeSGapFlow}. As $X\FE Y$ it also follows that $Y\FE \X_{[2]}$ and $l+\N_0\subseteq T$ for some $l\in \N_0$. Setting $S'=k+\N_0$ and $R'=l+\N_0$ now yields the desired result.
\\

If $u=0$ we can let $k'$ be chosen such that $s'$ and $t'$ respectively ends and begins with a 1, so that $k'\in R$. To see that $s'$ and $t'$ both have 1 as a subword, just notice that if not, then either $T(s10^\infty)=0^\infty$ or $T(0^\infty 1 t)=0^\infty$, which contradicts the bijectivity of $T$. Now, for every $r, r'\in \N_0$ such that $k+r\in S$ and $k'+r'\in R$ we have $s10^{k+r}1t\in X$ and $s'0^{k'+r'}t'\in Y$, so by Proposition \ref{decidingSandwich}(2),
\begin{eqnarray*}
	&T(s10^{k+r}1t) &= s'0^{k'+r}t'\in Y,\\
	&T^{-1}(s'0^{k'+r'}t') &= s10^{k+r'}1t\in X
\end{eqnarray*}
Thus, for every $r\in \N_0$, $k+r\in S$ if and only if $k'+r\in R$. Setting $S' = \{ s\in S\mid s\geq k\}$ and $R'=\{t\in R\mid t\geq k'\}$ then yields $S'-k=R'-k'$, which finishes the proof.
\end{proof}
\noindent
This allows us to establish a stronger invariant on the sofic $S$-gap shifts than the one we referred to in Proposition \ref{bestForSoficYet}.
\begin{corollary}
		Let $\X(S_1)$ and $\X(S_2)$ be sofic shifts and $S_1=R_1\cup (T_1+N\N_0)$ and $S_2=R_2\cup(T_2+M\N_0)$ be written of minimal form. If $\X(S)\FE \X(T)$, then $N=M$ and there exists an $r\in \Z$ such that $r+T_1\equiv T_2 \pmod{N}$.
\end{corollary}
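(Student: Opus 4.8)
The plan is to derive the corollary directly from Theorem \ref{newInvariant}, by translating its conclusion about \emph{cofinite} subsets into a statement about residue classes modulo $N$ and $M$. Since the minimal-form representations contain the infinite periodic parts $T_i+N\N_0$, the index sets $S_1,S_2$ are infinite, so Theorem \ref{newInvariant} applies and furnishes cofinite subsets $S_1'\subseteq S_1$ and $S_2'\subseteq S_2$ together with an $r\in\Z$ such that $S_1'+r=S_2'$. The whole argument then hinges on the fact that the eventual membership pattern of each $S_i$ is governed entirely by its periodic part.

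First I would record the consequences of the minimal form. Writing $\overline{T_1}=\{\,t\bmod N\mid t\in T_1\,\}$ and $\overline{T_2}=\{\,t\bmod M\mid t\in T_2\,\}$, the condition $\max\{\abs{t_i-t_j}\}<N$ guarantees that the elements of $T_1$ lie in distinct residue classes modulo $N$, and likewise for $T_2$ modulo $M$. Moreover, since $R_1$ is finite, for all sufficiently large $n$ one has $n\in S_1$ if and only if $n\bmod N\in\overline{T_1}$, and the analogous statement holds for $S_2$. Because $S_1'$ is cofinite in $S_1$ and $S_2'$ cofinite in $S_2$, the same eventual descriptions hold with each $S_i$ replaced by $S_i'$. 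Feeding these into $S_1'+r=S_2'$ yields, for all sufficiently large $n$, the equivalence
\[
 n\bmod N\in\overline{T_1}\quad\Longleftrightarrow\quad (n+r)\bmod M\in\overline{T_2}.
\]

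Next I would extract $N=M$. The left-hand side, viewed as a function of $n$, is periodic with period $N$; by the minimality of $N$ in the minimal form its \emph{minimal} period is exactly $N$, since a smaller period would let us rewrite $S_1$ with a smaller modulus, contradicting minimality. Similarly the right-hand side is periodic in $n$ with minimal period $M$ (a shift by $r$ does not change the minimal period). Two periodic $0$--$1$ sequences that agree on a tail must have the same minimal period, whence $N=M$. With $N=M$ the displayed equivalence then holds for \emph{all} residues $a$ modulo $N$, giving $a\in\overline{T_1}$ if and only if $(a+r)\bmod N\in\overline{T_2}$, that is $\overline{T_2}=(\overline{T_1}+r)\bmod N$. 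This is exactly the assertion $r+T_1\equiv T_2\pmod N$, and, since congruent sets have equal cardinality, it also recovers $\abs{T_1}=\abs{T_2}$, so the corollary indeed refines Proposition \ref{bestForSoficYet}.

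I expect the main obstacle to be the period-matching step forcing $N=M$: one must argue carefully that the minimality built into the minimal form is equivalent to the eventual pattern of $S_i$ having minimal period $N$ (respectively $M$), and that a periodic sequence restricted to a tail retains its minimal period, so that agreement on a tail transfers to equality of minimal periods. The remaining bookkeeping---discarding the finitely many elements of $R_i$ and the finitely many elements lost in passing to the cofinite subsets $S_i'$---is routine, but it must be handled uniformly so that a single threshold ``for all sufficiently large $n$'' simultaneously covers every exceptional case.
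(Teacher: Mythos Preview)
Your argument is correct and is precisely the natural way to derive the corollary from Theorem \ref{newInvariant}; the paper itself states the corollary without proof, treating it as an immediate consequence, so your write-up simply fills in the details the paper leaves implicit. The one point worth being careful about---and you flag it yourself---is that ``$N$ minimal in the minimal form'' is exactly the statement that the eventual indicator sequence of $S_1$ has minimal period $N$, which is what makes the period-matching step go through; once that identification is made, everything else is bookkeeping.
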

\noindent
Moving on, we will use the above result in a complete classification of the non-sofic $S$-gap shifts.

\begin{proposition}\label{ifPart}
	Let $S, R\subseteq \N_0$ be infinite and say that there exists cofinite subsets $S'\subseteq S$ and $R'\subseteq R$ such that $\vert S\setminus S'\vert=\vert R\setminus R'\vert$ and $S'+r=R'$ for some $r\in \Z$. Then $\mathsf{X} (S)\FE \mathsf{X} (R)$.
\end{proposition}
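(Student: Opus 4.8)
The plan is to assemble the flow equivalence from exactly the two moves recorded in Lemma \ref{sGapRykRundt}: a global translation of the gap set (part 1) to line up the cofinite ``repeating'' parts $S'$ and $R'$, followed by a relocation of the finitely many exceptional gap lengths, performed one at a time (part 2). Write $F_S=S\setminus S'$ and $F_R=R\setminus R'$; these are finite with common cardinality $n$, and by construction $F_S\cap S'=\emptyset$ and $F_R\cap R'=\emptyset$. Interchanging the roles of $S$ and $R$ turns the hypothesis $S'+r=R'$ into $R'+(-r)=S'$ and leaves $\abs{F_S}=\abs{F_R}$ intact, so I may assume without loss of generality that $r\geq 0$.

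First I would translate. Applying Lemma \ref{sGapRykRundt}(1) with $k=r$ to the set $S=S'\cup F_S$ and using $S'+r=R'$ gives
\[
	\X(S)\FE \X\big(r+(S'\cup F_S)\big)=\X\big(R'\cup(F_S+r)\big).
\]
Because $F_S$ is disjoint from $S'$, the translated extras $F_S+r$ are disjoint from $S'+r=R'$, and $F_S+r\subseteq\N_0$ since $r\geq 0$. Hence $R'\cup(F_S+r)$ is again a gap set with repeating part $R'$ together with exactly $n$ extra elements, all lying in $\N_0\setminus R'$.

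It then remains to slide the extras $F_S+r$ onto $F_R$, both of which are $n$-element subsets of $\N_0\setminus R'$. For this I would isolate the auxiliary fact that, for a fixed base $B\subseteq\N_0$ and finite sets $A_1,A_2\subseteq\N_0\setminus B$ with $\abs{A_1}=\abs{A_2}$, one has $\X(B\cup A_1)\FE\X(B\cup A_2)$. This I prove by induction on $\abs{A_1\setminus A_2}$: if $A_1\neq A_2$, choose $a\in A_1\setminus A_2$ and $b\in A_2\setminus A_1$, regard $B\cup(A_1\setminus\{a\})$ as the base, and apply Lemma \ref{sGapRykRundt}(2) to move the single extra $a$ to the vacant position $b$; this is legal because $a,b\notin B\cup(A_1\setminus\{a\})$, and it replaces $A_1$ by $(A_1\setminus\{a\})\cup\{b\}$, strictly decreasing the discrepancy with $A_2$. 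Taking $B=R'$, $A_1=F_S+r$ and $A_2=F_R$ yields $\X\big(R'\cup(F_S+r)\big)\FE\X(R'\cup F_R)=\X(R)$, and composing with the translation gives $\X(S)\FE\X(R)$.

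The bookkeeping (staying inside $\N_0$ and preserving disjointness from the base) is routine; the one point that needs care is the repositioning step, where I must move each misplaced extra \emph{directly} to an unoccupied target so that Lemma \ref{sGapRykRundt}(2) applies verbatim, rather than through an intermediate slot. This is exactly where the equal-cardinality hypothesis $\abs{S\setminus S'}=\abs{R\setminus R'}$ enters, and it is the crux of the argument: it guarantees the pairing $a\leftrightarrow b$ at every stage and thereby makes the finite discrepancy between $S$ and $R$ invisible to flow equivalence.
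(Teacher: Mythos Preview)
Your proof is correct and follows essentially the same approach as the paper's: first use Lemma~\ref{sGapRykRundt}(1) to align the cofinite parts by translation, then use Lemma~\ref{sGapRykRundt}(2) repeatedly to reposition the finitely many extras one at a time. The only cosmetic difference is that the paper routes both $F_S$ and $F_R$ through a common intermediate set $\{1,\dots,n\}$ (after an additional shift to make room below the base), whereas you move the extras of $F_S+r$ directly onto $F_R$ by induction on the discrepancy $\abs{A_1\setminus A_2}$; your variant is marginally more direct but the content is the same.
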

\begin{proof}
	Make the enumerations $S\setminus S' = \{s_1, \dots, s_n\}$ and $R\setminus R' = \{r_1, \dots, r_n\}$. Since $\X(S) \FE \X(S+r)$ by Proposition \ref{sGapRykRundt} we can assume that $R'=S'$ and that all elements of $R$ and $S$ are greater than $n$. Now, by Proposition \ref{sGapRykRundt} we have
	\begin{align*}
		\X(S'\cup \{1, \dots, n\}) &\FE \X(S'\cup \{s_1\}\cup\{2, \dots, n\})\\
		&\FE \X(S'\cup \{s_1, s_2\}\cup\{3, \dots, n\})\\
		& \hspace{1cm} \vdots \\
		&\FE \X(S'\cup \{s_1, \dots, s_n\})= \X(S).
	\end{align*}
	and similarly $\X(R'\cup \{1, \dots, n\})\FE \X(R)$, so $\X(S)\FE\X(R)$.
\end{proof}

\begin{lemma}\label{itIsPeriodic!}
	Let $T$ be an infinite subset of $\N_0$ and suppose that $r, r'\in \N_0$ with $r> r'$. If there exists an $N\in \N$ such that for $t>N$ we have $r+t\in T$ if and only if $r'+t\in T$, then $T=S\cup (R+(r-r')\N_0)$ for some finite  $S, R\subseteq \N_0$.
\end{lemma}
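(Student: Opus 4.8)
The plan is to rephrase the hypothesis as a clean eventual-periodicity statement with period $d := r - r'$, and then to exhibit $T$ directly as the union of its bounded part and its periodic tail.

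First I would perform the substitution $m = r' + t$. As $t$ ranges over $t > N$, the shifted index $m$ ranges over $m > r' + N$, and one has $r + t = (r' + d) + t = m + d$. So, writing $M := r' + N$, the hypothesis becomes the single-step equivalence: for every $m > M$, $m \in T$ if and only if $m + d \in T$.

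Next I would promote this to full periodicity on the tail: by induction on $k$, for every $m > M$ and every $k \in \N_0$ we have $m \in T \iff m + kd \in T$. The upward direction iterates the single-step equivalence, using that $m + jd > M$ at each stage, and the downward direction subtracts $d$ repeatedly, which is legitimate precisely as long as the argument remains above $M$. With this in hand I would define the two finite sets explicitly: set $S := T \cap \{0, 1, \dots, M\}$, finite as a subset of a finite set, and $R := T \cap \{M+1, \dots, M+d\}$, finite since it has at most $d$ elements. The claim is then $T = S \cup (R + d\N_0)$. The inclusion $R + d\N_0 \subseteq T$ holds because each $\rho \in R$ lies in $T$ and satisfies $\rho > M$, so $\rho + kd \in T$ for all $k$ by the tail periodicity. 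For the reverse inclusion, any $m \in T$ with $m \le M$ lies in $S$, while any $m \in T$ with $m > M$ can be written uniquely as $m = \rho + kd$ with $\rho \in \{M+1, \dots, M+d\}$ and $k \ge 0$; descending by $d$ shows $\rho \in T$, hence $\rho \in R$ and $m \in R + d\N_0$.

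The argument is essentially indexing and bookkeeping, so I do not anticipate a genuine obstacle. The one point demanding care is the choice of the window $\{M+1, \dots, M+d\}$ of length exactly $d$ as a complete set of representatives for the residues modulo $d$ lying above $M$, together with verifying that the descent to such a representative always stays above $M$ and that the resulting sets $S$ and $R$ are finite. Everything else follows from the two-sided periodicity established in the inductive step.
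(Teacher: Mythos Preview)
Your proposal is correct and essentially identical to the paper's proof: with $M=r'+N$ and $d=r-r'$ you define exactly the same sets $S=T\cap\{0,\dots,M\}$ and $R=T\cap\{M+1,\dots,M+d\}$ as the paper (which writes them as $\{q\in T\mid q\le N+r'\}$ and $\{q\in T\mid N+r'<q\le N+r\}$), and the periodicity argument is the same substitution $q=r'+t$ followed by iteration. There is nothing to add.
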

\begin{proof}
	For $q>N+r'$ we have $q\in T$ if and only if $q+(r-r')\in T$, and if $q>N+r$ we have $q\in T$ if and only if $q-(r-r')\in T$. So for every  $q>N+r'$ we have $q\in T$ if and only if there is a $q'\in T$ with $N+r'<q'\leq N+r$ and $q=q'+k(r-r')$ for some $k\in \N_0$.
	
	Setting $S = \{q\in T\mid q\leq N+r'\}$ and $R=\{q\in T\mid N+r'<q\leq N+r\}$ then yields $T=S\cup (R+(r-r')\N_0)$. 
\end{proof}

\begin{lemma}\label{savesMyAss}
	Let $S, S'\subseteq \N$ be infinite such that neither $\X(S)$ nor $\X(S')$ is flow equivalent to the full 2-shift, and $\X(S)\FE \X(S')$ with functional representation $T$. If $R\subseteq S$ is finite then there are no $x\in \X(R)$ with 
	\begin{equation*}
		T(x) = s0^\infty \,\,\, \text{ or } T(x) = 0^\infty s,
	\end{equation*}
	where $s$ and $t$ are left- and right-infinite words, respectively.
\end{lemma}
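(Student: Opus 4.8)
The plan is to first pin down the image of the all-zeros point and then run a block-by-block argument. Since neither $\X(S)$ nor $\X(S')$ is flow equivalent to the full $2$-shift, I would begin by showing $T(0^\infty)=0^\infty$. By Proposition \ref{periodicPoints} we have $T(0^\infty)=u^\infty$ for a non-repeating $u\in B(\X(S'))$, and the dichotomy established in the proof of Theorem \ref{newInvariant} shows that $u\neq 0$ would force $\X(S)\FE\X_{[2]}$ (via Propositions \ref{sGapFiniteClass} and \ref{finiteTypeSGapFlow}); this is excluded by hypothesis, so $u=0$ and $T(0^\infty)=0^\infty$. I would also record that, since $R$ is finite, every point of $\X(R)$ omits the block $0^{1+\max R}$, so its $1$'s occur with gaps at most $\max R$ in both directions; in particular any $x\in\X(R)$ has infinitely many $1$'s both to the left and to the right.

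Now suppose, for contradiction, that some $x\in\X(R)$ satisfies $T(x)=s0^\infty$, i.e.\ $T(x)$ is eventually all $0$'s to the right. Using the proposition guaranteeing a deciding length, I would fix $N$ so large that every word of $B(\X(S))$ of length $\geq N$ is deciding for $T$, and moreover $N>\max R$. Tiling the right tail of $x$ into consecutive blocks $w^{(1)}w^{(2)}\cdots$ of length $N$, each $w^{(j)}\in B_N(\X(S))$ is deciding and, because $N>\max R$, contains a $1$. Peeling these tiles off from the left by repeated application of Proposition \ref{rulesOfTrans} shows that the images $T(w^{(1)}),T(w^{(2)}),\dots$ appear as consecutive non-empty factors of $T(x)$ whose positions march off to $+\infty$.

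The crux is to show that each $T(w^{(j)})$ contains a $1$; granting this, $T(x)$ has $1$'s arbitrarily far to the right, contradicting $T(x)=s0^\infty$, and the case $T(x)=0^\infty s$ follows by the mirror-image argument tiling the left tail. To see that a deciding block $w$ containing a $1$ has $T(w)$ containing a $1$, I would consider $z=0^\infty w0^\infty$, which lies in $\X(S)$ since the internal gaps of $w$ lie in $R\subseteq S$ and $S$ is infinite. By Proposition \ref{rulesOfTrans} we may write $T(z)=s'\,T(w)\,t'$; and because the left and right tails of $z$ consist only of $0$'s, exactly as in the all-zeros point whose image is $0^\infty$ by the first step, the flanking words satisfy $s'=t'=0^\infty$, so $T(z)=0^\infty\,T(w)\,0^\infty$. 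Since $z\neq 0^\infty$ while $T$ is a bijection with $T(0^\infty)=0^\infty$, we get $T(z)\neq 0^\infty$, which forces $T(w)$ to contain a $1$.

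The main obstacle is precisely the identification $s'=t'=0^\infty$ in this last step: Proposition \ref{rulesOfTrans} only guarantees the existence of the flanking words, so one must argue that an all-zero left (resp.\ right) tail is sent to an all-zero tail. This is exactly where $T(0^\infty)=0^\infty$ is indispensable, and making it rigorous requires using that $T$ is a finite composition of conjugacies, expansions, and contractions, each determining the image near a position from a bounded window. Alternatively, one can deduce it from Proposition \ref{decidingSandwich} applied with the non-repeating word $0$ to the long maximal runs of $0$'s flanking $w$ — choosing the tile $w$ to begin and end with a $1$, so that these runs are genuine maximal blocks to which the sandwich estimate applies.
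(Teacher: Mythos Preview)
Your outline is sensible up to the step where you claim each $T(w^{(j)})$ contains a $1$, but that step has a real gap that neither of your proposed fixes closes. When you write $T(z)=s'\,T(w)\,t'$ for $z=0^\infty w\,0^\infty$, Proposition~\ref{rulesOfTrans} gives you \emph{some} flanking words $s',t'$, and the locality of the constituent maps (your fix~(a)) only tells you that $s'$ and $t'$ are \emph{eventually} zero, i.e.\ $s'=0^\infty\gamma$ and $t'=\delta\,0^\infty$ for finite words $\gamma,\delta$. Nothing prevents the $1$'s witnessing $T(z)\neq 0^\infty$ from sitting in $\gamma$ or $\delta$ rather than in $T(w)$ itself, so you cannot conclude $T(w)\neq 0^m$. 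Your fix~(b) fares no better: to apply Proposition~\ref{decidingSandwich} with $u=0$ you need the flanking $0$-runs to be deciding and to admit incrementing by one (i.e.\ both $k$ and $k+1$ realised), but inside $\X(R)$ the $0$-runs have length at most $\max R$, which you cannot force above the deciding threshold, and inside $z$ the runs are already infinite so the sandwich increment is vacuous.

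The paper sidesteps this obstacle entirely. Instead of trying to show each tile's image contains a $1$, it uses a pigeonhole argument: since $B_M(\X(R))$ is finite, some length-$M$ block $w$ repeats in the right tail, giving a factor $wvw$ lying wholly in the $0^\infty$ part of the image; as $\X(R)$ is $M$-step, $(wv)^\infty\in\X(R)$, and then $T((wv)^\infty)=0^\infty$ via Proposition~\ref{rulesOfTrans}. Since $w$ contains a $1$, this periodic point is distinct from $0^\infty$, contradicting the bijectivity of $T$ together with $T(0^\infty)=0^\infty$. This argument uses only the orbit-level bijectivity of $T$ and never needs to control where the $1$'s land in the word-level decomposition; if you try to repair your tiling approach along these lines (finding two equal tiles and periodising), you recover exactly the paper's proof.
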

\begin{proof}
	Since $R$ is finite, $\X(R)$ is $M$-step for some $M\in \N$. Choose $M$ large enough that all words of length $M$ or greater are deciding for $T$, and note that all words of $B_{M+1}(\X(R))$ must have 1 as a factor, since else $0^\infty\in \X(R)$. Assume for contradiction and without loss of generality that $x\in \X(R)$ satisfies $T(x) = s0^\infty$ for some left-infinite word $s$. Then there is a right-infinite subword $x'$ of $x$ with $T(x') = \epsilon 0^\infty$, where the $\epsilon$ signifies that $T(x')$ is only right-infinite and not bi-infinite. The set $B_M(\X(R))$ is finite, so there must be a word $w\in B_M(\X(R))$ such that $wvw$ is a factor of $x'$ and $v\in B(\X(R))$ is not the empty word. Since $\X(R)$ is $M$-step we have $(wv)^\infty\in \X(R)$, and as $w$ is deciding for $T$ and $T(wvw)=0^k$ for some $k$, we have $T((wv)^\infty)=0^\infty$ by Proposition \ref{rulesOfTrans}. Recall from the proof of Theorem \ref{newInvariant} that if $\X(S)$ is not flow equivalent to the full 2-shift, then $y=0^\infty$ is the unique point satisfying $T(y) = 0^\infty$, but this contradicts that the point $(vw)^\infty\in \X(R)$ satisfies $T((vw)^\infty)=0^\infty$.
\end{proof}

\begin{theorem}
	Let $S, R\subseteq \N_0$ such that $\X(S)$ and $\X(R)$ are not sofic. Then $\mathsf{X} (S)\FE \mathsf{X} (R)$ if and only if there exists cofinite subsets $S'\subseteq S$ and $R'\subseteq R$ such that $\vert S\setminus S'\vert=\vert R\setminus R'\vert$ and $S'+r=R'$ for some $r\in \Z$. 
\end{theorem}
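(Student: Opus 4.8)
The statement is a biconditional whose backward direction is essentially free. Since $\X(S)$ and $\X(R)$ are non-sofic they are in particular not of finite type, so by Proposition \ref{sGapFiniteClass} both $S$ and $R$ are infinite; the hypothesis is then exactly that of Proposition \ref{ifPart}, which gives $\X(S)\FE\X(R)$. So the whole content lies in the forward direction, and I would split it into two tasks: extracting the shift relation $S'+r=R'$, and proving the two heads $S\setminus S'$ and $R\setminus R'$ have the same size.

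The shift relation is immediate from the machinery already built. Applying Theorem \ref{newInvariant} produces cofinite $S'\subseteq S$ and $R'\subseteq R$ with $S'+r=R'$. Because the full $2$-shift is sofic and soficness is preserved under flow equivalence, neither $\X(S)$ nor $\X(R)$ is flow equivalent to $\X_{[2]}$; hence in the proof of Theorem \ref{newInvariant} only the case $u=0$ can occur, and one may take $S'=\{s\in S:s\ge k\}$, $R'=\{t\in R:t\ge k'\}$, $r=k'-k$, with the rigid tail correspondence $k+j\in S\iff k'+j\in R$ for all $j\ge 0$ and with $0^\infty$ the unique point satisfying $T(0^\infty)=0^\infty$, where $T$ is a functional representation of the flow equivalence. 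I would first record that this correspondence makes the head difference $\abs{S\setminus S'}-\abs{R\setminus R'}$ independent of the cutoff $k$ (enlarging $k$ to any larger element of $S$ adjoins equally many elements to each head, by the correspondence), so there is no slack to exploit: one genuinely must prove the heads are equinumerous. I would also take $k$ large enough that all runs of length $\ge k$ are deciding for both $T$ and $T^{-1}$, which is harmless by the same cutoff-invariance.

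The heart of the argument is to show that $T$ restricts to a flow equivalence between the finite-type ``small-gap'' subshifts $\X(S_0)$ and $\X(R_0)$, where $S_0=S\setminus S'$ and $R_0=R\setminus R'$. Each generating operation of $T$ (conjugacy, expansion, contraction) restricts to a subshift, so $T$ carries $\X(S_0)$ onto a subshift $Y\subseteq\X(R)$ and witnesses $\X(S_0)\FE Y$; since the suspension of $\X(S_0)$ is compact, $Y$ is closed. I would then prove the two inclusions $T(\X(S_0))\subseteq\X(R_0)$ and $T^{-1}(\X(R_0))\subseteq\X(S_0)$, i.e. that small gaps are carried to small gaps in both directions. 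For the first, Lemma \ref{savesMyAss} forbids a point of $\X(S_0)$ from mapping to a one-sided $0^\infty$ tail, so $Y$ has no infinite gaps; and if $T(x)$ for $x\in\X(S_0)$ contained a finite gap $0^{k'+j}$ with $j\ge 0$, then tracking this long run through $T^{-1}$ via Propositions \ref{periodicTrans} and \ref{decidingSandwich} (together with the rigidity that the aperiodic tail admits no competing self-similarity, as guaranteed by Lemma \ref{itIsPeriodic!}) would force $x$ to contain a gap $0^{k+j}$ of size $\ge k$, contradicting $x\in\X(S_0)$. The symmetric argument gives the second inclusion; applying $T$ to it yields $\X(R_0)\subseteq Y$, so $Y=\X(R_0)$ and $\X(S_0)\FE\X(R_0)$. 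Finally, by Proposition \ref{finiteTypeSGapFlow} these finite-gap shifts are flow equivalent to the full $\abs{S_0}$- and $\abs{R_0}$-shifts, and by Example \ref{fullShiftsAreDifferent} two full shifts are flow equivalent only if equal, so $\abs{S_0}=\abs{R_0}$ (the cases $\abs{S_0}\le 1$ being checked directly).

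The step I expect to be the main obstacle is precisely the identification $Y=\X(R_0)$, that is, the ``small-to-small, large-to-large'' gap principle. The compactness plus Lemma \ref{savesMyAss} bound only rules out infinite gaps in the image, while excluding finite-but-large gaps requires a careful gap-by-gap tracking of periodic and eventually-periodic configurations through the individual expansions and contractions composing $T$, and a verification that the restriction of $T$ to the subshift really equals the composite of the restricted operations. Making this local correspondence rigorous — and ensuring the rigidity supplied by Lemma \ref{itIsPeriodic!} genuinely pins the shift $r$ so that a long run cannot be reassigned to a different length — is where the real work lies.
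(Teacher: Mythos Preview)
Your approach is essentially the paper's: Proposition \ref{ifPart} for the backward direction, Theorem \ref{newInvariant} for the tail shift, restriction of $T$ to $\X(S\setminus S')$, Lemma \ref{savesMyAss} to exclude infinite $0$-tails, Lemma \ref{itIsPeriodic!} for the large-gap contradiction, and Proposition \ref{finiteTypeSGapFlow} together with Example \ref{fullShiftsAreDifferent} to finish. The only difference is packaging of the key step---you phrase it as ``rigidity (via Lemma \ref{itIsPeriodic!}) forces the large gap $0^{k'+j}$ to track back to $0^{k+j}$, contradicting $x\in\X(S_0)$,'' whereas the paper tracks the gap back to some $0^{r'}$ with $r'\in S\setminus S'$, obtains a second offset correspondence, and then invokes Lemma \ref{itIsPeriodic!} to conclude $S$ is eventually periodic, contradicting non-soficness; these are contrapositives of one another.
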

\begin{proof}
	If such cofinite subsets exist it follows directly from Proposition \ref{ifPart} that $\X(S)\FE \X(R)$.
	
	So instead assume that $\X(S)\FE \X(R)$. By Theorem \ref{newInvariant} there exists cofinite subsets $S'\subseteq S, R'\subseteq R$ such that $S'+r=R'$ for some $r\in \Z$, and as in the proof of Proposition \ref{ifPart}, we can assume that $S'=R'$.
	
		Let $T$ and $T^{-1}$ be the functional representations of the flow equivalences $\X(S)\FE \X(R)$ and $\X(R)\FE \X(S)$, respectively. We may assume that all elements of $S'$ and $R'$ are greater than the elements of $S\setminus S'$ and $R\setminus R'$ respectively, and that for all $s\in S'$ and $r\in R'$, $0^s$ and $0^r$ are deciding with respect to $T$ and $T^{-1}$ respectively. We will show that $T$ restricted to $\X(S\setminus S')$ maps bijectively to $\X(R\setminus R')$ giving a flow equivalence $\X(S\setminus S')\FE \X(R\setminus R')$ and proving $\vert S\setminus S'\vert=\vert R\setminus R'\vert$ by Proposition \ref{finiteTypeSGapFlow} and Example \ref{fullShiftsAreDifferent}. 
		
		Assume for contradiction that there is a point $x\in \X(S\setminus S')$ such that $T(x)\not\in \X(R\setminus R')$. Then $T(x) = v10^r1w$
		for some infinite words $v, w$  and $r\in R'$ by Lemma \ref{savesMyAss}. As in the proof of Theorem \ref{newInvariant}, we see that since $0^r$ is deciding with respect to $T^{-1}$ and $\X(S)$ is not flow equivalent to the full 2-shift, we get $T^{-1}(v10^r1w)= v'10^{r'}1w'$ for some $r'\in S\setminus S', r'>0$. Further, for every $t\in \N_0$ with $r+t\in R'$ we have $T^{-1}(v10^{r+t}1w)= v'10^{r'+t}1w'\in X(S)$, so $r'+t\in S$. Since the set $\abs{S\setminus S'}$ is bounded, there is an $N$ such that  $t\geq N$ and $r+t\in R'$ will ensure $r'+t\in S'$. Then $0^{r'+t}$ is deciding for $T$, so for $t\geq N$ and $r'+t\in S'$ or $r+t\in R'$ we will have
\begin{align*}
	T(v'10^{r'+t}1w') &= v10^{r+t}1w\in \X(R),\\
	&\text{ or }\\
	T^{-1}(v10^{r+t}1w) &= v'10^{r'+t}1w'\in \X(S)
\end{align*}		
respectively. Thus, $r+t\in R'$ if and only if $r'+t\in S'$ for $t>N$. Since $r\neq r'$ as $r'\in S\setminus S'$ and $r\in R'=S'$; $S\setminus S'$ and $R\setminus R'$ are finite; and $R'=S'$, it follows by Lemma \ref{itIsPeriodic!} that $R$ and $S$ can be written of the form $A\cup (B+\abs{r-r'}\N_0)$ for finite $A, B\subseteq \N_0$. Thus, $\X(S)$ and $\X(R)$ are sofic by Proposition \ref{soficSGapShifts}, a contradiction.
Thus, we have $T(\X(S\setminus S' ))\subseteq \X(R\setminus R')$, and by a symmetrical argument $T^{-1}(\X(R\setminus R' ))\subseteq \X(S\setminus S')$. As $T$ is a bijection it must then map $\X(S\setminus S')$ bijectively to $\X(R\setminus R')$.
\end{proof}

\noindent
As a final note, it seems natural given Theorem \ref{newInvariant} and Proposition \ref{ifPart} to conjecture that we can also include sofic shift in the above theorem.

\begin{conjecture}
	Let $S, R\subseteq \N_0$ be infinite. Then $\mathsf{X} (S)\FE \mathsf{X} (R)$ if and only if there exists cofinite subsets $S'\subseteq S$ and $R'\subseteq R$ such that $\vert S\setminus S'\vert=\vert R\setminus R'\vert$ and $S'+r=R'$ for some $r\in \Z$. 	
\end{conjecture}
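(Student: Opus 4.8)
The two implications should be handled separately, and the ``if'' direction is already finished: it is exactly Proposition~\ref{ifPart}. So the whole task is the ``only if'' direction, where we assume $\X(S)\FE\X(R)$ with $S,R$ infinite (and may as well assume at least one of the shifts is sofic, the fully non-sofic case being the preceding theorem). Theorem~\ref{newInvariant} already supplies an integer $r$ and cofinite subsets $S'\subseteq S$, $R'\subseteq R$ with $S'+r=R'$; what is missing is \emph{control of the complement sizes}. My first step would be to show that, once a residue-compatible shift $r$ is fixed, the integer $\delta(r)=\abs{S\setminus S'}-\abs{R\setminus R'}$ is independent of the particular cofinite subsets. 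Indeed, the maximal valid choice is $S'=S\cap(R-r)$ (cofinite in $S$ precisely because $(S+r)\setminus R$ is finite once the residues match), and passing from it to any smaller admissible $S'$ deletes a finite set $F$ from $S'$ and the set $F+r$ from $R'$, so both complements grow by $\abs F$ and $\delta(r)$ is unchanged. Thus $\delta(r)=\abs{(S+r)\setminus R}-\abs{R\setminus(S+r)}$ is a genuine function of the alignment alone.

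\textbf{Reduction to a divisibility.} Next I would compute how $\delta$ depends on $r$. Writing the common eventual periodic part as $T+N\N_0$ with $k=\abs T$ residues per period (the periods and residues of $S$ and $R$ agreeing by the corollary following Theorem~\ref{newInvariant}), a telescoping evaluation of the finite sum $\sum_n\big(\mathbf 1_{S+r+N}(n)-\mathbf 1_{S+r}(n)\big)$ — finite because each indicator agrees with $\mathbf 1_R$ for large $n$ and vanishes for small $n$ — gives $\delta(r+N)=\delta(r)-k$. Hence, as $r$ ranges over the single residue class forced by the matching, $\delta$ attains exactly the values $\{\delta(r_0)-jk:j\in\Z\}$, and a pair of cofinite subsets with $\abs{S\setminus S'}=\abs{R\setminus R'}$ exists if and only if $k\mid\delta(r_0)$. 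This already settles the conjecture whenever $k=1$ (a single residue class mod $N$), and in particular in the infinite finite-type case, where $N=1$ and hence $k=1$; the only surviving case is sofic, non-finite-type shifts with $k\geq2$.

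\textbf{The crux.} The genuine obstacle is to prove that flow equivalence forces $k\mid\delta(r_0)$ when $k\geq2$. The orbit-counting method of the non-sofic theorem does not close this gap: because a sofic non-finite-type $\X(S)$ is never flow equivalent to the full $2$-shift, Lemma~\ref{savesMyAss} still applies, but the step that previously produced a contradiction from non-soficity now ends in the \emph{consistent} conclusion that $S$ and $R$ are eventually periodic, so the functional representation need not carry $\X(S\setminus S')$ onto $\X(R\setminus R')$, and the cardinalities are no longer pinned down. To resolve this I would look for a flow-equivalence invariant valued in $\Z/k\Z$ whose value on $\X(S)$ equals $\delta(r_0)\bmod k$. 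The natural candidate is a Bowen--Franks--type group of the \emph{minimal right-resolving presentation} of $\X(S)$: put $S$ in the normal form $T+N\N_0$ by Proposition~\ref{toMinimalForm}, write the adjacency matrix of the canonical presentation graph, and compute the torsion of $\Z^n/(I-A)\Z^n$ by the same elementary-operations method used earlier for the Bowen--Franks group, now applied to a presentation graph rather than a finite-type matrix. The plan would be to verify that this group is a flow-equivalence invariant of the sofic shift and that its torsion detects precisely the residue $\delta(r_0)\bmod k$. Constructing such an invariant and checking both that it is flow-invariant and that it is fine enough to separate classes with incompatible $\delta\bmod k$ is the step I do not see how to carry out routinely; it is exactly the point at which the two existing results (Theorem~\ref{newInvariant} and Proposition~\ref{ifPart}) fail to meet, and is why the statement remains a conjecture rather than a theorem.
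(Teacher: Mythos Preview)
The statement is a \emph{conjecture} in the paper, not a theorem: the paper offers no proof of it and explicitly leaves it open. So there is nothing to compare your proposal against. Your write-up is not a proof either, and you say so yourself in the last paragraph; what you have produced is a structural analysis of where the difficulty sits, together with a speculative line of attack.

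That analysis is broadly sound and aligned with the paper's own viewpoint. The ``if'' direction is indeed Proposition~\ref{ifPart}; the fully non-sofic case is the preceding theorem; finite-type infinite $S$-gap shifts are all flow equivalent to the full $2$-shift, so the conjecture is easy there; and since soficity and finite type are flow-equivalence invariants, the only genuinely open case is both shifts sofic and non-finite-type. Your observation that $\delta(r)=\abs{S\setminus S'}-\abs{R\setminus R'}$ depends only on the alignment $r$, and that varying $r$ by the common period $N$ changes $\delta$ by $\pm k$ (the number of residues), correctly isolates the missing ingredient as a congruence $k\mid\delta(r_0)$.

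The gap you name is real. The orbit-tracking argument that drives the non-sofic classification breaks down exactly where you say it does: in the sofic case the would-be contradiction collapses into the consistent statement that $S$ and $R$ are eventually periodic, so the functional representation $T$ need not respect the decomposition into $\X(S\setminus S')$ and $\X(R\setminus R')$. Your proposed fix---extracting a $\Z/k\Z$-valued flow invariant from the minimal right-resolving presentation---is a plausible direction but is not carried out, and the paper does not do it either. Until that (or some substitute) is established, the statement remains open.
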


\nocite{ParrySullivan}
\nocite{Franks}
\nocite{RuneJohansen}
\nocite{S-gap}
\vfill
\bibliographystyle{siam}
\bibliography{bibliography}
\addcontentsline{toc}{chapter}{Bibliography}

\end{document}